\begin{document}

\title{Interpolation for normal bundles of general curves}

\author{
  Atanas Atanasov,
  Eric Larson, and
  David Yang
}
\date{}

\maketitle

\begin{abstract}
Given $n$ general points $p_1, p_2, \ldots, p_n \in \Pbb^r$,
it is natural to ask when there exists a curve $C \subset \Pbb^r$, of degree $d$
and genus $g$, passing through
$p_1, p_2, \ldots, p_n$.
In this paper, we give a complete answer to this question
for curves $C$ with nonspecial hyperplane section.
This result is a consequence of our main theorem, which states that
the normal bundle $N_C$ of a general nonspecial curve
of degree $d$ and genus $g$ in $\Pbb^r$ (with $d \geq g + r$)
has the property of \emph{interpolation} (i.e.\ that
for a general effective divisor $D$ of any degree on $C$, either
$H^0(N_C(-D)) = 0$ or $H^1(N_C(-D)) = 0$), with exactly three exceptions.
\end{abstract}


\tableofcontents

\newcommand{\versionedtext}[2]{#1}

\section{Introduction}
\label{S:introduction}

The study of curves in projective space is one of the major topics in modern algebraic geometry. It has also served as a central example in the broader interest in moduli spaces which has flourished during the past half-century. The goal of the present article is to address the following fundamental question about incidence conditions for curves.

\begin{named}{Main question}
  When is there a (smooth) curve of degree $d$ and genus $g$ passing through $n$ general points in $\Pbb^r$?
\end{named}

Several cases of this question,
and of closely related questions we shall discuss below, have been previously studied in the literature.
For example, the case of rational curves ($g = 0$) was answered independently by both Sacchiero \cite{sacchiero}
and Ran \cite{ran}, and partial results for space curves ($r = 3$) were obtained independently by both Perrin \cite{perrin}
and Atanasov~\cite{Atanasov-interpolation}.

There are also several generalizations worth mentioning. For example, given values $d$, $g$, $r$, and $n$, we can ask for the dimension of the space of appropriate curves which satisfy the incidence conditions for a general collection of $n$ points. Alternatively, we can also replace the points with with higher dimensional linear spaces, or even other subvarieties in projective space. It turns out that the main question and its generalizations are all related to a property of vector bundles over curves we call \emph{interpolation}. If the normal bundle of a curve satisfies interpolation, we deduce a statement about the deformation theory of the curve, which in turn can lead to an answer of the main question.

Before going any further, we will elaborate the connection between our main question and interpolation of normal bundles. Our references are \cite{Stevens} and \cite{perrin}. Let $\Hc_{d, g, r}$ and $\Pc_{n, r}$ respectively denote the Hilbert schemes of curves of degree $d$ and genus $g$ in $\Pbb^r$, and $n$ points in $\Pbb^r$. There is an incidence correspondence $\Sigma \subset \Pc_{n, r} \x \Hc_{d, g, r}$ (a flag Hilbert scheme) whose points are pairs $([D], [C])$ such that $D \subset C$.
\[\xymatrix{
  & \Sigma \ar[dl]_-{f} \ar[dr]^-{g} \\
  \Pc_{n, r} && \Hc_{d, g, r}
}\]
Choose a point $([D], [C])$ such that $C$ is an lci curve and $D \subset C$ is a Cartier divisor. There is an identification of tangent spaces $T_{[C]} \Hc_{d, g, r} \cong \H^0(N_C)$ and similarly for $D$. Then the tangent space $T = T_{([D], [C])} \Sigma$ fits in the following Cartesian diagram.
\[\xymatrix{
  T \ar^-{d g}[r] \ar_-{d f}[d] & \H^0(N_C) \ar[d] \\
  \H^0(N_D) \ar[r] & \H^0(N_C|_D)
}\]

\begin{theorem}[Kleppe]
  \label{T:Kleppe}
  Let $([D], [C])$ be a geometric point of $\Sigma$. If $[C] \in \Hc_{d, g, r}$ is a smooth point, and the restriction morphism $\H^0(N_C) \rarr \H^0(N_C|_D)$ is surjective, then $f$ is smooth at the point $([D], [C])$. In particular, the image of $f$ contains an open neighborhood of $[D]$.
\end{theorem}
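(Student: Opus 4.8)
The plan is to verify the infinitesimal lifting criterion for smoothness of the morphism $f$ at the geometric point $([D],[C])$. Since $f$ is a morphism between schemes of finite type over $k$, it suffices to prove the following: for every small extension $0 \to J \to A' \to A \to 0$ of local Artinian $k$-algebras (so $J \cong k$ and $\mathfrak{m}_{A'} J = 0$), every flat family $(D_A \subset C_A \subset \Pbb^r_A)$ over $A$ restricting to $(D \subset C)$ over $k$, and every $A'$-flat subscheme $D_{A'} \subset \Pbb^r_{A'}$ restricting to $D_A$, there exists an $A'$-flat curve $C_{A'} \subset \Pbb^r_{A'}$ restricting to $C_A$ and containing $D_{A'}$. (Such a pair $(D_{A'} \subset C_{A'})$ is then an $A'$-point of $\Sigma$ lifting the given $A$-point and having the prescribed image under $f$.) At first order — $A = k$, $A' = k[\epsilon]$ — this is exactly the surjectivity of $df$, which the Cartesian diagram above deduces at once from the surjectivity of the restriction $\H^0(N_C) \to \H^0(N_C|_D)$; the work is to run the same argument to all orders.

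\textbf{Step 1: deform $C$ by itself.} Since $C$ is lci in $\Pbb^r$, embedded deformations of $C$ are governed by $N_C$: the obstruction to extending $C_A$ to an $A'$-flat curve in $\Pbb^r_{A'}$ lies in $\H^1(N_C) \otimes_k J$, and if it vanishes the set of such extensions is a torsor under $\H^0(N_C) \otimes_k J \cong \H^0(N_C)$. Because $[C]$ is a smooth point of $\Hc_{d,g,r}$, the Hilbert scheme is formally smooth over $k$ near $[C]$, so this obstruction vanishes automatically; fix one extension $C'$. (The relevant point is that at a smooth point of the Hilbert scheme every obstruction vanishes, whether or not $\H^1(N_C) = 0$.)

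\textbf{Step 2: correct $C'$ so that it contains $D_{A'}$.} For a given extension $C'$, the failure of the inclusion $D_{A'} \subset C'$ is recorded by the composite $I_{C'} \hookrightarrow \mathcal{O}_{\Pbb^r_{A'}} \twoheadrightarrow \mathcal{O}_{D_{A'}}$. This map is zero modulo $J$ (because $D_A \subset C_A$), so it lands in $J \cdot \mathcal{O}_{D_{A'}} \cong \mathcal{O}_D$ by $A'$-flatness of $D_{A'}$, and a short check shows it factors further through the conormal bundle $N_C^\vee$ of $C$ restricted to $D$; it therefore defines a class $\omega(C') \in \H^0(N_C|_D)$ that vanishes precisely when $D_{A'} \subset C'$. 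The decisive computation is that replacing $C'$ by its twist by a class $\xi \in \H^0(N_C) \otimes_k J$ replaces $\omega(C')$ by $\omega(C') + \rho(\xi)$, where $\rho \colon \H^0(N_C) \to \H^0(N_C|_D)$ is the restriction map — the right-hand edge of the Cartesian diagram. Since $\rho$ is surjective by hypothesis, we may solve $\omega(C') + \rho(\xi) = 0$, and the corresponding twist is the desired $C_{A'}$. This shows that $f$ is formally smooth, hence smooth, at $([D],[C])$; since smooth morphisms are open, $f$ then carries an open neighbourhood of $([D],[C])$ onto an open subset of $\Pc_{n,r}$ containing $f([D],[C]) = [D]$, which is the last assertion.

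I expect the main obstacle to be the bookkeeping in Step 2 — in particular, obtaining the factorization through $N_C^\vee|_D$ and verifying that twisting $C'$ by $\xi$ changes $\omega$ by exactly $\rho(\xi)$. This last identity is where the hypothesis is genuinely used, and it is the all-orders incarnation of the Cartesian square. Some care is also required because $C$ and $D$ need not be smooth as varieties, which is precisely why one assumes $C$ lci and $D \subset C$ Cartier (so that $N_C$ and the conormal sheaves behave like locally free sheaves and the relevant sequences stay exact); finally, one should invoke the standard fact that, for a finite-type morphism, formal smoothness along small extensions of Artinian local rings at a point implies smoothness there.
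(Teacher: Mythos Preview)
The paper does not prove this theorem; it is stated as a result of Kleppe, with the references \cite{Stevens} and \cite{perrin} given immediately before, and no proof appears in the text. Your argument is the standard one and is correct: verify formal smoothness of $f$ via the infinitesimal lifting criterion by first extending $C_A$ to some $C'$ over $A'$ (using smoothness of $[C]$ in the Hilbert scheme), then measuring the failure of $D_{A'} \subset C'$ by a class in $\H^0(N_C|_D)$ and killing it by adjusting $C'$ within its $\H^0(N_C)$-torsor of extensions, which is possible exactly because the restriction map is surjective.

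Your self-identified ``main obstacle'' is handled correctly. The factorization through $N_C^\vee|_D$ follows because the composite $I_{C'} \to \Oc_{D_{A'}}$ lands in $J \cdot \Oc_{D_{A'}}$, hence kills $I_{C'}^2$ (as $J^2 = 0$) and kills $\mathfrak{m}_{A'} I_{C'}$ (as $\mathfrak{m}_{A'} J = 0$), so it factors through $(I_{C'}/I_{C'}^2) \otimes_{A'} k = I_C/I_C^2 = N_C^\vee$ restricted to $D$; the lci hypothesis on $C$ makes $N_C^\vee$ locally free so that this really gives a section of $N_C|_D$. The twisting computation $\omega(C' + \xi) = \omega(C') + \rho(\xi)$ is the routine check that replacing local generators $F_i$ of $I_{C'}$ by $F_i + \epsilon \xi_i$ (with $\epsilon$ a generator of $J$) shifts their images in $\Oc_D$ by the restrictions of the $\xi_i$.
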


If the hypotheses of \cref{T:Kleppe} are satisfied, then we can give a positive answer to the main question. Consider the short exact sequence
\[\xymatrix{
  0 \ar[r] &
  N_C(-D) \ar[r] &
  N_C \ar[r] &
  N_C|_D \ar[r] &
  0,
}\]
whose cohomology sequence reads
\[\xymatrix{
  0 \ar[r] &
  \H^0(N_C(-D)) \ar[r] &
  \H^0(N_C) \ar[r] &
  \H^0(N_C|_D) \ar[r] &
  \H^1(N_C(-D)) \ar[r] &
  \H^1(N_C) \ar[r] &
  0.
}\]
If $\H^1(N_C(-D)) = 0$, then $\H^0(N_C) \rarr \H^0(N_C|_D)$ is surjective and $[C]$ is a smooth point of $\Hc_{d, g, r}$ (because $\H^1(N_C) = 0$), so we can apply \cref{T:Kleppe}. Note that if $N_C$ is nonspecial, then $\H^1(N_C(-D)) = 0$ is equivalent to
\begin{equation}
  \label{E:interpolation-simple}
  \h^0(N_C(-D)) =
  \h^0(N_C) - (r - 1) \deg(D).
\end{equation}
This discussion naturally leads us to the definition of interpolation (see \cref{T:interpolation}). In this particular case, the bundle $N_C$ satisfies interpolation if
\begin{enumerate}
\item
  for all $n \leq \h^0(N_C) / (r-1)$, there exists a degree $n$ divisor $D$ satisfying \cref{E:interpolation-simple}, and
\item
  for all $n > \h^0(N_C) / (r-1)$, there exists a degree $n$ divisor $D$ such that $\h^0(N_C(-D)) = 0$.
\end{enumerate}

Given a general curve $C$ of genus $g$, and a general line bundle $\mathcal{L}$
on $C$ of degree $d$,
it is well-known that
there exists a linear series on $C$ attached to $\mathcal{L}$
defining a map to $\Pbb^r$ if and only if
\[d \geq g + r.\]
Moreover, in this range, there is a unique component of the Hilbert scheme
corresponding to such curves; this component is distinguished
by the fact that a general curve in this component has a nonspecial hyperplane
section (which we will refer to as a ``nonspecial curve'' for brevity).
Our main result determines when the normal bundle of a general nonspecial
curve satisfies interpolation:

\begin{theorem} \label{thm:intro}
Let $C$ be a general nonspecial curve of degree $d$ and genus $g$
in $\Pbb^r$ (where $d \geq g + r$).
Then the normal bundle $N_C$ satisfies interpolation, unless:
\[(d, g, r) \in \{(5, 2, 3), (6, 2, 4), (7, 2, 5)\}.\]
\end{theorem}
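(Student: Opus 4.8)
The plan is to induct on the degree $d$, using the classical technique of degenerating a curve to a nodal union $C = C' \cup_{\Gamma} R$, where $R$ is a rational normal curve (or a low-degree rational curve) meeting a curve $C'$ of lower degree and genus transversely in a set $\Gamma$ of points, chosen so that $C$ is a general nonspecial curve of degree $d$ and genus $g$. On such a reducible nodal curve one has the standard exact sequence
\[
0 \to N_{C'} (-\Gamma) \to N_C|_{C'} \to T^1 \to 0,
\]
and symmetrically for $R$, relating the normal bundle of the degeneration to the normal bundles of the two components twisted down by the gluing divisor. The idea is that interpolation for $N_C$ will follow, via semicontinuity and the characterization in \cref{E:interpolation-simple}, from interpolation-type statements for $N_{C'}$ (twisted by $-\Gamma$ and by a general divisor $-D'$) together with a strong positivity statement for $N_R$ restricted to a rational curve. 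Because interpolation is not preserved under arbitrary such operations, the actual inductive hypothesis will need to be strengthened: rather than plain interpolation, one proves that $N_C$ is (in the appropriate quantitative sense) as balanced as possible after twisting down by a general divisor, possibly together with the imposition of finitely many general \emph{modifications} of $N_C$ at points (replacing $N_C$ along a hyperplane through a point by a sub/quotient), since modifications interact cleanly with the degeneration exact sequences.

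Concretely, the first step is to set up the \emph{interpolation with modifications} framework: show that it suffices to prove that a suitable collection of ``modified'' normal bundles $N_C^{\Lambda}$ satisfy interpolation, and establish the formal calculus for how interpolation-with-modifications behaves under the gl-uing sequence above — i.e.\ if $N_{C'}$ and $N_R$ satisfy the modified statement and the numerical invariants match up, then so does $N_C$. The second step is to handle the base cases: small $d$ (in particular $d = g + r$, where $C$ is close to a rational normal curve with extra marked structure, and the cases with $g$ small), and to isolate exactly where the three exceptional triples $(5,2,3)$, $(6,2,4)$, $(7,2,5)$ arise — these are genus-$2$ curves where the normal bundle is genuinely unbalanced for a low-dimensional reason, so one must check directly that interpolation fails there and that the induction can be started just past them. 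The third step is the inductive step proper: given $(d,g,r)$ not exceptional and not a base case, choose the degeneration — typically $R$ a line or conic so that $(d', g') = (d-1, g)$ or $(d-2, g-1)$ — verify the numerical conditions, invoke the inductive hypothesis on $C'$, and feed the positivity of $N_R|_R$ (a rational curve, so its normal bundle is balanced up to a controlled defect, and twisting/modifying keeps it nonspecial) through the gluing calculus to conclude.

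The main obstacle, as usual in these arguments, is twofold. First, the bookkeeping: one must track the modifications carefully so that at each stage the twisted-down modified bundle has the exact numerical shape needed, and ensure that the modifications one is forced to introduce on $C'$ can be realized as restrictions of global modifications on $C$ — this requires the gluing divisor $\Gamma$ and the modification data to be in sufficiently general position, which in turn constrains the range of $(d,g,r)$ for which a given degeneration is legal. Second, and more seriously, the degenerations that are numerically available may not always exist as honest smoothable nodal curves with general moduli, or the corresponding statement on $C'$ may itself be one of the exceptional or boundary cases; so one likely needs several distinct degenerations (e.g.\ adding a line, adding a conic, or specializing to a curve with an extra node at a general point) and a careful case analysis showing that for every non-exceptional $(d,g,r)$ at least one of them reduces to an already-known case. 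I expect the genus-$2$ and low-$r$ regime near the exceptional triples to require the most delicate separate treatment, since that is precisely where the uniform inductive mechanism breaks down.
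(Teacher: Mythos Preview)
Your high-level architecture matches the paper's: degenerate to reducible curves, strengthen the inductive hypothesis to a statement about \emph{modified} normal bundles, and run a combinatorial induction with base cases and several distinct degenerations. In that sense you have identified the right strategy.

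However, what you have written is a research plan, not a proof, and the gap is precisely the part of the argument that carries all the content. When you degenerate $C$ to $C' \cup R$, the normal bundle of the union is \emph{not} determined by $N_{C'}$ and $N_R$ separately: one must understand the gluing maps
\[
H^0(N_{C \cup R}|_{C'}) \to H^0(N_{C \cup R}|_{C' \cap R}) \leftarrow H^0(N_{C \cup R}|_R),
\]
and your proposal gives no mechanism for doing so beyond vague positivity on $R$. The paper's key observation --- and the reason it attaches \emph{lines} rather than higher-degree rational curves --- is that for $R = L$ a line there are geometrically defined sub-line-bundles $N_{C \to v} \subset N_C$ (normal directions pointing toward a fixed point $v \in L$), constructed via cones $\overline{v \cdot C}$, which identify the positive subbundle of $N_{C\cup L}|_L$ and thus give an explicit description of the gluing. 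This is what converts interpolation for $N_{C\cup L}$ into interpolation for a concrete modification of $N_{C'}$ along $N_{C' \to v}$ at the nodes. Without this identification, your ``feed the positivity of $N_R|_R$ through the gluing calculus'' step has no content.

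Relatedly, the stronger inductive hypothesis cannot be left abstract. The modifications that arise from peeling off a $1$-secant or $2$-secant line are of very specific shapes --- twists combined with modifications of the form $[q \to p]$, $[q \to 2p]$, $[p \to kq]$ toward one distinguished point $p$ and its osculating flag --- and the paper's inductive class (parametrized by tuples $n_{ij}^k$ of marked points of nine fixed types) is exactly tailored so that each degeneration stays inside the class. Discovering this class, checking the numerical constraint \eqref{regime} propagates, and carrying out the residual combinatorics (which in low $r$ requires computer verification and an ad hoc base case at $(5g+1,g,4g+1)$) is the bulk of the work; your proposal acknowledges the bookkeeping difficulty but does not address it.
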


The condition of interpolation is equivalent for rational curves
(and analogous in some sense for curves of higher genus)
to the conditions of semistability and section-semistability
(see Section~3 of~\cite{Atanasov-interpolation}), although we shall not make use of these analogies here.
However, we will remark that the analog of \cref{thm:intro}
for semistability of the normal bundle is known in the case of
rational curves ($g = 0$) as mentioned earlier \cite{ran, sacchiero},
as well as in the case of linearly normal elliptic curves
($g = 1$ and $d = r + 1$)
by work of Ein and Lazarsfeld \cite{ein-lazarsfeld}.

As a consequence, we answer the main question posed at
the beginning of the introduction for nonspecial curves:

\begin{corollary} \label{through-n-general} There exists a nonspecial curve $C$ of degree $d$ and genus $g$
in $\Pbb^r$ (with $d \geq g + r$),
passing through $n$ general points,
if and only if
\[\begin{cases}
(r - 1) n \leq (r + 1) d - (r - 3)(g - 1) & \text{if $(d, g, r) \notin \{(5, 2, 3), (7, 2, 5)\}$;} \\
\phantom{(r - 1)} n \leq 9 & \text{if $(d, g, r) \in \{(5, 2, 3), (7, 2, 5)\}$.} \\
\end{cases}\]
\end{corollary}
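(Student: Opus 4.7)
The plan is to combine \cref{thm:intro} with the Kleppe analysis of the incidence correspondence $\Sigma$ set up in the introduction, converting interpolation for $N_C$ directly into an incidence statement for curves through general points. First I would compute $\h^0(N_C)$: from the normal bundle sequence, $N_C$ has rank $r-1$ and degree $(r+1)d + 2g - 2$, so Riemann--Roch yields
\[
  \chi(N_C) = (r+1)d - (r-3)(g-1).
\]
Applying interpolation to the empty divisor yields $\H^1(N_C) = 0$ in the non-exceptional triples (the three exceptional triples need a short direct check), so $\h^0(N_C) = (r+1)d - (r-3)(g-1)$.

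For necessity of the numerical bound, the correspondence $\Sigma$ has dimension $\h^0(N_C) + n$, since the fiber of $g\colon \Sigma \to \Hc_{d,g,r}$ over a general $[C]$ is the symmetric product $C^{(n)}$ of dimension $n$, while $\Pc_{n,r}$ has dimension $rn$. Dominance of $f$ therefore forces $(r - 1) n \leq (r + 1)d - (r - 3)(g - 1)$, giving the stated inequality.

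For sufficiency in the non-exceptional cases---which include $(6, 2, 4)$, since the inequality $3n \leq 29$ already forces $n \leq 9$---\cref{thm:intro} supplies, for any $n$ with $(r-1)n \leq \h^0(N_C)$, a divisor $D$ of degree $n$ with $\H^1(N_C(-D)) = 0$. The cohomology long exact sequence then makes the restriction $\H^0(N_C) \to \H^0(N_C|_D)$ surjective, and $\H^1(N_C) = 0$ makes $[C]$ a smooth point of $\Hc_{d,g,r}$; \cref{T:Kleppe} now forces the image of $f$ to contain an open neighborhood of $[D]$, so $f$ is dominant and a curve through $n$ general points exists.

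The main obstacle is the two exceptional triples $(5,2,3)$ and $(7,2,5)$, where $\h^0(N_C)/(r-1) = 10$ yet the corollary asserts $n \leq 9$. For $n \leq 9$ the argument above still applies, provided one verifies that the interpolation failure in \cref{thm:intro} for these triples is confined to the unique balanced degree $n = 10$ at which $\chi(N_C(-D)) = 0$. Sharpness must then be deduced from that very failure: for a general degree-$10$ divisor $D$ on $C$, both $\h^0(N_C(-D))$ and $\h^1(N_C(-D))$ are positive, so the restriction $\H^0(N_C) \to \H^0(N_C|_D)$ has a nonzero cokernel, and by the Cartesian square describing $T_{([D],[C])}\Sigma$ the differential $df$ fails to be surjective at the generic point of $\Sigma$; combined with the numerical coincidence $\dim \Sigma = \dim \Pc_{10, r}$, this rules out dominance of $f$ in characteristic zero and completes the corollary.
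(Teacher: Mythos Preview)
Your treatment of the non-exceptional triples---Kleppe's criterion plus interpolation from \cref{thm:intro}, together with the dimension count for necessity---is exactly what the paper intends; it simply says ``immediate from \cref{thm:intro}'' there.

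For the three exceptional triples your approach diverges from the paper's and carries a genuine gap. You assert that the interpolation failure is ``confined to the unique balanced degree $n=10$,'' i.e.\ that $H^1(N_C(-D))=0$ for a general divisor $D$ of degree $9$ and $H^1(N_C(-D))>0$ for a general divisor of degree $10$, but you never verify either claim. \cref{thm:intro} says only that interpolation fails; it does not identify the offending degree, nor does it say failure occurs at a \emph{single} degree. Pinning down both facts requires the exact sequence $0\to N_{C/S}\to N_C\to N_S|_C\to 0$ and the computation $\chi(N_{C/S})=11$ that the paper carries out in \S\ref{S:exceptional}; even with that sequence in hand, the vanishing $H^1(N_C(-D))=0$ at degree $9$ is not automatic for $r\in\{4,5\}$, where $N_S|_C$ has rank $>1$. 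The same issue affects your treatment of $(6,2,4)$: you correctly note that $3n\le 29$ already forces $n\le 9$, but you still need the degree-$9$ vanishing for existence, and \cref{thm:intro} does not supply it.

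The paper avoids all of this by a different, purely geometric route. It shows (\cref{to-surface}) that for $r\in\{3,4,5\}$ a genus-$2$ degree-$(r+2)$ curve through $n$ general points exists if and only if a degree-$(r-1)$ rational normal scroll through $n$ general points does, and then checks directly (a quadric for $r=3$, citing \cite{coskun} for $r=4,5$) that such scrolls pass through $9$ but not $10$ general points. Your generic-smoothness argument for ruling out $n=10$ is a legitimate alternative \emph{once} $h^1(N_C(-D))>0$ is in hand, but establishing that and the companion vanishing at degree $9$ is precisely the work you have not done.
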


To prove \cref{thm:intro}, we will argue by inductively degenerating $C$
to a reducible nodal curve $X \cup Y$. We use results of Hartshorne and Hirschowitz \cite{Hartshorne-Hirschowitz} 
to guarentee the existance of particular such degenerations,
and to give descriptions of the restrictions $N_{X \cup Y}|_X$ and $N_{X \cup Y}|_Y$.
However, in order to reduce interpolation
for $N_C$ to statements about $N_{X \cup Y}|_X$ and $N_{X \cup Y}|_Y$, we need to
have a geometric description of the gluing data:
\begin{equation} \label{glue}
H^0(N_{X \cup Y}|_X) \rightarrow H^0(N_{X \cup Y}|_{X \cap Y}) \leftarrow H^0(N_{X \cup Y}|_Y).
\end{equation}

The key observation that makes it possible to approach \cref{thm:intro}
is the existence --- in the case when $Y = L$ is a line --- of certain
geometrically-defined line subbundles
$\Lc \subseteq N_{X\cup Y}$, which taken together enable
us to give an essentially-complete geometric description of the gluing data
in \cref{glue}.

For example, suppose that $Y = L$ meets $X$ in a single point $u$;
write $v \in L$ for some point on $L$ distinct from $u$. Then writing $S = \overline{v \cdot X}$
for the cone over $X$ with vertex $v$, the normal bundle $\mathcal{L}$ of $X \cup L$ in $S$ gives such a bundle.
We will see in \cref{S:reducible} that $\mathcal{L}|_L$ gives the positive subbundle of
$N_{X \cup L}|_L$; using this, we will reduce interpolation for $N_{X \cup Y}$
to interpolation for the vector bundle on $X$ given by the kernel of the natural map
\begin{equation} \label{modification-example}
N_{X \cup Y}|_X \to (N_{X \cup Y}|_X / \mathcal{L})|_u.
\end{equation}

\subsection*{Summary}

We begin the paper in \cref{S:modifications-arbitrary} and \cref{S:modifications-curves}
by studying \emph{modifications} of vector bundles, which are generalizations of the
above bundle defined on $X$ --- where $N_{X \cup Y}|_X$ is replaced an arbitrary vector
bundle on $X$, and $\mathcal{L}$ by an arbitrary subbundle of $N_{X \cup Y}|_X$.
That is, the modification $\mathcal{E}[D \to \mathcal{F}]$ of $\mathcal{E}$
along $\mathcal{F}$ at a Cartier divisor $D$ is simply the kernel of the natural map
\[\mathcal{E} \to (\mathcal{E} / \mathcal{F})|_D.\]
The main results of these sections are tools for dealing with
\emph{multiple modifications}
\[\mathcal{E}[D_1 \to \mathcal{F}_1][D_2 \to \mathcal{F}_2] \cdots [D_n \to \mathcal{F}_n],\]
which correspond to the bundles on $X$
that we would obtain by, say, iteratively applying the construction outlined above;
our ability to handle multiple modifications will allow us to inductively degenerate $C$, peeling off lines
one (or sometimes two) at a time.
Our study of modifications is divided into two sections: We begin in \cref{S:modifications-arbitrary}
by studying modifications of vector bundles on arbitrary varieties; and further study
the special case of curves in \cref{S:modifications-curves}. This is necessary since
we will need to apply results on modifications to the total space of a family of curves.

Our next topic in \cref{S:sequences} is interpolation and its interaction with modifications.
For example, under certain conditions we show that if a given vector bundle $\mathcal{E}$,
a sub-bundle $\mathcal{F}$, and the quotient $\mathcal{E} / \mathcal{F}$, all satisfy interpolation,
then so does the modification $\mathcal{E}[D \to \mathcal{F}]$.

In \cref{S:normal-bundles} and \cref{S:normal-bundles-examples},
we respectively define, and calculate, important examples of, certain 
sub-bundles of normal bundles of curves in projective spaces.
These bundles will include the bundle $\mathcal{L}$ appearing in \cref{modification-example},
as well as the necessary generalizations thereof
(which are necessary, say, when $L$ meets $X$ at two points instead of just one).

In \cref{S:specialization}, we prove the necessary ingredients to degenerate
$C$ to a reducible curve (e.g.\ we prove that the conclusion of Theorem~\ref{thm:intro}
is an open condition in the Hilbert scheme parameterizing curves of degree $d$
and genus $g$ in $\Pbb^r$).

The heart of the paper is \cref{S:reducible}, where all of the previous work
enables us to carry out the analysis described above
(c.f.\ \cref{modification-example}). We consider not only the case where $L$ meets $X$
once, but also where $X$ meets $L$ twice, as well as several other variants (e.g.\ $X$
contained in a hyperplane, simultaneously adding two lines, etc.).

We then move forward with our inductive argument: First, in \cref{S:hypothesis}, we define a certain class of modifications of normal bundles of curves which we will
inductively study. Then in \cref{S:fixed}, we show how the results of \cref{S:reducible}
allow us to reduce interpolation for
certain cases of this class of modifications of normal bundles to other
``simpler'' cases. In \cref{S:base}, we directly prove
that certain modified normal bundles satisfy interpolation; these form
the base of our inductive argument.

To finish the proof, we need an intricate
combinatorial argument to
show that the collection of inductive
arguments of \cref{S:fixed}, together with the base cases of \cref{S:base},
imply \cref{thm:intro}. This is briefly summarized in \cref{S:combinat-summary},
and detailed in \cref{S:combinat}

Finally, in \cref{S:exceptional}, we further explore the
three exceptional cases occurring in \cref{thm:intro}, understanding
geometrically why curves of degree $r + 2$ and genus $2$ in $\Pbb^r$
do not satisfy interpolation for $r \in \{3, 4, 5\}$. The reason is essentially
that the sub-bundle $N_{C/S}$ has too many sections,
where $S$ is the surface obtained by taking the union of all lines joining
pairs of points $\{p, q\} \subset C$ which are conjugate under the hyperelliptic involution.
Using this construction, we also establish \cref{through-n-general}.

\subsection*{Conventions}
\noindent
  Unless otherwise noted, we will consistently make the following conventions.
  \begin{itemize}
  \item
    We will work over an algebraically closed field $K$ of characteristic $0$.
  \item
    All varieties are reduced, separated, finite type schemes over $K$.
  \item
    All curves are connected and locally complete intersection (lci); all families of curves have connected lci fibers.
  \item
    All vector bundles are locally free sheaves of finite constant rank.
  \item
    A subbundle refers to a vector subbundle with locally free quotient.
  \item
    All divisors are Cartier.
  \item
    We will call a vector bundle nonspecial if it has no higher cohomology.
  \end{itemize}


\subsection*{Acknowledgments}

The authors would like to thank Joe Harris for
introducing us to each other, and for his guidence
throughout this research.
We are also grateful to Bjorn Poonen for many helful conversations.
We would like to thank the members of the Harvard
and MIT mathematics departments, in particular Gabriel Bujokas,
Francesco Cavazzani, Anand Patel, Alexander Perry, and Eric Riedl;
and to thank the anonymous referee for helpful suggestions.

The first author would like to acknowledge the generous support
of the National Science Foundation, grant DMS-1308244, for
``Nonlinear Analysis on Sympletic, Complex Manifolds,
General Relativity, and Graphs.''
The second author would also like to acknowledge the generous
support both of the Fannie and John Hertz Foundation,
and of the Department of Defense
(NDSEG fellowship).


\section{Elementary modifications in arbitrary dimension}
\label{S:modifications-arbitrary}

Elementary modifications of vector bundles are a classical topic. Most sources focus on reduced divisors over curves, but the applications we have in mind require to us to relax these hypotheses. The goal of this section is to define an appropriate notion of modification and develop its properties.

Let $X$ be a variety and $E$ a vector bundle on it. Given an effective Cartier divisor $D \subset X$ and a subbundle $F \subset E|_U$ defined over an open $U$ containing the support of $D$, we consider the composition
\[\xymatrix{
  E \ar[r] &
  E|_D \ar[r] &
  (E/F)|_D
}\]
of the restriction to $D$ followed by a quotient. Both parts are surjective, hence so is the composition. We will call the kernel of the composition the \emph{(elementary) modification of $E$ at $D$ along $F$} and denote it by $E[D \to F]$. Our notation is inspired by the fact that sections of $E[D \to F]$ can be identified with sections of $E$ which point along $F$ when restricted to $D$:
\[
\H^0(E[D \to F]) = \{ \sigma \in \H^0(E) \;|\; \sigma|_D \in \H^0(F|_D) \}.
\]

The defining exact sequence of a modification $E[D \to F]$ is
\begin{equation}
  \label{E:modification-ses}
  \vcenter{\xymatrix{
      0 \ar[r] &
      E[D \to F] \ar[r] &
      E \ar[r] &
      (E/F)|_D \ar[r] &
      0.
    }}
\end{equation}
The inclusion $E[D \to F] \rarr E$ becomes an isomorphism when restricted to the complement $X \setminus \Supp(D)$. This is true since the cokernel $(E/F)|_D$ is supported on $D$.

    There is a second sequence, which can also be very handy:
    \begin{equation}
      \label{E:modification-ses2}
      \vcenter{\xymatrix{
          0 \ar[r] &
          E(-D) \ar[r] &
          E[D \to F] \ar[r] &
          F|_D \ar[r] &
          0.
        }}
    \end{equation}
    This is a consequence of the Snake Lemma applied to the following diagram with exact rows.
    \[\xymatrix{
      0 \ar[r] &
      0 \ar[r] \ar[d] &
      E \ar@{=}[r] \ar[d] &
      E \ar[r] \ar[d] &
      0 \\
      0 \ar[r] &
      F|_D \ar[r] &
      E|_D \ar[r] &
      (E/F)|_D \ar[r] &
      0
    }\]
    The third useful sequence
    \begin{equation}
      \label{E:modification-ses3}
      \vcenter{\xymatrix{
          0 \ar[r] &
          F \ar[r] &
          E[D \to F]|_U \ar[r] &
          (E|_U/F)(-D) \ar[r] &
          0
        }}
    \end{equation}
    is a corollary of the following diagram.
    \[\xymatrix{
      0 \ar[r] &
      0 \ar[r] \ar[d] &
      E|_U \ar@{=}[r] \ar[d] &
      E|_U \ar[r] \ar[d] &
      0 \\
      0 \ar[r] &
      (E|_U/F)(-D) \ar[r] &
      E|_U/F \ar[r] &
      (E|_U/F)|_D \ar[r] &
      0
    }\]
Note that sequences \labelcref{E:modification-ses,E:modification-ses2} are valid over the entire variety $X$, while \labelcref{E:modification-ses3} makes sense only over the open $U$.

\begin{remark}
  \label{T:modification-splitting}
  The inclusion $F \rarr E[D \to F]|_U$ in \labelcref{E:modification-ses3} splits if the inclusion $F \rarr E|_U$ splits. We can then write
  \[
  E[D \to F]|_U = F \oplus (E|_U/F)(-D).
  \]
  In particular, if $U$ is affine, then both inclusions split.
\end{remark}

\begin{remark}
  \label{T:only-neighborhood}
  The modification $E[D \to F]$ only depends on the restriction of $F$ to $D$. Put differently, if $F$ and $F'$ are subbundles of $E$ such that $F|_D = F'|_D$, then
  \[
  E[D \to F] = E[D \to F'].
  \]
      For example, if the support of $D$ is an irreducible variety $Y \subset X$ and $D = n Y$, then $E[D \to F]$ only depends on $F$ in an $n$-th order neighborhood of $Y$.
\end{remark}

    Under the hypotheses we made, elementary modifications are vector bundles.

    \begin{proposition}
      \label{T:modification-vector-bundle}
      If $F \subset E|_U$ is a subbundle and $D$ is a Cartier divisor on $X$, then $E[D \to F]$ is a vector bundle.
    \end{proposition}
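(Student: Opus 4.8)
The plan is to check local freeness locally on $X$, splitting into the two regions singled out by the fact that $\Supp(D) \subseteq U$: a neighborhood of $\Supp(D)$, where the subbundle $F$ is available, and the complement of $\Supp(D)$, where the modification coincides with $E$.

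To begin, I would note that $E[D \to F]$ is coherent, being the kernel of a morphism of coherent sheaves on the Noetherian scheme $X$; so it remains only to prove local freeness, which may be verified on any open cover. Choose a cover of $X$ by affine opens, each contained in either $U$ or $X \setminus \Supp(D)$ --- this is possible precisely because $\Supp(D) \subseteq U$. On an affine open $V \subseteq X \setminus \Supp(D)$, the cokernel $(E/F)|_D$ appearing in \cref{E:modification-ses} vanishes on $V$, so $E[D \to F]|_V \cong E|_V$, which is a vector bundle (indeed free after shrinking $V$).

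On an affine open $V \subseteq U$, the quotient $(E|_V/F|_V)$ is locally free, hence projective since $V$ is affine, so the surjection $E|_V \to (E|_V/F|_V)$ admits a splitting. By \cref{T:modification-splitting} --- or directly from \cref{E:modification-ses3} --- this yields
\[E[D \to F]|_V \;=\; F|_V \;\oplus\; \bigl((E|_V / F|_V)(-D)\bigr).\]
Here $F|_V$ is a vector bundle by hypothesis, and $(E|_V/F|_V)(-D)$ is a vector bundle because $(E|_V/F|_V)$ is locally free and $\mathcal{O}(-D)$ is invertible ($D$ being Cartier); a direct sum of vector bundles is a vector bundle, so $E[D \to F]|_V$ is one as well. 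Since the opens $V$ cover $X$, it follows that $E[D \to F]$ is a vector bundle.

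Alternatively, one may avoid the splitting and argue that \cref{E:modification-ses3} presents $E[D \to F]|_U$ as an extension of the vector bundle $(E|_U/F)(-D)$ by the vector bundle $F$, and any extension of a locally free sheaf by a locally free sheaf is itself locally free (locally the surjection onto the free quotient splits). There is no genuine obstacle here; the only point requiring care is the bookkeeping forced by $F$ being defined only over $U$, which is handled cleanly by the choice of cover above.
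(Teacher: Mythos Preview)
Your proof is correct and structurally parallel to the paper's: both reduce to the open $U$ (using that $E[D\to F]\cong E$ off $\Supp(D)$) and then invoke the exact sequence \eqref{E:modification-ses3}. The only difference is in the final step --- the paper concludes by computing $\Torc_1(E[D\to F],\Ac)$ from that sequence to verify flatness, whereas you pass to affines and use the splitting of \cref{T:modification-splitting} (your alternative argument, that an extension of locally free by locally free is locally free, is essentially the Tor argument in disguise). Your route is slightly more hands-on; the paper's avoids choosing a cover but invokes the local criterion of flatness --- neither buys anything substantive over the other here.
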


    \begin{proof}
      Since $E[D \to F]|_{X \setminus \Supp(D)} \cong E|_{X \setminus \Supp(D)}$, we can pass to an open neighborhood of $D$. For example, take the locus $U$ where $F$ is defined.
      
      Note that $E[D \to F]$ is finitely presented and it suffices to show it is flat. We will use the local criterion of flatness. Let $\Ac$ be a coherent sheaf over $X$. If we apply $\Torc_\bullet(-, \Ac)$ to sequence \labelcref{E:modification-ses3}, then $\Torc_1(E[D \to F], \Ac)$ sits between $\Torc_1(F, \Ac) = 0$ and $\Torc_1((E/F)(-D), \Ac) = 0$, so it must also be zero. This proves that $E[D \to F]$ is flat, hence locally free.
    \end{proof}

\begin{example}
  \label{T:modification-basic-examples}
  Taking $F = 0$ and $F = E$ gives rise to two basic examples:
  \[
  E[D \to 0] = E(-D),
  \qquad\qquad
  E[D \to E] = E.
  \]
      Similarly, when $D = \emptyset$, then
      \[
      E[D \to F] = E
      \]
      is independent of $F$.
\end{example}

Consider a morphism of varieties $f \cn Y \rarr X$. Let $D$ be an effective Cartier divisor on $X$ such that its support contains no component of the image of $f$. Under this hypothesis, the pullback divisor $f^\ast D$ is well-defined and modifications respect pullbacks.

\begin{proposition}
  \label{T:modification-pullback}
  Let $f \cn Y \rarr X$ be a morphism of varieties and $D$ an effective divisor on $X$ such that its support does not contain any component of the image of $f$. If $E$ is a vector bundle on $X$ and $F \subset E$ a subbundle, then there is a natural isomorphism
  \[
  f^\ast E[D \to F] \cong
  (f^\ast E)[f^\ast D \to f^\ast F].
  \]
\end{proposition}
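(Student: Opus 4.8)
The plan is to pull back the defining exact sequence \labelcref{E:modification-ses} for $E[D \to F]$ along $f$ and identify the result with the defining exact sequence for $(f^\ast E)[f^\ast D \to f^\ast F]$. The sequence
\[\xymatrix{
  0 \ar[r] & E[D \to F] \ar[r] & E \ar[r] & (E/F)|_D \ar[r] & 0
}\]
is a short exact sequence of sheaves, but pullback $f^\ast(-)$ is only right exact in general, so I cannot simply apply $f^\ast$ and conclude. The key point is that the cokernel $(E/F)|_D = (E/F) \otimes \Oscr_D$ is supported on $\Supp(D)$, which by hypothesis contains no component of the image of $f$; I want to use this to show the higher Tor sheaves vanish so that $f^\ast$ preserves exactness here. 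Concretely, I would argue that $\Lscr_1 f^\ast \big((E/F)|_D\big) = 0$: locally $D$ is cut out by a single equation $t$ which is a nonzerodivisor on $X$, and $E/F$ is locally free, so $(E/F)|_D$ has a two-term locally free resolution $0 \to E/F \xrightarrow{t} E/F \to (E/F)|_D \to 0$; pulling this back, the map $f^\ast(E/F) \xrightarrow{f^\ast t} f^\ast(E/F)$ is again injective precisely because $f^\ast t$ is a nonzerodivisor on $Y$ — and this is exactly the hypothesis that $\Supp(D)$ contains no component of the image (equivalently, of $Y$, after reducing to the case that $f$ is dominant onto its image), which is what makes $f^\ast D$ a well-defined Cartier divisor in the first place. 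Hence $\Lscr_1 f^\ast$ of the cokernel vanishes and $f^\ast$ applied to \labelcref{E:modification-ses} stays exact.

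Next I would identify the terms. Clearly $f^\ast E$ is $f^\ast E$, and for the cokernel there is a natural isomorphism $f^\ast\big((E/F)|_D\big) \cong (f^\ast E / f^\ast F)|_{f^\ast D}$: pullback commutes with cokernels (so $f^\ast(E/F) \cong f^\ast E / f^\ast F$, using that $F \subset E$ is a subbundle so the quotient is locally free and the sequence $0 \to F \to E \to E/F \to 0$ pulls back to a short exact sequence) and with restriction to a divisor (using the two-term resolution above, or simply $f^\ast(\Mscr \otimes_{\Oscr_X} \Oscr_D) \cong f^\ast \Mscr \otimes_{\Oscr_Y} f^\ast \Oscr_D$ and $f^\ast \Oscr_D \cong \Oscr_{f^\ast D}$). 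Moreover one checks that under these identifications the map $f^\ast E \to f^\ast\big((E/F)|_D\big)$ is carried to the natural map $f^\ast E \to (f^\ast E/f^\ast F)|_{f^\ast D}$ — this is a diagram chase that is immediate since all the maps involved (restriction to a divisor, quotient by a subbundle) are the canonical ones and pullback is functorial. Therefore the kernel $f^\ast E[D \to F]$ is identified with the kernel of $f^\ast E \to (f^\ast E/f^\ast F)|_{f^\ast D}$, which is by definition $(f^\ast E)[f^\ast D \to f^\ast F]$, and the isomorphism is natural because it is induced by the natural map of short exact sequences.

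I expect the main obstacle to be the first step: verifying carefully that $f^\ast$ preserves the exactness of \labelcref{E:modification-ses}, i.e.\ the vanishing of $\Lscr_1 f^\ast$ of the cokernel, and correctly translating the hypothesis "$\Supp(D)$ contains no component of the image of $f$" into the statement that the local equation of $D$ pulls back to a nonzerodivisor. One subtlety worth flagging: $f$ need not be flat, and $Y$ need not be irreducible, so "component of the image" should be handled component-by-component of $Y$ (the hypothesis guarantees that no component of $Y$ maps into $\Supp(D)$, equivalently $f^\ast t$ restricts to a nonzerodivisor on each component of $Y$, hence is a nonzerodivisor on $Y$ since $Y$ is reduced). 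Everything else — commuting $f^\ast$ with cokernels and with $\otimes \Oscr_D$, and the diagram chase identifying the maps — is routine functoriality, and \cref{T:modification-vector-bundle} already guarantees both sides are vector bundles so there are no further flatness worries.
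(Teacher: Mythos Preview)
Your proposal is correct and follows exactly the approach of the paper, which states in full: ``This follows by pulling back the defining sequence \labelcref{E:modification-ses}.'' You have simply unpacked the details behind that one sentence---the Tor-vanishing argument ensuring exactness is preserved and the identification of the pulled-back cokernel---which the paper leaves implicit.
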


\begin{proof}
  This follows by pulling back the defining sequence \labelcref{E:modification-ses}.
\end{proof}

\begin{remark}
  \label{T:modification-affine}
  Any vector bundle can be decomposed as a gluing of bundles over affine opens which themselves intersect in affines. The compatibility of modifications and pullbacks, open embeddings in particular, allows us to reduce various statements about modifications over general varieties to statements about affine varieties. We will use this technique in several of the arguments that follow.
\end{remark}

\begin{remark}
  \label{T:linearly-independet}
  Suppose we have a vector bundle $E$ over a variety $X$ and a collection of subbundles $F_i \in E$ indexed by $i \in I$. Stating that $\{ F_i \}$ are linearly independent means that for all $x \in X$ the fibers $\{ F_i|_x \}$ are linearly independent in $E|_x$ as vector spaces.

  There is an alternative formulation of this statement. The individual inclusions $F_i \rarr E$ induce a morphism
  \[
  \phi \cn \bigoplus_{i \in I} F_i \rarr E.
  \]
  Then $\{ F_i \}$ are linearly independent if and only if $\phi$ is injective and has locally free cokernel, that is, $\bigoplus_{i \in I} F_i \subset E$ is a subbundle. This restatement is convenient since it allows us to deal with linear independence in a global fashion.
\end{remark}

There are correspondences between certain classes of subbundles of $E$ and $E[D \to F]$. If $X$ is a curve this is true more generally without any restrictions on the subbundles in consideration (see \cref{S:modifications-curves}).

To state our result, consider a subbundle $F \subset E|_U$ and an effective divisor $D$ on $X$ whose support is contained in $U$. We define four sets of subbundles of $E$:
\begin{align*}
  S_1(E, F, D)
  &= \{ G \subset E \textrm{ subbundle} \;|\; \textrm{$G|_V \subset F|_V$ for some neighborhood $V \subset U$ of $D$} \}, \\
  S_2(E, F, D)
  &= \{ G \subset E \textrm{ subbundle} \;|\; \textrm{$F|_V \subset G|_V$ for some neighborhood $V \subset U$ of $D$} \}, \\
  S_3(E, F, D)
  &= \{ G \subset E \textrm{ subbundle} \;|\; \textrm{$G|_D$ and $F|_D$ are linearly independent} \}, \textrm{ and} \\
  S(E, F, D)
  &= S_1(E, F) \cup S_2(E, F, D) \cup S_3(E, F, D).
\end{align*}
    Direct inspection shows that
    \begin{align*}
      S_1(E, F, D) \cap S_2(E, F, D)
      &= \{ G \subset E \textrm{ subbundle} \;|\; \textrm{$G|_V = F|_V$ for some neighborhood $V \subset U$ of $D$} \}, \\
      S_1(E, F, D) \cap S_3(E, F, D)
      &=
        \begin{cases}
          \{ G \subset E \textrm{ subbundle} \} & \textrm{if $D = \emptyset$}, \\
          \{ 0 \} & \textrm{otherwise, and}
        \end{cases} \\
      S_2(E, F, D) \cap S_3(E, F, D)
      &=
        \begin{cases}
          \{ G \subset E \textrm{ subbundle} \} & \textrm{if $F|_D = 0$}, \\
          \emptyset & \textrm{otherwise}.
        \end{cases}
    \end{align*}
    In particular, if $D \cap X' \neq \emptyset$ for every irreducible component $X' \subset X$, then
    \begin{align*}
      S_1(E, F, D) \cap S_2(E, F, D)
      &=
        \begin{cases}
          \{ \overline{F} \} & \textrm{if $F$ extends to $\overline{F}$ defined over $X$}, \\
          \emptyset & \textrm{otherwise, and}
        \end{cases} \\
      S_1(E, F, D) \cap S_3(E, F, D)
      &= \{ 0 \}.
    \end{align*}
\begin{proposition}
  \label{T:modification-subbundles-bijection}
  Let $F \subset E|_U$ be a subbundle and $D$ an effective divisor on $X$ whose support is contained in $U$. Note that we can also treat $F$ as a subbundle of the modification $E[D \to F]|_U$ by sequence \labelcref{E:modification-ses3}. Then there are bijections
  \begin{align*}
    \phi_1 \cn S_1(E, F, D) & \longrightarrow S_1(E[D \to F], F, D), \\
    \phi_2 \cn S_2(E, F, D) & \longrightarrow S_2(E[D \to F], F, D), \\
    \phi_3 \cn S_3(E, F, D) & \longrightarrow S_3(E[D \to F], F, D), \textrm{ and} \\
    \phi \cn S(E, F, D) & \longrightarrow S(E[D \to F], F, D),
  \end{align*}
  such that
  \begin{enumeratea}
  \item
    \label{T:modification-subbundles-bijection-a}
    $\phi|_{S_i(E, F, D)} = \phi_i$ for $i = 1,2,3$,
  \item
    \label{T:modification-subbundles-bijection-b}
    $\phi$ is compatible with pullbacks,
  \item
    \label{T:modification-subbundles-bijection-c}
    given $G_1, G_2 \in S(E, F, D)$, then $G_1 \subset G_2$ in a neighborhood of $D$ if and only if $\phi(G_1) \subset \phi(G_2)$ in a neighborhood of $D$,
  \item
    \label{T:modification-subbundles-bijection-d}
    given $\{ G_i \;|\; i \in I \} \subset S_1(E, F, D)$, then $\{ G_i \}$ are linearly independent along $D$ if and only if $\{ \phi(G_i) \}$ are linearly independent along $D$,
  \item
    \label{T:modification-subbundles-bijection-e}
    given $\{ G_i \;|\; i \in I \} \subset S_3(E, F, D)$, then $\{ F \} \cup \{ G_i \}$ are linearly independent along $D$ if and only if $\{ F \} \cup \{ \phi(G_i) \}$ are linearly independent along $D$, and
  \item
    \label{T:modification-subbundles-bijection-f}
    if $D = \emptyset$, then $E[D \to F] \cong E$ induces an identification of $S(E, F, D)$ and $S(E[D \to F], F, D)$ such that $\phi$ becomes the identity map.
  \end{enumeratea}
\end{proposition}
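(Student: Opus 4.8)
The plan is to reduce the whole statement to an affine‑local computation and then patch, the essential local input being \cref{T:modification-splitting}: on an affine open $V$ containing $\Supp D$ on which $F$ splits off $E$ we have $E|_V = F|_V \oplus Q|_V$ with $Q := E/F$, and then $E[D \to F]|_V = F|_V \oplus Q|_V(-D)$. Away from $\Supp D$ the inclusion $E[D \to F] \hookrightarrow E$ is an isomorphism, so any $\phi_i$ is forced to restrict there to the identity on subbundles; hence only the behaviour of a subbundle in a neighbourhood of $D$ carries information, and \cref{T:modification-affine} (that is, \cref{T:modification-pullback} applied to open immersions) lets us define and test everything affine‑locally.

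For $\phi_1$ and $\phi_2$ I would use the sequence \cref{E:modification-ses3}. If $G \in S_1(E, F, D)$ then near $D$ we have $G \subseteq F$, and since $F$ is a subbundle of $E[D \to F]|_U$ by \cref{E:modification-ses3}, the same sheaf $G$ is a subbundle of $E[D \to F]$; set $\phi_1(G) = G$. If $G \in S_2(E, F, D)$ then near $D$ we have $F \subseteq G$, so $G/F$ is a subbundle of $E/F = Q$; let $\phi_2(G)$ agree with $G$ away from $D$ and, near $D$, be the preimage of $(G/F)(-D)$ under the surjection $E[D \to F] \twoheadrightarrow (E/F)(-D)$ of \cref{E:modification-ses3} — equivalently $\phi_2(G) = G[D \to F]$. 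These manifestly glue, land in the asserted sets, and commute with pullback; the inverse of $\phi_1$ is again "the same sheaf", and the inverse of $\phi_2$ sends $G'$ (with $F \subseteq G'$ near $D$) to the subbundle agreeing with $G'$ off $D$ and equal near $D$ to the preimage of $(G'/F)(D)$ under $E \twoheadrightarrow E/F$.

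The construction of $\phi_3$ is the crux and the step I expect to be the main obstacle. Near $D$ a subbundle $G \in S_3(E, F, D)$ is exactly the datum of the subbundle $\bar G \subseteq Q$ which is the image of $G$, together with the information of how $G$ sits as a complement to $F$ inside $\pi_Q^{-1}(\bar G) \subseteq E$; and $S_3(E[D \to F], F, D)$ is described by the analogous data with $Q$ replaced by $Q(-D)$. The plan is to produce the bijection by sending $\bar G$ to $\bar G(-D)$ under the canonical correspondence between subbundles of $Q$ and of $Q(-D)$ while transporting the "complement data" across the modification, then to check the image lies in $S_3(E[D \to F], F, D)$ and to recover the inverse by running the construction backwards. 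The delicate point is that the transported complement data must be insensitive to the splitting chosen on $V$, so that the local constructions patch into a global $\phi_3$; the natural route is to invoke \cref{T:only-neighborhood}, so that only $F|_D$ rather than an ambient splitting is in play, together with \cref{T:modification-pullback} to pass to a universal local model where independence of the choice can be settled by an explicit computation — or, better, to give an intrinsic description of $\phi_3(G)$ in terms of $G$, $F$, and the total space of the modification.

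With the three maps in hand, property \labelcref{T:modification-subbundles-bijection-a} is the definition of $\phi$: glue the $\phi_i$, noting that by the descriptions of $S_1\cap S_2$, $S_1\cap S_3$, $S_2\cap S_3$ preceding the statement the overlaps are tiny ($\{0\}$, a single extension of $F$, or — when $F|_D\neq 0$ — empty), so consistency is automatic; \labelcref{T:modification-subbundles-bijection-f} is immediate from $E[\emptyset \to F] = E$ in \cref{T:modification-basic-examples}. Property \labelcref{T:modification-subbundles-bijection-b} follows because all three constructions are assembled from the sequences \cref{E:modification-ses2,E:modification-ses3}, which are compatible with pullback by \cref{T:modification-pullback}. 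For \labelcref{T:modification-subbundles-bijection-c,T:modification-subbundles-bijection-d,T:modification-subbundles-bijection-e} I would argue in the local split model: \labelcref{T:modification-subbundles-bijection-d} is immediate since $\phi_1$ does not change the sheaf and all the $G_i|_D$ lie inside $F|_D$, so linear independence is computed in $F|_D$ either way; \labelcref{T:modification-subbundles-bijection-c} and \labelcref{T:modification-subbundles-bijection-e} reduce to a finite check over the possible combinations of which of $S_1, S_2, S_3$ the subbundles in question belong to — and most combinations are constrained (for instance a nonzero $S_1$-subbundle and a nonzero $S_3$-subbundle can never be nested, since their restrictions to $D$ would have to be both contained in and transverse to $F|_D$) — using the explicit forms of $\phi_1, \phi_2, \phi_3$. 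These verifications are routine but numerous, and together with the well‑definedness of $\phi_3$ they are where essentially all the work lies.
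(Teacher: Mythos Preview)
Your constructions of $\phi_1$ and $\phi_2$ match the paper exactly: $\phi_1(G)=G$ via the inclusion $F\subset E[D\to F]$, and $\phi_2(G)=G[D\to F]$. But for $\phi_3$ you have missed the key idea. The intrinsic description you say you would prefer is simply $\phi_3(G)=G(-D)$. To see this is a subsheaf of $E[D\to F]$, compose $G\hookrightarrow E\to (E/F)|_D$; because $G|_D$ and $F|_D$ are linearly independent, this map has image isomorphic to $G|_D$ and kernel exactly $G(-D)$, so $G(-D)$ factors through $E[D\to F]=\ker(E\to (E/F)|_D)$. A single Snake Lemma diagram then identifies the cokernel of $G(-D)\hookrightarrow E[D\to F]$ with $(E/G)[D\to (F+G)/G]$, which is locally free by \cref{T:modification-vector-bundle}; the inverse is $G'\mapsto G'(D)$. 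There is no splitting to choose and nothing to patch, so the well-definedness worries you flag simply evaporate. Your proposed route through ``complement data'' in a local split model would, if carried out, reproduce this same map, but you have not verified independence of the splitting, and that verification would amount to rediscovering the intrinsic formula anyway.

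For part \ref{T:modification-subbundles-bijection-c} your plan of a case-by-case check in the split model can be made to work, but the paper's argument is much shorter and worth knowing: restrict to an irreducible component meeting $D$, identify $E|_\eta$ with $E[D\to F]|_\eta$ at the generic point $\eta$, observe that each $\phi_i$ is the identity there, and use that containment of subbundles is a closed condition. This handles all combinations at once. Similarly, for part \ref{T:modification-subbundles-bijection-e} the paper gives a single global Snake Lemma computation showing that $F\oplus\bigoplus G_i(-D)\hookrightarrow E[D\to F]$ has locally free cokernel $(E/(F+\sum G_i))(-D)$, rather than working locally in a split model.
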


\begin{proof}
  Without loss of generality, assume that $X$ is connected. If not, we can use the morphisms $\phi_i$ and $\phi$ defined for each connected component to assemble their global versions.

  Let us start by considering $\phi_1$. We will bootstrap our way up by first constructing a simpler bijection $\phi_1' \cn S_1(E|_U, D) \rarr S_1(E[D \to F]|_U, D)$. Note that $F$ sits in both $E|_U$ and $E[D \to F]|_U$, so we can send $G \subset F \subset E|_U$ to $G' = G \subset F \subset E[D \to F]|_U$. To verify that this map sends subbundles to subbundles, we observe that given $F \subset E|_U$ is a subbundle, then $G \subset E|_U$ is a subbundle if and only if $G \subset F$ is a subbundle. An analogous statement is true for the inclusions $G' \subset F \subset E[D \to F]|_U$. To define $\phi_1$ in terms of $\phi_1'$, it suffices to note that $\phi_1'$ is compatible with open embeddings (more generally, it is compatible with pullbacks) and $E[D \to F]|_{X \setminus \Supp(D)} \cong E|_{X \setminus \Supp(D)}$. Simply put, the image $G' = \phi_1(G)$ is glued from $\phi_1'(G|_U)$ and $G|_{X \setminus \Supp(D)}$ along $U \setminus \Supp(D)$. It is easy to see that both $\phi_1'$ and $\phi_1$ are bijections.

  The compatibility with pullbacks follows from the fact sequences \labelcref{E:modification-ses,E:modification-ses2,E:modification-ses3} are preserved by pullbacks as long as the hypotheses of \cref{T:modification-pullback} are satisfied.

  The second and third morphisms, while still induced by the inclusion $E[D \to F] \rarr E$, are a little more interesting. For example, the main issue with $\phi_2$ is that given a subbundle $G' \subset E[D \to F]$, its image in $E$ is no longer a subbundle. One way to solve the problem is via saturation, but this makes it hard to understand the resulting subbundle $G \subset E$.

      Consider the second morphism $\phi_2$. Given a subbundle $G \subset E$ which contains $F$, we can show that the inclusion $G \rarr E$ lifts to an inclusion $G[D \to F] \rarr E[D \to F]$. We have constructed the following diagram with exact rows.
      \[\xymatrix{
        0 \ar[r] &
        G[D \to F] \ar[r] \ar[d]_{\iota} &
        G \ar[r] \ar[d] &
        (G/F)|_D \ar[r] \ar[d] &
        0 \\
        0 \ar[r] &
        E[D \to F] \ar[r] &
        E \ar[r] &
        (E/F)|_D \ar[r] &
        0
      }\]
      All vertical maps are injective, and the Snake Lemma produces the short exact sequence
      \[\xymatrix{
        0 \ar[r] &
        \Coker \iota \ar[r] &
        E/G \ar[r] &
        (E/G)|_D \ar[r] &
        0,
      }\]
      which identifies
      \[\Coker \iota \simeq (E/G)(-D).\]
      It follows that the inclusion $G[D \to F] \rarr E[D \to F]$ is a subbundle since we identified its cokernel with $(E/G)(-D)$. The morphism $\phi_2$ can thus be defined by sending $G$ to $G[D \to F]$.
  
  To construct the backward direction of $\phi_2$, we start with $G' \subset E[D \to F]$ which contains $F$. Similarly to \labelcref{E:modification-ses2}, there is a morphism $E \rarr E[D \to F](D)$. We define $G = \phi_2^{-1}(G')$ as the kernel of the composition
  \[\xymatrix{
    E \ar[r] &
    E[D \to F](D) \ar[r] &
    E[D \to F](D)/G'(D),
  }\]
  which is a subbundle of $E$. Following our construction, it is easy to check that $\phi_2$ is a bijection and it is compatible with pullbacks.

  We proceed to construct $\phi_3$. Let us start by constructing the forward direction first. Take the composition
  \[\xymatrix{
    G \ar[r] &
    E \ar[r] &
    (E/F)|_D,
  }\]
  where $G \subset E$ is such that $G|_D$ and $F|_D$ are linearly independent. By first restricting to $D$, we can identify its image $((F + G)/F)|_D$ with $G|_D$, so we obtain a morphism
  \[
  \xymatrix{
    G(-D) = \Ker(G \rarr G|_D) \ar[r] &
    E[D \to F].
  }\]
  Sending $G$ to $G' = G(-D)$ furnishes the forward direction of the bijection. To see that $G(-D) \subset E[D \to F]$ is a subbundle, observe that we have constructed the following diagram with exact rows.
  \[\xymatrix{
    0 \ar[r] &
    G(-D) \ar[r] \ar[d]_{\iota} &
    G \ar[r] \ar[d] &
    (F|_D + G|_D)/F|_D \ar[r] \ar[d] &
    0 \\
    0 \ar[r] &
    E[D \to F] \ar[r] &
    E \ar[r] &
    (E/F)|_D \ar[r] &
    0
  }\]
  Applying the Snake Lemma, we obtain the short exact sequence
  \[\xymatrix{
    0 \ar[r] &
    \Coker \iota \ar[r] &
    E/G \ar[r] &
    (E/(F + G))|_D \ar[r] &
    0,
  }\]
  which identifies
  \[\Coker \iota \simeq (E/G)[D \to (F + G)/G].\]
  In particular, the cokernel of the first vertical map is a vector bundle by \cref{T:modification-vector-bundle}. A similar analysis constructs the backward direction of the second map which sends $G' \subset E[D \to F]$ to $G = G'(D) \subset E$. Again, all diagrams are preserved by appropriate pullbacks (see \cref{T:modification-pullback}).

  To construct $\phi$ it suffices to note that $\phi_i$ agree on all pairwise intersections of their domains. This also ensures part (\ref{T:modification-subbundles-bijection-a}) is true. On a similar note, part (\ref{T:modification-subbundles-bijection-f}) follows immediately from the constructions of $\phi_i$.

  Next, we focus on part (\ref{T:modification-subbundles-bijection-c}). Without loss of generality, we may replace $X$ with an irreducible component which intersects $D$ non-trivially. Since $E$ and $E[D \to F]$ are isomorphic over $X \setminus \Supp(D)$, in particular, they are isomorphic over the generic point $\eta \in X$. What is more interesting is that if we identify $E|_\eta$ and $E[D \to F]|_\eta$, then $\phi_1$, $\phi_2$, $\phi_3$, and $\phi$ become the identity map. Since containment is a closed property, part (\ref{T:modification-subbundles-bijection-c}) follows immediately from the observations we made.

  We are left to demonstrate parts (\ref{T:modification-subbundles-bijection-d}) and (\ref{T:modification-subbundles-bijection-e}) of our claim. Consider a subset $\{ G_i \;|\; i \in I \} \subset S_1(E, F, D)$. Assume that $\{ G_i \}$ are linearly independent in a neighborhood $V \subset U$ of $D$. We can replace $X$ with $V$ so $\{ G_i \}$ are linearly independent everywhere. Note that $F$ sits in both $E$ and $E[D \to F]$ as a subbundle. Furthermore, all subbundles of $F$ remain unchanged by $\phi$. Since all $G_i$ are contained in $F$, part (\ref{T:modification-subbundles-bijection-d}) follows immediately.

  Finally, consider a subset $\{ G_i \;|\; i \in I \} \subset S_3(E, F, D)$ such that $\{ F \} \cup \{ G_i \}$ are linearly independent in a neighborhood $V$ of $D$. Recall that $\phi_3(G_i) = G_i(-D) \subset E[D \to F]$. After replacing $X$ with $V$, we have a subbundle $F \oplus G \rarr E$ where $G = \bigoplus_i G_i$. This inclusion lifts to a morphism $G(-D) \rarr E[D \to F]$ which fits in the following diagram with exact rows.
  \[\xymatrix{
    0 \ar[r] & F \oplus G(-D) \ar[r] \ar[d] & F \oplus G \ar[r] \ar[d] & G|_D \ar[r] \ar[d] & 0 \\
    0 \ar[r] & E[F \to D] \ar[r] & E \ar[r] & (E/F)|_D \ar[r] & 0
  }\]
  All vertical morphisms are injective, so the Snake Lemma identifies the cokernel of $F \oplus G(-D) \rarr E[D \to F]$ with
  \[
  (E/(F + G))(-D)
  \]
  which is a vector bundle. We have thus shown that $\{ F \} \cup \{ \phi(G_i) \}$ are linearly independent in $E[D \to F]$. The backward implication has an analogous proof, so we will omit that.
\end{proof}

Our discussion so far has only handled single modifications. This is insufficient for our purposes, and we would like to be handle more than one modification at a time. If the underlying variety $X$ is a curve, there is a recursive definition which utilizes the curve-to-projective extension theorem \cite[I.6.8]{Hartshorne} and works in full generality (see \cref{S:modifications-curves}). In higher dimensions, one needs to be much more careful. The following notions formalize multi-modifications. Later, we will relate these to the recursive definition for curves.

\begin{definition}
  \label{T:tree-like}
  Let $\{ F_i \subset E \;|\; i \in I \}$ be a collection of subbundles. We will say that $\{ F_i \}$ is \emph{tree-like} at a point $x \in X$ if for all $I' \subset I$ either
  \begin{enumeratea}
  \item
    the set of subspaces $\{ F_i|_x \;|\; i \in I' \}$ is linearly independent in $E_x$, or
  \item
    there is a distinct pair $i, j \in I'$ and an open $U \subset X$ containing $x \in X$ such that $F_i|_U \subset F_j|_U$.
  \end{enumeratea}
  We will use $\TL_X(\{ F_i \})$ to denote the set of tree-like points in $X$. When there is no ambiguity, we may write $\TL(\{ F_i \}) = \TL_X(\{ F_i \})$. We will say that $\{ F_i \}$ is tree-like along $Y \subset X$ if $Y \subset \TL(\{ F_i \})$.
\end{definition}

\begin{remark}
  \label{T:tree-like-local-open}
  Note that being tree-like is a local property. Let $U \subset X$ be an open, and $x \in U$ is a point in it. Being local means that $\{ F_i \}$ is tree-like at $x$ if and only if $\{ F_i|_U \}$ is tree-like at $x$. More strongly, being tree-like is also preserved by pullbacks.

  On a similar note, since linear independence is an open property, then being tree-like is also open.
\end{remark}

\begin{remark}
  \label{T:tree-like-explanation}
  The definition of being tree-like is inspired by the following observation. Let $E$ be a vector bundle over a variety $X$, and $\{ F_i \subset E \}$ a collection of subbundles. In addition, we consider the inclusion graph of $\{ F_i \} \cup \{ E \}$. The collection $\{ F_i \}$ is tree-like over $X$ if and only if the following two conditions are satisfied:
  \begin{enumeratea}
  \item
    the inclusion graph is a tree, and
  \item
    the children of each node are linearly independent.
  \end{enumeratea}
\end{remark}

The definition of tree-like was crafted so we can transfer multiple subbundles and entire modification data through modifications, similarly to \cref{T:modification-subbundles-bijection}. To simplify the statement of the following result, set
\[
S^{\textrm{set}}(E, F, D) =
\{ \{ F_i \subset E \textrm{ subbundle} \} \;|\; \textrm{$\{ F \} \cup \{ F_i \}$ is tree-like along $D$} \}.
\]

\begin{proposition}
  \label{T:modification-tree-like-bijection}
  Let $F \subset E|_U$ be a subbundle and $D$ an effective divisor on $X$ whose support is contained in $U$. Then there is a bijection
  \begin{align*}
    \phi^{\textrm{set}} \cn S^{\textrm{set}}(E, F, D) & \longrightarrow S^{\textrm{set}}(E[D \to F], F, D) \\
    \{ F_i \} & \longmapsto \{ \phi(F_i) \}
  \end{align*}
  such that
  \begin{enumeratea}
  \item
    \label{T:modification-tree-like-bijection-a}
    $\phi^{\textrm{set}}$ is compatible with pullbacks, and
  \item
    \label{T:modification-tree-like-bijection-b}
    if $D = \emptyset$ and we identify $S^{\textrm{set}}(E, F, D)$ and $S^{\textrm{set}}(E[D \to F], F, D)$, then $\phi^{\textrm{set}}$ becomes the identity map.
  \end{enumeratea}
\end{proposition}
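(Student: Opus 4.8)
The plan is to build $\phi^{\textrm{set}}$ componentwise from the single-subbundle bijection $\phi$ of \cref{T:modification-subbundles-bijection}, and then to verify that it preserves the tree-like condition in both directions. First I would reduce to the case where $X$ is connected and irreducible with $D$ meeting $X$ nontrivially, since otherwise $D = \emptyset$ on that component and part (\labelcref{T:modification-tree-like-bijection-b}) handles it (here $E[D\to F]\cong E$ canonically and each $\phi(F_i)=F_i$); the general case is assembled by gluing over connected components and over $X\setminus\Supp(D)$ exactly as in the proof of \cref{T:modification-subbundles-bijection}. The key point is that if $\{F\}\cup\{F_i\}$ is tree-like along $D$, then each $F_i$ lies in $S(E,F,D)$: running through the cases of \cref{T:tree-like} with $I'=\{F,F_i\}$, either $F_i|_D$ and $F|_D$ are linearly independent (so $F_i\in S_3$), or one of $F_i|_U\subset F|_U$, $F|_U\subset F_i|_U$ holds near $D$ (so $F_i\in S_1$ or $S_2$). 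Hence $\phi(F_i)$ is defined, and we set $\phi^{\textrm{set}}(\{F_i\})=\{\phi(F_i)\}$.

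Next I would show this map lands in $S^{\textrm{set}}(E[D\to F],F,D)$, i.e.\ that $\{F\}\cup\{\phi(F_i)\}$ is tree-like along $D$. Fix $x$ near $D$ and a subset $I'\subseteq I$; I must verify condition (a) or (b) of \cref{T:tree-like} for $\{F\}\cup\{F_i : i\in I'\}$ after applying $\phi$. The inclusion relations among the $F_i$ and between $F_i$ and $F$ in a neighborhood of $D$ are preserved by $\phi$ by part (\labelcref{T:modification-subbundles-bijection-c}) of \cref{T:modification-subbundles-bijection}, so case (b) transfers directly. For case (a), I split $I'$ according to the tree structure: because $\{F\}\cup\{F_i\}$ is tree-like, if $\{F\}\cup\{F_i:i\in I'\}$ has linearly independent fibers at $x$, then after possibly discarding the index corresponding to $F$ itself, either all $F_i$ ($i\in I'$) are disjoint from $F$ near $D$ — in which case each $F_i\in S_3(E,F,D)$ and part (\labelcref{T:modification-subbundles-bijection-e}) gives that $\{F\}\cup\{\phi(F_i)\}$ is linearly independent along $D$ — or some $F_i$ sits inside $F$ near $D$, forcing (by linear independence) $I'$ to contain no other index with that property and reducing us, via parts (\labelcref{T:modification-subbundles-bijection-d}) and (\labelcref{T:modification-subbundles-bijection-e}), to combining the two kinds. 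The bijectivity of $\phi^{\textrm{set}}$ follows because $\phi$ is a bijection on each $S_i$ and the inverse construction (run $\phi^{-1}$ on each member) preserves tree-likeness by the same argument run backwards; compatibility with pullbacks, part (\labelcref{T:modification-tree-like-bijection-a}), is inherited from part (\labelcref{T:modification-subbundles-bijection-b}) together with the fact (\cref{T:tree-like-local-open}) that being tree-like is preserved by pullbacks.

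The main obstacle I anticipate is the bookkeeping in the mixed case of the tree-like check: a subset $I'$ can contain simultaneously indices $i$ with $F_i\subseteq F$ near $D$ and indices $j$ with $F_j$ linearly independent from $F$, and one must argue that linear independence of the combined fibers at $x$ is equivalent before and after modification. The clean way to handle this is to note that $\phi_3(F_j)=F_j(-D)$ only shrinks the fiber by the $D$-twist while $\phi$ is the identity on subbundles of $F$, and then to invoke part (\labelcref{T:modification-subbundles-bijection-e}) with the role of "$F$" played by the saturation of $\sum_{i\in I',\,F_i\subseteq F} F_i$ — which is itself a subbundle of $F$ near $D$ by linear independence — so that the independent part and the inside-$F$ part interact only through that single sub-object. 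Everything else is a routine assembly from the already-established single-modification statement.
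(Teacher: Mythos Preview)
Your overall strategy matches the paper's: define $\phi^{\textrm{set}}$ componentwise via $\phi$, show each $F_i$ lies in $S(E,F,D)$ by applying the tree-like hypothesis to the pair $\{F, F_i\}$, verify that tree-likeness is preserved, and build the inverse from $\phi^{-1}$. The paper dispatches the tree-likeness preservation in a single sentence (``This follows immediately from the fact that $\phi$ respects inclusions and linear independence''), so you are right to want more detail, but your case analysis has several concrete gaps. First, tree-likeness of $\{F\}\cup\{\phi(F_i)\}$ is a condition on \emph{all} subsets of $\{0\}\sqcup I$, yet you only check subsets of the form $\{F\}\cup\{F_i:i\in I'\}$; subsets not containing $F$ need separate treatment. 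Second, you never consider the case $F_i\in S_2(E,F,D)$, i.e.\ $F\subset F_i$ near $D$, even though $\phi_2(F_i)=F_i[D\to F]$ behaves differently from $\phi_1$ and $\phi_3$. Third, the claim that linear independence forces at most one $F_i$ to sit inside $F$ is false---several subbundles of $F$ can be pairwise independent, which is exactly what part~(d) handles. Fourth, your proposed repair of invoking part~(e) with $F$ replaced by a sum of subbundles of $F$ is not a valid application: part~(e) refers specifically to the modification subbundle $F$, not an arbitrary one.

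A cleaner way to close the gap is to use the tree description of \cref{T:tree-like-explanation}. Since $\phi$ preserves inclusions near $D$ by part~(c), the inclusion tree of $\{F\}\cup\{F_i\}$ is unchanged, so it suffices to check that the children of each node remain linearly independent after applying $\phi$. For a node $G$ with children $G_1,\dots,G_m$: if $G\subset F$ then all $G_\alpha\in S_1$ and part~(d) applies; if $G$ is independent from $F$ then all $G_\alpha\in S_3$ and part~(e), applied to $\{F\}\cup\{G_\alpha\}$, gives the result; if $F\subsetneq G$ then exactly one child, say $G_1$, contains $F$ while the rest lie in $S_3$, and passing to an affine open with a splitting $G_1=F\oplus F'$ and $G=G_1\oplus G_2\oplus\cdots\oplus G_m\oplus G'$ makes the independence of the $\phi(G_\alpha)$ inside $E[D\to F]$ manifest from the direct-sum decomposition, as in the proof of \cref{T:communting-modifications}.
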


\begin{proof}
  As long as we show that $\phi^{\textrm{set}}$ is a well-defined bijection, then parts (\ref{T:modification-tree-like-bijection-a}) and (\ref{T:modification-tree-like-bijection-b}) follow from \cref{T:modification-subbundles-bijection}.

  Consider a collection of subbundles $\{ F_i \subset E \;|\; i \in I \}$ such that $\{ F \} \cup \{ F_i \}$ is tree-like along $D$. For convenience, set $F_0 = F$ and $\overline{I} = \{ 0 \} \sqcup I$. To verify that $\phi^{\textrm{set}}$ is well-defined, we first need to show that all $F_i$ are in $S(E, F, D)$, the domain of $\phi$. Fix an index $i$, and take $I' = \{ 0, i \}$. By the definition of being tree-like, we know that one of the following is true:
  \begin{enumerate}
  \item
    $F_i \subset F$ in a neighborhood of $D$,
  \item
    $F \subset F_i$ in a neighborhood of $D$, or
  \item
    $F$ and $F_i$ are linearly independent along $D$.
  \end{enumerate}
  These cases correspond to $S_1(E, F, D)$, $S_2(E, F, D)$, and $S_3(E, F, D)$ respectively, so $F_i \in S(E, F, D)$. Next, we need to show that $\{ F \} \cup \{ \phi(F_i) \}$ is tree-like along $D$. This follows immediately from the fact that $\phi$ respects inclusions and linear independence.

  We have demonstrated that $\phi^{\textrm{set}}$ is a well-defined map. To conclude our proof, we need to demonstrate it is a bijection. It suffices to note we can construct an inverse $(\phi^{\textrm{set}})^{-1}(\{ F_i' \}) = \{ \phi^{-1}(F_i') \}$.
\end{proof}

After establishing transfer for sets of subbundles, the next step in our bootstrapping program is to define modification data and show how to transfer them.

\begin{definition}
  \label{T:modification-datum}
  A \emph{modification datum for $E$} is an ordered collection of triples
  \[
  M = \{ (D_i, U_i, F_i) \;|\; i \in I \}
  \]
  such that for each $i$:
  \begin{enumeratea}
  \item
    $D_i$ is an effective Cartier divisor on $X$,
  \item
    $U_i \subset X$ is an open containing the support of $D_i$, and
  \item
    $F_i \subset E|_{U_i}$ is a subbundle
  \end{enumeratea}
  In addition, we will call a datum $M$ \emph{tree-like} if for all subsets $I' \subset I$, there is an inclusion
  \[
  \bigcap_{i \in I'} \Supp(D_i) \subset \TL_{U_{I'}}(\{ F_i|_{U_{I'}} \;|\; i \in I' \}),
  \]
  where $U_{I'} = \bigcap_{i \in I'} U_i$. Put differently, for all $x \in X$ the collection of subbundles $\{ F_i \;|\; x \in D_i \}$ is tree-like at $x$.
\end{definition}

To simplify the transfer statement for modification data, set
\[
S^{\textrm{md}}(E, F, D) = \{ M = \{ (D_i, U_i, F_i) \} \;|\; \{ (D, U, F) \} \cup M \textrm{ is a tree-like modification datum} \}.
\]

\begin{proposition}
  \label{T:modification-data-bijection}
  Let $F \subset E|_U$ be a subbundle and $D$ an effective divisor on $X$ whose support is contained in $U$. Then there is a bijection
  \begin{align*}
    \phi^{\textrm{md}} \cn S^{\textrm{md}}(E, F, D) & \longrightarrow S^{\textrm{md}}(E[D \to F], F, D) \\
    \{ (D_i, U_i, F_i) \} & \longmapsto \{ (D_i, U_i, \phi(F_i)) \}
  \end{align*}
  such that
  \begin{enumeratea}
  \item
    \label{T:modification-data-bijection-a}
    $\phi^{\textrm{md}}$ is compatible with pullbacks, and
  \item
    \label{T:modification-data-bijection-b}
    if $D = \emptyset$ and we identify $S^{\textrm{md}}(E, F, D)$ and $S^{\textrm{md}}(E[D \to F], F, D)$, then $\phi^{\textrm{md}}$ becomes the identity map.
  \end{enumeratea}
\end{proposition}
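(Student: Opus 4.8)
The plan is to bootstrap once more, exactly as \cref{T:modification-tree-like-bijection} was bootstrapped from \cref{T:modification-subbundles-bijection}: the transfer map $\phi$ on individual subbundles does all the work, and the only new bookkeeping is that each $F_i$ now lives merely over $U_i$ and that tree-likeness is imposed only along the intersections $\bigcap_{i \in I'} \Supp(D_i)$. As in the earlier proofs we may assume $X$ is connected, since a modification datum on $X$ is the same as a compatible family of modification data on its connected components.

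Everything reduces to checking that $\phi^{\textrm{md}}$ is well defined; the rest is formal. Fix a tree-like modification datum $\{(D,U,F)\}\cup M$ with $M = \{(D_i,U_i,F_i)\}$, and abbreviate $F_0 = F$, $D_0 = D$, $U_0 = U$. First we must produce, for each $i$, a subbundle $\phi(F_i)\subset E[D\to F]|_{U_i}$. Over $U_i \setminus \Supp(D)$ we simply transport $F_i$ through the canonical isomorphism $E[D\to F]\cong E$. Near a point of $\Supp(D)\cap\Supp(D_i)$, the tree-like hypothesis applied to the subset $\{0,i\}$ says $\{F,F_i\}$ is tree-like, and this persists on a neighborhood by openness of tree-likeness (\cref{T:tree-like-local-open}); on that neighborhood $U_i$ is covered by the three opens on which, respectively, $F_i\subseteq F$, $F\subseteq F_i$, and $F_i$ is linearly independent from $F$, so $F_i$ lies in $S_1$, in $S_2$, and in $S_3$ for the triple $(E,F,D)$ restricted to that neighborhood, and $\phi_1,\phi_2,\phi_3$ of \cref{T:modification-subbundles-bijection} produce subbundles of $E[D\to F]$ which agree on overlaps (because the $\phi_j$ do) and hence glue. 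On the remaining part of $\Supp(D)\cap U_i$ — which lies outside $\Supp(D_i)$, so the tree-like hypothesis gives no control — we may, after shrinking $U_i$ (harmless, since by \cref{T:only-neighborhood} the iterated modification along $D_i$ only sees $F_i|_{D_i}$), discard it: concretely, replace $U_i$ by the union of $U_i\setminus\Supp(D)$ with a tree-like neighborhood of $\Supp(D)\cap\Supp(D_i)$, which still contains $\Supp(D_i)$. This defines $\phi(F_i)$.

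It remains to see that $\{(D,U,F)\}\cup\{(D_i,U_i,\phi(F_i))\}$ is again tree-like and that $\phi^{\textrm{md}}$ is bijective. Tree-likeness of a datum is, over each intersection $\bigcap_{i\in I''}\Supp(D_i)$, a condition on inclusions and on linear independence among the $F_i$ for $i\in I''$ (together with $F$ when $0\in I''$); since $\phi$ preserves inclusions and linear independence along $D$ — this is parts (c)--(e) of \cref{T:modification-subbundles-bijection} — and tree-likeness is local, the transferred datum is tree-like if and only if the original one is, so $\phi^{\textrm{md}}$ indeed lands in $S^{\textrm{md}}(E[D\to F],F,D)$. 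Running the identical argument with $\phi^{-1}$ in place of $\phi$ produces the inverse map $\{(D_i,U_i,F_i')\}\mapsto\{(D_i,U_i,\phi^{-1}(F_i'))\}$, and the two compose to the identity because $\phi$ and $\phi^{-1}$ do. Finally, part (\ref{T:modification-data-bijection-a}) follows from the pullback-compatibility of $\phi$ and of tree-likeness (\cref{T:modification-pullback}, \cref{T:tree-like-local-open}), and part (\ref{T:modification-data-bijection-b}) is immediate from $E[\emptyset\to F]=E$ together with $\phi=\mathrm{id}$ (\cref{T:modification-basic-examples}).

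I expect the only real friction to be the construction of $\phi(F_i)$ over all of $U_i$: the tree-like hypothesis only constrains $F_i$ relative to $F$ along $\Supp(D)\cap\Supp(D_i)$, so one must either shrink $U_i$ as above or argue that the value of $\phi(F_i)$ on $\Supp(D)\cap U_i$ away from $\Supp(D_i)$ is immaterial, and then verify that the subbundle obtained by gluing $\phi_1,\phi_2,\phi_3$ with the trivial transfer off $\Supp(D)$ is independent of all choices. Once this local gluing is settled, preservation of tree-likeness and bijectivity are direct transcriptions of the proofs of \cref{T:modification-subbundles-bijection} and \cref{T:modification-tree-like-bijection}.
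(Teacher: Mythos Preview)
Your proof is correct and follows essentially the same bootstrapping strategy as the paper: reduce everything to well-definedness, then invoke the earlier bijections (\cref{T:modification-subbundles-bijection} and \cref{T:modification-tree-like-bijection}) to transfer tree-likeness and construct the inverse. The paper's proof is terser---it simply applies \cref{T:modification-tree-like-bijection} to the restrictions $\{F_i|_{V_{I'}}\}$ over the tree-like locus $V_{I'}$ for each subset $I'$---and in particular does not explicitly address the point you flag as ``friction'': that $F_i$ lives only over $U_i$ and may fail to lie in $S(E,F,D)$ away from $\Supp(D)\cap\Supp(D_i)$. Your resolution (shrink $U_i$, invoking \cref{T:only-neighborhood}) is correct and makes explicit what the paper leaves implicit, but it is the same argument, not a different one.
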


\begin{proof}
  Continuing our build-up, we will repeatedly refer to \cref{T:modification-tree-like-bijection} in this proof. First, parts (\ref{T:modification-data-bijection-a}) and (\ref{T:modification-data-bijection-b}) follow immediately once we establish that $\phi^{\textrm{md}}$ is a well-defined bijection.

  Fix an element $M = \{ (D_i, U_i, F_i) \;|\; i \in I \} \in S^{\textrm{md}}(E, F, D)$. As before, we set
  \[
  F_0 = F, \qquad
  U_0 = U, \qquad
  D_0 = D, \qquad
  \overline{I} = \{ 0 \} \cup I, \qquad
  \overline{M} = \{ (D, U, F) \} \cup M.
  \]
  Given a subset $I' \subset \overline{I}$, we know that the intersection $\bigcap_{i \in I'} D_i$ lies in the set of tree-like points
  \[
  V_{I'} = \TL_{U_{I'}}(\{ F_i \;|\; i \in I' \}).
  \]
  Applying \cref{T:modification-tree-like-bijection} to $\{ F_i|_{V_I'} \;|\; i \in I' \}$, we conclude that
  \[
  V_{I'} = \TL_{U_{I'}}(\{ \phi(F_i) \;|\; i \in I' \}).
  \]
  We have demonstrated that $\{ (D, U, F) \} \cup \{ (D_i, U_i, \phi(F_i)) \}$ is a tree-like modification datum, so $\phi^{\textrm{md}}(M) \in S^{\textrm{md}}(E[D \to F], F, D)$ and $\phi^{\textrm{md}}$ is a well-defined map. To see that it is a bijection, it suffices to note we can construct an inverse using $(\phi^{\textrm{set}})^{-1}$.
\end{proof}

We are now ready to provide a general definition of vector bundle modifications. The main idea is to recursively use the transfer of modification data (\cref{T:modification-data-bijection}).

\begin{definition}
  \label{T:modification-generic}
  Let $X$ be a variety, $E$ a vector bundle over $X$, and $M$ a tree-like modification datum for $E$. If $M$ is empty, then we define $E[\emptyset] = E$. On the other hand, if $M = \{(D, U, F)\} \cup M'$, then
  \[
  E[M] = E[D \to F][\phi^{\textrm{md}}(M')],
  \]
  where $\phi^{\textrm{md}} \cn S^{\textrm{md}}(E, F, D) \rarr S^{\textrm{md}}(E[D \to F], F, D)$ is the transfer map described in \cref{T:modification-data-bijection}. When
  \[
  M = \{ (D_1, U_1, F_1), \dots, (D_m, U_m, F_m) \},
  \]
  we will allow ourselves to write
  \[
  E[M] = E[D_1 \to F_1] \cdots [D_m \to F_m].
  \]
\end{definition}

After establishing the language of multi-modifications, we are ready to describe some of its basic properties. First, we note that modifications respect pullbacks. This is a direct consequence of \cref{T:modification-pullback}.

\begin{corollary}
  \label{T:multi-modification-pullback}
  Let $f \cn Y \rarr X$ be a morphism of varieties and $E$ a vector bundle on $X$. If $M = \{ (D_i, U_i, F_i) \}$ is a tree-like modification datum for $E$ such that $\bigcup_i \Supp(D_i)$ does not contain any component of the image of $f$, then the pullback datum
  \[
  f^\ast M = \{ (f^\ast D_i, f^{-1}(U_i), f^\ast F_i) \}
  \]
  is tree-like, and there is a natural isomorphism
  \[
  f^\ast E[M] \cong (f^\ast E)[f^\ast M].
  \]
\end{corollary}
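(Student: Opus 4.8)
The plan is to prove \cref{T:multi-modification-pullback} by induction on the size $m = |I|$ of the modification datum, reducing at each step to the single-modification pullback statement \cref{T:modification-pullback} and the compatibility of the transfer map $\phi^{\textrm{md}}$ with pullbacks (\cref{T:modification-data-bijection}\ref{T:modification-data-bijection-a}). The base case $m = 0$ is the tautology $f^\ast E[\emptyset] = f^\ast E = (f^\ast E)[\emptyset]$, using the convention in \cref{T:modification-generic}. Before running the induction, I would dispose of the first assertion --- that $f^\ast M$ is again tree-like. This is essentially immediate: being tree-like is a local property preserved by pullbacks (\cref{T:tree-like-local-open}), and the hypothesis that $\bigcup_i \Supp(D_i)$ contains no component of the image of $f$ guarantees that each $f^\ast D_i$ is defined and that $\Supp(f^\ast D_i) = f^{-1}(\Supp(D_i))$ (more precisely is contained in it), so $\bigcap_{i \in I'} \Supp(f^\ast D_i) \subseteq f^{-1}(\bigcap_{i \in I'} \Supp(D_i)) \subseteq f^{-1}(\TL_{U_{I'}}(\{F_i\})) \subseteq \TL_{f^{-1}(U_{I'})}(\{f^\ast F_i\})$, the last inclusion again by \cref{T:tree-like-local-open}.

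For the inductive step, write $M = \{(D, U, F)\} \cup M'$ with $M'$ of size $m - 1$. By \cref{T:modification-generic}, $E[M] = E[D \to F][\phi^{\textrm{md}}(M')]$, where $\phi^{\textrm{md}} \cn S^{\textrm{md}}(E, F, D) \to S^{\textrm{md}}(E[D \to F], F, D)$. Applying $f^\ast$ and using \cref{T:modification-pullback} for the outermost modification, I get a natural isomorphism $f^\ast(E[D \to F]) \cong (f^\ast E)[f^\ast D \to f^\ast F]$. Under this isomorphism I want to identify $f^\ast$ of the datum $\phi^{\textrm{md}}(M')$ on $E[D \to F]$ with $\phi^{\textrm{md}}(f^\ast M')$ on $(f^\ast E)[f^\ast D \to f^\ast F]$ --- that is, to check that the square relating $f^\ast \circ \phi^{\textrm{md}}$ and $\phi^{\textrm{md}} \circ f^\ast$ commutes. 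This is exactly \cref{T:modification-data-bijection}\ref{T:modification-data-bijection-a}. Granting this, the inductive hypothesis applies to the datum $\phi^{\textrm{md}}(M')$ on the bundle $E[D \to F]$ (it has size $m - 1$, and its supports are among the $\Supp(D_i)$ for $i \neq 0$, hence contain no component of $\operatorname{im} f$), giving
\[
f^\ast\bigl(E[D \to F][\phi^{\textrm{md}}(M')]\bigr) \cong \bigl(f^\ast(E[D \to F])\bigr)\bigl[f^\ast \phi^{\textrm{md}}(M')\bigr] \cong \bigl((f^\ast E)[f^\ast D \to f^\ast F]\bigr)\bigl[\phi^{\textrm{md}}(f^\ast M')\bigr] = (f^\ast E)[f^\ast M],
\]
where the last equality is the definition of $(f^\ast E)[f^\ast M]$ via \cref{T:modification-generic}, since $f^\ast M = \{(f^\ast D, f^{-1}(U), f^\ast F)\} \cup f^\ast M'$.

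The main obstacle is not any single hard estimate but bookkeeping: one must be careful that the transfer maps $\phi^{\textrm{md}}$ appearing on the two sides of the induction are genuinely compatible under the fixed isomorphism $f^\ast(E[D\to F]) \cong (f^\ast E)[f^\ast D \to f^\ast F]$, and that "natural isomorphism" is used coherently so that the chain of isomorphisms above does not secretly depend on choices. Both points are handled by invoking the naturality/pullback-compatibility already built into \cref{T:modification-pullback} and \cref{T:modification-data-bijection}, so in the end the proof is a short formal induction; the real content was front-loaded into those earlier propositions. A remark worth including is that the hypothesis on $f$ is used twice --- once to define $f^\ast M$ at all, and once (via the tree-like check above) to know the pulled-back datum is still tree-like so that $(f^\ast E)[f^\ast M]$ makes sense.
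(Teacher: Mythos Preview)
Your argument is correct and is exactly the approach the paper intends: the paper states this corollary as ``a direct consequence of \cref{T:modification-pullback}'' without further elaboration, and your induction on the size of $M$, invoking \cref{T:modification-pullback} for the single-step pullback and \cref{T:modification-data-bijection}\ref{T:modification-data-bijection-a} for compatibility of the transfer maps, simply spells out that inductive unwinding in full.
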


Next, note that we defined a modification datum as an ordered collection of triples (see \cref{T:modification-datum}). While the order plays a crucial point in our formulation, it turns out to be irrelevant for the final result $E[M]$ as long as $M$ is a tree-like modification datum.

\begin{proposition}[Commuting modifications]
  \label{T:communting-modifications}
  Let $E$ be a vector bundle over a variety $X$, and $M$ a tree-like modification datum. If $M'$ is a datum obtained by reordering $M$, then there is a natural isomorphism $E[M] \cong E[M']$ compatible with pullbacks.
\end{proposition}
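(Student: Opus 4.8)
The plan is to reduce the claim about reordering an arbitrary tree-like modification datum to the single case of swapping two adjacent triples, and then to analyze that swap fiberwise using the trichotomy encoded by tree-likeness. First I would observe that any permutation of $M = \{(D_1, U_1, F_1), \ldots, (D_m, U_m, F_m)\}$ is a composite of adjacent transpositions, so by induction it suffices to produce a natural isomorphism
\[
E[D_1 \to F_1] \cdots [D_i \to F_i][D_{i+1} \to F_{i+1}] \cdots [D_m \to F_m] \cong E[D_1 \to F_1] \cdots [D_{i+1} \to F_{i+1}][D_i \to F_i] \cdots [D_m \to F_m]
\]
compatible with pullbacks, where in each multi-modification the later subbundles are understood as transported via the appropriate $\phi^{\textrm{md}}$ maps. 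Applying \cref{T:modification-data-bijection} repeatedly, this reduces to the following local assertion: if $F, G \subset E|_U$ are subbundles and $D, D'$ are effective divisors whose supports lie in $U$, and $\{F, G\}$ is tree-like along $D \cap D'$, then $E[D \to F][D' \to \phi(G)] \cong E[D' \to G][D \to \phi(G)]$ naturally. (Here I am using that the two modifications at triples other than $i, i+1$ are unaffected, by commuting them past via \cref{T:multi-modification-pullback} and the pullback-compatibility in \cref{T:modification-data-bijection}.)

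Next I would prove this two-modification commutation. Both sides agree with $E$ over $X \setminus (\Supp D \cup \Supp D')$, so by \cref{T:modification-affine} (the gluing remark) and \cref{T:modification-pullback} it is enough to work over an affine open, and in fact enough to work in a neighborhood of a point $x$; the tree-like hypothesis is what lets us do this. If $x \notin \Supp D$ or $x \notin \Supp D'$, one of the modifications is trivial near $x$ and there is nothing to check. If $x \in \Supp D \cap \Supp D'$, the tree-like condition at $x$ gives three cases: (i) $F|_x$ and $G|_x$ are linearly independent, so near $x$ we have $F \oplus G \subset E$ a subbundle, and one computes directly that both iterated modifications equal the kernel of $E \to (E/F)|_D \oplus (E/G)|_{D'}$ (using \cref{T:modification-subbundles-bijection} to see that $\phi(G)$ is still linearly independent from $F$ in $E[D \to F]$, part (\ref{T:modification-subbundles-bijection-e})); (ii) $F \subset G$ near $x$; and (iii) $G \subset F$ near $x$. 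In cases (ii) and (iii), using \cref{T:modification-subbundles-bijection}\,(\ref{T:modification-subbundles-bijection-c}) the containment is preserved under $\phi$, and one checks the commutation by choosing a local splitting $E|_U = F \oplus (G/F) \oplus (E/G)$ (possible on an affine, by \cref{T:modification-splitting}), with respect to which every modification is diagonal — $E[D \to F]$ twists the last two summands by $-D$ et cetera — so the two iterated modifications are manifestly the same subsheaf of $E$. Finally, naturality and pullback-compatibility come from the fact that every isomorphism constructed is either the identity on the common open set or assembled from such identities, and \cref{T:modification-pullback} together with \cref{T:modification-data-bijection}\,(\ref{T:modification-data-bijection-a}) transport the whole picture.

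The main obstacle I anticipate is bookkeeping rather than geometry: making precise the statement that ``$M'$ is obtained by reordering $M$'' when the subbundles in $M$ are only defined after applying a cascade of transfer maps $\phi^{\textrm{md}}$, so that the two sides of the desired isomorphism are subsheaves of genuinely different bundles $E[D_1 \to F_1]\cdots$ and $E[D_{i+1} \to F_{i+1}]\cdots$. The clean way around this is to never write down the transferred subbundles explicitly, but instead to check both multi-modifications define the same subsheaf $E[M] \subseteq E$ (via the inclusions coming from iterating \cref{E:modification-ses}), since each elementary modification is a subsheaf of the previous bundle and ultimately of $E$; commutation then says the resulting subsheaf of $E$ is independent of the order, which is exactly what the local computation above shows. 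A secondary subtlety is that tree-likeness must be checked to be symmetric and preserved under the reordering, but this is immediate from \cref{T:modification-datum} since the defining condition quantifies over all subsets $I' \subset I$ and is insensitive to order.
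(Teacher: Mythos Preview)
Your proposal is correct and follows essentially the same approach as the paper: reduce to a single transposition, observe the two iterated modifications agree off $\Supp(D)\cap\Supp(D')$, cover a neighborhood of that intersection by affines on which the tree-like trichotomy (containment one way, the other way, or linear independence) holds, and in each case verify commutation by choosing a local splitting and computing directly. The paper handles the bookkeeping about transferred subbundles by passing to generic points, whereas you phrase it as both sides being the same subsheaf of $E$; these are the same idea, and your kernel description in the linearly-independent case is equivalent to the paper's splitting computation (note a small typo: the second $\phi(G)$ in your displayed two-modification isomorphism should be $\phi(F)$ with respect to the other transfer).
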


\begin{proof}
  Since any symmetric group is generated by transpositions, it suffices to consider the case
  \[
  M = \{ (D_1, U_1, F_1), (D_2, U_2, F_2) \}, \qquad
  M' = \{ (D_2, U_2, F_2), (D_1, U_1, F_1) \}.
  \]
  We also need to know that $\phi_{F_1, D_1}^{\textrm{md}} \circ \phi_{F_2, D_2}^{\textrm{md}} = \phi_{F_2, D_2}^{\textrm{md}} \circ \phi_{F_1, D_1}^{\textrm{md}}$ for the subset of the domain where this composition makes sense. If we assume there is an isomorphism $E[M] \cong E[M']$, this statement is automatically true if we pass to any the generic point. But subbundles which agree on all generic points must be the same, so this issue is resolved.
  
  We proceed by making several reductions. First, there is a natural isomorphism $E[M] \cong E[M']$ over $X \setminus (\Supp(D_1) \cap \Supp(D_2))$, so it suffices to focus on a neighborhood of $\Supp(D_1) \cap \Supp(D_2)$. Next, we can cover this locus by affine opens $U$ which fall in one of the following three categories: (1) $F_1|_U \subset F_2|_U$, (2) $F_2|_U \subset F_1|_U$, or (3) $F_1$ and $F_2$ are linearly independent over $U$. Since cases (1) and (2) are analogous, so we will demonstrate (1) and (3). For simplicity, we can also replace $X$ with $U$.

  Assume that $F_1 \subset F_2$. Since we are working over an affine space, there are splittings $F_2 = F_1 \oplus F_1'$ and $E = F_2 \oplus F_2'$. Then
  \begin{align*}
    E
    &= F_1 \oplus F_1' \oplus F_2', \\
    E[D_1 \to F_1]
    &= F_1 \oplus F_1'(-D_1) \oplus F_2'(-D_1), \textrm{and} \\
    E[D_2 \to F_2]
    &= F_1 \oplus F_1' \oplus F_2'(-D_2).
  \end{align*}
  Using these splittings, we can perform the second modification to arrive at
  \begin{align*}
    E[M]
    &= E[D_1 \to F_1][D_2 \to F_2] \\
    &= F_1 \oplus F_1'(-D_1) \oplus F_2'(- D_1 - D_2) \\
    &= E[D_2 \to F_2][D_1 \to F_1] \\
    &= E[M'].
  \end{align*}
  
  In the third case, we assume $F_1$ and $F_2$ are linearly independent which leads to a splitting $E = F_1 \oplus F_2 \oplus F$. A similar computation demonstrates that
  \begin{align*}
    E[D_1 \to F_1]
    &= F_1 \oplus F_2(-D_1) \oplus F(-D_1), \textrm{and} \\
    E[D_2 \to F_2]
    &= F_1(-D_2) \oplus F_2 \oplus F(-D_2),
  \end{align*}
  and
  \[
  E[M] =
  F_1(-D_2) \oplus F_2(-D_1) \oplus F(- D_1 - D_2) =
  E[M'].
  \qedhere
  \]
\end{proof}

\begin{proposition}[Commuting modifications and twists]
  \label{T:commuting-modifications-twists}
  Let $E$ be a vector bundle over a variety $X$, $F \subset E$ a subbundle, and $M = \{ (D_i, U_i, F_i) \}$ a tree-like modification datum. If $D$ is a Cartier divisor (not necessarily effective) and we define the datum $M(D) = \{ (D_i, U_i, F_i(D)) \}$ for $E(D)$, then $M(D)$ is tree-like and there is a natural isomorphism
  \[
  E[M](D) = E(D)[M(D)]
  \]
  compatible with pullbacks.
\end{proposition}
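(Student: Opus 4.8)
The plan is to reduce everything to the exactness of twisting by a line bundle. Write $T$ for the functor $-\otimes_{\mathcal{O}_X}\mathcal{O}_X(D)$ on vector bundles (and coherent sheaves) on $X$. Since $\mathcal{O}_X(D)$ is a line bundle, $T$ is exact, with quasi-inverse $-\otimes\mathcal{O}_X(-D)$; it commutes with restriction to any subscheme (in particular to each $D_i$), with formation of quotients, and with pullback along any morphism to which the divisors involved can be pulled back; and it carries subbundles to subbundles. On fibers $T$ is tensoring with a one-dimensional vector space, so it preserves inclusions of subbundles and linear independence of collections of subbundles. Consequently $T$ carries a tree-like collection of subbundles to a tree-like one, and hence a tree-like modification datum $M=\{(D_i,U_i,F_i)\}$ to the datum $M(D)=\{(D_i,U_i,F_i(D))\}$, which is therefore tree-like; this gives the first assertion.

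For a single modification I would apply the exact functor $T$ to the defining sequence \labelcref{E:modification-ses} of $E[D_1\to F_1]$. Since $T$ commutes with quotients and with restriction to $D_1$, the term $T\big((E/F_1)|_{D_1}\big)$ is identified with $\big(E(D)/F_1(D)\big)|_{D_1}$, so the twisted sequence is precisely the defining sequence of $E(D)[D_1\to F_1(D)]$. This yields a natural isomorphism $E[D_1\to F_1](D)\cong E(D)[D_1\to F_1(D)]$, compatible with pullbacks via \cref{T:modification-pullback}.

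Next I would induct on $|I|$, the empty datum being trivial. Writing $M=\{(D_1,U_1,F_1)\}\cup M'$, \cref{T:modification-generic} gives $E[M]=E[D_1\to F_1][\phi^{\mathrm{md}}(M')]$. Applying the inductive hypothesis to $E[D_1\to F_1]$ and $\phi^{\mathrm{md}}(M')$, followed by the single-modification case, yields
\[
E[M](D)\cong\big(E[D_1\to F_1](D)\big)\big[\phi^{\mathrm{md}}(M')(D)\big]\cong\big(E(D)[D_1\to F_1(D)]\big)\big[\phi^{\mathrm{md}}(M')(D)\big],
\]
whereas $E(D)[M(D)]=\big(E(D)[D_1\to F_1(D)]\big)\big[\phi^{\mathrm{md}}(M'(D))\big]$. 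So the whole statement comes down to showing that the transfer map $\phi^{\mathrm{md}}$ of \cref{T:modification-data-bijection} --- equivalently, each of $\phi_1,\phi_2,\phi_3$ from \cref{T:modification-subbundles-bijection} --- commutes with $T$.

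I expect this last compatibility to be the only genuine point, and I would establish it exactly as reordering is handled in the proof of \cref{T:communting-modifications}: over $X\setminus\Supp(D_1)$ the modification is already an isomorphism, so it suffices to compare subbundles at the generic point of each irreducible component of $X$ meeting $\Supp(D_1)$; there the inclusion $E[D_1\to F_1]\to E$ is an isomorphism under which $\phi_1,\phi_2,\phi_3$ (hence $\phi$ and $\phi^{\mathrm{md}}$) become the identity, and $T$ is likewise the identity once $\mathcal{O}_X(D)$ is trivialized at that point, so $T\circ\phi^{\mathrm{md}}$ and $\phi^{\mathrm{md}}\circ T$ agree generically; since a subbundle is determined by its restrictions to the generic points, they agree everywhere. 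Naturality and compatibility with pullbacks are then inherited at each stage of the induction, so the main thing to get right is the bookkeeping in this generic-point argument rather than any substantive obstacle.
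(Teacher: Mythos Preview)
Your argument is correct, and it takes a genuinely different route from the paper's. You proceed by direct functoriality: apply the exact twist functor $T$ to the defining sequence for a single modification, then induct, the heart of the matter being that $\phi^{\mathrm{md}}$ commutes with $T$, which you verify at generic points exactly as in the reordering argument. The paper instead avoids ever analyzing how $\phi^{\mathrm{md}}$ interacts with twisting. It first reduces to the case of a \emph{negative} effective divisor by writing $D=D^+-D^-$, and then observes that twisting down by an effective $D$ is itself a modification, namely $E[D\to 0]\cong E(-D)$. Since the zero subbundle is contained in every $F_i$, the datum $M\cup\{(D,U,0)\}$ is automatically tree-like, and the transfer map $\phi^{\mathrm{md}}$ for the modification $[D\to 0]$ sends each $F_i$ to $F_i(-D)$ (this is the $\phi_2$ case). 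Then commutativity of modifications (\cref{T:communting-modifications}) gives $E[M](-D)\cong E[M][D\to 0]\cong E[D\to 0][\phi^{\mathrm{md}}(M)]\cong E(-D)[M(-D)]$ in one line. The paper's trick is slicker and reuses machinery already in place; your approach is more self-contained and makes the functorial content explicit, at the cost of the extra bookkeeping in the generic-point verification.
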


\begin{proof}
  To see that $M(D)$ is tree-like, it suffices to note that vector bundle inclusion and linear independence are preserved by twisting.
  
  First, assume we know the desired isomorphism exists for negative effective divisors. Given a divisor $D$, we can always decompose it as $D = D^+ - D^-$ where $D^+$ and $D^-$ are effective. Using the pair $E(D^+)$ and $M(D^+)$ with divisor $-D^+$, we deduce
  \begin{align*}
    E[M]
    &= E(D^+ - D^+)[M(D^+ - D^+)] \\
    &\cong E(D^+)[M(D^+)](-D^+).
  \end{align*}
  Next, we apply the same result for $E(D^+)$, $M(D^+)$ with divisor $-D^-$:
  \begin{align*}
    E[M](D)
    &= E[M](D^+ - D^-) \\
    &\cong E(D^+)[M(D^+)](-D^-) \\
    &\cong E(D^+ - D^-)[M(D^+ - D^-)] \\
    &= E(D)[M(D)].
  \end{align*}

  We are left to furnish an isomorphism in the case of negative effective divisors. For simplicity, replace $D$ with its negative, so it is effective. Let $U$ be a neighborhood of $\Supp D$. Note that if $M$ is a tree-like datum, then $M' = M \cup \{ (D, U, 0) \}$ is also tree-like. Since $E[D \rarr 0] \cong E(-D)$, then the associated morphism $\phi^{\textrm{md}}$ maps the datum $M$ to $M(-D)$. Commutativity implies
  \begin{align*}
    E[M](-D)
    &\cong E[M][D \rarr 0] \\
    &\cong E[M'] \\
    &\cong E[D \rarr 0][\phi^{\textrm{md}}(M)] \\\
    &\cong E(-D)[M(-D)],
  \end{align*}
  which concludes our argument.
\end{proof}

\begin{remark}
  When it is clear that $M$ is a modification datum for $E$, we will allow ourselves to write $M$ instead of $M(D)$. Then the statement of \cref{T:commuting-modifications-twists} becomes
  \[
  E[M](D) = E(D)[M],
  \]
  so we say that modifications and twists commute.
\end{remark}

If we focus on the case of two modifications with identical base divisors, there are two more results mentioning.

\begin{proposition}[Combining modifications]
  \label{T:combining-modifications}
  Let $E$ be a vector bundle over a variety $X$. Consider a tree-like modification datum $M = \{ (a D, U, F_1), (b D, U, F_2) \}$ for $E$, where $a$, $b$ is a pair of non-negative integers.
  \begin{enumeratea}
  \item
    \label{T:combining-modifications-a}
    If $F = F_1 = F_2$, then
    \[
    E[a D \to F][b D \to F] \cong E[(a+b)D \to F].
    \]
  \item
    \label{T:combining-modifications-b}
    If $F_1$, $F_2$ are linearly independent and $a = b = 1$, then
    \[
    E[D \to F_1][D \to F_2] \cong E[D \to F_1 + F_2](-D).
    \]
  \end{enumeratea}
  In addition, both isomorphisms are compatible with pullbacks.
\end{proposition}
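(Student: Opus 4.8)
The plan is to reduce both isomorphisms to an explicit computation over affine opens, in the spirit of the proof of \cref{T:communting-modifications}. Both sides of each asserted isomorphism restrict to $E$ over $X \setminus \Supp(D)$ (every cokernel appearing in a defining sequence \labelcref{E:modification-ses} is supported on $D$), and by \cref{T:modification-pullback} all the bundles in sight are compatible with pullback along open immersions; so by \cref{T:modification-affine} it suffices to build the isomorphism over each affine open $U' \subset X$ meeting $\Supp(D)$ and to check the pieces agree on overlaps. Since an effective Cartier divisor on a reduced variety contains no irreducible component, the generic points of $X$ lie in $X \setminus \Supp(D)$, where every modification is canonically $E$; we will arrange that each local isomorphism restricts to $\mathrm{id}_E$ away from $\Supp(D)$. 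As two isomorphisms of vector bundles agreeing on a dense open agree, the local isomorphisms then glue to a global one, and the same observation yields compatibility with pullbacks (the glued map is determined by its restriction to the complement of $\Supp(D)$).

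For part \labelcref{T:combining-modifications-a}, over an affine $U'$ split $E|_{U'} = F \oplus F'$. By \cref{T:modification-splitting}, $E[aD \to F]|_{U'} = F \oplus F'(-aD)$, where the first summand is the canonical copy of $F$ furnished by sequence~\labelcref{E:modification-ses3}. Since $F \in S_1(E,F,aD) \cap S_2(E,F,aD)$, the transfer map of \cref{T:modification-subbundles-bijection} carries $F$ to precisely this summand, so $\phi^{\textrm{md}}(\{(bD,U,F)\}) = \{(bD,U,F)\}$ relative to the splitting. Applying \cref{T:modification-splitting} once more gives
\[
E[aD\to F][bD\to F]|_{U'} \;=\; F \oplus \bigl(F'(-aD)\bigr)(-bD) \;=\; F \oplus F'\bigl(-(a+b)D\bigr) \;=\; E[(a+b)D\to F]|_{U'},
\]
and this chain of identifications is the identity on $E$ over $U' \setminus \Supp(D)$. (When $a = 0$ we read $aD = \emptyset$ and $E[\emptyset \to F] = E$ by \cref{T:modification-basic-examples}; nothing changes.)

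For part \labelcref{T:combining-modifications-b}, note first that $F_1 + F_2 \subset E$ is a subbundle by linear independence (\cref{T:linearly-independet}). Over an affine $U'$ split $E|_{U'} = F_1 \oplus F_2 \oplus F''$. Then $E[D\to F_1]|_{U'} = F_1 \oplus (F_2 \oplus F'')(-D) = F_1 \oplus F_2(-D) \oplus F''(-D)$, and since $F_2 \in S_3(E, F_1, D)$, the construction of $\phi_3$ in \cref{T:modification-subbundles-bijection} identifies the transferred subbundle with $F_2(-D)$, the middle summand. Hence
\[
E[D\to F_1][D\to F_2]|_{U'} \;=\; F_2(-D) \oplus \bigl(F_1 \oplus F''(-D)\bigr)(-D) \;=\; F_1(-D)\oplus F_2(-D)\oplus F''(-2D),
\]
while $E[D\to F_1+F_2]|_{U'} = (F_1\oplus F_2)\oplus F''(-D)$ gives $E[D\to F_1+F_2](-D)|_{U'} = F_1(-D)\oplus F_2(-D)\oplus F''(-2D)$; the two agree, again compatibly with the canonical identification with $E$ off $\Supp(D)$.

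I expect the only real work to be bookkeeping: correctly locating the transferred subbundles $\phi(F_i)$ inside the first modification — that $F$ is sent to itself via \labelcref{E:modification-ses3} in part \labelcref{T:combining-modifications-a}, and that $F_2$ is sent to $F_2(-D)$ via $\phi_3$ in part \labelcref{T:combining-modifications-b} — together with the verification that the chosen local direct-sum isomorphisms are canonical enough to patch and to commute with pullback. Neither point is deep, but both must be spelled out, and the twisting divisors $aD$, $bD$, $2D$ must be carried through the splittings accurately.
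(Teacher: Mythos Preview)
Your proposal is correct and follows essentially the same approach as the paper: reduce to affines via \cref{T:modification-affine}, split off the subbundles, and compute directly with \cref{T:modification-splitting}. You are more explicit than the paper about locating the transferred subbundles via $\phi$ and about why the local splittings glue (agreement with $\mathrm{id}_E$ off $\Supp(D)$), but the underlying argument is the same.
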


\begin{proof}
  Following \cref{T:modification-affine}, we can assume $X$ is affine. For part (\ref{T:combining-modifications-a}), there is a splitting $E = F \oplus E/F$, and we compute
  \begin{align*}
    E[a D \to F][b D \to F]
    & \cong (F \oplus (E/F)(-a D))[b D \to F] \\
    & \cong F \oplus (E/F)(-a D)(-b D) \\
    & \cong F \oplus (E/F)(-(a+b) D) \\
    & \cong E[(a+b)D \to F].
  \end{align*}

  In part (\ref{T:combining-modifications-b}), consider a splitting $E = F_1 \oplus F_2 \oplus F_3$. Then
  \begin{align*}
    E[D \to F_1][D \to F_2]
    & \cong (F_1 \oplus F_2(-D) \oplus F_3(-D))[D \to F_2(-D)] \\
    & \cong F_1(-D) \oplus F_2(-D) \oplus F_3(-2 D) \\
    & \cong (F_1 \oplus F_2 \oplus F_3(-D))(-D) \\
    & \cong E[D \to F_1 + F_2](-D).
      \qedhere
  \end{align*}
\end{proof}


\section{Elementary modifications for curves}
\label{S:modifications-curves}

While \cref{S:modifications-arbitrary} introduces vector bundle modifications in a very general setting, the applications we have in mind use curves and families of curves. The present section will explain more concretely how modifications manifest themselves for curves, and provide several simple consequences.

A substantial part of bootstrapping the definition of multiple modifications consisted of transfer statements. It turns out that curves allow for a simpler transfer statement for subbundles which extends \cref{T:modification-subbundles-bijection}. In particular, this allows us to extend multi-modifications beyond tree-like data at the expense of sacrificing some of the properties we already established (e.g., commutativity).

To state our result, define
\[
\overline{S}(E) =
\{ G \subset E \textrm{ subbundles} \},
\]
where $E$ is a vector bundle over a curve $C$.

\begin{proposition}
  \label{T:curves-modification-subbundles-bijection}
  Let $E$ be a vector bundle over a curve $C$. Given a subbundle $F \subset E$ and a divisor $D$ whose support is contained in the smooth locus of $C$, there is a bijection
  \[
  \overline{\phi} \cn
  \overline{S}(E) \rarr
  \overline{S}(E[D \to F]),
  \]
  such that
  \begin{enumeratea}
  \item
    \label{T:curves-modification-subbundles-bijection-a}
    $\overline{\phi}|_{S(E, F, D)} = \phi$ where $S(E, F, D)$ and $\phi$ are as in \cref{T:modification-subbundles-bijection},
  \item
    \label{T:curves-modification-subbundles-bijection-b}
    $\overline{\phi}$ is compatible with pullbacks,
  \item
    \label{T:curves-modification-subbundles-bijection-c}
    given $G_1, G_2 \in \overline{S}(E)$, then $G_1 \subset G_2$ in a neighborhood of $D$ implies $\overline{\phi}(G_1) \subset \overline{\phi}(G_2)$ in a neighborhood of $D$, and
  \item
    \label{T:curves-modification-subbundles-bijection-d}
    if $D = \emptyset$, then $E[D \to F] \cong E$ induced an identification of $\overline{S}(E)$ and $\overline{S}(E[D \to F])$ such that $\overline{\phi}$ becomes the identity map.
  \end{enumeratea}
\end{proposition}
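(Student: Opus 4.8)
The plan is to construct $\overline{\phi}$ directly, exploiting the fact that over the open $U = C \setminus \Supp(D)$ the inclusion $E[D \to F] \rarr E$ of \labelcref{E:modification-ses} is an isomorphism. Given a subbundle $G \subset E$, I would first restrict $G$ to $U$, transport it across this isomorphism to a subbundle of $E[D \to F]|_U$, and then extend the result uniquely over the finitely many points of $\Supp(D)$. The one nontrivial input — and the sole reason for the hypothesis that $\Supp(D)$ lie in the smooth locus of $C$ — is the curve-to-projective extension theorem \cite[I.6.8]{Hartshorne}: a rank-$k$ subbundle of a vector bundle over a punctured neighborhood of a smooth point $p$ is a section over that punctured neighborhood of the Grassmann bundle $\mathrm{Gr}_k(E) \rarr C$, which is proper and separated; since $\mathcal{O}_{C,p}$ is a discrete valuation ring, the valuative criterion provides a unique extension of this section over $p$, hence a unique subbundle over the whole neighborhood restricting to the given one. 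Gluing these local extensions along $U$ yields a well-defined subbundle $\overline{\phi}(G) \subset E[D \to F]$ over all of $C$.

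For the inverse I would run the identical recipe with the roles of $E$ and $E[D \to F]$ interchanged (the isomorphism over $U$ is symmetric): send $G' \subset E[D \to F]$ to the unique subbundle of $E$ extending the transport of $G'|_U$. That the two assignments are mutually inverse is immediate from uniqueness of the extension — each composite produces a subbundle agreeing with the original over the dense open $U$, and two subbundles of a vector bundle on a curve that agree on a dense open are equal.

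The remaining four properties then follow formally. For part~(\ref{T:curves-modification-subbundles-bijection-a}), recall from the proof of \cref{T:modification-subbundles-bijection} that, after identifying $E$ with $E[D \to F]$ over the generic point (equivalently over $U$), the map $\phi$ becomes the identity; so for $G \in S(E, F, D)$ the subbundle $\phi(G)$ restricts over $U$ to the transport of $G|_U$, whence $\phi(G) = \overline{\phi}(G)$ by uniqueness of the extension. For part~(\ref{T:curves-modification-subbundles-bijection-b}), the isomorphism $E|_U \cong E[D \to F]|_U$ is compatible with pullbacks by \cref{T:modification-pullback}, and the extension step is a local construction compatible with any morphism $f \cn C' \rarr C$ of curves for which the pulled-back divisor is defined and supported in the smooth locus; so $f^\ast \overline{\phi}(G)$ and $\overline{\phi}(f^\ast G)$ are subbundles of $(f^\ast E)[f^\ast D \to f^\ast F]$ agreeing over $f^{-1}(U)$, hence equal. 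For part~(\ref{T:curves-modification-subbundles-bijection-c}), if $G_1 \subset G_2$ on a neighborhood $V$ of $\Supp(D)$, then $\overline{\phi}(G_1) \subset \overline{\phi}(G_2)$ on the dense open $V \cap U \subset V$, and the composite $\overline{\phi}(G_1) \rarr E[D \to F] \rarr E[D \to F]/\overline{\phi}(G_2)$ is a homomorphism of vector bundles on the reduced curve $V$ that vanishes on a dense open, hence vanishes, giving the containment over all of $V$. For part~(\ref{T:curves-modification-subbundles-bijection-d}), when $D = \emptyset$ we have $U = C$ and $E[D \to F] = E$ by \cref{T:modification-basic-examples}, the transport is the identity, and no extension is required, so $\overline{\phi} = \mathrm{id}$.

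I expect the extension step to be the main obstacle: one must ensure that the transported subbundle over $C \setminus \Supp(D)$ closes up to a genuine subbundle — locally free with locally free quotient — across the punctures, rather than merely to a coherent subsheaf. This is exactly where smoothness of $C$ along $\Supp(D)$ enters (at a nodal point the local ring is not a discrete valuation ring and the valuative criterion fails), and it is also the feature that lets us drop the hypothesis $G \in S(E, F, D)$ present in \cref{T:modification-subbundles-bijection}. Once this is secured, everything else is routine bookkeeping with the canonical isomorphism over $U$ and uniqueness of extensions.
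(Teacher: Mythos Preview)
Your proposal is correct and essentially identical to the paper's proof: both construct $\overline{\phi}$ by restricting a subbundle to $U = C \setminus \Supp(D)$, transporting along the canonical isomorphism $E|_U \cong E[D\to F]|_U$, and invoking the curve-to-projective extension theorem (equivalently, the valuative criterion for the Grassmannian bundle) to fill in over $\Supp(D)$; the verifications of (a)--(d) likewise proceed by comparing over the dense open $U$ and using that containment is a closed condition.
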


\begin{proof}
  We start by constructing the map $\overline{\phi}$. Given a subbundle $G \subset E$ of rank $r$, we can produce a section $\sigma$ of the Grassmannian bundle $\Gr(r, E)$ of $E$.
  \[
  \xymatrix{
    \Gr(r, E) \ar[d] \\
    C \ar@/_/[u]_-{\sigma}
  }
  \]
  The natural inclusion $E[D \to F]$ is an isomorphism over $U = C \setminus \Supp(D)$, so we also have an isomorphism $\Gr(r, E)|_U \cong \Gr(r, E[D \to F])|_U$. It follows that we can treat $\sigma|_U$ as a section of the second Grassmannian bundle over $U$. The curve-to-projective extension theorem \cite[I.6.8]{Hartshorne} implies there is a unique section $\sigma' \cn C \rarr \Gr(r, E[D \to F])$ which extends $\sigma|_U$. The new section gives rise to a subbundle $\overline{\phi}(G) = G' \subset E[D \to F]$.

  For part (\ref{T:curves-modification-subbundles-bijection-a}), start by picking a bundle $G \in S(E, F, D)$. If we identify $E|_U$ and $E[D \to F]|_U$, then $\overline{\phi}(G)|_U = G|_U = \phi(G)|_U$. Since both $\phi(G)$ and $\overline{\phi}(G)$ are subbundles, and $U \subset C$ is dense, it follows that $\phi(G) = \overline{\phi}(G)$.

  Note that it makes sense to consider the pullback by a morphism $f \cn C' \rarr C$ only if the pullback divisor $f^\ast D$ is well-defined. This happens exactly when no component of $C'$ is contracted to a point which lies in the support of the divisor $D$ on $C$ (see \cref{T:modification-pullback}). In particular, the condition is always satisfied for finite morphisms $f$. Once we understand this limitation, running through the section extension definition of $\overline{\phi}$, it is clear that $\overline{\phi}$ is compatible with pullbacks.

  Finally, the proofs of (\ref{T:curves-modification-subbundles-bijection-c}) and (\ref{T:curves-modification-subbundles-bijection-d}) are identical to the arguments we gave in \cref{T:modification-subbundles-bijection}.
\end{proof}

\begin{remark}
  \label{T:curves-modification-subbundles-bijection-linear-independence}
  Note that $\overline{\phi}$ satisfies all properties $\phi$ does except it does not preserve linear dependence and independence. To illustrate the point, take $C = \Abb^1$ with a coordinate $x$ on it, $p = 0$ is the origin, and $E = \Oc_C \oplus \Oc_C$. Set
  \begin{align*}
    F & = \langle (1, 1) \rangle, &
                                    G_1 &= \langle (1, 0) \rangle, &
                                                                     G_2 &= \langle (0, 1) \rangle, \\
    F' & = \langle (1, 0) \rangle, &
                                     G_1' &= \langle (1, x) \rangle, &
                                                                       G_2' &= \langle (1, -x) \rangle.  
  \end{align*}
  Then $G_1$ and $G_2$ are linearly independent in $E$, while $\overline{\phi}(G_1)$ and $\overline{\phi}(G_2)$ coincide over $p$ in $E[p \to F]$. On the other hand, $G_1'$ and $G_2'$ are linearly dependent at $p$, but their transfers $\overline{\phi}(G_1'), \overline{\phi}(G_2') \subset E[p \to F']$ are linearly independent at $p$.

  In summary, it is possible to modify curves along modification data which are not tree-like, but we need to be careful about switching the order of modifications. Unless otherwise stated, all modifications will be tree-like.
\end{remark}

    Finally, we present a result which relates the Euler characteristics of a modified bundle and the original one.

    \begin{proposition}[The Euler characteristic of modifications]
      \label{T:chi-modifications}
      Let $E$ be a vector bundle over a curve $C$.
      \begin{enumeratea}
      \item
        \label{T:chi-modifications-a}
        If $D_1, \dots, D_m$ are effective divisors, and $F_1, \dots, F_m \subset E$ are subbundles, then
        \[
        \chi(E[D_1 \to F_1] \cdots [D_m \to F_m]) =
        \chi(E) - \sum_{i=1}^m \deg(D_i) \rank(E/F_i).
        \]
      \item
        \label{T:chi-modifications-b}
        If $D$ is a any divisor, then
        \[
        \chi(E(D)) = \chi(E) + \rank(E) \deg(D).
        \]
      \end{enumeratea}
    \end{proposition}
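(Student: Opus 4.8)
The plan is to reduce both parts to additivity of the Euler characteristic applied to the defining sequence \labelcref{E:modification-ses} of a single modification. For part~(\ref{T:chi-modifications-a}) I first treat the case $m=1$. The sequence
\[
0 \to E[D \to F] \to E \to (E/F)|_D \to 0
\]
is exact, so $\chi(E[D \to F]) = \chi(E) - \chi((E/F)|_D)$. The sheaf $(E/F)|_D$ is torsion, supported on the finite set $\Supp(D)$, hence $\h^1((E/F)|_D) = 0$ and $\chi((E/F)|_D) = \h^0((E/F)|_D)$. Since $E/F$ is locally free of rank $\rank(E/F)$, it is trivial near each point of $\Supp(D)$, so $\h^0((E/F)|_D) = \rank(E/F)\cdot \h^0(\Oc_D) = \rank(E/F)\deg(D)$, the last equality being the definition of the degree of the Cartier divisor $D$. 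This settles the case $m=1$.

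For general $m$ I argue by induction, unwinding the recursive definition of a multi-modification (\cref{T:modification-generic}). Write $E' = E[D_1 \to F_1]\cdots[D_{m-1} \to F_{m-1}]$, and let $F_m' \subset E'$ be the subbundle obtained from $F_m$ by the transfer bijections (\cref{T:modification-data-bijection} in the tree-like case, \cref{T:curves-modification-subbundles-bijection} in general), so that $E[D_1 \to F_1]\cdots[D_m \to F_m] = E'[D_m \to F_m']$. Because the transfer is an isomorphism over the dense open $C \setminus \Supp(D_m)$ identifying $F_m'$ with $F_m$ there, we have $\rank(E'/F_m') = \rank(E/F_m)$. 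Applying the $m=1$ case to the pair $(E', F_m')$ and the inductive hypothesis to $E'$ gives
\[
\chi(E'[D_m \to F_m']) = \chi(E') - \deg(D_m)\rank(E/F_m) = \chi(E) - \sum_{i=1}^m \deg(D_i)\rank(E/F_i).
\]

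Part~(\ref{T:chi-modifications-b}) follows from part~(\ref{T:chi-modifications-a}): taking $F = 0$ and $m = 1$, and using $E[D \to 0] = E(-D)$ (\cref{T:modification-basic-examples}), gives $\chi(E(-D)) = \chi(E) - \rank(E)\deg(D)$ for every effective $D$; replacing $E$ by $E(D)$ gives $\chi(E(D)) = \chi(E) + \rank(E)\deg(D)$ for effective $D$ as well. For an arbitrary divisor write $D = D^+ - D^-$ with $D^\pm$ effective; applying the effective case twice, $\chi(E(D)) = \chi(E(D^+)) - \rank(E)\deg(D^-) = \chi(E) + \rank(E)(\deg(D^+) - \deg(D^-)) = \chi(E) + \rank(E)\deg(D)$. (Alternatively, part~(\ref{T:chi-modifications-b}) is immediate from Riemann--Roch, since $\deg(E(D)) = \deg(E) + \rank(E)\deg(D)$.) Everything here is formal; the only step meriting care is the bookkeeping in part~(\ref{T:chi-modifications-a}) --- that at each stage one modifies along a \emph{transferred} subbundle, whose quotient rank must be checked to be unchanged --- which is precisely what the transfer propositions \cref{T:modification-data-bijection,T:curves-modification-subbundles-bijection} supply.
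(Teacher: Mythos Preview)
Your proof is correct and follows essentially the same route as the paper's: both reduce part~(\ref{T:chi-modifications-a}) to the $m=1$ case via the defining sequence \labelcref{E:modification-ses} and then iterate, and both obtain part~(\ref{T:chi-modifications-b}) from part~(\ref{T:chi-modifications-a}) via $E[D\to 0]=E(-D)$ and a decomposition $D=D^+-D^-$. One small slip: the transfer identifying $F_m'$ with $F_m$ is an isomorphism over $C\setminus\bigcup_{i<m}\Supp(D_i)$, not $C\setminus\Supp(D_m)$, but since rank is determined on any dense open this does not affect your conclusion.
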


    \begin{proof}
      Note that the general statement of part (\ref{T:chi-modifications-a}) follows by applying the $m = 1$ case several times. When $m = 1$, we take Euler characteristics of the sequence \labelcref{E:modification-ses} and note that
      \[
      \chi((E/F_1)|_{D_1}) =
      \deg(D_1) \rank(E/F_1).
      \]
      
      Similarly to the proof of \cref{T:commuting-modifications-twists}, we can reduce (\ref{T:chi-modifications-b}) to the case of a negative effective divisor which is subsumed by part (\ref{T:chi-modifications-a}).
    \end{proof}

\begin{remark}
  \label{T:modifications-in-families}
  The theory of modifications over general varieties (\cref{S:modifications-arbitrary}) is certainly more complicated than the statements we presented for curves. Dimensions greater than one become very useful when we deal with families of curves and vector bundles. The fact that constructing modifications preserve pullbacks allows us to treat a modification over the total space of a family of curves as a family of modifications over the individual curves.

  We will demonstrate this point through a simple example. Let $C$ be a smooth curve, $E$ a vector bundle over $C$, and $F \subset E$ a subbundle. We consider the family of curves
  \[
  \pr_2 \cn \Cc = C \x B \rarr B
  \]
  where $B = C$. Given $b \in B$, we will use $i_b \cn C \rarr \Cc$ to denote the inclusion of the fiber over the point $b$. Choose a point $p_0 \in C$, and construct the divisors
  \[
  D_0 = \{ p_0 \} \x B, \qquad
  D_1 = \Delta_C, \qquad
  D = D_0 + D_1.
  \]
  If
  \[
  E' = (\pr_1^\ast E)[D \to \pr_1^\ast F]
  \]
  is the global modification, then restricting to a fiber over $b$ gives
  \[
  i_b^\ast E' =
  E[i_b^\ast D \to F] =
  E[(b + p_0) \to F].
  \]
  This shows that varying the modification divisor in a family produces modifications which also fit in a family. Furthermore, we know that $E[2 p_0 \to F]$ is the ``limit modification'' as $b$ approaches $p_0$. This is a very simple example to illustrate the power of modifications over higher dimensional varieties. In general, understanding limits of multiple modifications can be very tricky and being tree-like is the right condition to back our intuitive notion of limits.  
\end{remark}


\section{Interpolation and short exact sequences}
\label{S:sequences}

The goal of this section is to define interpolation for vector bundles and develop some of its properties, in particular its behavior in short exact sequences. For a more detailed explanation of this property, see \cite{Atanasov-interpolation}.

\begin{definition}
  \label{T:interpolation}
  Let $E$ be a rank $n$ vector bundle over a curve $C$.
  We say that a subspace of sections
  $V \subseteq \H^0(E)$ satisfies interpolation if $E$ is nonspecial, and for every $d \geq 1$, there exists a collection of $d$ points $p_1, \dots, p_d \in C_\sm$ such that
  \[
  \dim \left( V \cap \H^0\left( E\left(-\sum p_i\right) \right)\right) = \max\{0, \dim V - d n \}.
  \]
  We say that $E$ satisfies interpolation if
  the full space of sections $V = \H^0(E) \subseteq \H^0(E)$ satisfies
  interpolation.
\end{definition}

There are a number of observations which allow us to verify interpolation more easily.

\begin{remark}
  \label{T:interpolation-general}
  By the upper semi-continuity of $\h^0$, the existence of $d$ points satisfying the equality above implies that a general collection of $d$ points (in one component of $C_\sm^d$) satisfies this condition.
\end{remark}

    \begin{remark}
      \label{T:interpolation-two-d}
      In fact, we do not need to check the interpolation condition for every positive integer $d$. It suffices to verify that the statement holds for $\lfloor \h^0(E)/n \rfloor$ and $\lceil \h^0(E)/n \rceil$. The first value implies the statement holds for all $d \leq \lfloor \h^0(E)/n \rfloor$ and the second for all values $d \geq \lceil \h^0(E)/n \rceil$.

      We have arrived at a convenient rephrasing of \cref{T:interpolation}. Let $\h^0(E) = n \cdot d + r$ where $0 \leq r < n$. Consider the following two statements.
      \begin{enumeratea}
      \item
        \label{T:interpolation-two-d-a}
        There exist points $p_1, \dots, p_d \in C_\sm$ such that
        \[
        \h^0(E(-\sum p_i)) = r.
        \]
      \item
        \label{T:interpolation-two-d-b}
        There exist points $p_1, \dots, p_{d+1} \in C_\sm$ such that
        \[
        \h^0(E(-\sum p_i)) = 0.
        \]
      \end{enumeratea}
      Assume $E$ has no higher cohomology. If $r = 0$, then interpolation for $E$ is equivalent to (\ref{T:interpolation-two-d-a}). In the cases when $r > 0$, interpolation is equivalent to (\ref{T:interpolation-two-d-a}) and (\ref{T:interpolation-two-d-b}) together.
    \end{remark}

\begin{remark}
  \label{T:interpolation-h0-h1-remark}
  It is also possible to use the language of divisors to characterize interpolation. Consider a vector bundle $E \to C$ satisfying interpolation. Given an integer $d \geq 1$,
  there is a component of $\Sym^d C$ so that a general effective divisor $D$ in that component satisfies either $\h^1(E(-D)) = 0$ (when $\deg D \leq \h^0(E)/\rank(E)$) or $\h^0(E(-D)) = 0$ (when $\deg D \geq \h^0(E)/\rank(E)$). Conversely, if for all $d$ there is some component of $\Sym^d C$ for which this disjunction holds, then we can deduce interpolation. We have arrived at the following restatement of \cref{T:interpolation}.

  \begin{proposition}
    \label{T:interpolation-h0-h1}
    A nonspecial vector bundle $E \to C$ satisfies interpolation if and only if for every $d \geq 1$, there is a component of $\Sym^d C$ so that a general effective Cartier divisor $D$ of degree $d$ in that component satisfies
    \[
    \h^0(E(-D)) = 0
    \qquad\textrm{or}\qquad
    \h^1(E(-D)) = 0.
    \]
  \end{proposition}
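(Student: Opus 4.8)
The plan is to read this as a literal reformulation of \cref{T:interpolation} in the case $V = \H^0(E)$: since sections of $E(-D)$ are exactly the sections of $E$ vanishing on $D$, we have $V \cap \H^0(E(-D)) = \H^0(E(-D))$, so interpolation for $E$ asks that for every $d$ there be an effective divisor $D$ of degree $d$ supported in $C_\sm$ with $\h^0(E(-D)) = \max\{0,\, \h^0(E) - d\rank E\}$. The whole content is then to translate this numerical equality into the dichotomy ``$\h^0(E(-D)) = 0$ or $\h^1(E(-D)) = 0$'' and to reconcile the two ways of quantifying (over tuples of points versus over components of $\Sym^d C$). First I would record the Riemann--Roch bookkeeping: writing $n = \rank E$, part~(\ref{T:chi-modifications-b}) of \cref{T:chi-modifications} together with nonspeciality of $E$ (so $\chi(E) = \h^0(E)$) gives, for any effective $D$ of degree $d$,
\[
\h^0(E(-D)) - \h^1(E(-D)) = \chi(E(-D)) = \h^0(E) - dn,
\]
and in particular $\h^0(E(-D)) \ge \max\{0,\, \h^0(E) - dn\}$ always.

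Next I would prove the pointwise equivalence: for a fixed effective $D$ of degree $d$, the equality $\h^0(E(-D)) = \max\{0,\h^0(E)-dn\}$ holds if and only if $\h^0(E(-D)) = 0$ or $\h^1(E(-D)) = 0$. For one direction, split on the sign of $\h^0(E) - dn$: if it is nonnegative, the equality forces $\h^0(E(-D)) = \chi(E(-D))$, hence $\h^1(E(-D)) = 0$; if it is negative, the equality says $\h^0(E(-D)) = 0$. Conversely, $\h^1(E(-D)) = 0$ gives $\h^0(E(-D)) = \h^0(E) - dn \ge 0$, so that $\max\{0,\h^0(E)-dn\} = \h^0(E)-dn = \h^0(E(-D))$; and $\h^0(E(-D)) = 0$ collapses the always-valid inequality above to $0 = \max\{0,\h^0(E)-dn\}$. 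Either way the interpolation equality holds. I would also note here that, because $\h^0(E(-D)) \ge \max\{0,\h^0(E)-dn\}$ unconditionally, the interpolation equality is equivalent to the \emph{inequality} $\h^0(E(-D)) \le \max\{0,\h^0(E)-dn\}$, which by upper semicontinuity of $\h^0$ (as used in \cref{T:interpolation-general}) is an open condition on $\Sym^d C$.

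Finally I would match the quantifier structures. Given a component $Z \subseteq \Sym^d C$ whose general member satisfies the dichotomy, the previous paragraph shows its general member satisfies the (open) interpolation equality; a general $D \in Z$ is moreover reduced and supported in $C_\sm$ — the locus of non-reduced divisors, and the locus of divisors passing through any one of the finitely many points of $C \setminus C_\sm$, are each proper closed subsets of the $d$-dimensional variety $Z$ — so writing $D = p_1 + \cdots + p_d$ yields the points demanded by \cref{T:interpolation}. Conversely, given $p_1,\dots,p_d \in C_\sm$ realizing the interpolation equality, let $Z$ be a component of $\Sym^d C$ through $[\sum p_i]$; the interpolation equality is open and nonempty on $Z$, hence holds at a general member of $Z$, which therefore satisfies the dichotomy. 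Nonspeciality is the standing hypothesis of the proposition, so nothing further is needed. I do not expect a genuine obstacle here; the only point requiring a little care is this last paragraph's bookkeeping that a general member of each component of $\Sym^d C$ is reduced and supported in the smooth locus, so that the ``$d$ points in $C_\sm$'' formulation and the ``general divisor in a component of $\Sym^d C$'' formulation really do coincide.
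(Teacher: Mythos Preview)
Your proposal is correct and follows essentially the same approach as the paper: the paper presents this proposition as a direct restatement of \cref{T:interpolation}, with the surrounding remark sketching exactly the Riemann--Roch/semicontinuity argument you carry out (the interpolation equality for a degree-$d$ divisor is equivalent to $\h^0(E(-D))=0$ or $\h^1(E(-D))=0$, and one passes between tuples of smooth points and general divisors in a component of $\Sym^d C$ by openness). Your version simply makes explicit the bookkeeping that the paper leaves to the reader.
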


  There is a further simplification worth mentioning. Note that we do not need to verify the vector bundle is nonspecial before applying this result.

  \begin{proposition}
    \label{T:interpolation-h0-h1-two}
    A vector bundle $E$ of rank $n$ satisfies interpolation if and only if
    \begin{enumeratea}
    \item
      a general (in some component) effective divisor $D$ of degree $\lceil \h^0(E) / n \rceil$ satisfies $\h^0(E(-D)) = 0$, and
    \item
      a general (in some component) effective divisor $D$ of degree $\lfloor \h^0(E) / n \rfloor$ satisfies $\h^1(E(-D)) = 0$.
    \end{enumeratea}
    Furthermore, if $\chi(E) \geq 0$, we can replace $\h^0(E)$ with $\chi(E)$ in $\lceil \h^0(E) / n \rceil$ and $\lfloor \h^0(E) / n \rfloor$.
  \end{proposition}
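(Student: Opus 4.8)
The plan is to reduce everything to the two-value reformulation of interpolation in \cref{T:interpolation-two-d}, using two elementary inputs. First, for an effective Cartier divisor $D$ of degree $d$ supported in $C_\sm$, the exact sequence $0 \to E(-D) \to E \to E|_D \to 0$ has $\h^0(E|_D) = dn$, so $\h^0(E(-D)) \geq \h^0(E) - dn$ and $\chi(E(-D)) = \chi(E) - dn$. Second, since the loci $\{D : \h^0(E(-D)) > 0\}$ and $\{D : \h^1(E(-D)) > 0\}$ are closed in the relevant parameter space, a statement of the form ``a general divisor of degree $d$ in some component satisfies $\h^0(E(-D)) = 0$'' (resp.\ $\h^1(E(-D)) = 0$) is equivalent to the existence of one such divisor; this is \cref{T:interpolation-general} and its evident $\h^1$-analogue. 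Throughout, write $\h^0(E) = n d_0 + r_0$ with $0 \leq r_0 < n$, so $\lfloor \h^0(E)/n \rfloor = d_0$ and $\lceil \h^0(E)/n \rceil$ is $d_0$ if $r_0 = 0$ and $d_0 + 1$ otherwise.

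For the forward direction, suppose $E$ satisfies interpolation; then $E$ is nonspecial, so $\chi(E) = \h^0(E)$. Applying the defining equality with $V = \H^0(E)$ at $d = \lceil \h^0(E)/n \rceil$ produces a divisor $D$ with $\h^0(E(-D)) = \max\{0,\, \h^0(E) - dn\} = 0$, which is (a); applying it at $d = d_0$ produces a divisor with $\h^0(E(-D)) = r_0$, and since $\chi(E(-D)) = \h^0(E) - d_0 n = r_0$ this forces $\h^1(E(-D)) = 0$, which is (b).

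For the backward direction, the one point requiring care is that hypothesis (b) already recovers nonspeciality. Indeed, let $D$ have degree $d_0$ as in (b); then $\h^0(E(-D)) = \chi(E(-D)) = \chi(E) - d_0 n$, while the first elementary input gives $\h^0(E(-D)) \geq \h^0(E) - d_0 n$, so $\chi(E) \geq \h^0(E)$ and hence $\h^1(E) = 0$. With $E$ now known nonspecial, (b) moreover yields $\h^0(E(-D)) = r_0$ for a general degree-$d_0$ divisor, which is precisely the first of the two conditions in \cref{T:interpolation-two-d}; and (a) yields a general degree-$\lceil \h^0(E)/n \rceil$ divisor with $\h^0(E(-D)) = 0$, which is the second condition of that remark when $r_0 > 0$ and coincides with the first when $r_0 = 0$. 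By \cref{T:interpolation-two-d}, $E$ satisfies interpolation.

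For the final clause, assume $\chi(E) \geq 0$ and write $\chi(E) = n d_0' + r_0'$. Repeating the nonspeciality argument above with $d_0$ replaced by $d_0' = \lfloor \chi(E)/n \rfloor$ again gives $\chi(E) \geq \h^0(E)$, hence $\h^1(E) = 0$ and $\chi(E) = \h^0(E)$, so $d_0' = d_0$, $r_0' = r_0$, and the two formulations literally agree. I do not expect a genuine obstacle here: the whole content is the bookkeeping just described, the only mildly non-obvious step being the observation that the $\h^1$-vanishing in (b) forces $E$ to be nonspecial even though that is not assumed, after which \cref{T:interpolation-two-d} finishes the argument.
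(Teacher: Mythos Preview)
Your proof is correct and follows essentially the same route as the paper's: both deduce nonspeciality from condition~(b) via the exact sequence $0\to E(-D)\to E\to E|_D\to 0$, then invoke \cref{T:interpolation-two-d}. The only cosmetic difference is that you extract $\h^1(E)=0$ by comparing $\h^0(E(-D))=\chi(E(-D))$ against the lower bound $\h^0(E(-D))\geq \h^0(E)-d_0 n$, whereas the paper observes directly that $\H^1(E(-D))\twoheadrightarrow \H^1(E)$; these are two readings of the same long exact sequence.
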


  \begin{proof}
        To conclude that $E$ is nonspecial, we note that $\h^1(E(-D)) = 0$ for some effective divisor of non-negative degree $\lfloor \h^0(E) / n \rfloor$. The first part is a direct consequence of \cref{T:interpolation-h0-h1} and \cref{T:interpolation-two-d}. For the second part, it suffices to note the same argument implies that $\h^1(E) = 0$ as long as $\chi(E) \geq 0$.
  \end{proof}
\end{remark}

Characterizing line bundles which satisfy interpolation is particularly simple and worth elaborating on.

\begin{proposition}
  \label{T:interpolation-line-bundle}
  A line bundle satisfies interpolation if and only if it is nonspecial.
\end{proposition}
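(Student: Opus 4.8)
The plan is to prove both directions of the equivalence for a line bundle $L$ on a curve $C$. The forward direction is essentially immediate from the definition: interpolation for any vector bundle includes the requirement that the bundle be nonspecial, so in particular a line bundle satisfying interpolation is nonspecial. The content is therefore entirely in the converse: if $L$ is nonspecial (i.e.\ $\H^1(L) = 0$), then $L$ satisfies interpolation.

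\textbf{Converse.} Since $L$ has rank $n = 1$, I would invoke \cref{T:interpolation-h0-h1-two}: it suffices to show that a general effective divisor $D$ of degree $\h^0(L)$ satisfies $\h^0(L(-D)) = 0$, and a general effective divisor $D'$ of degree $\h^0(L)$ satisfies $\h^1(L(-D')) = 0$ (here $\lceil \h^0(L)/1 \rceil = \lfloor \h^0(L)/1 \rfloor = \h^0(L)$, so the two degrees coincide). For the $\h^1$ statement, note $\H^1(L) = 0$ by hypothesis and imposing a point condition can only decrease $\h^0$ by at most $1$ while $\chi$ drops by exactly $1$; so $\h^1$ stays $0$ as we pass to a general $L(-D')$ of any degree, in particular degree $\h^0(L)$ — more carefully, for general points $p_1, \dots, p_j$ with $j \le \h^0(L)$ one shows inductively that $\h^0(L(-p_1 - \cdots - p_j)) = \h^0(L) - j$ and hence $\h^1(L(-p_1-\cdots-p_j)) = \h^1(L) = 0$. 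For the $\h^0$ statement, the same induction, carried to $j = \h^0(L)$ steps, yields $\h^0(L(-D)) = 0$ for general $D$ of degree $\h^0(L)$.

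\textbf{The key inductive step} is the claim that for a nonspecial line bundle $L$ with $\h^0(L) > 0$ and a general point $p \in C_\sm$, one has $\h^0(L(-p)) = \h^0(L) - 1$. This is standard: the point $p$ fails to impose an independent condition on sections of $L$ exactly when every section of $L$ vanishes at $p$, i.e.\ when $p$ lies in the base locus of $|L|$; since $\h^0(L) > 0$ the linear system $|L|$ is nonempty, so its base locus is a proper closed subset of $C$, and a general $p \in C_\sm$ avoids it. Restricting to a neighborhood of the generic point and iterating (each successive general point avoids the base locus of the successively twisted-down system, using that generality is preserved in one component of $C_\sm^j$ per \cref{T:interpolation-general}), the induction goes through until $\h^0$ reaches $0$.

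\textbf{The main obstacle} — and it is a mild one — is bookkeeping around the boundary case where the number of points to be imposed exceeds $\h^0(L)$; once $\h^0$ has dropped to $0$, further general points keep it at $0$ trivially, so there is nothing to check beyond degree $\h^0(L)$, and \cref{T:interpolation-h0-h1-two} only asks for that degree anyway. One should also note that $\chi(L) = \h^0(L) \ge 0$ holds automatically here since $\h^1(L) = 0$, so the final clause of \cref{T:interpolation-h0-h1-two} applies and no separate verification of nonspeciality mid-argument is needed. I do not anticipate any genuine difficulty: the statement is really just the observation that for line bundles the rank-$1$ case of interpolation reduces to the classical fact that general points impose independent conditions on a base-point-free-away-from-them complete linear system.
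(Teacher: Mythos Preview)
Your proposal is correct and follows essentially the same route as the paper: both argue the forward direction is immediate from the definition, and for the converse both iteratively pick smooth points outside the base locus of the successively twisted-down linear system to drop $\h^0$ by one at each step until it reaches zero. The only difference is cosmetic---you frame the reduction through \cref{T:interpolation-h0-h1-two} and separately track $\h^1$, whereas the paper works directly from the definition---but the substance of the argument is identical.
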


\begin{proof}
  One direction is implied by the definition of interpolation. For the converse, consider a nonspecial line bundle $L$. We proceed to choose $m = \h^0(L)$ points $p_i \in C_\sm$ as follows. First, pick $p_1$ such that $\h^0(L(-p_1)) = \h^0(L) - 1$. If $m \geq 2$, we choose a second point $p_2$ such that $\h^0(L(-p_1-p_2)) = \h^0(L) - 2$, and so on. This demonstrates that $L$ satisfies interpolation.
\end{proof}

Next we show interpolation is preserved
by modifications along appropriately general subbundles,
and by positive twists.
To provide the precise statement, we need to introduce the following notion.

\begin{definition}
  \label{T:linearly-general}
  Let $V$ be a vector space, and $\{ W_b \subset V \;|\; b \in B \}$ be a collection of subspaces indexed by a set $B$. We will call $\{ W_b \}$ \emph{linearly general} if for each subspace $W \subset V$, there exists $b \in B$ such that $W_b$ and $W$ intersect transversely.
\end{definition}

    \begin{remark}
      \label{T:linearly-general-complimentary}
      Suppose the ambient vector space has dimension $n$, and all members of the collection $\{ W_b \}$ have dimension $m$. Then to conclude that $\{ W_b \}$ is linearly general, it suffices to know that for all subspaces $W \subset V$ of complimentary dimension $n - m$ there exists $b \in B$ such that $W \cap W_b = 0$.
    \end{remark}

\begin{proposition}
  \label{T:linearly-general-interpolation}
  Let $E$ be a vector bundle over a curve $C$ and $p \in C_\sm$ a smooth point. Suppose we have a collection of vector bundles $\{ G_b \subset E \;|\; b \in B \}$ indexed by a set $B$ and $F \subset E$ is a subbundle, such that
  \begin{enumeratea}
  \item
    $F|_p \subset G_b|_p$ for all $b \in B$, and
  \item
    $\{ G_b/F|_p \;|\; b \in B \}$ is linearly general in $E/F|_p$.
  \end{enumeratea}
  If $E$ and $E[p \to F]$ both satisfy interpolation, then $E[p \to G_b]$ satisfies interpolation for at least one element $b \in B$.
\end{proposition}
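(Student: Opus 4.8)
The plan is to verify, for a single index $b$ chosen at the end, the two conditions of \cref{T:interpolation-h0-h1-two} for the bundle $E[p \to G_b]$. Write $n = \rank E$. Since $F|_p \subseteq G_b|_p$ forces $\rank F \le \rank G_b$, we have
\[
\chi\big(E[p \to G_b]\big) = \chi(E) - \rank(E/G_b) \;\ge\; \chi(E) - \rank(E/F) = \chi\big(E[p \to F]\big) \;\ge\; 0 ,
\]
the last inequality because $E[p \to F]$ is nonspecial. I will use two short exact sequences, each valid after twisting by an effective divisor $D$ disjoint from $p$ (so that the twist passes through the modification by \cref{T:commuting-modifications-twists} and leaves the skyscraper quotient alone):
\[
0 \to E[p \to G_b](-D) \to E(-D) \to (E/G_b)|_p \to 0 ,
\]
\[
0 \to E[p \to F](-D) \to E[p \to G_b](-D) \to (G_b/F)|_p \to 0 ,
\]
the second relying on $F|_p \subseteq G_b|_p$. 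Taking $D = 0$ in the second sequence and using $\H^1(E[p \to F]) = 0$ already gives $\H^1(E[p \to G_b]) = 0$; so by \cref{T:interpolation-h0-h1-two} it is enough to find one $b$ for which a general effective divisor $D$ of degree $d_a := \lceil \chi(E[p \to G_b])/n \rceil$ has $\h^0(E[p \to G_b](-D)) = 0$, and a general effective divisor $D'$ of degree $d_b := \lfloor \chi(E[p \to G_b])/n \rfloor$ has $\h^1(E[p \to G_b](-D')) = 0$.

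First I would fix a general effective divisor $D$ of degree $d_a$, disjoint from $p$, chosen so that $\h^0(E[p \to F](-D)) = 0$ and so that $E(-D)$ is nonspecial or has no global sections. The former is achievable by interpolation for $E[p \to F]$, since the displayed inequality gives $d_a \ge \lceil \h^0(E[p \to F])/n \rceil$. The latter follows from interpolation for $E$ once one notices that $d_a$ equals either $\lceil \chi(E)/n \rceil$ or $\lceil \chi(E)/n \rceil - 1$ (because $0 \le \rank(E/G_b) \le n$): the first value gives $d_a \ge \lceil \h^0(E)/n \rceil$ hence $\h^0(E(-D)) = 0$, the second gives $d_a \le \lfloor \h^0(E)/n \rfloor$ hence $\h^1(E(-D)) = 0$. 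Let $W \subseteq E|_p$ be the image of evaluation at $p$ on $\H^0(E(-D))$, and $\bar W \subseteq E/F|_p$ its image modulo $F|_p$. The key estimate is $\dim \bar W \le \rank(E/G_b)$: it is trivial if $\h^0(E(-D)) = 0$, and otherwise $E(-D)$ is nonspecial, so
\[
\dim \bar W \le \h^0(E(-D)) = \chi(E) - n\,d_a \le \chi(E) - \chi(E[p \to G_b]) = \rank(E/G_b) .
\]

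Now I would choose $b$. Because $\dim \bar W + \rank(G_b/F) \le \rank(E/G_b) + \rank(G_b/F) = \rank(E/F) = \dim(E/F|_p)$, the assumed linear generality of $\{ G_b/F|_p \}$ in $E/F|_p$ produces a $b$ with $\bar W$ transverse to $G_b/F|_p$; the dimension inequality then forces both $\bar W \cap (G_b/F|_p) = 0$ and $\bar W + G_b/F|_p = E/F|_p$. For this $b$: the cohomology sequence of the second exact sequence, twisted by $-D$, identifies $\H^0(E[p \to G_b](-D))/\H^0(E[p \to F](-D))$ with $\bar W \cap (G_b/F|_p) = 0$, so $\h^0(E[p \to G_b](-D)) = \h^0(E[p \to F](-D)) = 0$ --- the first interpolation condition. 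For the second, let $D' \subseteq D$ be general of degree $d_b$ (so $D' = D$ when $d_b = d_a$); then the image of evaluation at $p$ on $\H^0(E(-D'))$ contains $W$, hence spans $E|_p$ together with $G_b|_p$, so $\H^0(E(-D')) \to (E/G_b)|_p$ is surjective; and $d_b \le \lfloor \h^0(E)/n \rfloor$, so interpolation for $E$ gives $\h^1(E(-D')) = 0$. The cohomology sequence of the first exact sequence, twisted by $-D'$, then gives $\h^1(E[p \to G_b](-D')) = 0$.

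The hard part is the estimate $\dim \bar W \le \rank(E/G_b)$, and the fact that transversality against $\bar W$ --- as supplied by linear generality --- simultaneously kills the intersection needed for the $\h^0$-vanishing and preserves the spanning needed for the $\h^1$-vanishing. This all rests on the chain of inequalities among $\chi(E)$, $\chi(E[p \to G_b])$ and $\chi(E[p \to F])$ (governed by the ranks of the respective quotients) together with $\lceil \chi/n \rceil - \lfloor \chi/n \rfloor \le 1$, which pin down $d_a$ and $d_b$ sharply enough to apply interpolation for $E$ and for $E[p \to F]$ at exactly the right degrees. The leftover points --- that the various ``general divisor'' requirements on $D$ (and those inherited by $D' \subseteq D$) can be met in a single component of $\Sym^{d_a} C$ and away from $p$, and the displayed cohomological identification --- are routine.
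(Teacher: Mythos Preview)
There is a genuine gap in your $h^1$-vanishing step. You assert that transversality of $\bar W$ and $G_b/F|_p$, together with your inequality $\dim \bar W + \rank(G_b/F) \le \rank(E/F)$, forces $\bar W + G_b/F|_p = E/F|_p$. But transversality combined with a $\le$-inequality on the dimensions only yields $\bar W \cap (G_b/F|_p) = 0$; spanning would require the reverse inequality. In fact your own estimate shows the inequality is typically strict: whenever $n \nmid \chi(E[p\to G_b])$ and $\h^1(E(-D)) = 0$, you get $\dim \bar W \le \h^0(E(-D)) = \chi(E) - n d_a < \rank(E/G_b)$. So the surjectivity of $\H^0(E(-D')) \to (E/G_b)|_p$ is unjustified, and the $h^1$-condition at degree $d_b$ is not established. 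A related problem is circularity: $d_a$ and $d_b$ depend on $\rank(E/G_b)$, hence on $b$, yet you fix $D$ (and therefore $\bar W$) before choosing $b$. This is harmless if all $G_b$ have the same rank, but the proposition does not assume that.

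The paper avoids both issues by working degree by degree rather than singling out $d_a, d_b$. For general $D_d$ of each degree $d$: if $\h^0(E(-D_d)) = 0$ then $\h^0(E[p\to G_b](-D_d)) = 0$ for every $b$; if $\h^1(E[p\to F](-D_d)) = 0$ then $\h^1(E[p\to G_b](-D_d)) = 0$ for every $b$. The only remaining case is $\h^1(E(-D_d)) = 0$, $\h^0(E[p\to F](-D_d)) = 0$, with neither of the easy vanishings; this forces $\chi(E[p\to F]) < nd < \chi(E)$, hence occurs for at most one $d$. At that single degree, $\H^0(E(-D_d))$ embeds in $E/F|_p$, and linear generality supplies $b$ with $G_b/F|_p$ transverse to its image; then the composite $\H^0(E(-D_d)) \to (E/G_b)|_p$ has maximal rank, yielding either $\h^0 = 0$ or $\h^1 = 0$ without needing to know which. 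The key differences from your attempt: only one degree requires a choice of $b$, that choice is against a subspace independent of $b$, and one never needs both trivial intersection and spanning simultaneously.
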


\begin{proof}
  We can assemble two copies of sequence \labelcref{E:modification-ses} into the following diagram with exact rows and columns.
      \[\xymatrix{
        &&& 0 \ar[d] \\
        & 0 \ar[d] && G_b/F|_p \ar[d] \\
        0 \ar[r] & E[p \to F] \ar[r] \ar[d] & E \ar[r] \ar@{=}[d] & E/F|_p \ar[r] \ar[d] & 0 \\
        0 \ar[r] & E[p \to G_b] \ar[r] \ar[d] & E \ar[r] & E/G_b|_p \ar[r] \ar[d] & 0 \\
        & G_b/F|_p \ar[d] && 0 \\
        & 0
      }\]
  Given a divisor $D$, we twist the entire sequence by $-D$ and take cohomology. For $A = E/F$, $E/G_b$, and $G_b/F$, there are induced isomorphisms $A(-D)|_p \cong A|_p$, so we will use the latter. We will also avoid the $\H^0$-functor in front of skyscraper sheaves supported on a point. The operation we described leads to the following diagram.
      \[\xymatrix@C=0.13in{
        &&& G_b/F|_p \ar[d] \\
        0 \ar[r] & \H^0(E[p \to F](-D)) \ar[r] \ar[d] & \H^0(E(-D)) \ar[r] \ar@{=}[d] & E/F|_p \ar[r] \ar[d] & \H^1(E[p \to F](-D)) \ar[r] \ar@{->>}[d] & \H^1(E(-D)) \ar[r] \ar@{=}[d] & 0 \\
        0 \ar[r] & \H^0(E[p \to G_b](-D)) \ar[r] & \H^0(E(-D)) \ar[r] & E/G_b|_p \ar[r] & \H^1(E[p \to G_b](-D)) \ar[r] & \H^1(E(-D)) \ar[r] & 0    
      }\]

  Now that we have described the basic tools we need, we can proceed with the proof. For each $d \geq 1$, choose a divisor $D_d$ of degree $d$ such that
  \[
  \h^0(A(-D_d)) = 0
  \qquad\textrm{or}\qquad
  \h^1(A(-D_d)) = 0
  \]
  for $A = E$ and $A = E[p \to F]$ (\cref{T:interpolation-h0-h1}). Note that each value $d$ falls in one of three cases:
  \begin{enumerate}
  \item
    \label{T:linearly-general-interpolation-1}
    $\h^0(E(-D_d)) = 0$,
  \item
    \label{T:linearly-general-interpolation-2}
    $\h^1(E(-D_d)) = 0$ and $\h^1(E[p \to F](-D_d)) = 0$, or
  \item
    \label{T:linearly-general-interpolation-3}
    $\h^1(E(-D_d)) = 0$ and $\h^0(E[p \to F](-D_d)) = 0$.
  \end{enumerate}
  With the aid of the diagram above, case \ref{T:linearly-general-interpolation-1} implies $\h^0(E[p \to G_b](-D_d)) = 0$, and case \ref{T:linearly-general-interpolation-2} implies $\h^0(E[p \to G_b](-D_d)) = 0$.

  Note that our argument so far works for all $b \in B$. The handling of case \ref{T:linearly-general-interpolation-3} requires a choice of $b$. Fortunately, there can be at most one value of $d$ which satisfies this case. First observe that $\H^0(E(-D_d)) \rarr E/F|_p$ is an inclusion, and we choose $b \in B$ so that $G_b/F|_p$ is transverse to $\H^0(E(-D))$. It follows that the composition $\H^0(E(-D)) \rarr E/F|_p \rarr E/G_b|_p$ has maximal rank. Injectivity and surjectivity respectively imply $\h^0(E[p \to G_b](-D_d)) = 0$ and $\h^1(E[p \to G_b](-D_d)) = 0$.
\end{proof}

\begin{proposition}
  \label{T:interpolation-twist-up}
  If $E$ satisfies interpolation, and $D$ is any effective Cartier divisor, then $E(D)$ satisfies interpolation.
\end{proposition}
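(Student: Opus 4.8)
The plan is to reduce to the case of a single point, and then to exploit the characterization of interpolation in \cref{T:interpolation-h0-h1}. First I would note that writing $D = p_1 + p_2 + \cdots + p_k$ as a sum of (not necessarily distinct) points in $C_\sm$, it suffices by induction to treat the case $D = p$ a single smooth point; the general case then follows since $E(D) = E(p_1)(p_2)\cdots(p_k)$ and at each stage the hypothesis "satisfies interpolation" is propagated. (If $\Supp D$ is not contained in the smooth locus one still reduces to a point, but since interpolation only involves twisting down by points in $C_\sm$, the relevant constructions only ever see smooth points.)

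So fix a smooth point $p$, assume $E$ satisfies interpolation, and aim to show $E(p)$ does. The key observation is that $E(p)$ is itself a modification: we have $E = E(p)[p \to 0]$ by \cref{T:modification-basic-examples} (applied with the divisor $p$ and subbundle $0$ on $E(p)$), equivalently $E(p) = E[p \to E(p)|_{\text{near }p}]$ read the other way — more usefully, the defining sequence \labelcref{E:modification-ses} for $E = E(p)[p\to 0]$ reads
\[
0 \to E \to E(p) \to (E(p)/0)|_p \to 0,
\]
i.e. $0 \to E \to E(p) \to E(p)|_p \to 0$. Twisting by $-D$ for an effective divisor $D$ supported on $C_\sm$ and passing to cohomology gives the long exact sequence relating $\H^i(E(-D))$, $\H^i(E(p-D))$, and the $n$-dimensional skyscraper $E(p)|_p$ (using $E(p-D)|_p \cong E(p)|_p$ after the twist). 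Concretely: $\h^0(E(p-D)) \leq \h^0(E(-D)) + n$ always, and if $p \notin \Supp D$ then $\h^1(E(p-D)) \leq \h^1(E(-D))$, with $\h^0(E(p-D)) = 0$ whenever $\h^0(E(-D)) = 0$ and $p \notin \Supp D$.

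Now I would run the criterion of \cref{T:interpolation-h0-h1-two} (or \cref{T:interpolation-h0-h1} together with \cref{T:interpolation-two-d}). Note $\chi(E(p)) = \chi(E) + n$ by \cref{T:chi-modifications}\labelcref{T:chi-modifications-b}, so $\lfloor \h^0(E(p))/n\rfloor = \lfloor \h^0(E)/n\rfloor + 1$ and similarly for the ceiling (using nonspeciality of $E$, hence $\h^0(E) = \chi(E)$, and nonspeciality of $E(p)$, which I verify below). Given $d \geq 1$, pick by interpolation for $E$ a degree-$(d-1)$ effective divisor $D_0$ in some component of $\Sym^{d-1}C$, general there, with $\h^0(E(-D_0)) = 0$ or $\h^1(E(-D_0)) = 0$; set $D = D_0 + p'$ for $p'$ a general point of $C_\sm$, which I may take disjoint from $p$. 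In the $\h^0$ case, $\h^0(E(-D_0 - p')) = 0$ too, and then $\h^0(E(p - D_0 - p')) \leq \h^0(E(-D_0-p')) + n$... — here I need the sharper bound. The cleaner route: choose instead $D = D_0$ directly and compare degrees. Since twisting up by $p$ shifts the relevant divisor degrees by exactly one, interpolation for $E$ at degree $d-1$ gives exactly what interpolation for $E(p)$ needs at degree $d$: a general degree-$(d-1)$ divisor $D$ with $\h^1(E(-D)) = 0$ yields (for $p\notin\Supp D$, which holds generically) $\h^1(E(p - (D+p)))$... no — I want $\h^1(E(p)(-D'))=0$ for $D'$ of degree $d$; take $D' = D + p$ with $D$ of degree $d-1$ satisfying $\h^1(E(-D)) = 0$, and use $E(p)(-D') = E(p)(-D-p) = E(-D)$, so $\h^1(E(p)(-D')) = \h^1(E(-D)) = 0$. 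Likewise $\h^0(E(p)(-D')) = \h^0(E(-D)) = 0$ when $\h^0(E(-D)) = 0$. Thus the identity $E(p)(-D-p) = E(-D)$ is the whole point: it transports each divisor witnessing interpolation for $E$ at degree $d-1$ to one witnessing it for $E(p)$ at degree $d$, and one checks the degree bookkeeping ($d$ versus $\h^0(E(p))/n$) matches because $\chi$ and hence $\h^0$ jump by $n$. Finally, nonspeciality of $E(p)$: from $0 \to E \to E(p) \to E(p)|_p \to 0$ and $\h^1(E) = 0$ we get $\h^1(E(p)) = 0$.

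The main obstacle is purely the degree bookkeeping: one must check that the shift $d \mapsto d-1$ in divisor degree is exactly compatible with the shift $\h^0(E) \mapsto \h^0(E) + n$ in the two threshold values $\lfloor \h^0/n\rfloor$ and $\lceil \h^0/n\rceil$ appearing in \cref{T:interpolation-h0-h1-two}, and to handle the boundary case $d = 1$ (where the degree-$(d-1) = 0$ divisor is empty and one simply uses $\h^0(E) = \chi(E) \geq 0$, $\h^1(E) = 0$) separately. None of this is hard, but it is where care is needed; everything else is the formal identity $E(p)(-D-p) \cong E(-D)$ together with the cohomology sequence of \labelcref{E:modification-ses}.
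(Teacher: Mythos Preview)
Your proposal is correct and rests on the same identity as the paper's proof: $E(D)(-D') = E(D-D')$, used to transport interpolation witnesses for $E$ to witnesses for $E(D)$ by choosing $D'$ containing $D$ (or a piece of it). The organizational difference is that you reduce to $D = p$ a single point by induction and then, for each $d \geq 1$, take $D' = D_0 + p$ from a degree-$(d-1)$ witness $D_0$ for $E$; the paper instead treats $D$ all at once and splits into three cases ($d > \deg D$, $d = \deg D$, $d < \deg D$), the last handled by taking $D'$ a subdivisor of a fixed $D_0$ supported on $C_\sm$ and noting $\h^1$ does not increase under positive twists. Your induction avoids that third case entirely, which is mildly cleaner.

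One point worth tightening: when $p$ lies in the singular locus, your constructed witness $D_0 + p$ is not supported on $C_\sm$ and so does not literally meet the requirement of \cref{T:interpolation}. The fix---which the paper also invokes, for its $d = \deg D$ case---is openness of the vanishing condition: perturb $D_0 + p$ to a nearby divisor supported on $C_\sm$. Your parenthetical gestures at this but does not quite say it; making it explicit would close the gap.
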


\begin{proof}
  We need to show that for every degree $d$, there exists a divisor $D'$ of degree $d$ such that either $\h^0(E(D - D')) = 0$ or $\h^1(E(D - D')) = 0$. If $d > \deg D$, take $D' = D + D''$ such that $\h^0(E(-D'') = 0$ or $\h^1(E(-D'')) = 0$ from the interpolation of $E$. If $d = \deg D$, take $D' = D$ and note that $E$ is nonspecial. Since $\h^1(E(D - D')) = 0$ is an open condition in $D'$, it follows that there exists some $D' = D_0$ supported on $C_\sm$ such that $\h^1(E(D - D_0)) = 0$. The interesting case is $d < \deg D$. If we choose an effective divisor $D' \leq D_0$, then $\h^1(E(D - D')) = 0$ follows from $\h^1(E(D - D_0)) = 0$.
\end{proof}

We now study several strengthenings and partial converses to the above results,
subject to additional hypotheses --- \emph{including the irreducibility of $C$,
  which we suppose for the remainder of this section}.

We have already investigated interpolation and twisting up (see \cref{T:interpolation-twist-up}). The following result provides a partial converse; note that the base curve $C$ needs to be irreducible and $\chi(E)$ is relatively large.

\begin{proposition}
  \label{T:interpolation-twist-down}
  Let $E$ be a vector bundle on an irreducible curve $C$, and $D$ an effective divisor on $C$. If
  \begin{enumeratea}
  \item
    $E(D)$ satisfies interpolation, and
  \item
    $\chi(E) \geq \genus(C) \rank(E)$,
  \end{enumeratea}
  then $E$ also satisfies interpolation.
\end{proposition}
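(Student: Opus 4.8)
The plan is to deduce interpolation for $E$ by checking the two conditions of \cref{T:interpolation-h0-h1-two}, and to obtain these by transporting the corresponding conditions for $E(D)$ through the dictionary ``general effective divisor $\leftrightarrow$ general line bundle'' that is available in degrees $\geq \genus(C)$. Write $n = \rank(E)$, $\gamma = \genus(C)$, $k = \deg(D)$; I may assume $k \geq 1$, since otherwise $E = E(D)$. Hypothesis (b) gives $\chi(E) \geq \gamma n \geq 0$, so by \cref{T:interpolation-h0-h1-two} it is enough to prove that a general effective divisor $D'$ of degree $d^+ := \lceil \chi(E)/n\rceil$ has $\h^0(E(-D')) = 0$, and that a general effective divisor $D'$ of degree $d^- := \lfloor \chi(E)/n\rfloor$ has $\h^1(E(-D')) = 0$. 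The crucial consequence of (b) is that $d^+, d^- \geq \gamma$, and hence also $d^+ + k, d^- + k \geq \gamma$.

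The first step I would carry out is the observation that for any vector bundle $F$ on $C$ and any $\ell \geq \gamma$, a general effective divisor of degree $\ell$ (supported on $C_\sm$) has a general line bundle class in $\operatorname{Pic}^\ell(C)$. Indeed every $L$ of degree $\ell$ has $\h^0(L) \geq \chi(L) = \ell + 1 - \gamma \geq 1$; as $C$ is reduced and irreducible a nonzero section of $L$ is a nonzerodivisor, so its vanishing scheme is an effective Cartier divisor representing $L$, and for general $L$ this divisor may be chosen on $C_\sm$ — so $\Sym^\ell(C_\sm) \rarr \operatorname{Pic}^\ell(C)$ is dominant (this is the classical Abel--Jacobi surjectivity when $C$ is smooth). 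Since $\h^i(F(-D'))$ depends only on $[D'] \in \operatorname{Pic}^\ell(C)$, semicontinuity over $\operatorname{Pic}^\ell(C)$ shows that ``$\h^i(F(-D')) = 0$ for a general effective divisor $D'$ of degree $\ell$'' is equivalent to ``$\h^i(F \otimes N^{-1}) = 0$ for a general $N \in \operatorname{Pic}^\ell(C)$''.

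Next I would feed this through $E(D)$. Applying \cref{T:interpolation-h0-h1-two} to $E(D)$ (which satisfies interpolation and has $\chi(E(D)) \geq 0$), a general effective divisor of degree $\lceil \chi(E(D))/n\rceil = d^+ + k$ kills $\h^0(E(D)(-\,\cdot\,))$, and a general effective divisor of degree $\lfloor \chi(E(D))/n\rfloor = d^- + k$ kills $\h^1(E(D)(-\,\cdot\,))$; since $d^\pm + k \geq \gamma$, the previous step turns these into $\h^0(E(D) \otimes N^{-1}) = 0$ for general $N \in \operatorname{Pic}^{d^+ + k}(C)$ and $\h^1(E(D) \otimes N^{-1}) = 0$ for general $N \in \operatorname{Pic}^{d^- + k}(C)$. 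Finally, $E(-D') = E(D) \otimes \Oc_C(D + D')^{-1}$, and since $d^\pm \geq \gamma$ the map $D' \mapsto [D + D']$ sends a general effective divisor of degree $d^\pm$ to a general point of $\operatorname{Pic}^{d^\pm + k}(C)$ — it is a translate of the dominant map from the previous step. Hence $\h^0(E(-D')) = 0$ for general $D'$ of degree $d^+$ and $\h^1(E(-D')) = 0$ for general $D'$ of degree $d^-$, and \cref{T:interpolation-h0-h1-two} (applied to $E$, using $\chi(E) \geq 0$) gives that $E$ satisfies interpolation.

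The only non-formal content — and the step I expect to matter most — is arranging the degree bookkeeping so that both $d^\pm$ (needed for the conclusion about $E$) and $d^\pm + k$ (where interpolation of $E(D)$ is invoked) lie in the regime $\ell \geq \gamma$ where the divisor/line-bundle dictionary holds; hypothesis (b) is exactly what forces $d^\pm \geq \gamma$, and hypothesis (a) supplies the vanishing for $E(D)$ at degrees $d^\pm + k$. The irreducibility of $C$ enters essentially here: on a reducible curve a line bundle of large degree need not be effective and $\Sym^\ell C \rarr \operatorname{Pic}^\ell(C)$ need not be dominant, so the argument genuinely fails without it.
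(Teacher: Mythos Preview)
Your proof is correct and follows essentially the same approach as the paper's: both use \cref{T:interpolation-h0-h1-two}, the dominance of $\Sym^\ell C_\sm \rarr \operatorname{Pic}^\ell(C)$ for $\ell \geq \genus(C)$ (which is exactly what hypothesis~(b) guarantees at $\ell = d^\pm$), and interpolation of $E(D)$ at degree $d^\pm + \deg D$. The paper's version is slightly more streamlined --- it exhibits a single line bundle $L = \Oc_C(D'' - D)$ rather than passing through $\operatorname{Pic}^{d^\pm + k}(C)$ --- but the content is the same.
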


\begin{proof}
  Since interpolation is an open condition, we may replace $D$ by a divisor supported on the smooth locus of $C$.

  By \cref{T:interpolation-h0-h1-two}, we only need to show that $\h^1(E(-D')) = 0$ for a general divisor $D'$ of degree $\lfloor \chi(E)/\rank(E) \rfloor$, and $\h^0(E(-D')) = 0$ for general $D'$ of degree $\lceil \chi(E)/\rank(E) \rceil$. Since the arguments are analogous, we will focus on the first case.

  For convenience, set $d = \lfloor \chi(E)/\rank(E) \rfloor$ and $g = \genus(C)$. Since $d \geq g$, the Riemann-Roch theorem implies that the natural map $\Sym^d C \rarr \Pic^d C$ is dominant; hence, it suffices to show that $\h^1(E \otimes L^{\vee}) = 0$ for a line bundle $L$ of degree $d$. Since $E(D)$ satisfies interpolation, we know that there exists a divisor $D''$ of degree $d+ \deg(D)$ such that $\h^1(E(D - D'')) = 0$. Taking $L = \Oc_C(D'' - D)$ completes the argument.
\end{proof}

    \begin{remark}
      \label{T:interpolation-twist-down-smooth}
      Suppose we have a family of curves $\pi \cn \Cc \rarr B$ whose central fiber $\Cc_0 = \pi^{-1}(0)$ is reducible but the general fiber is irreducible. If $\Ec$ is a vector bundle on $\Cc$ whose restriction $\Ec_0$ to $\Cc_0$ satisfies the hypotheses of \cref{T:interpolation-twist-down}, then the general fiber $\Cc_b = \pi^{-1}(b)$ together with $\Ec_b = \Ec|_{\Cc_b}$ also satisfy these conditions. While we cannot conclude that $\Ec_0$ satisfies interpolation, the general bundle $\Ec_b$ does satisfy interpolation. This will be sufficient for our needs in this paper.
    \end{remark}

To study the behavior of interpolation under modifications without any assumption of linear generality,
we will need to investigate
the behavior of interpolation in exact sequences\versionedtext{, and introduce the notion of positive modifications}{}.
We begin by considering a short exact sequence
\begin{equation}
  \label{E:ses-FGH}
  \vcenter{\xymatrix{
      0 \ar[r] &
      F \ar[r] &
      G \ar[r] &
      H \ar[r] &
      0
    }}
\end{equation}
of vector bundles over an irreducible curve $C$. Given a Cartier divisor $D$, we twist back by $D$ and consider associated the long exact sequence in cohomology.
\begin{equation}
  \label{E:les-delta}
  \vcenter{\xymatrix@R=0.1in{
      0 \ar[r] &
      \H^0(F(-D)) \ar[r] &
      \H^0(G(-D)) \ar[r] &
      \H^0(H(-D)) \ar[r]^-{\delta_D} & \\
      \ar[r]^-{\delta_D} &
      \H^1(F(-D)) \ar[r] &
      \H^1(G(-D)) \ar[r] &
      \H^1(H(-D)) \ar[r] &
      0
    }}
\end{equation}
We will use $\delta_D \cn \H^0(H(-D)) \rarr \H^1(F(-D))$ to denote the only non-trivial connecting homomorphism. Our first result allows us to transfer interpolation from the edges $F$ and $H$ to the middle term $G$.

\begin{proposition}
  \label{T:interpolation-ses}
  Let $F$, $G$, and $H$ be as above. If $F$ and $H$ satisfy interpolation, then $G$ satisfies interpolation if and only if
  \begin{enumeratea}
  \item
    \label{T:interpolation-ses-a}
    $\h^0(F)/\rank(F) \leq \lfloor \h^0(H)/\rank(H) \rfloor + 1$, and
  \item
    \label{T:interpolation-ses-b}
    for every $d \geq 1$ and a general effective divisor $D$ of degree $d$, the boundary map $\delta_D$ has maximal rank (i.e., it is either injective or surjective).
  \end{enumeratea}
\end{proposition}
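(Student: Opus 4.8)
The plan is to read off everything from the long exact sequence \labelcref{E:les-delta}. Writing $\rho_D = \rank \delta_D$, that sequence yields, for every Cartier divisor $D$,
\[
\h^0(G(-D)) = \h^0(F(-D)) + \h^0(H(-D)) - \rho_D, \qquad \h^1(G(-D)) = \h^1(F(-D)) + \h^1(H(-D)) - \rho_D.
\]
Setting $D = 0$ and using $\h^1(F) = \h^1(H) = 0$ shows $\h^1(G) = 0$, so $G$ is automatically nonspecial; hence, by \cref{T:interpolation-h0-h1} together with the irreducibility of $C$ (which makes every $\Sym^d C$ irreducible), ``$G$ satisfies interpolation'' means precisely that for each $d \geq 1$ a general effective divisor $D$ of degree $d$ has $\h^0(G(-D)) = 0$ or $\h^1(G(-D)) = 0$. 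I will also set $\alpha = \lceil \h^0(F)/\rank(F)\rceil$ and $\beta = \lfloor \h^0(H)/\rank(H)\rfloor$. Interpolation for $F$ and $H$ (with semicontinuity) makes $\h^0(F(-D)) = \max(0, \h^0(F) - d\rank(F))$ and $\h^0(H(-D)) = \max(0, \h^0(H) - d\rank(H))$ for general $D$ of degree $d$ (and, via $\chi$, also pins down $\h^1$), so $\alpha$ is the least $d$ with $\h^0(F(-D)) = 0$ generically, $\beta$ the greatest with $\h^1(H(-D)) = 0$ generically, and a short floor/ceiling check shows hypothesis (a) is equivalent to $\alpha \leq \beta + 1$, i.e.\ to there being no integer $d$ with $\beta < d < \alpha$.

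For the ``if'' direction, assume (a) and (b), fix $d \geq 1$, and choose $D$ of degree $d$ general enough to simultaneously realize the generic values of all the $\h^i(F(-D))$, $\h^i(H(-D))$ and, by (b), to make $\rho_D = \min(\h^0(H(-D)), \h^1(F(-D)))$; this is allowed since each is a dense open condition on the irreducible variety $\Sym^d C$. Feeding this into the two displayed formulas and splitting on the sign of $\h^0(H(-D)) - \h^1(F(-D))$ gives: if $\h^0(H(-D)) \leq \h^1(F(-D))$ then $\h^0(G(-D)) = \h^0(F(-D))$, which vanishes for $d \geq \alpha$, while if $d < \alpha$ then $\h^1(F(-D)) = 0$, whence also $\h^0(H(-D)) = 0$ and $\h^1(G(-D)) = \h^1(H(-D))$, which vanishes since (a) forces $d \leq \beta$; and if $\h^0(H(-D)) > \h^1(F(-D))$ then $\h^0(H(-D)) > 0$ forces $d \leq \beta$, so $\h^1(G(-D)) = \h^1(H(-D)) = 0$. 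Either way one of $\h^0(G(-D))$, $\h^1(G(-D))$ vanishes, so $G$ satisfies interpolation.

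For the converse, suppose $G$ satisfies interpolation. If (a) failed, i.e.\ $\alpha \geq \beta + 2$, pick an integer $d$ with $\beta + 1 \leq d \leq \alpha - 1$ (such $d \geq 1$ exists since $\beta \geq 0$); then $\chi(F(-D)) = \h^0(F) - d\rank(F) > 0$ and $\chi(H(-D)) = \h^0(H) - d\rank(H) < 0$ for \emph{every} degree-$d$ divisor $D$, so $\h^0(F(-D)) \geq \chi(F(-D)) > 0$ and $\h^1(H(-D)) \geq -\chi(H(-D)) > 0$, and the inclusion $\H^0(F(-D)) \hookrightarrow \H^0(G(-D))$ and the surjection $\H^1(G(-D)) \to \H^1(H(-D))$ then force $\h^0(G(-D)) > 0$ and $\h^1(G(-D)) > 0$ for all such $D$ --- contradicting interpolation for $G$ at degree $d$. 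Similarly, if (b) failed at some $d$, then on a dense open of $\Sym^d C$ one would have $\rho_D < \h^0(H(-D))$ and $\rho_D < \h^1(F(-D))$, and the displayed formulas would give $\h^0(G(-D)) > 0$ and $\h^1(G(-D)) > 0$ on that dense open, again contradicting interpolation for $G$. Hence both (a) and (b) hold.

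The cohomological part is routine once the two rank formulas are written down; the step I expect to need the most care is the genericity input --- selecting a single general divisor of degree $d$ that simultaneously witnesses the interpolation divisors for $F$ and for $H$ and the maximal-rank locus of $\delta_D$, and, in the converse, detecting failure of interpolation for $G$ on a dense open. This is exactly where the irreducibility of $C$ enters, which is why it is imposed for the remainder of the section; and the floor/ceiling bookkeeping identifying (a) with $\alpha \leq \beta + 1$ and isolating the ``gap'' degrees must be carried out carefully to handle the cases where $\h^0(F)/\rank(F)$ or $\h^0(H)/\rank(H)$ is an integer.
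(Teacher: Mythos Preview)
Your proof is correct and follows essentially the same approach as the paper: both read everything off the long exact sequence \labelcref{E:les-delta}, use irreducibility of $C$ to choose a single general $D$ witnessing all the needed vanishings simultaneously, and handle the converse via the Euler-characteristic inequalities at a ``gap'' degree. The only organizational difference is that the paper splits the ``if'' direction into four cases according to which of $\h^0(F(-D))$, $\h^1(F(-D))$, $\h^0(H(-D))$, $\h^1(H(-D))$ vanish, whereas you split into two cases according to which side of $\min(\h^0(H(-D)),\h^1(F(-D)))$ realizes $\rho_D$; and the paper proves (b) directly from interpolation of $G$ rather than by contraposition.
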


\begin{proof}
  First, assume that $G$ satisfies interpolation. By \cref{T:interpolation-h0-h1}, this means that for a general effective $D$, either $\h^0(G(-D)) = 0$ or $\h^1(G(-D)) = 0$. Using sequence \labelcref{E:les-delta}, the first case implies that $\h^0(F(-D)) = 0$ and $\delta_D$ is injective, and the second that $\h^1(H(-D)) = 0$ and $\delta_D$ is surjective. In particular, we have demonstrated condition (\ref{T:interpolation-ses-b}) stating that $\delta_D$ has maximal rank.

  Condition (\ref{T:interpolation-ses-a}) is a little more interesting. Its negative asserts there exists an integer $d$ such that
  \begin{equation} \label{no-integer-inbetween}
    \frac{\h^0(H)}{\rank(H)} < d < \frac{\h^0(F)}{\rank(F)}.
  \end{equation}
  For contradiction, assume such an integer $d$ exists. Let $D$ be a general effective divisor of degree $d$ and consider the associated sequence \labelcref{E:les-delta}. The first side of the inequality implies
  \[
  \h^0(H(-D)) = 0
  \qquad\textrm{and}\qquad
  \h^1(H(-D)) > 0,
  \]
  while the second side implies
  \[
  \h^0(F(-D)) > 0
  \qquad\textrm{and}\qquad
  \h^1(F(-D)) = 0.
  \]
  Then
  \[
  \h^0(G(-D)) = \h^0(F(-D)) > 0
  \qquad\textrm{and}\qquad
  \h^1(G(-D)) = \h^1(H(-D)) > 0,
  \]
  which contradicts our hypothesis that $G$ satisfies interpolation by \cref{T:interpolation-h0-h1}.
  
  The reverse direction also follows by inspecting sequence \labelcref{E:les-delta}. Let us pick an integer $d \geq 1$ and consider a general effective divisor $D$ of degree $d$. We are given that $F$ and $H$ satisfy interpolation, so the argument can be split in four cases.
  \begin{description}
  \item[Case 1, $\h^0(F(-D)) = 0$ and $\h^0(H(-D)) = 0$.] ${}$ \\
    Since $\H^0(G(-D))$ sits between two zeros, it must also be zero.

  \item[Case 2, $\h^1(F(-D)) = 0$ and $\h^1(H(-D)) = 0$.] ${}$ \\
    Since $\H^1(G(-D))$ sits between two zeros, it must also be zero.

  \item[Case 3, $\h^0(F(-D)) = 0$ and $\h^1(H(-D)) = 0$.] ${}$ \\
    If $\delta_D$ is injective, then $\h^0(G(-D)) = 0$. Otherwise, if $\delta_D$ is surjective, then $\h^1(G(-D)) = 0$.

  \item[Case 4, $\h^1(F(-D)) = 0$ and $\h^0(H(-D)) = 0$.] ${}$ \\
    If $\h^0(F(-D)) > 0$ and $\h^1(H(-D)) > 0$, then $d$ satisfies \cref{no-integer-inbetween}, so condition (\ref{T:interpolation-ses-a}) is violated. If either of these is zero, we fall back to one of the first three cases.
  \end{description}

  Finally, $G$ is nonspecial since $\H^1(G)$ sits between $\H^1(F) = 0$ and $\H^1(H) = 0$ in sequence \labelcref{E:les-delta} for $D = 0$. This proves that $G$ satisfies interpolation, so the converse implication is complete.
\end{proof}

\begin{remark}
  \label{T:interpolation-ses-irreducibility}
  The forward direction of \cref{T:interpolation-ses} holds without the irreducibility hypothesis on $C$, that is, if $G$ satisfies interpolation, then statements (\labelcref{T:interpolation-ses-a}) and (\labelcref{T:interpolation-ses-b}) are true.

  To construct a counterexample for the converse, consider the curve $C$ obtained by gluing two rational components $C_1$ and $C_2$ in a single point. Let $F$ be the line bundle obtained by gluing $\Oc_{C_1}$ and $\Oc_{C_2}(2)$, and let $H$ be the line bundle obtained by gluing $\Oc_{C_1}(2)$ and $\Oc_{C_1}$. Next, we will take $G = F \oplus H$. Both $F$ and $H$ are nonspecial line bundles, so they satisfy interpolation. Condition (\labelcref{T:interpolation-ses-a}) is automatically satisfied since $\h^0(F) = \h^0(H)$ by the symmetry between $F$ and $H$. Since $G$ is the direct sum of $F$ and $H$, it follows that all boundary maps $\delta_D$ are zero. To show they are of maximal rank, we need to know that either the source $\h^0(H(-D))$ or the target $\h^1(F(-D))$ is zero. Again, this is true by the symmetry between $F$ and $H$ and the fact both of them satisfy interpolation. Finally, to see that $G$ does not satisfy interpolation note that there exists no degree $3$ divisor $D$ such that $\h^0(G(-D)) = 0$ or $\h^1(G(-D)) = 0$.
\end{remark}

Specializing $F$ to a line bundle yields the following useful result.

\begin{corollary}
  \label{T:interpolation-ses-line-bundle}
  Let $F$, $G$, and $H$ be as above, and $F$ is a nonspecial line bundle. If $H$ satisfies interpolation, then $G$ satisfies interpolation if and only if
  \begin{enumeratea}
  \item
    \label{T:interpolation-ses-line-bundle-a}
    $\rank(H) (\h^0(F) - 1) \leq \h^0(H)$, and
  \item
    \label{T:interpolation-ses-line-bundle-b}
    for every $d \geq 1$ and a general divisor $D$ of degree $d$, the boundary map $\delta_D$ has maximal rank.
  \end{enumeratea}
\end{corollary}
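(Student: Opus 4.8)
The plan is to obtain this as an immediate consequence of \cref{T:interpolation-ses}. First I would observe that a nonspecial line bundle satisfies interpolation by \cref{T:interpolation-line-bundle}, so the hypothesis of \cref{T:interpolation-ses} concerning $F$ is automatically met; together with the assumption that $H$ satisfies interpolation, this places us exactly in the setting of that proposition. Applying it, $G$ satisfies interpolation if and only if conditions (\labelcref{T:interpolation-ses-a}) and (\labelcref{T:interpolation-ses-b}) of \cref{T:interpolation-ses} hold. Condition (\labelcref{T:interpolation-ses-b}) is word-for-word our condition (\labelcref{T:interpolation-ses-line-bundle-b}), so nothing further is required there.

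The only genuine content is translating condition (\labelcref{T:interpolation-ses-a}) into condition (\labelcref{T:interpolation-ses-line-bundle-a}). Setting $\rank(F) = 1$, condition (\labelcref{T:interpolation-ses-a}) reads $\h^0(F) \leq \lfloor \h^0(H)/\rank(H) \rfloor + 1$, i.e.\ $\h^0(F) - 1 \leq \lfloor \h^0(H)/\rank(H) \rfloor$. I would then use that $\h^0(F) - 1$ is an integer, so that $\h^0(F) - 1 \leq \lfloor \h^0(H)/\rank(H) \rfloor$ if and only if $\h^0(F) - 1 \leq \h^0(H)/\rank(H)$; clearing the positive denominator $\rank(H)$ yields $\rank(H)(\h^0(F) - 1) \leq \h^0(H)$, which is precisely (\labelcref{T:interpolation-ses-line-bundle-a}).

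There is no real obstacle here — the statement is a specialization of a result already proved. The one point deserving care is exactly that floor-removal step: it is valid only because the left-hand side $\h^0(F) - 1$ is an integer, which relies on $F$ having rank one, so this is where the line bundle hypothesis genuinely enters beyond supplying interpolation for $F$.
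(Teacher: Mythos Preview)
Your proposal is correct and follows essentially the same approach as the paper: invoke \cref{T:interpolation-line-bundle} so that $F$ satisfies interpolation, apply \cref{T:interpolation-ses}, and then simplify condition~(\ref{T:interpolation-ses-a}) using $\rank(F)=1$. Your explicit justification for dropping the floor is a nice touch that the paper leaves implicit.
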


\begin{proof}
  Other than simplifying the inequality in condition (\ref{T:interpolation-ses-line-bundle-a}), this result follows by noting that a line bundle satisfies interpolation if and only if it is nonspecial (\cref{T:interpolation-line-bundle}).
\end{proof}

Let us return to the short exact sequence of vector bundles \labelcref{E:ses-FGH}. \versionedtext{Defining a positive modification of $G$ takes more effort than the (negative) elementary modifications we have been working with since \cref{S:modifications-arbitrary}. Without introducing any new notation, we will construct positive modification at $p \in C_\sm$ first by twisting up to arrive at $G(np)$, and then applying an elementary modification to get
    \[
    G(np)[ n p \to F ] = G(np)[ n p \to F(np) ].
    \]
    We could have also started with the elementary modification $G[n p \to F]$ and then twisted up to obtain $G[n p \to F](np)$. The two results are naturally isomorphic by \cref{T:commuting-modifications-twists}, so we will avoid stressing the distinction for the sake of convenience.

    The reason we call $G(np)[n p \to F]$ a positive modification is the existence of a natural morphism $G \rarr G(np)[n p \to F]$. To construct this map, start by observing that both $G$ and $G(np)[n p \to F]$ admit injective maps into $G(np)$. The cokernel of the latter $G(np) \rarr H(np)_{np}$ factors through the cokernel of the former $G(np) \rarr G(np)_{np}$, so we have an inclusion $G \rarr G(np)[n p \to F]$.
    \[\xymatrix{
      & G \ar[d] \ar@{-->}[dl] \\
      G(np)[n p \to F] \ar[r] & G(np) \ar[r] \ar[d] & H(np)_{np} \\
      & G(np)_{np} \ar[ur]
    }\]
    A very similar argument shows that there is a natural inclusion $F(np) \rarr G(np)[n p \to F]$. The Snake Lemma provides an isomorphism between the cokernel of this morphism and
    \[
    H = \Ker(H(np) \rarr H(np)_{np}).
    \]
    The following diagram with exact rows summarizes our observations.}{
  Note that we have the following diagram with exact rows:}
\begin{equation}
  \label{E:ses-modification}
  \vcenter{\xymatrix{
      0 \ar[r] &
      F \ar[r] \ar[d] &
      G \ar[r] \ar[d] &
      H \ar[r] \ar@{=}[d] &
      0 \\
      0 \ar[r] &
      F(np) \ar[r] &
      G(np)[n p \to F] \ar[r] &
      H \ar[r] &
      0
    }}
\end{equation}

\versionedtext{
      If the inclusion $F \rarr G$ splits (e.g., if we work in an affine neighborhood of $p$), then the positive modification is
      \[
      G(np)[n p \to F] =
      F(np) \oplus H.
      \]
      The existence and exactness of diagram \labelcref{E:ses-modification} become immediate.
    }{}

The following proposition (when combined with Proposition~\ref{T:interpolation-twist-down} to remove the positive twist)
gives the promised result on modifications along line subbundles without any linear generality assumption.

\begin{proposition}
  \label{T:interpolation-positive}
  Consider diagram \labelcref{E:ses-modification}. If
  \begin{enumeratea}
  \item
    $F$, $G$, and $H$ satisfy interpolation,
  \item
    $F$ is a line bundle,
  \item
    the point $p \in C_{\sm}$ is general, and
  \item
    $\rank(H) (\h^0(F) + n - 1) \leq \h^0(H)$,
  \end{enumeratea}
  then $G(np)[n p \to F]$ satisfies interpolation.
\end{proposition}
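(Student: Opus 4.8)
The plan is to view $G(np)[n p \to F]$ through the lower row of diagram~\labelcref{E:ses-modification}, the short exact sequence $0 \to F(np) \to G(np)[n p \to F] \to H \to 0$, and to apply \cref{T:interpolation-ses-line-bundle} to it. Since $F$ satisfies interpolation it is a nonspecial line bundle, so by \cref{T:interpolation-twist-up} the twist $F(np)$ is again a nonspecial line bundle, and by Riemann--Roch $\h^0(F(np)) = \h^0(F) + n$. Hence hypothesis~(d) is exactly condition~(a) of \cref{T:interpolation-ses-line-bundle}, and as $H$ satisfies interpolation, it remains to verify condition~(b): for every $d \geq 1$ and a general effective divisor $D$ of degree $d$, the connecting homomorphism $\delta_D \cn \H^0(H(-D)) \to \H^1(F(np)(-D))$ of the lower row has maximal rank. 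For all but finitely many $d$ one of $\H^0(H(-D))$ or $\H^1(F(np)(-D))$ already vanishes for general $D$ (because $H$ and $F(np)$ are nonspecial), so there is nothing to prove there; it therefore suffices to handle the finitely many remaining values of $d$, for which a single general choice of $p$ will work.

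The second step is to factor $\delta_D$. Twisting diagram~\labelcref{E:ses-modification} by $-D$ and taking the associated long exact sequences in cohomology, naturality of the connecting homomorphism along the vertical maps --- whose right-hand one is the identity on $H$ --- yields $\delta_D = \pi_D \circ \delta_D^{\circ}$, where $\delta_D^{\circ} \cn \H^0(H(-D)) \to \H^1(F(-D))$ is the connecting map of the upper row $0 \to F \to G \to H \to 0$ and $\pi_D \cn \H^1(F(-D)) \to \H^1(F(np)(-D))$ is induced by the inclusion $F \hookrightarrow F(np)$. The cokernel of $F(-D) \hookrightarrow F(np)(-D)$ is a skyscraper sheaf, so $\pi_D$ is surjective. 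Since $F$, $G$, and $H$ all satisfy interpolation and $C$ is irreducible, the forward direction of \cref{T:interpolation-ses} (applied to the upper row, with $G$ playing the role of the middle term) shows that $\delta_D^{\circ}$ has maximal rank for general $D$. If $\delta_D^{\circ}$ is surjective, then so is $\delta_D = \pi_D \circ \delta_D^{\circ}$ and we are done; so we may assume $\delta_D^{\circ}$ is injective, and write $W = \delta_D^{\circ}(\H^0(H(-D))) \subseteq \H^1(F(-D))$, a subspace of dimension $\h^0(H(-D))$.

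The remaining point --- which I expect to be the main obstacle --- is that, for $p$ general, the restriction $\pi_D|_W$ has maximal rank. Put $L = K_C \otimes F^{\vee}(D)$; Serre duality identifies $\H^1(F(-D))^{\vee}$ with $\H^0(L)$, and under this identification $\ker \pi_D$ is the annihilator of $\H^0(L(-np)) \subseteq \H^0(L)$, while $W^{\perp}$ is a subspace of $\H^0(L)$ of dimension $\h^1(F(-D)) - \h^0(H(-D))$; we may assume this is positive, since otherwise $W = \H^1(F(-D))$ and $\delta_D$ is again surjective. A short diagram chase shows that $\pi_D|_W$ has maximal rank precisely when $W^{\perp}$ and $\H^0(L(-np))$ meet in the expected dimension inside $\H^0(L)$, i.e.\ when $np$ imposes the expected number $\min(n, \dim W^{\perp})$ of conditions on the nonzero linear series $W^{\perp} \subseteq \H^0(L)$. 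In characteristic zero this holds for $p$ general: choosing $\min(n, \dim W^{\perp})$ linearly independent sections of $W^{\perp}$, their generalized Wronskian is a nonzero section of a line bundle on the irreducible curve $C$, hence is nonzero at a general point $p$, and at such a point the corresponding $(n-1)$-jets at $p$ are linearly independent. This gives condition~(b) of \cref{T:interpolation-ses-line-bundle}, and with it the proposition.

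Finally, I would note a Wronskian-free alternative for the last step: reduce to $n = 1$. Using \cref{T:combining-modifications}(a) and \cref{T:commuting-modifications-twists}, one checks that the lower row of diagram~\labelcref{E:ses-modification} for the $k$-fold positive modification is the ($n = 1$) positive modification, along the line subbundle $F((k-1)p)$, of the corresponding lower row for the $(k-1)$-fold positive modification; since hypothesis~(d) for $G(np)[n p \to F]$ implies the (weaker) numerical hypotheses needed at each intermediate step, an induction on $n$ reduces the proposition to the case $n = 1$. When $n = 1$ the kernel of $\pi_D$ is at most one-dimensional, so the transversality needed above reduces to the elementary fact that a general point of the irreducible curve $C$ is not a base point of a fixed nonzero linear series.
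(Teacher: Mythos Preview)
Your main argument is correct and matches the paper's proof essentially line for line: apply \cref{T:interpolation-ses-line-bundle} to the bottom row of diagram~\labelcref{E:ses-modification}, factor the new connecting map $\delta_D$ as $\pi_D \circ \delta_D^{\circ}$, observe $\pi_D$ is surjective and $\delta_D^{\circ}$ has maximal rank (by the forward direction of \cref{T:interpolation-ses}), and then handle the injective case by Serre-dualizing and invoking the Wronskian. Your explicit remark that only finitely many values of $d$ are in play --- so that a single general $p$ suffices --- is a useful clarification the paper leaves implicit.

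The ``Wronskian-free alternative'' at the end, however, has a gap. The inductive step applies the $n=1$ case of the proposition to the triple $(F((k-1)p),\, G((k-1)p)[(k-1)p\to F],\, H)$, and that case only asserts that \emph{for a general point $q$} the next modification satisfies interpolation. You then take $q = p$, but the data to which you are applying the $n=1$ statement already depends on $p$, so there is no a~priori reason $p$ lies in the relevant open set. Concretely, unwinding your own dual description, the condition needed at step $k$ is that $p$ not be a base point of $W^{\perp}\cap\H^0(L(-(k-1)p))$; for $k\geq 2$ this linear series is no longer fixed, and the requirement that this hold for all $k\leq n$ is exactly that the vanishing sequence of the \emph{fixed} series $W^{\perp}$ at $p$ begins $0,1,\dots,\min(n,\dim W^{\perp})-1$ --- i.e.\ that $p$ is not an inflection point of $W^{\perp}$. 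Knowing this for general $p$ is precisely the nonvanishing of the Wronskian, so the induction does not actually sidestep it.
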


\begin{proof}
  Both $F(np)$ and $H$ satisfy interpolation (for the first, we apply \cref{T:interpolation-twist-up}), and
  \[
  \rank(H) (\h^0(F(np)) - 1) =
  \rank(H) (\h^0(F) + n - 1) \leq
  \h^0(H).
  \]
  To apply \cref{T:interpolation-ses-line-bundle} and conclude that $G(np)[n p \to F]$ satisfies interpolation, we need to verify that the connecting homomorphism $\delta_D' \cn \H^0(H(-D)) \rarr \H^1(F(-D + np))$ has maximal rank for $D$ a general divisor of degree $d$ and every $d \geq 1$. On the other hand, we can present $\delta_D'$ as a composition using the connecting homomorphism $\delta_D \cn \H^0(H(-D)) \rarr \H^1(F(-D))$ which has maximal rank (\cref{T:interpolation-ses-line-bundle}).
  \[\xymatrix{
    \H^0(H(-D)) \ar[r]^-{\delta_D} \ar@{=}[d] & H^1(F(-D)) \ar[d]^-{\pi} \\
    \H^0(H(-D)) \ar[r]^-{\delta_D'} & \H^1(F(-D + np))
  }\]
  Since the cokernel of $F(-D) \rarr F(-D + np)$ is supported at $p$, it has no higher cohomology, so the morphism $\pi \cn \H^1(F(-D)) \rarr \H^1(F(-D + np))$ is surjective. If $\delta_D$ is surjective, then $\delta_D'$ is automatically surjective.

  The case when $\delta_D$ is injective requires a little more work. Note that the image $V$ of $\delta_D$ is independent of the point $p$. Therefore, it suffices to show that the restriction of $\pi$ to an arbitrary fixed subspace $V$ has maximal rank.

  Set $L = K_C \otimes F(-D)^\vee$, where $K_C$ is the dualizing line bundle (which exists since $C$ is lci). The dual problem asks whether the image of the natural inclusion $\H^0(L(-np)) \rarr \H^0(L)$ intersects an arbitrary fixed space $V \subset \H^0(L)$ transversely. Since the inclusion has codimension $n$, by shrinking or enlarging $V$, it suffices to answer this question when $\dim V = n$. In turn, this is equivalent to the non-vanishing of the Wronskian associated to $V$ \cite{Wronskian}.
  (Note that this final step requires our assumption that $K$ has characteristic~$0$.)
\end{proof}

    \begin{remark}
      \label{T:interpolation-positive-higher-rank}
      It is natural to ask whether \cref{T:interpolation-positive} holds if the rank of $F$ is greater than $1$. As presented, the proof does not go through if $\rank F \geq 2$. One of the major obstacles is that the image of $\delta_D \cn \H^0(H(-D)) \rarr \H^1(F(-D))$ may be contained in $\H^1(F'(-D))$ for some proper subbundle $F' \subset F$.
    \end{remark}


\section{Elementary modifications of normal bundles}
\label{S:normal-bundles}

We defined modifications for varieties of arbitrary dimension in \cref{S:modifications-arbitrary}, and later provided curve-specific results in \cref{S:modifications-curves}. We plan to apply these ideas by using modifications of normal bundles of curves along some very specific subbundles. The aim of the present section is to introduce these subbundles and explain their properties.

Let us start with a specific example. Consider a curve $C \subset \Pbb^r$ and a point $p \in \Pbb^r$. We would like to construct a line subbundle $N_{C \to p} \subset N_C$ whose fibers consist of normal directions ``pointing to $p$''. To be more specific, choose a smooth point $q \in C_\sm$ whose tangent line $[T_q C] \subset \Pbb^r$ does not pass through $p$. The fiber $N_{C \to p}|_q \subset N_C|_q$ corresponds to the place spanned by $p$ and $[T_q C]$. Note that we made several assumptions about $C$, $p$, and $q$, so the bundle $N_{C \to p}$ may not be defined over the entire curve $C$. In what follows, we attempt to relax some of these hypotheses while carrying the construction more generally for families of curves. It is also possible to replace the point $p$ by a linear space $\Lambda \subset \Pbb^r$ of arbitrary dimension.

After providing a simple example of what the goal of this section is, we are ready to explain our constructions in full generality. Fix an ambient projective space $\Pbb^r$, and let $\Cc \subset \Pbb^r \x B$ and $\Lambda \subset \Pbb^r \x B$ respectively be flat families (over $B$) of curves and linear spaces. The projections on the second factor $B$ will be denoted by $\pi_\Cc \cn \Cc \rarr B$ and $\pi_\Lambda \cn \Lambda \rarr B$. Given a point $b \in B$, we will use $\Cc_b = \pi_\Cc^{-1}(b)$ and $\Lambda_b = \pi_\Lambda^{-1}(b)$ to denote the curve and linear space over $b$. For simplicity, we will also assume the base $B$ is reduced and connected.

We define the open set $U_{\Cc, \Lambda} \subset \Cc$ consisting of all points $p \in \Cc$ such that $[T_p \Cc_{\pi_\Cc(p)}] \subset \Pbb^r$, the projective realization of the tangent space to the curve containing $p$ at the same point, does not meet the corresponding linear space $\Lambda_{\pi_\Cc(p)}$. Note that if $p \in \Cc$ is a smooth point of the fiber $\Cc_{\pi_\Cc(p)}$, then $[T_p \Cc_{\pi_\Cc(p)}] \subset \Pbb^r$ is a line. A node point yields a 2-plane $[T_p \Cc_{\pi_\Cc(p)}]$, while other singularities may lead to even higher dimensional linear spaces. In the cases of interest, all curve singularities will be nodes.

Consider $\epsilon \cn X = \Bl_{\Lambda} (\Pbb^r \x B) \rarr \Pbb^r \x B$ and the projection $\pi_X \cn X \rarr B$. Let $E = \epsilon^{-1}(\Lambda)$ denote the exceptional divisor. Since blowing up along a flat subscheme commutes with base change, the fiber $X_b = \pi_X^{-1}(b)$ can be identified with an individual blowup $\Bl_{\Lambda_b} \Pbb^r$. Note that $p \in [T_p \Cc_{\pi_\Cc(p)}]$ for all $p \in \Cc$, so $U_{\Cc, \Lambda} \subset \Cc \setminus \Lambda$. In particular, the embedding $\Cc \subset \Pbb^r \x B$ lifts to an embedding $U_{\Cc, \Lambda} \rarr X$. (More generally, there is a lift to $\Cc \setminus \Sing \pi_\Cc$ but we will not need this fact.)

After constructing the blowup $X$, it is natural to consider the fiber-by-fiber quotient of $\Pbb^r$ by $\Lambda$. If the fibers of $\Lambda$ have dimension $k < r$, we define
\[
Y = \{ (P, b) \;|\; \Lambda_b \subset P \} \subset \Gbb(k, r) \x B
\]
with projection to the second factor $\pi_Y \cn Y \rarr B$. Similarly to the typical quotient construction, there is a morphism $f \cn X \rarr Y$ whose fibers are projective spaces of dimension $k + 1$. Furthermore $f$ descends to a rational morphism $\Pbb^r \x B \drarr Y$. We have constructed the following commutative diagram over $B$.
\[\xymatrix{
  & X \ar[dr]^-{f} \ar[d]^-{\epsilon} \\
  \Cc \ar[r] \ar[ur] & \Pbb^r \x B \ar@{-->}[r] & Y
}\]

There is a morphism of normal sheaves $N_{\Cc/X} \to N_{\Cc/Y}$. If we restrict it to the open $U_{\Cc, \Lambda}$, this becomes a morphism of vector bundles, and the kernel is locally free because the tangent spaces to $\Cc$ do not meet $\Lambda$. We will denote this kernel by $N_{\Cc \rarr \Lambda}$. It is important to stress this is a vector bundle only over the open $U_{\Cc, \Lambda}$. Note that $N_{\Cc \rarr \Lambda} \subset N_{\Cc/X}|_{U_{\Cc, \Lambda}}$ by definition. On the other hand, there is a natural isomorphism $N_{\Cc/X}|_{U_{\Cc, \Lambda}} \cong N_{\Cc/\Pbb^r \x B}|_{U_{\Cc, \Lambda}}$, so we can treat $N_{\Cc \rarr \Lambda}$ as a subbundle of the original normal bundle $N_{\Cc/\Pbb^r \x B}$.

\begin{remark}
  \label{T:N-C-Lambda-over-point}
  If the base $B$ is a point, we can extend $N_{\Cc \rarr \Lambda}$ to the entire curve $\Cc$ as long as $U_{\Cc, \Lambda} \neq \emptyset$ contains the singular locus of $\Cc$. After observing that $N_{C \rarr \Lambda} \subset N_{\Cc / \Pbb^r}|_{U_{\Cc, \Lambda}}$, this follows immediately from the curve-to-projective extension theorem.
\end{remark}

The widespread use of the bundles $N_{\Cc \rarr \Lambda}$ makes it economical to update our modification notation. First, when dealing with a family of normal bundles we will write $[D \to \Lambda]$ instead of $[D \to N_{\Cc \rarr \Lambda}]$. Second, if $Z \subset \Pbb^r \x B$ is a family of subvarieties whose fiber-by-fiber spans form a flat family $\Lambda_Z \subset \Pbb^r \x B$, we will also write $[D \to Z]$ instead of $[D \to \Lambda_Z]$.


\section{\boldmath Examples of the bundles $N_{C \to \Lambda}$}
\label{S:normal-bundles-examples}

In this section, we calculate two important examples of the bundles $N_{C \to \Lambda}$
appearing in the previous section. For this, it will be helpful to recall the notion
of an Euler vector field.

\begin{definition}
Let $p \in \Pbb V$ and $H \subset \Pbb V$ be a point and a hyperplane, which are complementary.
This gives rise to a direct sum decomposition $V \simeq \langle p \rangle \oplus \langle H \rangle$.
There is a natural $\Cbb^\times$-action on $\Pbb V$, which is induced by the action of
$\lambda \in \Cbb^\times$ on $V$ given by
\[\lambda(x + y) = \lambda x + y \quad \text{for} \quad x \in \langle p \rangle, \ y \in \langle H \rangle.\]
We then define the \emph{Euler vector field} corresponding to $p$ and $H$ as the differential
(at $\lambda = 1 \in \Cbb^\times$) of the above action.
By inspection, the Euler vector field vanishes at $p$ and along $H$, but nowhere else.
\end{definition}

\begin{proposition} \label{to-Lambda}
Let $\Lambda \simeq \Pbb^{k - 1} \subset \Pbb^r$ be a linear subspace with $k \leq r - 1$,
such that no tangent line of $C$ meets $\Lambda$. (So in particular, $C$ does not meet $\Lambda$.)
Then
\[N_{C \to \Lambda} \simeq \Oc_C(1)^k.\]
\end{proposition}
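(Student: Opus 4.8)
The plan is to write down $k$ global sections of $T_{\Pbb^r}(-1)$ coming from directions toward $\Lambda$, push them into $N_{C/\Pbb^r}$, observe that they land in $N_{C \to \Lambda}$, and check that together they furnish an isomorphism $\Oc_C(1)^{\oplus k} \simeq N_{C \to \Lambda}$. Throughout, the hypothesis that no tangent line of $C$ meets $\Lambda$ ensures $U_{C, \Lambda} = C$, so that $N_{C \to \Lambda}$ is genuinely a bundle on all of $C$.

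First I would record the fiberwise picture. Unwinding the definition $N_{C \to \Lambda} = \ker(N_{C/X} \to N_{C/Y})$, where $X \to Y$ is (the blowup of) the projection away from $\Lambda$, the relevant differential at $q \in C$ has kernel the tangent space at $q$ to the $k$-plane $\overline{q \cdot \Lambda}$ spanned by $q$ and $\Lambda$; hence
\[
N_{C \to \Lambda}|_q = \text{image of } T_q \overline{q \cdot \Lambda} \text{ in } N_{C/\Pbb^r}|_q = T_q \Pbb^r / T_q C .
\]
Since $T_q C$ is a line through $q$, it is contained in $\overline{q \cdot \Lambda}$ if and only if it meets $\Lambda$; so by hypothesis $T_q C \not\subseteq \overline{q \cdot \Lambda}$, and therefore the displayed image has dimension $\dim \overline{q \cdot \Lambda} = k$. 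In particular $N_{C \to \Lambda}$ has rank $k$.

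Next come the sections. Recall that $\H^0(T_{\Pbb^r}(-1)) \simeq K^{r+1}$, the section attached to a nonzero vector $v$ taking the value ``$v \bmod \langle \tilde q \rangle$'' in $K^{r+1}/\langle \tilde q \rangle \simeq T_{\Pbb^r}(-1)|_q$ at a point $q = [\tilde q] \ne [v]$; equivalently, writing $v \otimes \xi \leftrightarrow \xi \cdot v$, this is the Euler vector field associated to $[v]$ and any hyperplane $H$ with $[v] \notin H$, regarded as a section of $T_{\Pbb^r}(-H) \simeq T_{\Pbb^r}(-1)$. Its value at $q$ points along the line $\overline{q \cdot [v]}$. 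Choose $p_1, \dots, p_k \in \Lambda$ spanning $\Lambda$, with lifts $\tilde p_i \in K^{r+1}$. Restricting the corresponding $k$ sections of $T_{\Pbb^r}(-1)$ to $C$ (which avoids $\Lambda$) and composing with the surjection $T_{\Pbb^r}(-1)|_C \to N_{C/\Pbb^r}(-1)|_C$ produces a map
\[
\sigma \cn \Oc_C(1)^{\oplus k} \longrightarrow N_{C/\Pbb^r} ;
\]
because $\overline{q \cdot p_i} \subseteq \overline{q \cdot \Lambda}$, the value of the $i$-th section at every $q$ lies in $T_q \overline{q \cdot \Lambda}$, so by the fiber description above $\sigma$ factors through the subbundle $N_{C \to \Lambda}$.

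It remains to see that $\sigma \cn \Oc_C(1)^{\oplus k} \to N_{C \to \Lambda}$ is injective on every fiber; being a fiberwise-injective map between vector bundles of the same rank $k$, it is then an isomorphism and we are done. After the evident identification (tensoring by $\Oc_C(-1)|_q$), $\sigma$ at $q$ sends the $i$-th basis vector to the class of $\tilde p_i$ in $K^{r+1}/\widetilde{T_q C}$, where $\widetilde{T_q C} \supseteq \langle \tilde q \rangle$ is the plane over $T_q C$. Writing $W'$ for the $k$-dimensional span of the $\tilde p_i$ (the cone over $\Lambda$), these classes are independent exactly when $W' \cap \widetilde{T_q C} = 0$; and if this intersection were nonzero, then together with $\tilde q$ it would span $\widetilde{T_q C}$, giving $\widetilde{T_q C} \subseteq W' + \langle \tilde q \rangle$ and hence $T_q C \subseteq \overline{q \cdot \Lambda}$ — so $T_q C$ would meet $\Lambda$, contrary to hypothesis. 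I expect this last linear-independence check to be the only real content of the argument: it is the one place where the full hypothesis is used (rather than merely $C \cap \Lambda = \emptyset$), and it is exactly what forces all $k$ Euler sections to be simultaneously nondegenerate at every point of $C$.
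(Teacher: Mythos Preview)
Your proof is correct and follows essentially the same approach as the paper: both arguments use Euler vector fields associated to $k$ points $p_1,\dots,p_k$ spanning $\Lambda$ to produce a map $\Oc_C(1)^{\oplus k}\to N_{C\to\Lambda}$ and then check it is an isomorphism. The paper phrases this as a reduction to $k=1$ via the direct sum decomposition $N_{C\to\Lambda}\simeq\bigoplus_j N_{C\to p_j}$, while you carry out the fiberwise linear-independence check for all $k$ sections simultaneously; these are two packagings of the same idea.
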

\begin{proof}
Let $p_1, p_2, \ldots, p_k \in \Lambda$ be $k$ points in linear general position.
This gives us a direct sum decomposition
\[N_{C \to \Lambda} \simeq \bigoplus_{j = 1}^k N_{C \to p_j}.\]
Consequently, we are reduced to the case when $\Lambda = p_1 = p$ is a single point.

In this case, write $H$ for a hyperplane complementary to $p$.
Then the Euler vector field corresponding to $p$ and $H$ gives rise to a section
of $N_{C \to p}$ which vanishes precisely along the intersection of $C$ with $H$.
\end{proof}

\begin{proposition} \label{to-kp}
Let $p \in C$ be a point on $C$,
and $k \leq r - 1$ be an integer. Suppose that $p$ is not a generalized flex point,
and that no other tangent line to $C$ besides $T_p C$ meets $\overline{kp}$.
Then there is an isomorphism
\[\frac{N_{C \to kp}}{N_{C \to (k - 1)p}} \simeq \Oc_C(1)(2p).\]
\end{proposition}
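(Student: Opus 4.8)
The plan is to reduce to the case $r = k+1$ by projecting from a general $(r-k-2)$-plane, so that $\overline{kp}$ becomes a hyperplane and we are computing a quotient of successive osculating-type subbundles of a normal bundle of a curve in $\Pbb^{k+1}$; but in fact it is cleaner to argue directly, exploiting the fact that $N_{C \to kp}$ and $N_{C \to (k-1)p}$ are both sub-line-bundle-filtered pieces of $N_C$ sitting inside normal sheaves to blowups. First I would set up notation: let $Z = \Bl_{\overline{kp}} \Pbb^r$ and $Z' = \Bl_{\overline{(k-1)p}} \Pbb^r$, with the evident dominant rational maps to the quotient projective spaces $Y$ (of dimension $k+1$) and $Y'$ (of dimension $k$); there is a further rational map $Y \drarr Y'$ corresponding to projection away from the image of $p$, fitting all of these into a commutative diagram over which $C$ lifts (away from the bad locus, which by hypothesis — $p$ not a generalized flex, no other tangent line meeting $\overline{kp}$ — is empty or at worst avoided). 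Then $N_{C \to (k-1)p} \subset N_{C \to kp}$ as subbundles of $N_C$, and the quotient $N_{C \to kp}/N_{C \to (k-1)p}$ is identified with the kernel of $N_{C/Y} \to N_{C/Y'}$ (a rank-one sheaf over the locus where everything is a bundle), i.e.\ with a line bundle that I must now pin down.

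The key computational step is to produce an explicit section of this quotient line bundle and read off its zero divisor. Here I would use the Euler vector field as in the proof of \cref{to-Lambda}: choose a hyperplane $H$ complementary to $\overline{kp}$, wait — more precisely, pick a point $q \in \overline{kp}$ general and a complementary hyperplane $H_q$, and consider the Euler vector field $v_q$ on $\Pbb^r$ vanishing at $q$ and along $H_q$. Its image in $N_C$ lands in $N_{C\to \overline{kp}} = N_{C\to kp}$, and as $q$ varies over $\overline{kp} \cong \Pbb^{k-1}$ these span. To get into the quotient by $N_{C\to (k-1)p}$, I want the piece of the construction that "sees the $k$-th order behavior at $p$": concretely, the line $\overline{kp}$ is the $(k-1)$-st osculating space to $C$ at $p$ (since $p$ is not a generalized flex), and the natural section of $N_{C\to kp}/N_{C\to(k-1)p}$ comes from differentiating the family of Euler fields $v_q$ as $q$ runs along $\overline{kp}$ and approaches $p$ along the curve's osculating flag — equivalently, from the composite $\Oc_C \to T\Pbb^r|_C \to N_C$ given by a vector field adapted to the order-$k$ contact. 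Its vanishing locus should be the hyperplane section $C \cap H$ (contributing $\Oc_C(1)$) together with an extra $2p$ coming from the fact that the relevant vector field, being pinned to vanish to second order at $p$ after passing to the blowup quotient (the blowdown contracts a $\Pbb^1$ and introduces the $+2p$ discrepancy exactly as in the standard computation $N_{\Pbb^1 \subset \Pbb^r}$-type formulas). Comparing degrees, $\deg\big(N_{C\to kp}/N_{C\to (k-1)p}\big)$ must equal $\deg \Oc_C(1) + 2$, which matches $\Oc_C(1)(2p)$.

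To make the "$+2p$" rigorous rather than hand-waved, I would localize at $p$ in a coordinate system adapted to the osculating flag: choose local coordinates on $\Pbb^r$ near $p$ so that $C$ is parametrized as $t \mapsto (t, t^2, \ldots, t^r)$ up to higher order (valid since $p$ is not a generalized flex), so that $\overline{kp}$ is the coordinate line spanned by $p$ and the $k$-th coordinate direction, and $\overline{(k-1)p}$ the span of $p$ and the $(k-1)$-st. Then write down the Euler vector field(s) for $\overline{kp}$ explicitly, restrict to $C$, project to $N_C$, reduce mod the subbundle $N_{C\to (k-1)p}$, and compute the order of vanishing in $t$ of the resulting local generator; this is a direct monomial bookkeeping that should yield order $2$ at $p$ and no other local zeros, while away from $p$ the Euler-field construction (exactly as in \cref{to-Lambda}, applied to the single auxiliary point cutting out the rest of $H$) contributes the honest $\Oc_C(1)$. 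The main obstacle I anticipate is precisely this local computation — correctly identifying which vector field descends to a \emph{nonvanishing} local section of the quotient and tracking the blowup discrepancy so that the answer comes out to $2p$ and not $p$ or $3p$; everything else (the global diagram of blowups, the compatibility of the $N_{C\to\Lambda}$'s as subbundles, and the degree cross-check) is formal given the machinery of \cref{S:normal-bundles} and the proof of \cref{to-Lambda}.
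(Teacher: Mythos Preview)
Your overall strategy---Euler vector fields plus a local computation at $p$ with $C$ parametrized as $t \mapsto (t, t^2, \ldots)$---matches the paper's, but the organization differs in two places and yours is muddier in both.

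First, the paper skips the blowup/quotient-space framework entirely. It simply observes that for any $q \in \overline{kp} \smallsetminus \overline{(k-1)p}$, the natural map $N_{C \to q} \to N_{C \to kp}/N_{C \to (k-1)p}$ is an isomorphism away from $p$; since $N_{C \to q} \simeq \Oc_C(1)$ by \cref{to-Lambda}, this immediately gives $N_{C \to kp}/N_{C \to (k-1)p} \simeq \Oc_C(1)(np)$ for some integer $n$, reducing the whole problem to showing $n = 2$. This is exactly your ``degree cross-check,'' but executed as the opening move rather than a sanity check at the end. Your talk of ``differentiating the family of Euler fields as $q$ approaches $p$'' is unnecessary (and not quite right): a single Euler field at a fixed $q$ already gives the section you want.

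Second, to pin down $n = 2$, the paper does \emph{not} compute the order of vanishing at $p$ of a section of the quotient line bundle. Instead it computes $\chi(N_{C \to kp})$ directly for each $k$ and takes differences. The Euler fields at the $k$ points $p, q_1, \ldots, q_{k-1}$ (with $q_i$ the $i$-th coordinate direction inside $\overline{kp}$) give an injection $\Oc_C(1)^k \hookrightarrow N_{C \to kp}$ with cokernel supported at $p$, and the length of this cokernel equals the minimal $t$-order among all $(k+1)\times(k+1)$ minors of the explicit $(k+1)\times r$ matrix with rows $C(t), C(t) - q_1, \ldots, C(t) - q_{k-1}, C'(t)$. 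Row reduction collapses this to a $2\times 2$ determinant of order exactly $2k$, yielding $\chi(N_{C \to kp}) = k(d - g + 3)$ and hence $n = 2$. Your plan---reduce a single Euler section modulo $N_{C \to (k-1)p}$ and read off its order at $p$---would also work, but to carry it out you must first produce a local generator of the quotient line bundle at $p$, which means identifying $N_{C \to (k-1)p}$ locally there; the paper's matrix computation sidesteps this by handling the whole rank-$k$ bundle at once rather than one graded piece at a time.

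(A small slip: in your coordinates $\overline{kp}$ is the $(k-1)$-plane through $p$ and the first $k-1$ coordinate directions, not ``the line through $p$ and the $k$-th coordinate direction.'')
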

\begin{proof}
Pick some point $q \in \overline{kp} \smallsetminus \overline{(k - 1)p}$.
Note that we have a natural map
\[N_{C \to q} \rarr \frac{N_{C \to kp}}{N_{C \to (k - 1)p}},\]
which is an isomorphism away from $p$. Since $N_{C \to q} \simeq \Oc_C(1)$,
we thus have
\[\frac{N_{C \to kp}}{N_{C \to (k - 1)p}} \simeq \Oc_C(1)(np)\]
for some $n$; it remains to show $n = 2$. For this, we note that the above implies
\[\chi(N_{C \to kp}) - \chi(N_{C \to (k - 1)p}) = \chi\left(\frac{N_{C \to kp}}{N_{C \to (k - 1)p}}\right) = d - g + 1 + n.\]
In particular, we see that it is sufficient to prove
\[\chi(N_{C \to kp}) = k(d - g + 3).\]

For this, we choose a local coordinate $t$ on $C$, and an affine patch
$p = (0, 0, \ldots) \in \Abb^r \subset \Pbb^r$, so that in an analytic neighborhood
of the origin, $C$ is the locus of points of the form
\[C(t) = (t, t^2 + f_2, t^3 + f_3, \ldots, t^{k + 1} + f_{k + 1}, f_{k + 2}, \ldots),\]
where the $f_i = f_i(t) = O(t^{k + 2})$ for all $i$ are holomorphic.
(Such a presentation exists because $p$ is not a generalized flex point of $C$.)
Define $q_1, \ldots, q_{k - 1}$ via
\[q_i = (\underbrace{0, \ldots, 0}_{\text{$i$ times}}, 1, 0, 0, \ldots).\]
The Euler vector fields at $p, q_1, \ldots, q_{k - 1}$ then define an
injective map of sheaves
\[\Oc_C(1)^k \rarr N_{C \to kp},\]
with cokernel supported at $p$.
Since $\chi(\Oc_C(1)^k) = k(d - g + 1)$, it remains to show
that the cokernel has Euler characteristic $2k$.
Equivalently, since the cokernel is supported at $p$,
we want to show that the vectors
\[C(t) = C(t) - p, \quad C(t) - q_1, \quad C(t) - q_2, \quad \ldots, \quad C(t) - q_{k - 1}, \quad C'(t)\]
are linearly independent to order exactly $2k$ in $t$. Or
more explictly, that the $(k + 1) \times r$ matrix
\[\begin{array}{ccccccccc}
t & t^2 + f_2 & \ldots & t^{k - 1} + f_{k - 1} & t^k + f_k & t^{k + 1} + f_{k + 1} & f_{k + 2} & \ldots \\
-1 + t & t^2 + f_2 & \ldots & t^{k - 1} + f_{k - 1} & t^k + f_k & t^{k + 1} + f_{k + 1} & f_{k + 2} & \ldots \\
t & -1 + t^2 + f_2 & \ldots & t^{k - 1} + f_{k - 1} & t^k + f_k & t^{k + 1} + f_{k + 1} & f_{k + 2} & \ldots \\
\vdots & \vdots & \ddots & \vdots & \vdots & \vdots & \vdots & {} \\
t & t^2 + f_2 & \ldots & -1 + t^{k - 1} + f_{k - 1} & t^k + f_k & t^{k + 1} + f_{k + 1} & f_{k + 2} & \ldots \\
1 & 2t + f_2' & \ldots & (k - 1) t^{k - 2} + f_{k - 1}' & k t^{k - 1} + f_k' & (k + 1) t^k + f_{k + 1}' & f_{k + 2}' & \ldots \\
\end{array}\]
has the property that the minimum power of $t$ dividing the
determinants of all $(k + 1) \times (k + 1)$ minors is exactly $t^{2k}$.

To show this, we may first subtract the first row from rows $2, 3, \ldots, (k - 1)$, to replace the above matrix with:
\[\begin{array}{ccccccccc}
t & t^2 + f_2 & \ldots & t^{k - 1} + f_{k - 1} & t^k + f_k & t^{k + 1} + f_{k + 1} & f_{k + 2} & \ldots \\
-1 & 0 & \ldots & 0 & 0 & 0 & 0 & \ldots \\
0 & -1 & \ldots & 0 & 0 & 0 & 0 & \ldots \\
\vdots & \vdots & \ddots & \vdots & \vdots & \vdots & \vdots & {} \\
0 & 0 & \ldots & -1 & 0 & 0 & 0 & \ldots \\
1 & 2t + f_2' & \ldots & (k - 1) t^{k - 2} + f_{k - 1}' & k t^{k - 1} + f_k' & (k + 1) t^k + f_{k + 1}' & f_{k + 2}' & \ldots \\
\end{array}\]

Next, we expand along rows $2, 3, \ldots, (k - 1)$, which reduces
our problem to showing that the $2 \times (r - k + 1)$ matrix
\[\begin{array}{ccccc}
t^k + f_k & t^{k + 1} + f_{k + 1} & f_{k + 2} & f_{k + 3} & \ldots \\
k t^{k - 1} + f_k' & (k + 1) t^k + f_{k + 1}' & f_{k + 2}' & f_{k + 3}' & \ldots \\
\end{array}\]
has the property that the minimum power of $t$ dividing the
determinants of all $2 \times 2$ minors is exactly $t^{2k}$.
Since this matrix in particular has the form
\[\begin{array}{ccccc}
O(t^k) & O(t^{k + 1}) & O(t^{k + 1}) & O(t^{k + 1}) & \ldots \\
O(t^{k - 1}) & O(t^k) & O(t^k) & O(t^k) & \ldots \\
\end{array}\]
we conclude that the determinant of every $2 \times 2$ minor is divisible by $t^{2k}$.
In particular, it suffices to find a particular $2 \times 2$ minor
whose determinant is not divisible by $t^{2k + 1}$. But we can easily calculate
\[\left|\begin{array}{cc}
t^k + f_k & t^{k + 1} + f_{k + 1} \\
k t^{k - 1} + f_k' & (k + 1) t^k + f_{k + 1}'
\end{array}\right| \equiv t^{2k} \mod t^{2k + 1}. \qedhere\]
\end{proof}

\begin{corollary} \label{to-kp-rat}
If in addition $C$ is rational, then
\[N_{C \to kp} \simeq \Oc_C(1)(2p)^k.\]
\end{corollary}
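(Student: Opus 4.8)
The plan is to prove the stated isomorphism by induction on $k$, realizing $N_{C \to kp}$ as an iterated extension whose successive quotients are all computed by \cref{to-kp}, and then using that on a rational curve such an extension necessarily splits.

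First I would record that the hypotheses of \cref{to-kp} for a given integer $k$ automatically hold for every smaller integer $k'$: since $\overline{k'p} \subseteq \overline{kp}$ when $k' \le k$, the condition that no tangent line of $C$ other than $T_p C$ meets $\overline{kp}$ implies the same for $\overline{k'p}$, while the non-flex hypothesis on $p$ is independent of $k$. Consequently, for every $1 \le k' \le k$ there is a natural inclusion $N_{C \to (k'-1)p} \subseteq N_{C \to k'p}$ (already implicit in \cref{to-kp}) fitting in a short exact sequence on $C$
\[0 \to N_{C \to (k'-1)p} \to N_{C \to k'p} \to \Oc_C(1)(2p) \to 0,\]
with the convention $N_{C \to 0 \cdot p} = 0$; note that $N_{C \to (k'-1)p}$ is genuinely a subbundle here because the quotient is the line bundle $\Oc_C(1)(2p)$.

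I would then induct on $k$, the case $k = 0$ being trivial. Assuming $N_{C \to (k-1)p} \simeq \Oc_C(1)(2p)^{k-1}$ for some $k \ge 1$, the extension class of the $k' = k$ sequence lies in
\[\mathrm{Ext}^1\big(\Oc_C(1)(2p),\, N_{C \to (k-1)p}\big) \simeq \H^1\big(N_{C \to (k-1)p} \otimes (\Oc_C(1)(2p))^\vee\big) \simeq \H^1(\Oc_C)^{\oplus(k-1)},\]
which vanishes because $C$ is rational (so $\H^1(\Oc_C) = 0$). Hence the sequence splits, giving $N_{C \to kp} \simeq N_{C \to (k-1)p} \oplus \Oc_C(1)(2p) \simeq \Oc_C(1)(2p)^k$ and completing the induction.

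There is no real obstacle here: the only points needing a little care are that the hypotheses of \cref{to-kp} descend to all $k' \le k$ --- so that the whole filtration of $N_{C \to kp}$ is available --- and that each stage is honestly an extension of vector bundles, both of which are immediate once one observes that the relevant quotients are line bundles. Everything else reduces to the vanishing of $\H^1$ of a trivial bundle on $\Pbb^1$.
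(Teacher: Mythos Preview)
Your proof is correct and follows essentially the same approach as the paper: induct on $k$, use the exact sequence from \cref{to-kp}, and show it splits via $\mathrm{Ext}^1(\Oc_{\Pbb^1}(n),\Oc_{\Pbb^1}(n)) = \H^1(\Oc_{\Pbb^1}) = 0$. The paper phrases the splitting step as ``every sequence $0 \to \Oc_{\Pbb^1}(n)^a \to \bullet \to \Oc_{\Pbb^1}(n)^b \to 0$ splits,'' which is exactly your Ext computation; your extra remark that the hypotheses of \cref{to-kp} descend to all $k' \leq k$ is a nice point the paper leaves implicit.
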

\begin{proof}
We argue by induction on $k$. The base case follows from
Corollary~\ref{to-kp}. For the inductive step,
Proposition~\ref{to-kp} gives an exact sequence of vector bundles
\[\xymatrix@C=0.18in{
  0 \ar[r] &
  N_{C \to (k-1)p} \simeq \Oc_C(1)(2p)^{k-1} \simeq \Oc_{\Pbb^1}(d + 2)^{k - 1} \ar[r] &
  N_{C \to kp} \ar[r] &
  \Oc_C(1)(2p) \simeq \Oc_{\Pbb^1}(d + 2) \ar[r] &
  0.
}\]
But every exact sequence of vector bundles of the form
\[\xymatrix{
  0 \ar[r] &
  \Oc_{\Pbb^1}(n)^a \ar[r] &
  \bullet \ar[r] &
  \Oc_{\Pbb^1}(n)^b \ar[r] &
  0
}\]
on $\Pbb^1$ splits.
\end{proof}


\section{Interpolation and specialization}
\label{S:specialization}

In order to prove our main theorem, we will argue via degeneration. In this section,
we set up the basic results necessary for such an argument.

\begin{proposition} \label{prop:interpolation-is-open} Given a flat family of curves $\mathcal{C} \to B$,
and a vector bundle $\mathcal{E}$ on $\mathcal{C}$, the locus of $b \in B$ for which the
pullback $\mathcal{E}_b$ of $\mathcal{E}$ to $\mathcal{C}_b$ satisfies interpolation is open.
\end{proposition}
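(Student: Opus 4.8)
The plan is to reduce interpolation to a condition that is visibly open by using the characterization of \cref{T:interpolation-h0-h1-two}: a rank-$n$ bundle $\mathcal{E}_b$ satisfies interpolation if and only if a general effective divisor of degree $\lceil \h^0(\mathcal{E}_b)/n \rceil$ has $\h^0(\mathcal{E}_b(-D)) = 0$ and a general effective divisor of degree $\lfloor \h^0(\mathcal{E}_b)/n \rfloor$ has $\h^1(\mathcal{E}_b(-D)) = 0$. The first obstacle is that the relevant degrees $\lceil \h^0(\mathcal{E}_b)/n \rceil$ and $\lfloor \h^0(\mathcal{E}_b)/n \rfloor$ depend on $b$ through $\h^0(\mathcal{E}_b)$, which is only upper semicontinuous, not locally constant. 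To handle this, I would first stratify $B$ by the value of $\h^0(\mathcal{E}_b)$ (equivalently of $\h^1(\mathcal{E}_b)$, since $\chi$ is locally constant on the connected components of $B$); openness is local, so it suffices to prove the statement after restricting to a locally closed subscheme on which $\h^0(\mathcal{E}_b)$ is constant — but actually one must be more careful, since we want openness in $B$ itself, not in a stratum. The cleaner approach: fix $b_0 \in B$ with $\mathcal{E}_{b_0}$ satisfying interpolation, and show interpolation holds in a neighborhood. Near $b_0$, upper semicontinuity gives $\h^0(\mathcal{E}_b) \leq \h^0(\mathcal{E}_{b_0})$; combined with $\chi$ locally constant, on the open set where $\h^1$ does not jump we have $\h^0(\mathcal{E}_b) = \h^0(\mathcal{E}_{b_0})$, so the target degrees are constant there.

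Next I would form the universal family of effective divisors. Let $d_1 = \lceil \h^0(\mathcal{E}_{b_0})/n \rceil$ and $d_0 = \lfloor \h^0(\mathcal{E}_{b_0})/n \rfloor$. Consider the relative symmetric powers $\mathrm{Sym}^{d_i}_{B}(\mathcal{C}/B) \to B$ — or rather, since the fibers need not be smooth, work with the relative Hilbert scheme of length-$d_i$ subschemes supported in the smooth locus, which is still flat and proper over $B$. Over this space there is a universal divisor $\mathcal{D}_i$, and one can form $p^\ast \mathcal{E}(-\mathcal{D}_i)$ where $p$ is the projection to $\mathcal{C}$. By cohomology and base change, the loci where $\h^0(\text{fiber}) = 0$ (respectively $\h^1(\text{fiber}) = 0$) are open in the total space of the relative Hilbert scheme. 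Pushing forward: the image in $B$ of the open locus where a divisor $D$ of degree $d_1$ has $\h^0(\mathcal{E}_b(-D)) = 0$ need not be open a priori (images of opens under proper maps are closed, not open!), so instead I would argue fiberwise: $\mathcal{E}_b$ satisfies interpolation iff the fiber of the relevant Hilbert scheme over $b$ meets both open loci. Since the two open loci in the total space are each the complement of a closed set proper over $B$, their images of complements are closed, so the set of $b$ for which the fiber is \emph{entirely} contained in the bad locus is closed in $B$; hence the set of $b$ for which the fiber meets the good locus is open.

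Assembling: the locus $B_1 \subseteq B$ of $b$ such that \emph{some} degree-$d_1$ divisor has $\h^0(\mathcal{E}_b(-D))=0$ is open (by the properness argument above, applied to the first relative Hilbert scheme), and likewise $B_0 \subseteq B$ for the $\h^1$ condition with degree $d_0$ is open. On the open set $B' \subseteq B$ where $\h^0(\mathcal{E}_b) = \h^0(\mathcal{E}_{b_0})$ (so that $d_0, d_1$ are the correct degrees), interpolation for $\mathcal{E}_b$ is exactly the condition $b \in B_0 \cap B_1$, which is open in $B'$, hence open in $B$ since $B'$ is open; and $b_0$ lies in this open set. Therefore the interpolation locus contains a neighborhood of each of its points, so it is open. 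The main obstacle I anticipate is bookkeeping around the non-constancy of $\h^0(\mathcal{E}_b)$ — one must verify that the degree jump happens only on a closed set and that, where $\h^0$ jumps up, interpolation genuinely fails (or at least that we need not worry, since we only claim openness) — together with the mild technical point that the relative symmetric power is not smooth over $B$, forcing the use of the relative Hilbert scheme of points in the smooth locus and an appeal to cohomology-and-base-change in that setting.
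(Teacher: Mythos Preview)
Your approach is essentially the natural fleshing-out of the paper's proof, which is only a two-sentence sketch (``vanishing of a cohomology group is an open condition'') followed by a citation to Theorem~5.8 of \cite{Atanasov-interpolation} for the careful version. The overall strategy---reduce to the two critical degrees via \cref{T:interpolation-h0-h1-two}, handle the local constancy of $\h^0(\mathcal{E}_b)$ by using that interpolation forces $\h^1(\mathcal{E}_{b_0})=0$, and then work on a parameter space of divisors---is correct.

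There is, however, one genuine error in the execution. You assert that the relative Hilbert scheme of length-$d$ subschemes supported in the smooth locus is ``still flat and proper over $B$''; it is not proper (the smooth locus itself is not proper in the fibers), so your closedness argument---complement of the good locus is closed, image of a closed set under a proper map is closed---does not apply. The fix is immediate and in fact simpler than what you wrote: since $\mathcal{C}^{\mathrm{sm}}\to B$ is smooth of relative dimension~$1$, the relative symmetric power $\operatorname{Sym}^d_B(\mathcal{C}^{\mathrm{sm}})\to B$ is smooth as well, hence flat, hence an \emph{open} morphism. Thus the image in $B$ of the open ``good'' locus (where $\h^0(\mathcal{E}_b(-D))=0$, respectively $\h^1(\mathcal{E}_b(-D))=0$) is open directly, with no properness needed. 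With this correction your argument goes through.
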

\begin{proof}
The bundle $\mathcal{E}_b$ satisfies interpolation if and only if
$\H^0(\mathcal{E}_b(-D)) = 0$ or $\H^1(\mathcal{E}_b(-D)) = 0$, for $D$ a general effective divisor.
But the vanishing of a cohomology group is an open condition, hence the result.

For a more careful proof, see Theorem~5.8 of \cite{Atanasov-interpolation}.
\end{proof}

We next show that certain constructions for producing reducible curves yield
curves which correspond to a point in the component of the Hilbert scheme that we care about.

\begin{lemma} \label{nonspecial-smooth} If $C \subseteq \Pbb^r$ is a curve with $\H^1(\Oc_C(1)) = 0$,
then $\H^1(N_C) = 0$.
\end{lemma}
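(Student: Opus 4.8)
The plan is to compare $N_C$ with the restriction $T_{\Pbb^r}|_C$ of the ambient tangent bundle: the required vanishing for $T_{\Pbb^r}|_C$ is immediate from the Euler sequence, and the two bundles are linked by the normal sheaf sequence, whose only ``error terms'' are an $H^2$ of a sheaf on a curve and an $H^1$ of a sheaf supported in dimension zero --- both of which vanish.

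First I would restrict the Euler sequence $0 \to \Oc_{\Pbb^r} \to \Oc_{\Pbb^r}(1)^{r+1} \to T_{\Pbb^r} \to 0$ to $C$; exactness is preserved since all terms are locally free. The associated long exact cohomology sequence contains
\[\H^1(\Oc_C(1))^{r+1} \longrightarrow \H^1(T_{\Pbb^r}|_C) \longrightarrow \H^2(\Oc_C),\]
and $\H^2(\Oc_C) = 0$ since $C$ is a curve, while $\H^1(\Oc_C(1)) = 0$ by hypothesis. Hence $\H^1(T_{\Pbb^r}|_C) = 0$.

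Next, since $C$ is lci in the smooth variety $\Pbb^r$, the conormal sequence $0 \to N_C^\vee \to \Omega_{\Pbb^r}|_C \to \Omega_C \to 0$ is exact; applying $\mathcal{H}om(-,\Oc_C)$ and using local freeness of $N_C^\vee$ and $\Omega_{\Pbb^r}|_C$ produces a four-term exact sequence
\[0 \longrightarrow T_C \longrightarrow T_{\Pbb^r}|_C \longrightarrow N_C \longrightarrow \tau \longrightarrow 0, \qquad \tau := \mathcal{E}xt^1(\Omega_C,\Oc_C),\]
with $\tau$ supported on the singular locus of $C$ (so $\tau$ is torsion and $\H^1(\tau) = 0$; if $C$ is smooth then $\tau = 0$). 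Breaking this into $0 \to T_C \to T_{\Pbb^r}|_C \to Q \to 0$ and $0 \to Q \to N_C \to \tau \to 0$, with $Q$ the image of the middle arrow, the first sequence shows $\H^1(Q)$ is a quotient of $\H^1(T_{\Pbb^r}|_C) = 0$ (using $\H^2(T_C) = 0$), and the second shows $\H^1(N_C)$ is a quotient of $\H^1(Q)$ (using $\H^1(\tau) = 0$). Therefore $\H^1(N_C) = 0$.

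The argument is entirely formal once one knows that the conormal sequence is left-exact for the possibly singular (e.g.\ nodal) lci curve $C$ and that the cokernel of $T_{\Pbb^r}|_C \to N_C$ is torsion --- standard facts about regular embeddings, which for the reducible nodal curves appearing in the degeneration can also be checked by a direct local computation at a node. This bookkeeping is the only mildly delicate point; for smooth $C$ the middle step reduces to the usual short exact sequence $0 \to T_C \to T_{\Pbb^r}|_C \to N_C \to 0$.
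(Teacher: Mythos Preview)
Your argument is correct and follows essentially the same route as the paper --- restrict the Euler sequence to kill $\H^1(T_{\Pbb^r}|_C)$, then use the tangent--normal sequence to kill $\H^1(N_C)$. You are in fact more careful than the paper: you explicitly account for the torsion cokernel $\tau = \mathcal{E}xt^1(\Omega_C,\Oc_C)$ that appears when $C$ is merely lci rather than smooth, whereas the paper writes $0 \to T_C \to T_{\Pbb^r}|_C \to N_C \to 0$ as a short exact sequence without comment.
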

\begin{proof}
We begin by considering the Euler sequence
\[\xymatrix{
  0 \ar[r] &
  \Oc_C \ar[r] &
  \Oc_C(1)^{r + 1} \ar[r] &
  T_{\Pbb^r}|_C \ar[r] &
  0,
}\]
which gives a long exact sequence
\[\xymatrix{
  \cdots \ar[r] &
  \H^1(\Oc_C(1))^{r + 1} = 0 \ar[r] &
  \H^1(T_{\Pbb^r}|_C) \ar[r] &
  \H^2(\Oc_C) = 0 \ar[r] &
  \cdots.
}\]
In particular, we conclude that $\H^1(T_{\Pbb^r}|_C) = 0$.
But now from the exact sequence
\[\xymatrix{
  0 \ar[r] &
  T_C \ar[r] &
  T_{\Pbb^r}|_C \ar[r] &
  N_C \ar[r] &
  0,
}\]
we obtain a long exact sequence
\[\xymatrix{
  \cdots \ar[r] &
  \H^1(T_{\Pbb^r}|_C) = 0 \ar[r] &
  \H^1(N_C) \ar[r] &
  \H^2(T_C) = 0 \ar[r] &
  \cdots.
}\]
Consequently, $\H^1(N_C) = 0$ as desired.
\end{proof}

\begin{definition}
Write $\mathcal{H}_{d, g, r}$ for the Hilbert scheme
of subschemes of $\Pbb^r$ with Hilbert polynomial $P(x) = dx + 1 - g$.
\end{definition}

\begin{definition}
We say $C \subseteq \Pbb^r$ is \emph{nonspecial} if it is smooth, irreducible, and
$\H^1(\Oc_C(1)) = 0$.
We say $C$ is \emph{limit nonspecial} if $C$ lies in the closure (in the Hilbert scheme
$\mathcal{H}_{d, g, r}$) of the locus of nonspecial curves, and satisfies $\H^1(N_C) = 0$.
\end{definition}

\begin{remark}
From Lemma~\ref{nonspecial-smooth}, we see that every nonspecial curve
satisfies $\H^1(N_C) = 0$, and is therefore limit nonspecial.
\end{remark}

The set of nondegenerate limit nonspecial curves is parameterized by an open subset of a component of
$\mathcal{H}_{d, g, r}$.
If $\mathcal{L}$ is a nonspecial line bundle, then $\dim H^0(\mathcal{L}) = d + 1 - g$.
In particular, if $d < g + r$, there are no nonspecial curves (and thus no limit nonspecial curves).

For $d \geq g + r$, the moduli space of nonspecial curves naturally maps to the open subset $U$ of the Picard bundle
parameterizing
$\{(C, \mathcal{L}) : \mathcal{L} \ \text{is nonspecial}\}$.
Write $\mathcal{E}$ for the natural vector bundle on $U$ whose fiber
over $(C, \mathcal{L})$ gives $H^0(\mathcal{L})$.
Then the moduli space
of nonspecial curves can be described as $\mathbb{V}_{r + 1}(\mathcal{E})$,
where $\mathbb{V}_{r + 1}$ denotes the variety
parameterizing $(r + 1)$-frames up to scaling.
In particular, since the Picard bundle (and therefore $U$) is irreducible,
the moduli space of nonspecial curves (and thus of limit nonspecial curves) is also irreducible.

\begin{definition} We say subschemes $Y$ and $Z$ of $X$ meet
\emph{quasi-transversely} at $x \in Y \cap Z$ if
\[T_x Y \oplus T_x Z \to T_x X\]
is of maximal rank (i.e.\ either injective or surjective).
\end{definition}

\begin{proposition}[Hartshorne-Hirschowitz] \label{prop:hh}
Let $C \subset \Pbb^r$ be a curve with $\H^1(N_C) = 0$, and $L$ a line meeting $C$ quasi-transversely
at one or two points. Then $C \cup L$ lies in the closure of the locus of smooth irreducible curves
and satisfies $\H^1(N_{C \cup L}) = 0$.
\end{proposition}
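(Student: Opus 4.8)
The plan is to deduce both conclusions from the smoothing machinery of Hartshorne and Hirschowitz, after reducing everything to a cohomology computation on the connected nodal curve $X := C \cup L$. Write $\Gamma := C \cap L$, a reduced subscheme of length $k \leq 2$, and let $i \cn C \hookrightarrow X$ and $j \cn L \hookrightarrow X$ denote the inclusions of the two components.

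First I would pin down the local geometry. Quasi-transversality means the tangent directions of $C$ and $L$ at each point of $\Gamma$ are independent; since in our setting $C$ is smooth along $\Gamma$ (as it is throughout the applications), $X$ is a connected (because $L$ meets $C$) nodal curve with exactly the $k$ nodes along $\Gamma$, and $N_X$ is locally free of rank $r-1$. Working in a local model $X = V(x_1 x_2, x_3, \ldots, x_r) \subset \Abb^r$ at a node, with $C$ and $L$ the first two coordinate axes, a short computation starting from $I_X = (x_1 x_2, x_3, \ldots, x_r)$ identifies the restrictions of $N_X$ to the components as the extensions
\[0 \rarr N_C \rarr N_X|_C \rarr \Oc_\Gamma \rarr 0, \qquad\qquad 0 \rarr N_L \rarr N_X|_L \rarr \Oc_\Gamma \rarr 0,\]
and exhibits the exact sequence on $X$
\[0 \rarr j_*\bigl(N_X|_L(-\Gamma)\bigr) \rarr N_X \rarr i_*\bigl(N_X|_C\bigr) \rarr 0,\]
whose left-hand term is the subsheaf of sections of $N_X$ vanishing along $C$.

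Next I would run the cohomology. From the first extension, the hypothesis $\H^1(N_C) = 0$, and $\H^1(\Oc_\Gamma) = 0$ (automatic, as $\Gamma$ is zero-dimensional), we get $\H^1(N_X|_C) = 0$. For the line component, $N_L \cong \Oc_{\Pbb^1}(1)^{r-1}$, so the second extension presents $N_X|_L$ as a vector bundle on $\Pbb^1$ containing $\Oc_{\Pbb^1}(1)^{r-1}$ with torsion cokernel of length $k$; dualizing shows $(N_X|_L)^\vee$ embeds into $\Oc_{\Pbb^1}(-1)^{r-1}$, so every line-bundle summand of $N_X|_L$ has degree $\geq 1$, and therefore every summand of $N_X|_L(-\Gamma)$ has degree $\geq 1 - k \geq -1$, giving $\H^1(N_X|_L(-\Gamma)) = 0$. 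Feeding these two vanishings into the long exact sequence of the third displayed sequence yields $\H^1(N_X) = 0$, which is the second assertion of the proposition.

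Finally, I would address smoothability, which I expect to be the only genuine obstacle — the first two steps being routine bookkeeping. With $X$ now known to be connected and nodal with $\H^1(N_X) = 0$, the statement that $X$ is a flat limit of smooth irreducible curves is exactly the content of the Hartshorne--Hirschowitz smoothing theorem \cite{Hartshorne-Hirschowitz}: heuristically, each of the $k$ nodes of $X$ deforms to a divisor in the Hilbert scheme, and the vanishing $\H^1(N_X) = 0$ lets one choose a deformation of $X$ avoiding all of these divisors simultaneously, which is then a smooth connected --- hence irreducible --- curve. In carrying this out I would be careful about the edge behavior of the hypotheses: the cases $k = 1$ and $k = 2$ separately, and the point that quasi-transversality together with our convention that all curves are lci forces $C$ to be smooth at each point of $\Gamma$, so that $X$ is genuinely nodal there and \cite{Hartshorne-Hirschowitz} applies. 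In practice I would simply cite \cite{Hartshorne-Hirschowitz} for this last step.
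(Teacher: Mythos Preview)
Your proposal is correct and takes essentially the same approach as the paper, which simply cites Theorem~4.1 and Corollary~4.2 of \cite{Hartshorne-Hirschowitz} (noting the proof there, stated for $r=3$, generalizes to arbitrary $r$). You have helpfully unpacked the $\H^1(N_{C\cup L})=0$ computation that the paper leaves inside that citation: the Mayer--Vietoris-type sequence on $X=C\cup L$, the elementary-modification description of $N_X|_C$ and $N_X|_L$, and the degree bound on the summands of $N_X|_L(-\Gamma)$ are exactly what underlies the Hartshorne--Hirschowitz argument.

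One small correction: your claim that ``quasi-transversality together with our convention that all curves are lci forces $C$ to be smooth at each point of $\Gamma$'' is not true as stated --- an lci curve can have a node $p$, and a line can meet it quasi-transversely there (the three tangent directions just need to be independent in $\Pbb^r$). What rescues you is that in every application in the paper (e.g.\ Corollary~\ref{right-component}) the intersection points are explicitly taken in $C_{\sm}$, and the cited Hartshorne--Hirschowitz theorem is stated for attachment at smooth points. So the proposition should be read with that implicit hypothesis, and your argument then goes through cleanly.
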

\begin{proof}
See Theorem~4.1 and Corollary~4.2 of~\cite{Hartshorne-Hirschowitz}
(which is stated for $r = 3$;
but the proof given there generalizes trivially to $r$ arbitrary).
\end{proof}

\begin{corollary} \label{right-component}
Let $C \subset \Pbb^r$ be a limit nonspecial curve, and $L$ a line meeting $C$ quasi-transversely
at one or two points in $C_\sm$. Then $C \cup L$ is limit nonspecial.
\end{corollary}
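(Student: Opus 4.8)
The plan is to bootstrap from \cref{prop:hh}, which already carries out the hard geometric work, but to apply it not to $C$ itself but to the general member of a family degenerating to $C$. First, since $C$ is limit nonspecial we have $\H^1(N_C) = 0$, so \cref{prop:hh} immediately gives $\H^1(N_{C \cup L}) = 0$; this is one of the two conditions in the definition. The remaining task is to show that $C \cup L$ lies in the closure of the locus of nonspecial curves. One should resist the temptation to deduce this directly from \cref{prop:hh} applied to $C$, since a priori a limit nonspecial curve need not have $\H^1(\Oc_C(1)) = 0$, and \cref{prop:hh} only produces nearby \emph{smooth irreducible} curves, not nonspecial ones.

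Instead, I would fix a reduced irreducible curve $B$ with a point $0 \in B$ and a flat family $\mathcal{C} \to B$ with $\mathcal{C}_0 = C$ whose fiber $\mathcal{C}_b$ is nonspecial for every $b \neq 0$ in a neighborhood of $0$ (such a family exists because $[C]$ lies in the closure of the irreducible locus of nonspecial curves; after normalizing we may take $B$ smooth). Since the one or two points $p$ (and $q$) where $L$ meets $C$ lie in $C_\sm$, they lie in the smooth locus of $\mathcal{C} \to B$, so --- after an étale base change on $B$, harmless since the desired conclusion does not involve $B$ --- there are sections $\sigma$ (and $\tau$) of $\mathcal{C} \to B$ through them. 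Taking $\mathcal{L}_b$ to be the line spanned by $\sigma(b)$ and $\tau(b)$ in the two-point case, or by $\sigma(b)$ and a fixed auxiliary point of $L \smallsetminus \{p\}$ in the one-point case, produces, after shrinking $B$, a flat family of lines $\mathcal{L} \to B$ with $\mathcal{L}_0 = L$.

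Next I would check that for general $b$ the line $\mathcal{L}_b$ meets $\mathcal{C}_b$ quasi-transversely in exactly one or two points. Quasi-transversality at $\sigma(b)$ (resp. $\tau(b)$) is an open condition satisfied at $b = 0$ by hypothesis, hence holds near $0$; and the absence of further intersection points follows from upper semicontinuity of the length of the fibers of the finite morphism $\mathcal{C} \cap \mathcal{L} \to B$, whose length over $0$ equals $1$ or $2$ and is at least $1$ or $2$ over every nearby $b$. Now $\mathcal{C}_b$ is nonspecial, hence $\H^1(N_{\mathcal{C}_b}) = 0$ by \cref{nonspecial-smooth}, so \cref{prop:hh} applies and $\mathcal{C}_b \cup \mathcal{L}_b$ lies in the closure of smooth irreducible curves. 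Moreover a Mayer--Vietoris computation --- using $\H^1(\Oc_{\mathcal{C}_b}(1)) = 0$ together with the surjectivity of $\H^0(\Oc_{\mathcal{L}_b}(1)) \to \H^0(\Oc_{\mathcal{C}_b \cap \mathcal{L}_b}(1))$ (a degree-one form on a line cannot vanish at two distinct points) --- gives $\H^1(\Oc_{\mathcal{C}_b \cup \mathcal{L}_b}(1)) = 0$, which is an open condition on the Hilbert scheme; hence the smooth irreducible curves near $\mathcal{C}_b \cup \mathcal{L}_b$ are nonspecial, and $\mathcal{C}_b \cup \mathcal{L}_b$ lies in the closure of the nonspecial locus for all $b \neq 0$ near $0$.

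Finally, I would pass to the limit $b \to 0$. The scheme-theoretic closure of $(\mathcal{C} \cup \mathcal{L})|_{B \smallsetminus \{0\}}$ is flat over $B$ with constant Hilbert polynomial (attaching a line along $m$ points raises the degree by $1$ and the arithmetic genus by $m - 1$, equally in the general and special fibers), and its central fiber is a subscheme of $C \cup L$ with the same Hilbert polynomial as $C \cup L$, hence equals $C \cup L$. By properness of the Hilbert scheme, the map $b \mapsto [\mathcal{C}_b \cup \mathcal{L}_b]$ from $B \smallsetminus \{0\}$ into the (closed) closure of the nonspecial locus extends over $0$ to $[C \cup L]$, so $C \cup L$ lies in that closure; combined with $\H^1(N_{C \cup L}) = 0$ this shows $C \cup L$ is limit nonspecial. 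The step demanding the most care is the construction of the family of lines and the verification that its general member meets the general curve $\mathcal{C}_b$ quasi-transversely in exactly the right number of points; the rest is bookkeeping with \cref{prop:hh}, semicontinuity of cohomology, and Hilbert polynomials.
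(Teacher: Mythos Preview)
Your proof is correct and follows essentially the same approach as the paper's. The paper compresses your explicit construction of the family $(\mathcal{C}_b, \mathcal{L}_b)$ into the single phrase ``we may deform $L$ along with $C$'' and then, after reducing to $C$ nonspecial, computes $\H^1(\Oc_{C \cup L}(1)) = 0$ via the exact sequence $0 \to \Oc_L(1)(-C \cap L) \to \Oc_{C \cup L}(1) \to \Oc_C(1) \to 0$ rather than Mayer--Vietoris; these are equivalent computations, and your more explicit treatment of flatness and the semicontinuity steps simply spells out what the paper leaves implicit.
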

\begin{proof}
By Lemma~\ref{nonspecial-smooth} and Proposition~\ref{prop:hh},
we know that $C \cup L$ lies in the closure of the locus of smooth irreducible
curves and satisfies $\H^1(N_{C \cup L}) = 0$. It thus remains to show that $C \cup L$
lies in the closure of the locus of nonspecial curves.

Since $C$ is a limit nonspecial curve, and we may deform $L$ along with $C$,
we may suppose by semicontinuity that $C$ is itself nonspecial; it thus remains to show that for
$C$ nonspecial, $\H^1(\Oc_{C\cup L}(1)) = 0$.
For this, consider the exact sequence of sheaves
\[\xymatrix{
  0 \ar[r] &
  \Oc_L(1)(-C \cap L) \ar[r] &
  \Oc_{C \cup L}(1) \ar[r] &
  \Oc_C(1) \ar[r] &
  0,
}\]
which gives rise to the long exact sequence
\[\xymatrix{
  \cdots \ar[r] &
  \H^1(\Oc_L(1)(-C \cap L)) \ar[r] &
  \H^1(\Oc_{C \cup L}(1)) \ar[r] &
  \H^1(\Oc_C(1)) = 0 \ar[r] &
  \cdots.
}\]
To complete the proof, it suffices to show $H^1(\Oc_L(1)(-C \cap L)) = 0$.
But this is clear because $\Oc_L(1)(-C \cap L) = \Oc_{\Pbb^1}(1 - \# (C \cap L))$
is either $\Oc_{\Pbb^1}$ or $\Oc_{\Pbb^1}(-1)$.
\end{proof}


\section{Reducible curves and their normal bundles}
\label{S:reducible}

In this section, we give several tools for relating interpolation of the normal bundle of
a reducible curve to interpolation for the normal bundles of its components. More specifically,
we focus on reducible curves of the form $C \cup L$, where $L$ is a line, which we assume
for the rest of this section meets $C$ quasi-transversely.
Then we can reduce interpolation for the normal bundle $N_{C \cup L}$ to interpolation
for a modification $N_C'$ of the normal bundle $N_C$ as follows.
\begin{enumerate}
\item First we find the space of sections $\H^0(N_{C \cup L}|_L(-D))$ which vanish along some
divisor $D$ supported on $L$. Since $L$ is a line, this can be done via explicit computation.
\item Then we find which sections of $\H^0(N_{C \cup L}|_C)$ match up with the above
space of sections along the intersection $C \cap L$. In many cases, this turns out
to be expressible in terms of a modification $N_C'$.
\end{enumerate}
We begin by proving a general proposition that makes precise under what
hypotheses the above method is applicable.

\begin{proposition} \label{trivglue} Let $\mathcal{E}$ be a vector bundle on a reducible curve
$X \cup Y$, and $D$ be an effective divisor on $X$ disjoint from $X \cap Y$.
Assume that
\[\H^0(\mathcal{E}|_X(-D - X \cap Y)) = 0.\]
Let
\begin{gather*}
\ev_X \colon \H^0(\mathcal{E}|_X) \longrightarrow \H^0(\mathcal{E}|_{X \cap Y}) \\
\ev_Y \colon \H^0(\mathcal{E}|_Y) \longrightarrow \H^0(\mathcal{E}|_{X \cap Y})
\end{gather*}
denote the natural evaluation morphisms.
Then $\mathcal{E}$ satisfies interpolation provided that
\[V = \ev_Y^{-1}(\ev_X(\H^0(\mathcal{E}|_X(-D)))) \subseteq \H^0(\mathcal{E}|_Y)\]
satisfies interpolation and has dimension
\[\chi(\mathcal{E}|_Y) + \chi(\mathcal{E}|_X(-D - X \cap Y)).\]
\end{proposition}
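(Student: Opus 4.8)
The statement relates interpolation of $\mathcal{E}$ on $X \cup Y$ to interpolation of a certain subspace $V \subseteq \H^0(\mathcal{E}|_Y)$. The natural approach is to use the glueing sequence
\[\xymatrix{
  0 \ar[r] & \mathcal{E} \ar[r] & \mathcal{E}|_X \oplus \mathcal{E}|_Y \ar[r] & \mathcal{E}|_{X \cap Y} \ar[r] & 0,
}\]
twisted by $-D'$ for $D'$ a general effective divisor supported on $Y_\sm$ (we may take $D'$ disjoint from $X \cap Y$). Since $D$ is supported on $X$ and $D'$ on $Y$, the twisted sequence reads
\[\xymatrix{
  0 \ar[r] & \mathcal{E}(-D - D') \ar[r] & \mathcal{E}|_X(-D) \oplus \mathcal{E}|_Y(-D') \ar[r] & \mathcal{E}|_{X \cap Y} \ar[r] & 0.
}\]
The hypothesis $\H^0(\mathcal{E}|_X(-D - X \cap Y)) = 0$ says exactly that $\ev_X$ restricted to $\H^0(\mathcal{E}|_X(-D))$ is \emph{injective}; this is what lets us identify $\H^0$ of the glued bundle with a fiber product that collapses to the single space $V$. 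Concretely, I would first verify that $\mathcal{E}(-D)$ — the modification of $\mathcal{E}$ twisting down by $D$ on the $X$-side — has $\H^0(\mathcal{E}(-D)) \cong V$ and $\H^1(\mathcal{E}(-D)) \cong \H^1(\mathcal{E}|_Y) \oplus (\text{something from }X)$; more precisely, chasing the long exact sequence of the untwisted-by-$D'$ version gives $\H^0(\mathcal{E}(-D)) = V$ once $\ev_X|_{\H^0(\mathcal{E}|_X(-D))}$ is injective with image inside $\ev_X(\H^0(\mathcal{E}|_X))$, so that pulling back along $\ev_Y$ recovers exactly $V$.

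**Carrying it out.** I would then check interpolation for $\mathcal{E}$ via \cref{T:interpolation-h0-h1}: for each $d \geq 1$ I must produce a general effective divisor $D'$ of degree $d$ with $\h^0(\mathcal{E}(-D')) = 0$ or $\h^1(\mathcal{E}(-D')) = 0$. The idea is to use the interpolation of $V$ inside $\H^0(\mathcal{E}|_Y)$: choosing $D'$ on $Y$ general of degree $d$ makes $V \cap \H^0(\mathcal{E}|_Y(-D'))$ as small as possible, i.e.\ of dimension $\max\{0, \dim V - d\cdot\rank(\mathcal{E})\}$. Feeding this into the long exact sequence for the $(-D-D')$-twisted glueing sequence above, together with the vanishing $\H^0(\mathcal{E}|_X(-D-X\cap Y))=0$ (which forces $\H^0(\mathcal{E}|_X(-D-D'-X\cap Y))=0$ as well since $D'$ is effective), computes $\h^0(\mathcal{E}(-D-D'))$ and $\h^1(\mathcal{E}(-D-D'))$. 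The dimension hypothesis $\dim V = \chi(\mathcal{E}|_Y) + \chi(\mathcal{E}|_X(-D-X\cap Y))$ is precisely the bookkeeping that makes the Euler characteristics line up so that "$\h^0$ or $\h^1$ vanishes for $V$" translates into "$\h^0$ or $\h^1$ vanishes for $\mathcal{E}(-D-D')$". Finally I would invoke \cref{T:interpolation-twist-up} (or rather the analogue for the $X$-side modification) — actually, cleaner: observe $\mathcal{E}$ and $\mathcal{E}(-D)$ differ by a modification along $X$ away from $Y$, and since $D$ is supported away from $X\cap Y$ the argument above directly handles $\mathcal{E}$ after absorbing $D$ into the computation. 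One must also separately check $\H^1(\mathcal{E}) = 0$ (nonspeciality), which follows from the $d=0$ case of the same Euler-characteristic computation combined with $\H^1(V) $-type vanishing implicit in $V$ satisfying interpolation.

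**The main obstacle.** The delicate point is the cohomology bookkeeping in the long exact sequence: I need to confirm that the connecting map $\H^0(\mathcal{E}|_{X\cap Y}) \to \H^1(\mathcal{E}(-D-D'))$ and the preceding evaluation maps interact so that $\dim\H^0(\mathcal{E}(-D-D'))$ equals exactly $\dim\bigl(V \cap \H^0(\mathcal{E}|_Y(-D'))\bigr)$ (using the injectivity of $\ev_X$ on $\H^0(\mathcal{E}|_X(-D))$ to kill the $X$-contribution to $\H^0$), while the $\h^1$ side is controlled by $\chi$. Getting the inequality $\dim V \leq (\text{the relevant bound})$ versus $\geq$ to match the two cases of interpolation — rather than just one — is where the precise value of $\dim V$ in the hypothesis, as opposed to an inequality, is essential; if $\dim V$ were off, one of the two interpolation conditions for $\mathcal{E}$ could fail even though the other holds. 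I expect the routine part (writing the sequences, taking $\chi$) to be short, and the care to go into checking that generality of $D'$ on $Y$ can be chosen compatibly with the fixed $D$ on $X$ and with $D'$ avoiding $X \cap Y$, which is immediate since $X\cap Y$ is a finite set in the smooth locus of $Y$.
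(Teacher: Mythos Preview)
Your approach is essentially the same as the paper's, but you are missing the one-line simplification that makes the whole argument collapse. You correctly identify (via the gluing sequence and injectivity of $\ev_X$ on $\H^0(\mathcal{E}|_X(-D))$) that restriction to $Y$ gives an isomorphism $\H^0(\mathcal{E}(-D)) \cong V$. The paper then simply observes that the dimension hypothesis reads
\[
\h^0(\mathcal{E}(-D)) = \dim V = \chi(\mathcal{E}|_Y) + \chi(\mathcal{E}|_X(-D - X\cap Y)) = \chi(\mathcal{E}(-D)),
\]
so $\h^1(\mathcal{E}(-D)) = 0$. At this point there is nothing left to compute: since $\mathcal{E}(-D)$ is nonspecial and $V$ is its \emph{full} space of sections, the hypothesis ``$V$ satisfies interpolation'' (with points chosen on $Y_\sm$) is literally the statement that $\mathcal{E}(-D)$ satisfies interpolation. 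Then $\mathcal{E} = \mathcal{E}(-D)(D)$ satisfies interpolation by \cref{T:interpolation-twist-up}.

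Your plan of twisting by an auxiliary $D'$ on $Y$ and chasing the long exact sequence for $\mathcal{E}(-D-D')$ is not wrong, but it is exactly re-deriving the definition of ``$\mathcal{E}(-D)$ satisfies interpolation'' by hand. The ``delicate bookkeeping'' you flag in your obstacle paragraph (controlling the connecting map, matching both the $\h^0 = 0$ and $\h^1 = 0$ cases) evaporates once you notice $\h^1(\mathcal{E}(-D)) = 0$: you never need to look at the connecting homomorphism for general $D'$, and you never need to separately verify nonspeciality of $\mathcal{E}$. Your final sentence about ``absorbing $D$ into the computation'' versus invoking \cref{T:interpolation-twist-up} is where the confusion shows; the twist-up proposition is exactly the right tool, and there is no cleaner alternative.
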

\begin{proof}
We first note that since $\H^0(\mathcal{E}|_X(-D - X \cap Y)) = 0$, the map
$\ev_X$ is injective when restricted to $\H^0(\mathcal{E}|_X(-D))$.
Thus, restriction to $Y$ defines an isomorphism
$\H^0(\mathcal{E}(-D)) \simeq V$.
Consequently,
\[\h^0(\mathcal{E}(-D)) = \chi(\mathcal{E}|_Y) + \chi(\mathcal{E}|_X(-D - X \cap Y)) = \chi(\mathcal{E}(-D)) \quad \Rightarrow \quad \h^1(\mathcal{E}(-D)) = 0.\]
Since $\H^0(\mathcal{E}(-D)) \simeq V$ satisfies interpolation
by assumption, we conclude that $\mathcal{E}(-D)$ --- and hence $\mathcal{E}$ --- satisfies interpolation.
\end{proof}

We now specialize to the case of $C \cup L$ as in the previous section;
we seek to relate interpolation for $N_{C \cup L}$
to interpolation for a modification of $N_C$, along a divisor supported at $C \cap L$.
Since this will depend only on the local behavior of the normal bundles
at the nodes $C \cap L$, we can in fact work in slightly greater generality:
Suppose that $N'_{C \cup L}$ is a bundle on $C \cup L$, equipped with an isomorphism
to $N_{C \cup L}$ over an open set of $C \cup L$ containing the entire line $L$;
in particular, containing a neighborhood $U$ of $C \cap L$ in $C$.

\begin{definition}
Write $N_C'$ for the bundle obtained from $N'_{C \cup L}|_{C \smallsetminus (C \cap L)}$,
glued along $U \smallsetminus (C \cap L)$ via our given isomorphism to the bundle $N_C|_{U \cap C}$.
For example, when $N'_{C \cup L} = N_{C \cup L}$, then $N_C' = N_C$.
\end{definition}

Now suppose that $u \in C \cap L$ is a point of intersection.
Let $v \in L$ and $w \in T_u C$ be points distinct from $u$.

\begin{proposition}[Hartshorne-Hirschowitz] \label{hh}
Let $V$ be a neighborhood of $u$ in $C \cap L$, disjoint from the other
points of intersection (if any). Then, we have isomorphisms
\[N'_{C\cup L}|_{C \cap V} \cong N'_C(u)[u\rightarrow v]|_{C \cap V} \quad \text{and} \quad N'_{C\cup L}|_{L \cap V} \cong N_L(u)[u\rightarrow w]|_{L \cap V},\]
extending the obvious isomorphisms on $V \smallsetminus \{u\}$.
\end{proposition}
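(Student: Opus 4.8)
The statement is local around each point of $C \cap L$, and is symmetric under interchanging the two branches, so the plan is to fix one point $u \in C \cap L$, work in a small neighbourhood $V$ of $u$ disjoint from any other intersection point, and prove the first isomorphism $N'_{C \cup L}|_{C \cap V} \cong N'_C(u)[u \to v]|_{C \cap V}$; the second follows by the identical argument with $(C, v)$ replaced by $(L, w)$. Since $N'_{C \cup L}$ is identified with $N_{C \cup L}$ on all of $L$, hence on a neighbourhood of $u$, and $N'_C$ is identified with $N_C$ near $u$, it suffices to produce an isomorphism $N_{C \cup L}|_{C \cap V} \cong N_C(u)[u \to v]|_{C \cap V}$ extending the canonical one over $V \smallsetminus \{u\}$. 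Away from $u$ the curve $C \cup L$ coincides with $C$, so $N_{C \cup L}|_C$, $N_C$, and $N_C(u)$ all agree there, and each is a lattice in the common generic fibre of $N_C$; thus it is enough to compare their stalks at $u$.

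First I would pin down the fibres that actually enter. By \cref{T:only-neighborhood} the modification $N_C(u)[u \to v] = N_C(u)[u \to N_{C \to v}]$ depends, near $u$, only on $N_{C \to v}|_u \subset N_C|_u$. This fibre is easy to compute: since $v \in L \smallsetminus \{u\}$ and $L$ meets $T_u C$ only at $u$ (quasi-transversality at a single point of intersection), the $2$-plane $\langle v, T_u C \rangle$ equals $\langle L, T_u C \rangle$, so its image in $N_C|_u = T_{\Pbb^r}|_u / T_u C$ is the line $[T_u L]$. In particular it is independent of the choice of $v \in L \smallsetminus \{u\}$, so the notation $[u \to v]$ is unambiguous. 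Symmetrically $N_{L \to w}|_u \subset N_L|_u$ is the image of $[T_u C]$, independent of $w \in T_u C \smallsetminus \{u\}$; this also explains why, on the $L$-side, the second branch may be recorded through a point $w$ on the tangent line $T_u C$ rather than a point of $C$ itself, since only the first-order behaviour of the other branch at $u$ is relevant.

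The core of the proof is the local structure of the normal bundle at a node, which is exactly the content of \cite{Hartshorne-Hirschowitz}: near $u$ there is a chain of lattices $N_C \subseteq N_{C \cup L}|_C \subseteq N_C(u)$ in which the first cokernel has length $1$ and the second has length $r - 2$, and in which the single ``extra'' direction gained by $N_{C \cup L}|_C$ over $N_C$ — a simple pole at $u$ — points towards the other branch $L$, i.e.\ is the direction $[T_u L] = N_{C \to v}|_u$. Granting this, the defining sequence~\labelcref{E:modification-ses} of $N_C(u)[u \to N_{C \to v}]$ together with \cref{T:only-neighborhood} identify $N_{C \cup L}|_{C \cap V}$ with $N_C(u)[u \to v]|_{C \cap V}$, extending the canonical isomorphism over $V \smallsetminus \{u\}$; the $L$-side is the same computation with $C$, $v$ replaced by $L$, $w$. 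To establish the local claim concretely one chooses analytic coordinates with $u = 0$, $T_u C$ the $x_1$-axis and $L$ the $x_2$-axis, writes $I_C = (x_2 - \phi_2(x_1), \dots, x_r - \phi_r(x_1))$ with each $\phi_i = O(x_1^2)$ and $I_L = (x_1, x_3, \dots, x_r)$, forms $I_{C \cup L} = I_C \cap I_L$, computes the normal sheaf of $C \cup L$ restricted to $C$, and reads off that relative to $N_C$ one gains exactly a simple pole at $u$ whose residue lies in the $x_2$-direction — that is, along $L$ — and no pole in the remaining $r - 2$ directions.

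I expect this last step — the explicit computation at the node, and in particular the bookkeeping needed to see that the single extra direction is the $L$-direction rather than merely \emph{some} line, and that it matches the twist convention built into the notation $N_C(u)[u \to v]$ — to be the only genuinely non-formal part. Everything around it is the modification machinery of \cref{S:modifications-arbitrary,S:modifications-curves} together with the curve-to-projective extension theorem \cite[I.6.8]{Hartshorne}, which is what lets us glue the local isomorphism back to one over all of $V \smallsetminus \{u\}$.
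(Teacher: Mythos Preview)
Your proposal is correct and follows essentially the same route as the paper: reduce from $N'_{C\cup L}$ to $N_{C\cup L}$ by passing to the neighbourhood where they agree, then invoke the local node computation of Hartshorne--Hirschowitz (their Corollary~3.2, stated for $r=3$ but valid in general). The paper simply cites that corollary and stops, whereas you unpack what the cited result actually asserts --- the chain $N_C \subseteq N_{C\cup L}|_C \subseteq N_C(u)$ with the single pole in the $L$-direction --- and sketch the coordinate verification; this is a reasonable elaboration but not a different argument.
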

\begin{proof}
The above in the special case of $N'_{C \cup L} = N_{C \cup L}$ follows from Corollary~3.2 of \cite{Hartshorne-Hirschowitz}
(which is stated for $r = 3$, but the proof given applies for $r$ arbitrary).
The general case follows from the special case by passing to the neighborhood $U \cap V$ of $u$.
\end{proof}

From the above, the subbundles $N_{C \to v} \subseteq N_C$ and $N_{L \to w} \subseteq N_L$
give rise to subbundles $N'_{C \to v}(u)|_V \subseteq N'_{C\cup L}|_{C \cap V}$ and 
$N_{L \to w}(u)|_V \subseteq N'_{C\cup L}|_{L \cap V}$. The key to analyzing these normal bundles
is the following result.

\begin{lemma} \label{cone-glue}
The fibers of $N'_{C \to v}(u)|_V$ and $N_{L \to w}(u)|_V$ at $u$ coincide.
\end{lemma}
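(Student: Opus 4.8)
The plan is to exhibit a single geometrically defined line subbundle of $N'_{C\cup L}$ in a neighborhood of $u$ whose fiber at $u$ is simultaneously $N'_{C\to v}(u)|_u$ and $N_{L\to w}(u)|_u$; this subbundle will be the normal bundle $N_{C\cup L/S}$, where $S\subseteq\Pbb^r$ is the cone over $C$ with vertex $v$.

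I would work throughout in a small enough neighborhood $V$ of $u$ in $C\cup L$ that $N'_{C\cup L}\cong N_{C\cup L}$, $N'_C\cong N_C$, and the subbundles $N_{C\to v}\subseteq N_C$, $N_{L\to w}\subseteq N_L$ are all defined, so that the primes may be dropped. Since $v\in L$ and $u\in C$, the line $L=\overline{vu}$ is a ruling of $S=\overline{vC}$ and so $C\cup L\subseteq S$. The tangent plane to $S$ at $C(t)$ is spanned by $C'(t)$ and the vector from $v$ to $C(t)$; these are independent because $T_uC$ does not contain $v$ (the distinct lines $T_uC$ and $L$ meet only at $u\ne v$, by quasi-transversality), hence also for $t$ near $0$. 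The tangent plane to $S$ along $L\smallsetminus\{v\}$ is $\langle T_uC,\,T_uL\rangle$, again of dimension $2$ by quasi-transversality. So, after shrinking $V$, the surface $S$ is smooth along $C\cup L$ throughout $V$, where $C\cup L$ is a nodal --- hence Cartier --- divisor in the smooth surface $S$; the conormal sequences of $C\cup L\subseteq S\subseteq\Pbb^r$ then assemble into
\[0 \to N_{C\cup L/S} \to N_{C\cup L} \to N_{S/\Pbb^r}|_{C\cup L} \to 0,\]
which exhibits $N_{C\cup L/S}$ as a line subbundle of $N_{C\cup L}$ over $V$.

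Next I would identify $N_{C\cup L/S}$ on each branch, away from $u$. Over $C\cap V\smallsetminus\{u\}$ the curve $C\cup L$ locally equals $C$, so $N_{C\cup L/S}$ restricts to $N_{C/S}$; and for $c=C(t)$ the plane $T_cS$ computed above is exactly the tangent plane at $c$ to the $2$-plane spanned by $v$ and the tangent line $T_cC$, whence $N_{C/S}|_c=T_cS/T_cC$ is precisely the fiber $N_{C\to v}|_c$ by the defining description of $N_{C\to v}$ in \cref{S:normal-bundles}. Thus $N_{C\cup L/S}$ and $N_{C\to v}$ coincide as subbundles of $N_C$ over $C\cap V\smallsetminus\{u\}$. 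Symmetrically, over $L\cap V\smallsetminus\{u\}$ the bundle $N_{C\cup L/S}$ restricts to $N_{L/S}$, and at $\ell\in L$ the plane $T_\ell S=\langle T_uC,\,T_uL\rangle$ is the tangent plane at $\ell$ to the $2$-plane spanned by $w$ and $L$ (note $w-\ell$ represents the direction of $T_uC$ modulo $T_\ell L$), so $N_{L/S}|_\ell=N_{L\to w}|_\ell$; hence $N_{C\cup L/S}$ and $N_{L\to w}$ coincide over $L\cap V\smallsetminus\{u\}$.

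Finally I would transport through \cref{hh}. Its isomorphism $N_{C\cup L}|_{C\cap V}\cong N_C(u)[u\to v]|_{C\cap V}$ is the tautological identification over $C\cap V\smallsetminus\{u\}$, under which $N_{C\cup L/S}$ --- being $N_{C\to v}\subseteq N_C=N_C(u)[u\to v]$ there, by the previous step --- maps to a line subbundle of $N_C(u)[u\to v]|_{C\cap V}$ that over $C\cap V\smallsetminus\{u\}$ agrees with the image of $N_{C\to v}(u)$ under \labelcref{E:modification-ses3} (this image is literally $N_{C\to v}(u)\subseteq N_C(u)$ away from $u$), that is, with $N'_{C\to v}(u)$. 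Two subbundles of a vector bundle over the reduced curve $C\cap V$, with $u$ a smooth point, agreeing on the dense open $C\cap V\smallsetminus\{u\}$ must be equal --- the uniqueness clause of the curve-to-projective extension theorem \cite[I.6.8]{Hartshorne} for sections of the relevant Grassmannian bundle, exactly as used in \cref{T:curves-modification-subbundles-bijection}. Hence $N'_{C\to v}(u)|_u=N_{C\cup L/S}|_u$ inside $N_{C\cup L}|_u$, and running the identical argument on the branch $L$ with $N_{C\cup L}|_{L\cap V}\cong N_L(u)[u\to w]|_{L\cap V}$ gives $N_{L\to w}(u)|_u=N_{C\cup L/S}|_u$; the two fibers at $u$ therefore coincide. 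The only genuine computations are the branch-by-branch identifications $N_{C/S}=N_{C\to v}$ and $N_{L/S}=N_{L\to w}$ together with the smoothness of $S$ along $C\cup L$ near $u$ (where quasi-transversality enters); I expect the one mild subtlety --- and the reason to work with the cone rather than unwind the modification by hand --- to be that the positive subbundle $N'_{C\to v}(u)$ is only pinned down once one invokes uniqueness of subbundle extensions over a curve.
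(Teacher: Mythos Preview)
Your argument is correct and follows essentially the same approach as the paper: both construct the cone $S$ over $C$ with vertex $v$, observe that $C\cup L\subseteq S$ with $S$ smooth near $u$, and then identify each of the two subbundles in question with $N_{(C\cup L)/S}$ by checking agreement on the dense open $V\smallsetminus\{u\}$ and invoking uniqueness of subbundle extensions over a curve. Your write-up simply spells out a few details (the tangent plane computation, the explicit appeal to the curve-to-projective extension theorem) that the paper leaves implicit.
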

\begin{proof}
Consider the cone $S$ parameterizing pairs of points $(x, y)$ with $x \in C$
and $y$ on the line joining $x$ to $v$.
The inclusion $C \hookrightarrow \Pbb^r$ then factors as $C \hookrightarrow S \to \Pbb^r$.
(The map $C \hookrightarrow S$ is defined via $x \mapsto (x, x)$; the map
$S \to \Pbb^r$ is defined via $(x, y) \mapsto y$.)

Then $S$ is a smooth surface
in the neighborhood $C \smallsetminus \{v\}$ of $u$; so shrinking $V$ if necessary, we may suppose
$S$ is smooth along $V$. Further shrinking $V$, we may suppose
in addition that $U \subseteq V$, so $N'_{C \to v} = N_{C \to v}$.

We now show these fibers coincide by identifying
these two bundles with $N_{(C \cup L)/S}|_C$ and $N_{(C \cup L)/S}|_L$.
Since two subbundles of a vector bundle coincide
if and only if they coincide on a dense open, it suffices to identify
these pairs of bundles on $V^\circ = V \smallsetminus \{u\}$.
But this is clear because
\begin{align*}
N_{C \to v}(u)|_{V^\circ} &= N_{C \to v}|_{V^\circ} = N_{C/S}|_{V^\circ} = N_{(C \cup L)/S}|_{C \cap V^\circ} \\
N_{L \to w}(u)|_{V^\circ} &= N_{L \to w}|_{V^\circ} = N_{L/S}|_{V^\circ} = N_{(C \cup L)/S}|_{L \cap V^\circ}. \qedhere
\end{align*}
\end{proof}

\begin{lemma} \label{addone} Suppose $L$ is a $1$-secant line to $C$, meeting $C$ at $u$;
and $p_1, p_2 \in L$ are points distinct from~$u$. Let
$\Lambda_1$ and $\Lambda_2$ be independent linear subspaces of dimensions $k_1$ and $k_2$,
such that $k_1 + k_2 \leq r - 4$,
and $\overline{\Lambda_1 \cdot \Lambda_2}$ is disjoint from $T_u(C \cap L)$.
Then
\[N'_{C \cup L}[p_1 \to \Lambda_1][p_2 \to \Lambda_2]\]
satisfies interpolation provided that
\[N_C'(u)[u \to v][u \to v \cup \Lambda_1 \cup \Lambda_2]\]
satisfies interpolation, where $v \in L$ is any point distinct from $u$.
\end{lemma}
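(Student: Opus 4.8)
The plan is to apply Proposition~\ref{trivglue} to the bundle $\mathcal{E} := N'_{C \cup L}[p_1 \to \Lambda_1][p_2 \to \Lambda_2]$ on the nodal curve $C \cup L$, taking $X = L$, $Y = C$, and $D = 0$. The first step is to identify the two restrictions. Since $p_1, p_2 \in L \smallsetminus \{u\}$, restricting to $C$ kills both modifications, so Proposition~\ref{hh} (extended from a neighborhood of $u$ to all of $C$ via the curve-to-projective extension theorem) gives $\mathcal{E}|_C \cong N_C'(u)[u \to v]$; hence the hypothesis of the lemma is exactly that $\mathcal{E}|_C[u \to W]$ satisfies interpolation, where $W := N_{C \to \overline{v \cdot \Lambda_1 \cdot \Lambda_2}}(u)$ is a rank-$(k_1 + k_2 + 3)$ subbundle of $N_C$ near $u$ (Proposition~\ref{to-Lambda}; the hypothesis that $\overline{\Lambda_1 \cdot \Lambda_2}$ misses $T_u(C \cup L)$ forces $v, \Lambda_1, \Lambda_2$ to be independent and their span to avoid the tangent lines of $C$ near $u$), and where the modification $[u \to W]$ of $N_C'(u)[u \to v]$ is legitimate because $N_{C \to v}(u) \subseteq W$ (Proposition~\ref{T:curves-modification-subbundles-bijection}).

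For the other branch, Proposition~\ref{hh} gives $N'_{C \cup L}|_L \cong N_L(u)[u \to w]$ with $w \in T_u C$. Using the decomposition $N_L = N_{L \to w} \oplus N_{L \to \Lambda_1} \oplus N_{L \to \Lambda_2} \oplus Q$ --- valid because these are linearly independent subbundles ($w, \Lambda_1, \Lambda_2$ being mutually independent and independent of $T_u L = L$), each a sum of copies of $\Oc_L(1)$ by Proposition~\ref{to-Lambda}, so the sub-sum splits off on $\Pbb^1$, with $Q \cong \Oc_L(1)^{r - k_1 - k_2 - 4}$ (this is where $k_1 + k_2 \le r - 4$ enters) --- one carries out the three elementary modifications summand by summand via Remark~\ref{T:modification-splitting}, obtaining $\mathcal{E}|_L \cong \Oc_L^{\,k_1 + k_2 + 3} \oplus \Oc_L(-1)^{\,r - k_1 - k_2 - 4}$. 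In particular $\H^0(\mathcal{E}|_L(-u)) = 0$, so the vanishing hypothesis of Proposition~\ref{trivglue} holds, $\ev_L$ is injective, and $W' := \ev_L(\H^0(\mathcal{E}|_L)) \subseteq \mathcal{E}|_u$ has dimension $k_1 + k_2 + 3$.

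The heart of the argument is to show that $W' = W|_u$ inside the fiber $\mathcal{E}|_u$ at the node. From the summand decomposition, $W'$ is spanned by the values at $u$ of the sections of the three summands $N_{L \to w}(u)(-p_1 - p_2)$, $N_{L \to \Lambda_1}(-p_2)$, $N_{L \to \Lambda_2}(-p_1)$ (the $Q$-summand has no sections), and none of these has a section vanishing at $u$, so each contributes its full fiber at $u$. The $N_{L \to w}$-contribution agrees with the $N_{C \to v}$-contribution by Lemma~\ref{cone-glue}; the $N_{L \to \Lambda_i}$-contribution agrees with the $N_{C \to \Lambda_i}$-contribution because --- $\Lambda_i$ being disjoint from $T_u(C \cup L)$ --- the subbundle $N_{(C \cup L) \to \Lambda_i}$ of $N_{C \cup L}$ is defined on a neighborhood of $u$ in $C \cup L$ and restricts on the two branches to $N_{C \to \Lambda_i}$ and $N_{L \to \Lambda_i}$ (up to the twists dictated by Proposition~\ref{hh}), so both fibers at $u$ equal $N_{(C \cup L) \to \Lambda_i}|_u$. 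Hence $W' = W|_u$, and therefore $V = \ev_C^{-1}(W') = \{\, \sigma \in \H^0(\mathcal{E}|_C) : \sigma|_u \in W|_u \,\} = \H^0(\mathcal{E}|_C[u \to W])$.

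Finally, since $\mathcal{E}|_C[u \to W]$ satisfies interpolation it is nonspecial, so by Proposition~\ref{T:chi-modifications},
\[\dim V = \chi(\mathcal{E}|_C[u \to W]) = \chi(\mathcal{E}|_C) - \bigl(r - 1 - (k_1 + k_2 + 3)\bigr) = \chi(\mathcal{E}|_C) + \chi(\mathcal{E}|_L(-u)),\]
the last step using the splitting type of $\mathcal{E}|_L$; this is precisely the dimension demanded by Proposition~\ref{trivglue}, which therefore yields interpolation for $\mathcal{E}$. I expect the main obstacle to be the identification $W' = W|_u$: one must compare the four subbundles $N_{C \to v}$, $N_{L \to w}$, $N_{C \to \Lambda_i}$, $N_{L \to \Lambda_i}$ at the node after transporting them through the isomorphisms of Proposition~\ref{hh}, which requires carefully tracking the $(u)$-twists and the local modifications at $u$; the remaining steps are routine bookkeeping with Euler characteristics and with splitting types of bundles on $\Pbb^1$.
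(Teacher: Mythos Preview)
Your proposal is correct and follows essentially the same approach as the paper: apply Proposition~\ref{trivglue} with $X = L$, $Y = C$, $D = 0$, decompose $N_L$ into the pointing bundles toward $w$, $\Lambda_1$, $\Lambda_2$, and a complement, read off the splitting type $\Oc^{k_1+k_2+3} \oplus \Oc(-1)^{r-4-k_1-k_2}$, and identify the image of $\ev_L$ with the fiber of $N_{C \to v \cup \Lambda_1 \cup \Lambda_2}$ at $u$. The one place you are arguably more careful than the paper is the identification at the node for the $\Lambda_i$ pieces: the paper attributes the whole equality $N_{L \to w}|_u \oplus N_{L \to \Lambda_1}|_u \oplus N_{L \to \Lambda_2}|_u = N'_{C \to v}|_u \oplus N'_{C \to \Lambda_1}|_u \oplus N'_{C \to \Lambda_2}|_u$ to Lemma~\ref{cone-glue}, whereas you correctly note that cone-glue only literally covers the $w$/$v$ component and give a separate (and valid) argument via $N_{(C \cup L) \to \Lambda_i}$ for the others.
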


\begin{proof}
From Proposition~\ref{hh}, we have 
\[N'_{C \cup L}|_C \simeq N'_C(u)[u \to v] \quad \Rightarrow \quad N'_{C \cup L}[p_1 \to \Lambda_1][p_2 \to \Lambda_2]|_C \simeq N'_C(u)[u \to v].\]

For $T$ a general linear space of dimension $r - 5 - k_1 - k_2$ (where $\dim \emptyset = -1$
by convention), 
Proposition~\ref{hh} implies that:
\begin{align*}
N'_{C \cup L}[p_1 \to \Lambda_1][p_2 \to \Lambda_2]|_L &\simeq N_L(u)[u \to w][p_1 \to \Lambda_1][p_2 \to \Lambda_2] \\
&\simeq \left(N_{L \to w} \oplus N_{L \to \Lambda_1} \oplus N_{L \to \Lambda_2} \oplus N_{L \to T} \right)(u)[u \to w][p_1 \to \Lambda_1][p_2 \to \Lambda_2] \\
&\simeq N_{L \to w}(u - p_1 - p_2) \oplus N_{L \to \Lambda_1}(-p_2) \oplus N_{L \to \Lambda_2}(-p_1) \oplus N_{L \to T}(-p_1-p_2)\\
&\simeq \Oc_{\Pbb^1} \oplus \Oc_{\Pbb^1}^{k_1 + 1} \oplus \Oc_{\Pbb^1}^{k_2 + 1} \oplus \Oc_{\Pbb^1}(-1)^{r-4-k_1 - k_2} \\
&\simeq \Oc_{\Pbb^1}^{k_1 + k_2 + 3} \oplus \Oc_{\Pbb^1}(-1)^{r-4-k_1 - k_2}.
\end{align*}
The positive subbundle $\Oc_{\Pbb^1}^{k_1 + k_2 + 3}$ here is:
\[N_{L \to w}(u - p_1 - p_2) \oplus N_{L \to \Lambda_1}(-p_2) \oplus N_{L \to \Lambda_2}(-p_1).\]
The above isomorphism also implies:
\begin{align*}
\H^0(N_{C \cup L}'[p_1 \to \Lambda_1][p_2 \to \Lambda_2]|_L(-u)) &\simeq \H^0(\Oc_{\Pbb^1}(-1)^{k_1+k_2+3} \oplus \Oc_{\Pbb^1}(-2)^{r-4-k_1-k_2}) = 0 \\
\chi(N_{C \cup L}'[p_1 \to \Lambda_1][p_2 \to \Lambda_2]|_L(-u)) &= \chi(\Oc_{\Pbb^1}(-1)^{k_1+k_2+3} \oplus \Oc_{\Pbb^1}(-2)^{r-4-k_1-k_2}) = k_1 + k_2 + 4 - r.
\end{align*}

So applying Proposition~\ref{trivglue}, it suffices to show that the subspace $V \subseteq \H^0(N_C'(u)[u \to v])$
whose fiber at $u$ lies in $\H^0(N_{L \to w}(u - p_1 - p_2)|_u \oplus N_{L \to \Lambda_1}(-p_2)|_u \oplus N_{L \to \Lambda_2}(-p_1)|_u)$
satisfies interpolation and has dimension
\[\chi(N_C'(u)[u \to v]) + k_1 + k_2 + 4 - r = \chi(N_C'(u)[u \to v][u \to v \cup \Lambda_1 \cup \Lambda_2]).\]
But from Lemma~\ref{cone-glue},
\[N_{L \to w}(u - p_1 - p_2)|_u \oplus N_{L \to \Lambda_1}(-p_2)|_u \oplus N_{L \to \Lambda_2}(-p_1)|_u = N'_{C \to v}(u)|_u \oplus N'_{C \to \Lambda_1}|_u \oplus N'_{C \to \Lambda_2}|_u,\]
which implies the above space $V$ is precisely
$\H^0(N_C'(u)[u \to v][u \to v \cup \Lambda_1 \cup \Lambda_2])$.
\end{proof}

Now we suppose $L$ and $C$ are as above, with $\#(C \cap L) = 2$.
Write $C \cap L = \{u, v\}$, and 
pick general points $x \in T_u C$ and
$y \in T_v C$. Let $T$ be a general $(r - 4)$-plane in $\Pbb^r$.
(We take $T = \emptyset$ if $r = 3$.)
We suppose that $T_u C$ does not meet $T_v C$,
so $\{x, y, L\}$ are linearly independent.

\begin{definition}
Let $\mathcal{E} \simeq \Oc_{\Pbb^1}(1)^k$, and $u, v, z \in \Pbb^1$ be distinct points.
Then we obtain an isomorphism
\[\xymatrix{
  \phi_z \colon \H^0(\mathcal{E}|_u) \ar[r]^-{\sim} &
  \H^0(\mathcal{E}|_v),
}\]
defined by the composition
$\H^0(\mathcal{E}|_u) \simeq \H^0(\mathcal{E}(-z)) \simeq \H^0(\mathcal{E}|_v)$.
\end{definition}

\begin{lemma} \label{cross-ratio}
For $z, z' \in \Pbb^1 \smallsetminus \{u, v\}$, we have
\[\phi_z = \frac{(z - v)(z' - u)}{(z - u)(z' - v)} \cdot \phi_{z'}.\]
\end{lemma}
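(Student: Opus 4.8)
The statement to prove is Lemma~\ref{cross-ratio}: for $z, z' \in \Pbb^1 \smallsetminus \{u, v\}$, the isomorphisms $\phi_z, \phi_{z'} \colon \H^0(\mathcal{E}|_u) \to \H^0(\mathcal{E}|_v)$ attached to $\mathcal{E} \simeq \Oc_{\Pbb^1}(1)^k$ differ by the scalar $\tfrac{(z-v)(z'-u)}{(z-u)(z'-v)}$. The plan is to reduce immediately to the case $k = 1$, since everything in sight is a direct sum over the $k$ summands (the isomorphism $\mathcal{E} \simeq \Oc_{\Pbb^1}(1)^k$ identifies $\phi_z$ with the $k$-fold direct sum of the corresponding map for $\Oc_{\Pbb^1}(1)$, and the scalar does not depend on $k$). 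So I would simply take $\mathcal{E} = \Oc_{\Pbb^1}(1)$.

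Next I would make everything explicit. Choose an affine coordinate $t$ on $\Pbb^1$, so that $\H^0(\Oc_{\Pbb^1}(1))$ is the $2$-dimensional space of affine-linear forms $a + bt$, and for a point $p$ the line bundle $\Oc_{\Pbb^1}(1)(-p)$ is the subspace of forms vanishing at $p$, i.e.\ spanned by $t - p$. The map $\phi_z$ is the composition $\H^0(\Oc_{\Pbb^1}(1)|_u) \xleftarrow{\sim} \H^0(\Oc_{\Pbb^1}(1)(-z)) \xrightarrow{\sim} \H^0(\Oc_{\Pbb^1}(1)|_v)$: a vector in $\H^0(\Oc_{\Pbb^1}(1)|_u)$ is lifted to the unique global section vanishing at $z$ (a scalar multiple of $t - z$) whose value at $u$ matches, then evaluated at $v$. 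Concretely, a nonzero element of $\H^0(\Oc_{\Pbb^1}(1)|_u)$ lifts to $\lambda(t-z)$ with $\lambda(u - z)$ recording the chosen value at $u$; its image under $\phi_z$ records $\lambda(v - z)$ at $v$. Thus, under fixed trivializations of the fibers at $u$ and $v$, $\phi_z$ is multiplication by $\tfrac{v - z}{u - z}$. Then
\[
\frac{\phi_z}{\phi_{z'}} = \frac{(v - z)/(u - z)}{(v - z')/(u - z')} = \frac{(z - v)(z' - u)}{(z - u)(z' - v)},
\]
which is exactly the claimed scalar (the sign changes in numerator and denominator cancel). One should also remark that the ratio is independent of the chosen trivializations of $\mathcal{E}|_u$ and $\mathcal{E}|_v$, since changing them rescales both $\phi_z$ and $\phi_{z'}$ by the same factors; hence the identity $\phi_z = \tfrac{(z-v)(z'-u)}{(z-u)(z'-v)}\,\phi_{z'}$ is coordinate-free and makes sense as stated.

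There is essentially no hard step here — the only thing to be careful about is bookkeeping with the two distinct identifications $\H^0(\mathcal{E}(-z)) \simeq \H^0(\mathcal{E}|_u)$ and $\H^0(\mathcal{E}(-z)) \simeq \H^0(\mathcal{E}|_v)$ (both are restriction-to-a-point maps, which are isomorphisms precisely because $z \neq u$ and $z \neq v$), and with keeping the trivializations of the fibers fixed as $z$ varies so that the comparison of $\phi_z$ and $\phi_{z'}$ is meaningful. If one wanted to avoid coordinates entirely, the same computation can be phrased via the residue/evaluation pairing, but the affine-coordinate computation above is the most transparent and is what I would write down.
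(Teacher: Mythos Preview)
Your proposal is correct and follows exactly the paper's approach: reduce to $k=1$ via the direct sum decomposition, then compute directly. The paper's proof simply says ``the lemma holds by direct computation'' for $k=1$, and you have spelled out that computation explicitly (and correctly).
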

\begin{proof}
Decomposing $\mathcal{E}$ as a direct sum, we reduce to the case of $k = 1$, in which case
the lemma holds by direct computation.
\end{proof}

\begin{lemma} \label{addtwo} Suppose $L$ is a $2$-secant line to $C$, meeting $C$ at $\{u, v\}$.
Then $N_{C \cup L}'$ satisfies interpolation provided that
\[N_C'(u + v)[u \to v][v \to u][v \to 2u]\]
satisfies interpolation.
\end{lemma}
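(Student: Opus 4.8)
The plan is to adapt the argument of \cref{addone} to the $2$-secant case, using \cref{hh} to describe the restrictions of $N'_{C\cup L}$ to the two components and then \cref{trivglue} to reduce interpolation for $N'_{C\cup L}$ to interpolation for an explicit modification of $N_C'$. With $x\in T_uC$, $y\in T_vC$ and the general $(r-4)$-plane $T$ as in the setup preceding \cref{cross-ratio}, applying \cref{hh} at both nodes gives
\[N'_{C\cup L}|_C\simeq N_C'(u+v)[u\to v][v\to u]=:\tilde N,\]
and, decomposing $N_L\simeq N_{L\to x}\oplus N_{L\to y}\oplus N_{L\to T}$ via \cref{to-Lambda},
\[N'_{C\cup L}|_L\simeq N_{L\to x}(u)\oplus N_{L\to y}(v)\oplus N_{L\to T}\simeq\Oc_{\Pbb^1}(2)^2\oplus\Oc_{\Pbb^1}(1)^{r-3}.\]
Since this restriction still has sections after twisting down by $u+v$, the new feature (compared with \cref{addone}) is that I must first twist by a further general point $z\in L$ before applying \cref{trivglue}, taking $X=L$, $Y=C$ and $D=z$: one checks $\H^0(N'_{C\cup L}|_L(-z-u-v))=0$, and, using \cref{T:chi-modifications}, that the dimension demanded by \cref{trivglue}, namely $\chi(\tilde N)+\chi(N'_{C\cup L}|_L(-z-u-v))$, equals $\chi(N_C'(u+v)[u\to v][v\to u][v\to 2u])$. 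Thus it remains to identify
\[V:=\ev_C^{-1}\!\big(\ev_L(\H^0(N'_{C\cup L}|_L(-z)))\big)\ \subseteq\ \H^0(\tilde N)\]
with $\H^0(N_C'(u+v)[u\to v][v\to u][v\to 2u])$; once this is done the dimension is automatic (interpolation forces nonspeciality), and \cref{trivglue} gives the lemma.

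For the identification I would first compute $W:=\ev_L(\H^0(N'_{C\cup L}|_L(-z)))$ inside $N'_{C\cup L}|_u\oplus N'_{C\cup L}|_v$. From the displayed splitting, the two $\Oc_{\Pbb^1}(1)$-summands contribute the full fibres of $N_{L\to x}$ and $N_{L\to y}$ at $u$ and $v$, whereas $N_{L\to T}(-z)\simeq\Oc_{\Pbb^1}^{\,r-3}$ contributes (via its constant sections) the graph of the isomorphism $\phi_z\colon N_{L\to T}|_u\to N_{L\to T}|_v$ from the definition preceding \cref{cross-ratio}. Next, \cref{cone-glue} identifies the fibres $N_{L\to x}|_u$ and $N_{L\to y}|_v$ with the fibres at $u$ and $v$ of the line subbundles modified in passing from $N_C'$ to $\tilde N$; transporting these through the subbundle correspondences of \cref{T:modification-subbundles-bijection,T:curves-modification-subbundles-bijection}, the parts of the condition ``$\ev_C(\sigma)\in W$'' coming from the two $\Oc_{\Pbb^1}(1)$-summands impose no constraint on $\H^0(\tilde N)$ beyond what $\tilde N$ already encodes.

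What remains of ``$\ev_C(\sigma)\in W$'' is the part coming from the graph of $\phi_z$, which genuinely \emph{couples} the value of a section of $\tilde N$ at $u$ with its value at $v$; understanding this coupling is the step I expect to be the main obstacle. The idea is to trade this coupling for a local condition at a single node: using \cref{cone-glue} to pass the $N_{L\to T}$-directions across the line, and then \cref{cross-ratio} to compare the identification $\phi_z$ with the ``tautological'' one attached to the tangent direction $T_uC=\overline{2u}$ (this comparison is where the characteristic-zero hypothesis enters, through \cref{cross-ratio}), the coupled condition should become precisely the local modification condition defining $[v\to 2u]$ --- i.e.\ that the image of the section in $\tilde N/N_{C\to\overline{2u}}$ vanish at $v$. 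Hence $V=\H^0(\tilde N[v\to 2u])=\H^0(N_C'(u+v)[u\to v][v\to u][v\to 2u])$, and this satisfies interpolation by hypothesis, so \cref{trivglue} completes the proof. The points I would expect to need the most care are the bookkeeping in the cone/cross-ratio comparison (including that the genericity of $x,y,z,T$ suffices for the transversalities invoked), and verifying that the subbundle singled out is exactly $N_{C\to\overline{2u}}$ rather than some other rank-$2$ subbundle containing $N_{C\to u}$ --- which is what pins down the ``$2u$'' appearing in the statement.
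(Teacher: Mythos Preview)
Your overall architecture matches the paper's: apply \cref{hh} at both nodes to get $N'_{C\cup L}|_C\simeq N'_C(u+v)[u\to v][v\to u]$ and the splitting $N'_{C\cup L}|_L\simeq N_{L\to x}(u)\oplus N_{L\to y}(v)\oplus N_{L\to T}$, twist down by a general $z\in L$, and feed this into \cref{trivglue}. Your description of $W$ --- the $\Oc_{\Pbb^1}(1)^2$ part surjecting onto the full fibres and the $N_{L\to T}(-z)$ part contributing the graph of $\phi_z$ --- is also correct.

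The gap is in how you pass from the graph condition to $[v\to 2u]$. For a \emph{general} $z$, the condition ``the $N_{L\to T}$-components of $\sigma|_u$ and $\sigma|_v$ lie on the graph of $\phi_z$'' is a genuine coupling of the two fibres; the resulting subspace $V\subset\H^0(\tilde N)$ is \emph{not} equal to $\H^0(\tilde N[v\to 2u])$, and there is no ``tautological identification attached to $\overline{2u}$'' to compare $\phi_z$ against --- $\overline{2u}=T_uC$ does not produce any isomorphism $N_{L\to T}|_u\to N_{L\to T}|_v$. What the paper does instead is \emph{specialize} $z\to v$: by \cref{cross-ratio}, $\phi_z$ is a scalar multiple of any fixed $\phi_{z'}$ with scalar tending to $0$ as $z\to v$, so the graph limits to $N_{L\to T}|_u\times\{0\}$. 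In that limit the condition decouples to the purely local requirement $\sigma|_v\in(N_{L\to x}\oplus N_{L\to y})|_v$, which via \cref{cone-glue} becomes $\sigma|_v\in N'_{C\to\overline{u\cup x}}|_v=N'_{C\to 2u}|_v$, i.e.\ exactly the modification $[v\to 2u]$. Interpolation for this limiting $V$ then implies interpolation for general $z$ by openness. Without this degeneration step your identification $V=\H^0(\tilde N[v\to 2u])$ simply fails.

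A minor point: \cref{cross-ratio} is a direct calculation valid in any characteristic, so no characteristic-zero hypothesis enters this argument.
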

\begin{proof}
From Proposition~\ref{hh}, we have 
$N'_{C \cup L}|_C \simeq N'_C(u+v)[u \to v][v \to u]$.
Additionally, Proposition~\ref{hh} implies that for $z \in L$ general,
\begin{align*}
N_{C \cup L}'|_L(-z) &\simeq N_L(u + v)[u \to x][v \to y](-z) \\
&\simeq \big(N_{L \to x} \oplus N_{L \to y} \oplus N_{L \to T}\big) (u + v)[u \to x][v \to y](-z) \\
&\simeq N_{L \to x}(u - z) \oplus N_{L \to y}(v - z) \oplus N_{L \to T}(-z) \\
&\simeq \Oc_{\Pbb^1}(2)(-z) \oplus \Oc_{\Pbb^1}(2)(-z) \oplus \Oc_{\Pbb^1}(1)(-z)^{r - 3} \\
&\simeq \Oc_{\Pbb^1}(2)(-z)^2 \oplus \Oc_{\Pbb^1}(1)(-z)^{r - 3} \\
&\simeq \Oc_{\Pbb^1}(1)^2 \oplus \Oc_{\Pbb^1}^{r - 3}.
\end{align*}
Note that all isomorphisms except the last one are independent of $z$.
The positive subbundle $\Oc_{\Pbb^1}(1)^2$ here is canonically:
\[N_{L \to x}(u - z) \oplus N_{L \to y}(v - z);\]
while a (choice of) negative complement is (up to isomorphism independent of $z$):
\[N_{L \to T}(-z) \simeq \Oc_{\Pbb^1}(1)(-z)^{r - 3}.\]
The above isomorphism also implies:
\begin{align*}
\H^0(N_{C \cup L}'|_L(-z-u-v)) &\simeq \H^0(\Oc_{\Pbb^1}(-1)^2 \oplus \Oc_{\Pbb^1}(-2)^{r - 3}) = 0 \\
\chi(N_{C \cup L}'|_L(-z-u-v)) &= \chi(\Oc_{\Pbb^1}(-1)^2 \oplus \Oc_{\Pbb^1}(-2)^{r - 3}) = 3 - r.
\end{align*}

We next examine the map $\H^0(N_{C \cup L}'|_L(-z)) \to \H^0(N_{C \cup L}'|_L(-z)|_{\{u, v\}})$.
The above decomposition of $N_{C \cup L}'|_L(-z)$ reduces this problem to understanding the maps
\begin{align*}
  \H^0(N_{L \to x}(u - z) \oplus N_{L \to y}(v - z)) &\longrightarrow \H^0((N_{L \to x}(u - z) \oplus N_{L \to y}(v - z))|_{\{u, v\}}) \\
  \H^0(N_{L \to T}(-z)) &\longrightarrow \H^0(N_{L \to T}(-z)|_{\{u, v\}}).
\end{align*}

Since $N_{L \to x}(u - z) \oplus N_{L \to y}(v - z) \simeq \Oc_{\Pbb^1}(1)^2$, the first map
is surjective. The second map is, by construction, the graph of $\phi_z\colon \H^0(N_{L \to T}|_u) \to \H^0(N_{L \to T}|_v)$.
In particular, limiting $z \to v$, Lemma~\ref{cross-ratio} implies the image of the second map
limits to $\H^0(N_{L \to T}|_u) \times \{0\}$.
So applying Proposition~\ref{trivglue}, it suffices to show that the subspace $V \subseteq \H^0(N_C'(u+v)[u \to v][v\to u])$
of sections whose restriction to $\{u, v\}$ lies in
\[\H^0((N_{L \to x}(u - z) \oplus N_{L \to y}(v - z))|_{\{u, v\}}) \oplus (\H^0(N_{L \to T}|_u) \times \{0\}),\]
or equivalently whose restriction to $v$ lies in $\H^0((N_{L \to x}(u - z) \oplus N_{L \to y}(v - z))|_v)$,
satisfies interpolation and has dimension
\[\chi(N_C'(u+v)[u \to v][v\to u]) + 3 - r = \chi(N_C'(u+v)[u \to v][v\to u][v \to 2u]).\]
But from Lemma~\ref{cone-glue} and the machinery of \cref{S:normal-bundles}, we have
\[(N_{L \to x}(u - z) \oplus N_{L \to y}(v - z))|_v = (N'_{C \to x} \oplus N'_{C \to u}(v))|_v,\]
which gives
\[V = \H^0(N_C'(u+v)[u \to v][v\to u][v \to (u \cup x)]) = \H^0(N_C'(u+v)[u \to v][v\to u][v \to 2u]). \qedhere\]
\end{proof}

We now give an alternative condition for $N'_{C \cup L}$ to satisfy interpolation;
this requires first introducing some new notation:

\begin{definition} \label{corr}
Let $x \neq y$ be points on $C$; and $X$ and $Y$ be points of $\Pbb^r$,
or sub-line-bundles of $N_C$.
We say a subspace $V \subseteq \H^0 (\mathcal{E})$
is an
\[\H^0 (\mathcal{E}) \langle x \to X : y \to Y \rangle \]
to indicate that 
\[\H^0 (\mathcal{E}(-x-y)) \subsetneq V \subsetneq \H^0 (\mathcal{E}[x \to X][y \to Y]),\]
but that $V$ is neither $\H^0 (\mathcal{E}(-x)[y \to Y])$, nor $\H^0 (\mathcal{E}(-y)[x \to X])$.

More generally, given points $x_1, y_1, \ldots, x_k, y_k \in C$, and points or sub-line-bundles
$X_1, Y_1$, \ldots, $X_k, Y_k \in \Pbb^r$,
we say $V$ is an
\[\H^0 (\mathcal{E}) \langle x_1 \to X_1 : y_1 \to Y_1 \rangle \cdots \langle x_k \to X_k : y_k \to Y_k \rangle\]
if it is a sum (not direct sum) of spaces $V_i$ which are
\[\H^0 (\mathcal{E}(-x_1 - y_1 - \cdots - \widehat{x_i} - \widehat{y_i} - \cdots - x_k - y_k)) \langle x_i \to X_i : y_i \to Y_i \rangle.\]

In all applications, the
$\H^0(E(-x_1-y_1 - \cdots - \widehat{x_i}-\widehat{y_i}-\cdots - x_k-y_k)[x_i \to X_i][y_i \to Y_i])$
will be linearly independent relative to $\H^0(E(-x_1-x_2 - \cdots - x_k - y_k))$.
\end{definition}

\begin{lemma} \label{addtwo-two} Suppose $L$ is a $2$-secant line to $C$, meeting $C$ at $\{u, v\}$.
Write $x \in T_u C$ and $y \in T_v C$ for points in their respective tangent lines, distinct from $u$ and $v$.
Suppose that $T_u C$ does not meet $T_v C$.
Then $N_{C \cup L}'$ satisfies interpolation provided that every
\[\H^0 \big(N'_C(2u + 2v)[u \to v][v \to u] \big)\langle u \to v : v \to x \rangle \langle v \to u : u \to y \rangle\]
satisfies interpolation, and
\[\H^1 (N'_C[u \to v][v \to u]) = 0.\]
\end{lemma}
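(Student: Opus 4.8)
The plan is to run the strategy of Lemma~\ref{addtwo}, but using \emph{two} general points on $L$ in place of one and without degenerating either of them to a node; this keeps the construction symmetric in $u$ and $v$, and in place of the single triple modification of Lemma~\ref{addtwo} it produces exactly the correspondence space in the statement. First I would record, as in Lemma~\ref{addtwo}, the consequences of Proposition~\ref{hh}: $N'_{C \cup L}|_C \simeq N'_C(u + v)[u \to v][v \to u]$, and, decomposing $N_L = N_{L \to x} \oplus N_{L \to y} \oplus N_{L \to T}$ for a general $(r - 4)$-plane $T$,
\[N'_{C \cup L}|_L \simeq N_{L \to x}(u) \oplus N_{L \to y}(v) \oplus N_{L \to T} \simeq \Oc_{\Pbb^1}(2)^2 \oplus \Oc_{\Pbb^1}(1)^{r - 3},\]
with the first two summands canonical. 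For general $z_1, z_2 \in L$ this gives $N'_{C \cup L}|_L(-z_1 - z_2) \simeq \Oc_{\Pbb^1}^2 \oplus \Oc_{\Pbb^1}(-1)^{r - 3}$, hence $\H^0(N'_{C \cup L}|_L(-z_1 - z_2 - u - v)) = 0$ and (Proposition~\ref{T:chi-modifications}) $\chi(N'_{C \cup L}|_L(-z_1 - z_2 - u - v)) = 4 - 2r$. Applying Proposition~\ref{trivglue} with $X = L$, $Y = C$, $D = z_1 + z_2$, it then suffices to show that
\[V := \ev_C^{-1}\bigl(\ev_L(\H^0(N'_{C \cup L}|_L(-z_1 - z_2)))\bigr) \subseteq \H^0(N'_{C \cup L}|_C)\]
satisfies interpolation and has dimension $\chi(N'_{C \cup L}|_C) + 4 - 2r$.

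The next step is to pin down $V$. Only the positive part contributes sections, so $\H^0(N'_{C \cup L}|_L(-z_1 - z_2)) = \H^0(N_{L \to x}(u - z_1 - z_2)) \oplus \H^0(N_{L \to y}(v - z_1 - z_2))$, with each summand one-dimensional. Restricting the two corresponding sections to the nodes $\{u, v\}$, and using Lemma~\ref{cone-glue} (applied to suitable cones) together with the machinery of \cref{S:normal-bundles} to rewrite the relevant fibers in terms of normal directions of $C$ --- so that $N_{L \to x}(u - z_1 - z_2)$ restricts to $N'_{C \to v}(u)$ at $u$ and to $N'_{C \to x}$ at $v$, and symmetrically for the $y$-section with $u$, $v$ and $x$, $y$ interchanged --- one finds that $\ev_L(\H^0(N'_{C \cup L}|_L(-z_1 - z_2)))$ is a two-dimensional subspace of $N'_{C \cup L}|_u \oplus N'_{C \cup L}|_v$ which couples the $[u \to v]$-direction at $u$ with the $[v \to x]$-direction at $v$, and the $[v \to u]$-direction at $v$ with the $[u \to y]$-direction at $u$. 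Unwinding Definition~\ref{corr} --- the extra $2u + 2v$ over $N'_{C \cup L}|_C = N'_C(u + v)[u \to v][v \to u]$ reflecting the two correspondence conditions, each permitting one further pole at $u$ and at $v$ --- this exhibits $V = V_1 + V_2$ as a space of the form $\H^0(N'_C(2u + 2v)[u \to v][v \to u]) \langle u \to v : v \to x \rangle \langle v \to u : u \to y \rangle$. The generality of $z_1, z_2$, quantified through the cross-ratio computation of Lemma~\ref{cross-ratio}, is precisely what guarantees that $V_1$ and $V_2$ are neither of the two excluded non-generic subspaces in Definition~\ref{corr} and are linearly independent relative to $\H^0(N'_C[u \to v][v \to u])$.

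Finally, $\H^1(N'_C[u \to v][v \to u]) = 0$ propagates, via twisting up, to $\H^1(N'_{C \cup L}|_C) = 0$, and a routine Euler-characteristic computation with Proposition~\ref{T:chi-modifications} then shows $\dim V = \chi(N'_{C \cup L}|_C) + 4 - 2r$, matching Proposition~\ref{trivglue}. Since $V$ is, by construction, a correspondence space $\H^0(N'_C(2u + 2v)[u \to v][v \to u]) \langle u \to v : v \to x \rangle \langle v \to u : u \to y \rangle$, the hypothesis that every such space satisfies interpolation then finishes the proof.

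As with Lemmas~\ref{addone} and \ref{addtwo}, I expect the main obstacle to be the fiber-level identification in the middle step: translating the $L$-side gluing datum $\ev_L(\H^0(N'_{C \cup L}|_L(-z_1 - z_2)))$ into the $C$-side correspondence that defines $\langle u \to v : v \to x \rangle \langle v \to u : u \to y \rangle$. This forces one to compose carefully the isomorphisms of Proposition~\ref{hh}, the coincidences of fibers at $u$ and $v$ from Lemma~\ref{cone-glue}, and the precise cross-ratio dependence of Lemma~\ref{cross-ratio}, and then to verify that a general choice of $z_1, z_2$ lands on a genuine --- rather than a degenerate --- member of the family of correspondence spaces.
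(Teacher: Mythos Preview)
Your proposal is correct and follows essentially the same route as the paper: two general points $z_1,z_2$ on $L$ (the paper's $z,w$), the same direct-sum decomposition of $N'_{C\cup L}|_L(-z_1-z_2)\simeq\Oc_{\Pbb^1}^2\oplus\Oc_{\Pbb^1}(-1)^{r-3}$, the same application of Proposition~\ref{trivglue}, the same use of Lemma~\ref{cone-glue} to translate the $L$-side fibers into $C$-side normal directions, and the same $\H^1$-vanishing argument for the dimension count. The paper phrases the non-degeneracy of the coupling slightly differently---it observes directly that the restriction maps on the two positive summands are graphs of the isomorphisms $\phi_w$, hence automatically one-dimensional and not contained in either factor---but this is the same content as your cross-ratio remark.
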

\begin{proof}
From Proposition~\ref{hh}, we have 
$N'_{C \cup L}|_C \simeq N'_C(u+v)[u \to v][v \to u]$.
Additionally, Proposition~\ref{hh} implies that for $z, w \in L$ general,
\begin{align*}
N_{C \cup L}'|_L(-z-w) &\simeq N_L(u + v)[u \to x][v \to y](-z-w) \\
&\simeq N_{L \to x}(u - z-w) \oplus N_{L \to y}(v - z-w) \oplus N_{L \to T}(-z-w) \\
&\simeq \Oc_{\Pbb^1}^2 \oplus \Oc_{\Pbb^1}(-1)^{r - 3}.
\end{align*}
The above isomorphism also implies:
\begin{align*}
\H^0(N_{C \cup L}'|_L(-z-w-u-v)) &\simeq \H^0(\Oc_{\Pbb^1}(-2)^2 \oplus \Oc_{\Pbb^1}(-3)^{r - 3}) = 0 \\
\chi(N_{C \cup L}'|_L(-z-w-u-v)) &= \chi(\Oc_{\Pbb^1}(-2)^2 \oplus \Oc_{\Pbb^1}(-3)^{r - 3}) = 4 - 2r.
\end{align*}

We next examine the map $\H^0(N_{C \cup L}'|_L(-z-w)) \to \H^0(N_{C \cup L}'|_L(-z-w)|_{\{u, v\}})$.
The above decomposition of $N_{C \cup L}'|_L(-z-w)$ reduces this problem to understanding the maps
\begin{align*}
\H^0(N_{L \to x}(u-z-w)) &\longrightarrow \H^0(N_{L \to x}(u-z-w)|_{\{u, v\}}) \\
\H^0(N_{L \to y}(v-z-w)) &\longrightarrow \H^0(N_{L \to y}(v-z-w)|_{\{u, v\}}) \\
\H^0(N_{L \to T}(-z-w)) &\longrightarrow \H^0(N_{L \to T}(-z-w)|_{\{u, v\}}).
\end{align*}

Since $\H^0(N_{L \to T}(-z-w)) = \H^0(\Oc_{\Pbb^1}(-1)^{r - 3}) = 0$,
the last map is zero.
The first two maps are, by construction,
the graphs of the isomorphisms $\phi_w$:
\begin{align*}
\H^0(N_{L \to x}(u-z-w)|_u) &\longrightarrow \H^0(N_{L \to x}(u-z-w)|_v) \\
\H^0(N_{L \to y}(v-z-w)|_u) &\longrightarrow \H^0(N_{L \to y}(v-z-w)|_v).
\end{align*}
In particular, they are subsets $W_u \subset \H^0(N'_{L \to x}(u-z-w)|_u) \times \H^0(N'_{L \to x}(u-z-w)|_v)$,
respectively $W_v \subset \H^0(N'_{L \to y}(v-z-w)|_u) \times \H^0(N'_{L \to y}(v-z-w)|_v)$, of dimension $1$,
which are not contained in either factor.

By Proposition~\ref{trivglue},
it suffices to show that the subspace $V \subseteq \H^0(N_C'(u+v)[u \to v][v\to u])$
of sections whose restriction to $\{u, v\}$ lies in $W_u \oplus W_v$
satisfies interpolation and has dimension
\[\chi(N_C'(u+v)[u \to v][v\to u]) + 4 - 2r = \chi(N_C'[u \to v][v\to u]) + 2.\]
We note that by Lemma~\ref{cone-glue}, together with the machinery of
\cref{S:normal-bundles}, the subsets $W_u$ and $W_v$ can equally
well be described as subsets:
\begin{align*}
W_u &\subset \H^0(N'_{C \to v}(v)|_u) \times \H^0(N'_{C \to x}|_v) \\
W_v &\subset \H^0(N'_{C \to y}|_u) \times \H^0(N'_{C \to u}(v)|_v),
\end{align*}
which are of dimension~$1$ and not contained in either factor.

For the dimension statement, we note that since $\H^1(N_C'[u \to v][v\to u]) = 0$,
the map $\underline{\res}$ of restriction to $\{u, v\}$ is surjective. Consequently,
\begin{align*}
  \dim V
  &= \dim \Ker (\underline{\res}) + 2 \\
  &= \dim \H^0(N_C'[u \to v][v\to u]) + 2 \\
  &= \chi(N_C'[u \to v][v\to u]) + 2,
\end{align*}
as desired. For the interpolation statement, it suffices to show that $V$ is an
\[\H^0 \big(N'_C(2u + 2v)[u \to v][v \to u] \big)\langle u \to v : v \to x \rangle \langle v \to u : u \to y \rangle.\]

Write $V_u$ for the subspace of $\H^0(N_C'(u+v)[u \to x][v\to y])$
consisting of sections whose restriction to $\{u, v\}$ lies in $W_u$, and define similarly $V_v$.
Again, since restriction to $\{u, v\}$ is surjective, $V = V_u + V_v$.
It is therefore sufficient (by symmetry) to note that $V_u$ is an
\[\H^0 \big(N'_C(u + v)[u \to v][v \to u]\big) \langle u \to v : v \to x \rangle. \qedhere\]
\end{proof}

\begin{lemma} \label{line-limit} Let $L \subset \Pbb^r$ be a line, and $w, s, t \in \Pbb^r$ be three distinct collinear points,
lying on a line which does not meet $L$.
Then for $u, q, p \in L$, the bundle
\[N_{L \to w \cup s} (u-q)[u \to w][q \to s][p \to t] \simeq \Oc_{\Pbb^1} \oplus \Oc_{\Pbb^1}(-1).\]
Moreover, writing $P \simeq \Oc_{\Pbb^1}$ for the positive subbundle,
the fiber $P|_u$ limits, as $q \to u$, to the fiber
\[N_{L \to s}(-q-p)|_u \subseteq N_L(u-q)[u \to w][q \to s][p \to t].\]
\end{lemma}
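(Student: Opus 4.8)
The plan is to compute the bundle $N_{L \to w \cup s}(u - q)[u \to w][q \to s][p \to t]$ explicitly on $L \cong \Pbb^1$, pin down its positive subbundle, and then read off the limiting fiber by an elementary calculation in local coordinates. We may assume $u, q, p$ are distinct, and for the limit keep $u, p$ fixed while $q \to u$. First, since $w, s, t$ are distinct points on a common line $M$ disjoint from $L$, Proposition~\ref{to-Lambda} gives that $N_{L \to w}$, $N_{L \to s}$, $N_{L \to t}$ are all defined over $L$ and each isomorphic to $\Oc_{\Pbb^1}(1)$, that $N_{L \to w \cup s} = N_{L \to w} \oplus N_{L \to s}$ (from the proof of that proposition), and that $N_{L \to t} \subseteq N_{L \to w \cup s}$ is a line subbundle independent of each summand --- the last point because the three $2$-planes $\langle w, L \rangle$, $\langle s, L \rangle$, $\langle t, L \rangle$ inside $\langle M, L \rangle \cong \Pbb^3$ are pairwise distinct, so their images in $N_L$ are pairwise independent line subbundles.

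\textbf{Computing the bundle.} Applying Remark~\ref{T:modification-splitting} to the global splitting $N_{L \to w \cup s}(u - q) = N_{L \to w}(u - q) \oplus N_{L \to s}(u - q)$ --- modifying at $u$ along the first summand, then at $q$ along the second --- gives $N_{L \to w \cup s}(u - q)[u \to w][q \to s] \cong N_{L \to s}(-q) \oplus N_{L \to w}(u - 2q)$, a direct sum of two copies of $\Oc_{\Pbb^1}$. By Proposition~\ref{T:chi-modifications}, the full bundle $E := N_{L \to w \cup s}(u - q)[u \to w][q \to s][p \to t]$ then has $\chi(E) = \chi(\Oc_{\Pbb^1}(1)^2) - 3 = 1$, hence degree $-1$, so $E \cong \Oc_{\Pbb^1}(a) \oplus \Oc_{\Pbb^1}(-1 - a)$ with $a \geq 0$ and $h^0(E) = a + 1$. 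To see $a = 0$, note $H^0(E)$ is the space of $\sigma \in H^0(N_{L \to s}(-q) \oplus N_{L \to w}(u - 2q))$ with $\sigma(p) \in N_{L \to t}|_p$ (a condition that involves only $N_{L \to t}$ near $p$, where the bundle is identified with $N_{L \to w \cup s}$); since evaluation at $p$ is an isomorphism from the $2$-dimensional space $H^0(N_{L \to s}(-q) \oplus N_{L \to w}(u - 2q))$ onto the fiber at $p$, this is one linear condition, so $h^0(E) = 1$. Hence $E \cong \Oc_{\Pbb^1} \oplus \Oc_{\Pbb^1}(-1)$, and the positive subbundle $P$ is generated by the unique (up to scaling) global section $\sigma^\ast = c_1 \sigma_1 + c_2 \sigma_2$, where $\sigma_i$ generates $H^0$ of the $i$-th summand and $(c_1 : c_2)$ is determined by $\sigma^\ast(p) \in N_{L \to t}|_p$; since $P \cong \Oc_{\Pbb^1}$ is a subbundle, $P|_z = \langle \sigma^\ast(z) \rangle$ for every $z$, and $c_1, c_2 \neq 0$ because $N_{L \to w}, N_{L \to s}, N_{L \to t}$ are pairwise independent and $\sigma_1, \sigma_2$ are nowhere vanishing on the summands $N_{L \to s}(-q), N_{L \to w}(u-2q)$.

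\textbf{The limit.} First locate the target fiber: tracking the subbundle $N_{L \to s}(u - q) \subseteq N_L(u - q)$ through the three modifications --- independent of $N_{L \to w}$ at $u$, equal to the modified-along bundle at $q$, independent of $N_{L \to t}$ at $p$ --- shows it becomes exactly $N_{L \to s}(-q - p)$, which lies inside $E$ as a sub-line-bundle of its $N_{L \to s}(-q)$-summand (using $E \cong N_{L \to s}(-q) \oplus N_{L \to w}(u - 2q)$ away from $p$); in particular $N_{L \to s}(-q - p)|_u$ is the fiber of that summand at $u$. So it suffices to show $\lim_{q \to u} P|_u$ equals this fiber. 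Pick a coordinate $z$ on $L$ with $u = 0$, keep $p \neq 0$ fixed, write $q = \epsilon \to 0$, and choose local frames $e_w, e_s$ for $N_{L \to w}, N_{L \to s}$ near $0$. Then $\sigma_1$ is the rational section $(z - \epsilon)\, e_s$ of $N_{L \to s}$ and $\sigma_2$ the rational section $\frac{(z - \epsilon)^2}{z}\, e_w$ of $N_{L \to w}$ (the unique ones with divisors $[\epsilon]$ and $2[\epsilon] - [0]$, matching the twists). Near $0$ the bundle $N_{L \to s}(-q)$ has frame $e_s$ (as $\epsilon \neq 0$) and $N_{L \to w}(u - 2q)$ has frame $z^{-1} e_w$, in which $\sigma_1$ has value $-\epsilon$ at $0$ and $\sigma_2$ has value $\epsilon^2$ at $0$. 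Since $c_2/c_1$ is determined by the values of $\sigma_1, \sigma_2$ at the fixed point $p$, it stays bounded and bounded away from $0$ as $\epsilon \to 0$, so $P|_u = \langle(-c_1 \epsilon,\ c_2 \epsilon^2)\rangle = \langle(c_1,\ -c_2 \epsilon)\rangle \to \langle(c_1, 0)\rangle$, the $N_{L \to s}(-q)$-summand fiber at $u$, which is $N_{L \to s}(-q - p)|_u$.

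\textbf{Main obstacle.} The fiddly part is the twist bookkeeping --- correctly tracking which summand gets twisted, and by how much, through the three successive modifications, and in particular identifying $N_{L \to s}(-q - p)$ as a subbundle of $N_L(u - q)[u \to w][q \to s][p \to t]$ and locating its fiber at $u$ --- together with choosing the local frames so that the $\epsilon^2$-versus-$\epsilon$ scaling (which is precisely what pushes the limit into the $N_{L \to s}$-direction) is visible; the underlying algebra is short once the framing is fixed. One could alternatively run the degeneration in a family over $L$ as in Remark~\ref{T:modifications-in-families}, whose tree-like limiting datum at $q = u$ is essentially $[u \to w \cup s](-u)$, but the explicit computation above seems most transparent.
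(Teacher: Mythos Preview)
Your proof is correct and follows essentially the same approach as the paper: decompose $N_{L \to w \cup s}$ along $N_{L \to w} \oplus N_{L \to s}$, then do an explicit coordinate computation for the limit of $P|_u$. The only tactical difference is in establishing the splitting type --- the paper exhibits $N_{L \to s}(-q-p)$ as a sub-line-bundle of Euler characteristic $0$ and invokes the classification of rank-$2$ bundles on $\Pbb^1$, whereas you compute $h^0(E) = 1$ directly by observing that the condition at $p$ cuts one dimension from $H^0(\Oc_{\Pbb^1}^{\oplus 2})$; and your limit computation evaluates the two generators $\sigma_1, \sigma_2$ at $u$ rather than eliminating from a five-parameter family of sections as the paper does, but the content is the same.
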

\begin{proof}
As $N_{L \to s}$ and $N_{L \to t}$, viewed as subbundles of $N_{L \to w \cup s}$,
have distinct fibers at $p$, this holds after modification at $u$ and $q$.
As $N_{L \to s}(-q)$ is a subbundle of
$N_{L \to w \cup s}(u-q)[u \to w][q \to s]$,
it follows that $N_{L \to s}(-q-p)$ is a subbundle of $N_{L \to w \cup s}(u)[u \to w][q \to s][p \to t]$.
But from Proposition~\ref{T:chi-modifications}, we have
\begin{align*}
\chi(N_{L \to s}(-q-p)) &= 0 \\
\chi(N_{L \to w \cup s}(u)[u \to w][q \to s][p \to t]) &= 4 + 2 - 1 - 1 - 1 = 1.
\end{align*}

Now by the classification of vector bundles on $\Pbb^1$, 
is is clear that any rank~$2$ vector bundle on $\Pbb^1$, which has Euler characteristic~$1$
and a subbundle of Euler characteristic~$0$, is necessarily $\Oc_{\Pbb^1} \oplus \Oc_{\Pbb^1}(-1)$.

Our next problem is to calculate the behavior of the fiber $P|_u$ as $q \to u$. To do this,
we choose isomorphisms $N_{L \to w} \simeq N_{L \to s} \simeq \Oc_{\Pbb^1}(1)$
such that $N_{L \to t}$ is the diagonal. Here, we identify $\Oc_{\Pbb^1}(1)$ with the bundle
of functions with a pole allowed at $\infty$. We then act by an automorphism of $\Pbb^1$
preserving $\infty$ to send $u$ and $p$ to $0$ and $1$ respectively.
We write
\[N_{L \to w \cup s}(u-q)[u \to w][q \to s][p \to t] = \left(N_{L \to w}(u - 2q) \oplus N_{L \to s}(-q)\right)[p \to t].\]
In terms of a local coordinate $z$ on $\Pbb^1$, sections of $N_{L \to w}(u) \oplus N_{L \to s}$
are then expressions of the form
\[\left(\frac{a}{z} + c + dz, b + ez\right).\]
Here, $P|_u$ is identified with the lowest-order terms $[a : b] \in \Pbb(N_{L \to w}(u)|_u \oplus N_{L \to s}|_u)$.
To be a section of $\left(N_{L \to w}(u - 2q) \oplus N_{L \to s}(-q)\right)[p \to t]$, we require:
\begin{gather*}
\left.\left(\frac{a}{z} + c + dz\right)\right|_{z = q} = \left.\frac{d}{dz} \left(\frac{a}{z} + c + dz\right)\right|_{z = q} = 0 \\
(b + ez)|_{z = q} = 0 \\
\left.\left(\frac{a}{z} + c + dz\right)\right|_{z = 1} = (b + ez)|_{z = 1}.
\end{gather*}
This is a system of linear equations in $a, b, c, d, e$; eliminating $c, d, e$ via elementary linear algebra gives
\[a(1 - q) + bq = 0.\]
In particular, as $q \to u = 0$, the subspace $P|_u$ limits to $[a : b] = [0 : 1]$.
Or in other words, the fiber $P|_u$ limits to the fiber $N_{L \to s}|_u \simeq N_{L \to s}(-q-p)|_u \subseteq N_{L \to w \cup s}(u-q)[u \to w][q \to s][p \to t]$.
\end{proof}

\begin{lemma} \label{fiddle-over}
Suppose $L$ is a $1$-secant line to $C$, meeting $C$ at $u$;
and $p, q \in L$ and $w \in T_u C$ are points distinct from~$u$. Let $\Lambda$ be a linear space
of dimension $r - 3$, and $s$ a point. Suppose the
subspaces $\Lambda$, $s$, and $w$ are disjoint from $L$, and that their projections
from $L$ are in linear general position.
Then
\[N'_{C \cup L}[q \to s][p \to \Lambda]\]
satisfies interpolation, in the limit $q \to u$, provided that
\[N_C'[u \to p \cup s]\]
satisfies interpolation.
\end{lemma}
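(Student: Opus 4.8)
The plan is to mimic the proofs of Lemmas~\ref{addone} and~\ref{addtwo}: use Proposition~\ref{hh} to split $N'_{C\cup L}[q\to s][p\to\Lambda]$ into its restrictions to $C$ and to $L$, compute the restriction to $L$ explicitly (carrying out the $q\to u$ limit with Lemma~\ref{line-limit}), and then invoke Proposition~\ref{trivglue} with $X=L$ and $Y=C$. Since the two modifications are made at $p,q\in L\smallsetminus\{u\}$ while $C\cap L=\{u\}$, the restriction to $C$ is unaffected, so Proposition~\ref{hh} gives
\[N'_{C\cup L}[q\to s][p\to\Lambda]\big|_C \simeq N'_C(u)[u\to v], \qquad v\in L\smallsetminus\{u\}\ \text{arbitrary},\]
while on $L$ it gives $N'_{C\cup L}[q\to s][p\to\Lambda]\big|_L \simeq N_L(u)[u\to w][q\to s][p\to\Lambda]$.

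To analyze the $L$-restriction, note first that $N_{L\to\Lambda}$ depends only on the hyperplane $H:=\overline{L\cdot\Lambda}$ (as $\overline{T_qL\cdot\Lambda}=H$ for every $q\in L$), so $[p\to\Lambda]=[p\to H]$. Put $t:=\overline{ws}\cap H$; this is a point collinear with $w$ and $s$, and by the general-position hypotheses $w,s,t$ are distinct and $\overline{ws}$ is disjoint from $L$. Choosing a general $(r-4)$-plane $\Lambda_0\subset H$ we get $N_L=N_{L\to w\cup s}\oplus N_{L\to\Lambda_0}$ and $N_{L\to H}=N_{L\to t}\oplus N_{L\to\Lambda_0}$ with $N_{L\to t}\subset N_{L\to w\cup s}$. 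Since elementary modifications respect direct-sum decompositions (using Remark~\ref{T:only-neighborhood} to localize each modification near its support), this propagates to
\[N'_{C\cup L}[q\to s][p\to\Lambda]\big|_L \simeq \big(N_{L\to w\cup s}(u)[u\to w][q\to s][p\to t]\big)\oplus N_{L\to\Lambda_0}(-q).\]
The second summand is $\simeq\Oc_{\Pbb^1}^{r-3}$ by Proposition~\ref{to-Lambda}, and Lemma~\ref{line-limit} (twisting its statement up by $q$) identifies the first summand with $\Oc_{\Pbb^1}(1)\oplus\Oc_{\Pbb^1}$, whose positive subbundle $P$ has fiber at $u$ limiting, as $q\to u$, to $N_{L\to s}|_u$. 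Hence $N'_{C\cup L}[q\to s][p\to\Lambda]|_L\simeq\Oc_{\Pbb^1}(1)\oplus\Oc_{\Pbb^1}^{r-2}$; in particular, for a general point $z\in L$ we get $\H^0\big(N'_{C\cup L}[q\to s][p\to\Lambda]|_L(-z-u)\big)=0$ and $\chi\big(N'_{C\cup L}[q\to s][p\to\Lambda]|_L(-z-u)\big)=2-r$.

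Now apply Proposition~\ref{trivglue} with $X=L$, $Y=C$, $D=z$. It remains to identify
\[V=\ev_C^{-1}\Big(\ev_L\big(\H^0(N'_{C\cup L}[q\to s][p\to\Lambda]|_L(-z))\big)\Big)\subseteq \H^0\big(N'_C(u)[u\to v]\big)\]
and to check its dimension. In the limit, $\ev_L(\cdots)$ is the line $P|_u=N_{L\to s}|_u$, which by Lemma~\ref{cone-glue} and the machinery of \cref{S:normal-bundles} coincides, inside $N'_{C\cup L}|_C|_u$, with $N'_{C\to s}|_u$; therefore $V=\H^0\big(N'_C(u)[u\to v][u\to s]\big)$. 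Because these two modifications sit at the single point $u$ along the independent line subbundles $N'_{C\to v}$ and $N'_{C\to s}$, Proposition~\ref{T:combining-modifications} gives $V=\H^0\big(N'_C[u\to v\cup s]\big)$; and since $u,v,p$ are collinear on $L$, the spans $\overline{T_uC\cdot v\cdot s}$ and $\overline{T_uC\cdot p\cdot s}$ coincide, so $N'_{C\to v\cup s}|_u=N'_{C\to p\cup s}|_u$ and, by Remark~\ref{T:only-neighborhood}, $V=\H^0\big(N'_C[u\to p\cup s]\big)$. Finally a routine Euler-characteristic count (Propositions~\ref{T:chi-modifications} and~\ref{to-Lambda}) gives $\chi(N'_C(u)[u\to v])+(2-r)=\chi(N'_C[u\to p\cup s])$, which equals $\dim V$ once $N'_C[u\to p\cup s]$ satisfies interpolation (hence is nonspecial). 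Thus Proposition~\ref{trivglue} applies.

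I expect the main obstacle to be this last identification: carrying the $q\to u$ limit through the three modifications on $L$ and correctly translating the limiting positive subbundle at $u$ from the ``$L$-side'' into the ``$C$-side'' modification $[u\to p\cup s]$ --- in particular, checking that the collinearity of $u,v,p$ on $L$ makes the modifications along $\overline{v\cup s}$ and $\overline{p\cup s}$ agree at $u$, which is precisely what lets the hypothesis be stated in terms of $p$ rather than the auxiliary point $v$ coming from Proposition~\ref{hh}. Everything else is bookkeeping with the modification calculus of Sections~\ref{S:modifications-arbitrary}--\ref{S:modifications-curves} and splitting-type computations on $\Pbb^1$.
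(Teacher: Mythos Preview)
Your proposal is correct and follows essentially the same route as the paper's proof: both use Proposition~\ref{hh} to identify the restrictions to $C$ and $L$, decompose the $L$-restriction using Lemma~\ref{line-limit} to isolate the positive subbundle and its limiting fiber $N_{L\to s}|_u$, and then apply Proposition~\ref{trivglue}. The differences are only cosmetic: the paper takes $v=p$ from the outset and uses $D=q$ in Proposition~\ref{trivglue}, whereas you carry an arbitrary $v\in L$ and a general $z\in L$, reconciling $v$ with $p$ at the end via the collinearity argument (which is valid by Remark~\ref{T:only-neighborhood}); likewise the paper moves $\Lambda$ so that it passes through $t$, while you define $t=\overline{ws}\cap H$ directly---these produce the same point $t$ and the same decomposition.
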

\begin{proof}
First we note that the bundle we wish to prove satisfies interpolation and the bundle
we assume satisfies interpolation, both depend only upon the projection of $\Lambda$ from $L$.
Consequently, since the projection of $\Lambda$ from $L$ meets the projection of $\overline{ws}$ from $L$ at a point,
we may suppose $\Lambda$ meets $\overline{ws}$ at a point $t$. 
Let $\Lambda' \subset \Lambda$ be a codimension~$1$ subspace, disjoint from $t$
(which we take to be the empty set if $r = 3$).

Now, from Proposition~\ref{hh}, we have 
\[N'_{C \cup L}|_C \simeq N'_C(u)[u \to p] \quad \Rightarrow \quad N'_{C \cup L}[q \to s][p \to \Lambda]|_C \simeq N'_C(u)[u \to p].\]
Additionally (using Lemma~\ref{line-limit}),
\begin{align*}
N'_{C \cup L}[q \to s][p \to \Lambda]|_L(-q) &\simeq N_L(u)[u \to w][q \to s][p \to \Lambda](-q) \\
&\simeq N_L(u)[u \to w][q \to s][p \to t \cup \Lambda'](-q) \\
&\simeq \left(N_{L \to w \cup s} \oplus N_{L \to \Lambda'}\right)(u)[u \to w][q \to s][p \to t \cup \Lambda'](-q) \\
&\simeq N_{L \to w \cup s}(u-q)[u \to w][q \to s][p \to t] \oplus N_{L \to \Lambda'}(-2q) \\
&\simeq \Oc_{\Pbb^1} \oplus \Oc_{\Pbb^1}(-1) \oplus \Oc_{\Pbb^1}(-1)^{r - 3} \\
&\simeq \Oc_{\Pbb^1} \oplus \Oc_{\Pbb^1}(-1)^{r - 2}.
\end{align*}
The positive subbundle $\Oc_{\Pbb^1}$ here is:
\[\text{the positive subbundle $P$ of} \ N_{L \to w \cup s}(u-q)[u \to w][q \to s][p \to t].\]
The above isomorphism also implies:
\begin{align*}
\H^0(N'_{C \cup L}[q \to s][p \to \Lambda]|_L(-q-u)) &\simeq \H^0(\Oc_{\Pbb^1}(-1) \oplus \Oc_{\Pbb^1}(-2)^{r-3}) = 0 \\
\chi(N'_{C \cup L}[q \to s][p \to \Lambda]|_L(-q-u)) &= \chi(\Oc_{\Pbb^1}(-1) \oplus \Oc_{\Pbb^1}(-2)^{r-2}) = 2 - r.
\end{align*}

So applying Proposition~\ref{trivglue}, it suffices to show that the subspace $V \subseteq \H^0(N_C'(u)[u \to p])$
whose fiber at $u$ lies in $\H^0(P|_u)$
satisfies interpolation and has dimension
\[\chi(N_C'(u)[u \to p]) + 2 - r = \chi(N_C'[u \to p \cup s]).\]

Again from Lemma~\ref{line-limit}, the subspace $V \subseteq \H^0(N_C'(u)[u \to p])$ thus limits to the space
of sections whose fiber at $u$ lies in $\H^0(N_{L \to s}(-p-q)|_u)$,
or equivalently (by the machinery of \cref{S:normal-bundles}) whose fiber at $u$ lies in
$\H^0(N'_{C \to s}|_u)$,
which we recognize as the space of sections
$\H^0(N_C'(u)[u \to p][u \to s])$.
\end{proof}


\section{A stronger inductive hypothesis}
\label{S:hypothesis}

In this section, we explain a generalization of our interpolation problem; this
generalization will allow us to make an inductive argument in the following sections.

\begin{definition}
Consider a curve $C$, equipped with a collection of general points in $C_\sm$:
\begin{itemize}
\item One marked point $p$;
\item For any triple $(i, j; k)$ of integers in the set
\[(1, 1; 1) \quad (2, 0; 1) \quad (1, 0; 2) \quad (1, 1; 0) \quad (1, 0; 1) \quad (2, 0; 0) \quad (0, 0; 2) \quad (1, 0; 0) \quad (0, 0; 1),\]
a set of $n_{ij}^k$ points
$q_{ij}^k(1), q_{ij}^k(2), \ldots, q_{ij}^k(n_{ij}^k)$. We call these points
\emph{$(i, j; k)$-points}, and we write $q_{ij}^k$ for the divisor
$q_{ij}^k(1) + \cdots + q_{ij}^k(n_{ij}^k)$.
\end{itemize}
Here and throughout, we require
\[\sum_{i,j,k} kn_{ij}^k < r - 1.\]
In addition, we require $r \geq 2$; and if $r = 2$,
we require
\[\sum_{i,j,k} jn_{ij}^k = 0.\]
Then we define the \emph{modified normal bundle}
\begin{align*}
N_C' &= N_C\big( (i + j - 1) q_{ij}^k\big)[p \to kq_{ij}^k][iq_{ij}^k \to p][jq_{ij}^k \to 2p] \\
&= N_C\left(\sum_{i, j, k} (i + j - 1) q_{ij}^k\right)\left[p \to \sum_{i, j, k} kq_{ij}^k\right]\left[\sum_{i,j,k} iq_{ij}^k \to p\right]\left[\sum_{i,j,k} jq_{ij}^k \to 2p\right].
\end{align*}
In general, when we write an expression with indices $i, j, k$ in a twist or modification of a 
vector bundle, the reader should
sum over $i, j, k$, as in the above example.
\end{definition}

\begin{remark}
Note that for every allowed $(i,j;k)$, we have $i \geq j \geq 0$ and $k \geq 0$.
Moreover, when $(i, j; k)$ is allowed, then both $(i, j; 0)$ and $(j, 0; k)$ are either allowed
or equal to $(0, 0; 0)$.
(And in particular, combining these two statements, so is $(j, 0; 0)$.)
\end{remark}

Note that when every $n_{ij}^k = 0$,
we have $N_C' = N_C(-p)$;
hence, interpolation for $N_C'$ will imply interpolation for $N_C$.

\begin{definition}
We say that a quadruple $\big(d,\, g,\, r,\, n \cn\! (i, j; k) \mapsto n_{ij}^k\big)$ is
\emph{good} if the modified normal bundle of
a general curve of degree $d$ and genus $g$ in $\Pbb^r$, with $n_{ij}^k$ general marked points
of type $(i, j; k)$, satisfies interpolation.
\end{definition}

\begin{lemma} \label{chimod} We have
\[\chi(N_C') = (r + 1)d - (r - 3)g - 2 - \sum_{i,j,k} (r - 1 - i - 2j - k) n_{ij}^k.\]
\end{lemma}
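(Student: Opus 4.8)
The plan is a direct computation with \cref{T:chi-modifications}. First I would record $\chi(N_C)$: from the Euler sequence $0 \to \Oc_C \to \Oc_C(1)^{r+1} \to T_{\Pbb^r}|_C \to 0$ we get $\deg(T_{\Pbb^r}|_C) = (r+1)d$, and then $0 \to T_C \to T_{\Pbb^r}|_C \to N_C \to 0$ gives $\deg N_C = (r+1)d - (2-2g)$, so by Riemann--Roch (valid since $C$ is lci of arithmetic genus $g$)
\[\chi(N_C) = \deg N_C + (r-1)(1-g) = (r+1)d - (r-3)g + (r-3).\]
As a check: when all $n_{ij}^k = 0$ one has $N_C' = N_C(-p)$, so $\chi(N_C') = \chi(N_C) - (r-1) = (r+1)d - (r-3)g - 2$, which is the asserted value in this case.

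Next I would unwind the definition of $N_C'$. Put $E = N_C\big(\sum_{i,j,k}(i+j-1)q_{ij}^k\big)$, so that $N_C' = E[p \to \Lambda_Z]\big[\sum_{i,j,k} i q_{ij}^k \to p\big]\big[\sum_{i,j,k} j q_{ij}^k \to 2p\big]$, with $\Lambda_Z = \overline{\sum_{i,j,k} k q_{ij}^k}$ the linear span occurring in the first modification. Because the marked points are general, the subbundles $N_{C\to\Lambda_Z}$, $N_{C\to p}$, $N_{C\to 2p}$ are defined on neighborhoods of the divisors $p$, $\sum i q_{ij}^k$, $\sum j q_{ij}^k$ respectively, with ranks $\sum_{i,j,k} k n_{ij}^k$ (by \cref{to-Lambda}, the needed genericity of $\Lambda_Z$ being guaranteed by the standing hypothesis $\sum k n_{ij}^k < r-1$), $1$, and $2$ (the last from the rank-one quotient of \cref{to-kp}). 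Twisting does not change the ranks of the cokernels $N_C/N_{C\to\Lambda_Z}$, $N_C/N_{C\to p}$, $N_C/N_{C\to 2p}$, which are therefore $r-1-\sum k n_{ij}^k$, $r-2$, $r-3$.

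Then I would assemble everything: part (b) of \cref{T:chi-modifications} gives $\chi(E) = \chi(N_C) + (r-1)\sum_{i,j,k}(i+j-1)n_{ij}^k$, and part (a) (using $\deg q_{ij}^k = n_{ij}^k$) gives
\[\chi(N_C') = \chi(E) - \Big(r - 1 - \sum_{i,j,k} k n_{ij}^k\Big) - (r-2)\sum_{i,j,k} i n_{ij}^k - (r-3)\sum_{i,j,k} j n_{ij}^k.\]
Substituting the value of $\chi(N_C)$, the constant terms combine to $(r-3) - (r-1) = -2$, and the total coefficient of $n_{ij}^k$ works out to $(r-1)(i+j-1) + k - (r-2)i - (r-3)j = i + 2j + k - (r-1) = -(r-1-i-2j-k)$, which is precisely the claimed identity.

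There is no genuine obstacle: the argument is pure bookkeeping once one respects the index-summation convention of \cref{S:hypothesis} and keeps straight which modifications alter the rank of the ambient bundle and which do not. The only points needing a little care are the value $\chi(N_C)$ and the identification of the three subbundle ranks as $\sum_{i,j,k} k n_{ij}^k$, $1$, and $2$; in the degenerate situations (for instance $r = 2$, where the hypotheses of \cref{S:hypothesis} force $\sum j n_{ij}^k = \sum k n_{ij}^k = 0$) some of these subbundles collapse to $0$ or to all of $N_C$, but the identity $\chi(E[D\to F]) = \chi(E) - \deg(D)\rank(E/F)$ of \cref{T:chi-modifications} still applies verbatim, so no separate treatment is required.
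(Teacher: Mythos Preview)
Your proof is correct and takes essentially the same approach as the paper's: both compute $\chi(N_C) = (r+1)d - (r-3)(g-1)$ and then apply \cref{T:chi-modifications} to tally the contributions of the twist and the three modifications. The paper is terser, grouping the net change at each $(i,j;k)$-point as $-(r-1-i-2j)$ and the change at $p$ as $-(r-1-\sum k n_{ij}^k)$, whereas you expand the twist and each modification separately and then recombine; the arithmetic is identical.
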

\begin{proof} Since $\chi(N_C) = (r + 1)d - (r - 3)(g - 1)$, the result follows from counting the changes
in the normal bundle at respectively $p$, and at all $(i, j; k)$-points (c.f.\ Proposition~\ref{T:chi-modifications}):
\[\chi(N_C') = \chi(N_C) - \left(r - 1 - \sum_{i,j,k} k n_{ij}^k\right) - \sum_{i,j,k} (r - 1 - i - 2j) n_{ij}^k. \qedhere \]
\end{proof}

\begin{lemma} \label{top}
The sub-line-bundle $N_{C \to p}'$ of $N_C'$ consisting of sections which point towards $p$
is nonspecial and has
Euler characteristic given by
\[d - g + 2 + \sum_{i,j,k} (i + j - 1) n_{ij}^k.\]
\end{lemma}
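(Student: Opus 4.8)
The plan is to identify $N_{C \to p}'$ explicitly as a line bundle by transporting the sub-line-bundle $N_{C \to p} \subseteq N_C$ through each of the operations defining $N_C'$, using the transfer formalism of \cref{T:curves-modification-subbundles-bijection} (which refines \cref{T:modification-subbundles-bijection}). Since all of these operations are isomorphisms away from $p$ and the general points $q_{ij}^k(\ell)$, and ``pointing towards $p$'' is an intrinsic condition, the line bundle produced this way is exactly $N_{C \to p}'$.

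The starting point is \cref{to-kp} with $k = 1$ (together with $N_{C \to 0\cdot p} = 0$): since $p$ is a general point of the general curve $C$ --- in particular not a flex, and on no tangent line of $C$ besides $T_pC$ --- this gives $N_{C \to p} \simeq \Oc_C(1)(2p)$. I would then run through the operations defining $N_C'$ in order. The twist by $\sum_{i,j,k}(i+j-1)q_{ij}^k$ replaces $N_{C\to p}$ by $\Oc_C(1)(2p)\big(\sum(i+j-1)q_{ij}^k\big)$. For the modification $[p \to \sum k q_{ij}^k]$ --- a modification at the single point $p$ along $N_{C\to\Lambda}$, where $\Lambda$ is the span of the osculating spaces $\overline{k\cdot q_{ij}^k(\ell)}$ --- the key claim is that the fibres $N_{C\to p}|_p$ and $N_{C\to\Lambda}|_p$ are linearly independent in $N_C|_p$: a computation in local coordinates (as in the proof of \cref{to-kp}) identifies $N_{C\to p}|_p$ with the line cut out in $N_C|_p$ by the second osculating plane of $C$ at $p$, while $N_{C\to\Lambda}|_p$ is a subspace of dimension $\sum k n_{ij}^k$, which --- the $(i,j;k)$-points being general and $\sum k n_{ij}^k < r-1 = \dim N_C|_p$ --- is a general subspace of its dimension and hence avoids the fixed line $N_{C\to p}|_p$. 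Thus $N_{C\to p}$ lies in the subset $S_3$ of \cref{T:modification-subbundles-bijection} (subbundles linearly independent from the modifying one at $p$), so this modification twists its transfer by $-p$, yielding $\Oc_C(1)\big(p + \sum(i+j-1)q_{ij}^k\big)$. Finally, $[\sum i q_{ij}^k \to p]$ is a modification along $N_{C\to p}$ and $[\sum j q_{ij}^k \to 2p]$ one along $N_{C\to 2p}$, and $N_{C\to p} \subseteq N_{C\to 2p}$; in both cases the line bundle being transported is contained in (in the first case, equal to) the subbundle along which one modifies, so these belong to $S_1$ and leave it unchanged. This gives
\[
N_{C\to p}' \simeq \Oc_C(1)\Big(p + \sum_{i,j,k}(i+j-1)q_{ij}^k\Big),
\]
and $\chi(N_{C\to p}') = \big(d + 1 + \sum(i+j-1)n_{ij}^k\big) + 1 - g = d - g + 2 + \sum_{i,j,k}(i+j-1)n_{ij}^k$ by Riemann--Roch.

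For nonspeciality, write $\sum(i+j-1)q_{ij}^k = T^+ - T^-$ with $T^+$, $T^-$ effective; only the $(0,0;1)$- and $(0,0;2)$-points contribute to $T^-$, so $\deg T^- \le \sum_{i,j,k} k n_{ij}^k < r - 1$. Now $\H^1(\Oc_C(1)(p + T^+)) = 0$, since $\Oc_C(1)$ is nonspecial and $p + T^+$ is effective, and this bundle has $\h^0 \ge \chi \ge d + 2 - g \ge r + 2$ global sections. Removing the $\deg T^- < r - 1$ general points of $T^-$ one at a time keeps $\h^0$ strictly positive at every stage, and removing a general point from a line bundle which has a section and no $\H^1$ preserves the vanishing of $\H^1$; hence $\H^1(N_{C\to p}') = \H^1\big(\Oc_C(1)(p + T^+)(-T^-)\big) = 0$.

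I expect the main obstacle to be the linear-independence step --- the geometric identification of $N_{C\to p}|_p$ as the second-osculating-plane direction, together with the general-position argument, which is precisely where the standing hypothesis $\sum_{i,j,k} k n_{ij}^k < r - 1$ is used. Everything else is bookkeeping with the transfer maps plus a short computation with line bundles on $C$.
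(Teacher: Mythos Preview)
Your approach matches the paper's: both identify $N_{C\to p}' \simeq \Oc_C(1)\big(p + \sum(i+j-1)q_{ij}^k\big)$ by tracking $N_{C\to p}$ through the defining modifications, and you spell out the $S_1/S_3$ transfer casework that the paper leaves implicit in its one-line assertion $N_{C\to p}' \simeq N_{C\to p}(-p)\big((i+j-1)q_{ij}^k\big)$. Where you diverge is the nonspeciality argument: the paper simply notes that this is a \emph{general} line bundle (since $C$, and hence $\Oc_C(1)$, is general) and then checks $\chi \ge 0$ via $i+j-1 \ge -k$ and $\sum k n_{ij}^k \le r-2$; your $T^+/T^-$ decomposition with point-by-point removal is a bit more hands-on but uses only that $\Oc_C(1)$ is nonspecial and the marked points are general, so it does not need the generality of $C$ itself.
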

\begin{proof}
The bundle $N_{C \to p}$ (without modification) is isomorphic to $\Oc_C(1)(2p)$ by \cref{to-kp}.
Consequently, $N_{C \to p}'$ is isomorphic to
\[N_{C \to p}' \simeq N_{C \to p}(-p) \big((i + j - 1) q_{ij}^k\big) \simeq \Oc_C(1)(p)\big((i + j - 1) q_{ij}^k\big).\]
By inspection, $N_{C \to p}'$ is a general line bundle
of the given Euler characteristic. It thus remains to show
that the given Euler characteristic is positive. But
\[d - g + 2 + \sum_{i,j,k} (i + j - 1) n_{ij}^k \geq (g + r) - g + 2 - \sum_{i,j,k} k n_{ij}^k \geq r + 2 - (r - 2) \geq 0. \qedhere\]
\end{proof}

\noindent
In particular, since by \cref{T:interpolation-ses} part~\ref{T:interpolation-ses-a},
the bundle $N_C'$ can only satisfy interpolation when
\begin{align*}
(r - 1) \cdot \chi(N_{C \to p}') - (r - 2) &\leq \chi(N_C'),
\end{align*}
the previous two lemmas imply that a \emph{necessary}
condition for $N_C'$ to satisfy interpolation is
\begin{equation}
\sum_{i,j,k} ((r - 2)i + (r - 3)j - k) \cdot n_{ij}^k \leq 2d + 2g - r - 2. \label{regime}
\end{equation}

Our goal for the rest of the paper will be to establish a partial converse: That
subject to certain conditions (the most important of which is \eqref{regime}) --- which
are satisfied in particular when every $n_{ij}^k = 0$ --- the
general modified normal bundle $N_C'$ satisfies interpolation.
To do this, we must first know that the property of $N_C'$ satisfying interpolation
is open; this is made precise by the following crucial proposition.

\begin{proposition} \label{modified-normal-glue}
Let $\mathcal{C} \to B$ be a family of curves, $p \colon B \to \mathcal{C}$
a section, and $q_{ij}^k$ be effective divisors on $\mathcal{C}$ which are flat over $B$
of relative degree $n_{ij}^k$.
Suppose that, for every $b \in B$:
\begin{enumerate}
\item The divisor $2p(b) + \sum_{i,j,k} k q_{ij}^k(b)$ is nondegenerate;
\item For any $x \in q_{ij}^k(b)$, the tangent lines to $\mathcal{C}(b)$
at $x$ and $p(b)$ are disjoint.
\end{enumerate}
Then the locus of $b \in B$ so that the modified
normal bundle for $(\mathcal{C}(b), p(b), q_{ij}^k(b))$
satisfies interpolation is open.
\end{proposition}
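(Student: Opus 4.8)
The plan is to reduce this to \cref{prop:interpolation-is-open} by producing a single vector bundle $\mathcal{N}'$ on the total space $\mathcal{C}$ (viewed as $\mathcal{C} \subset \Pbb^r \times B$) whose restriction to each fiber $\mathcal{C}(b)$ is naturally isomorphic to the modified normal bundle $N'_{\mathcal{C}(b)}$ of $(\mathcal{C}(b), p(b), q_{ij}^k(b))$. Granting this, the locus in question is exactly the locus of $b$ for which $\mathcal{N}'|_{\mathcal{C}(b)}$ satisfies interpolation, which is open by \cref{prop:interpolation-is-open}. Since openness is local on $B$, we may freely work over a reduced connected open subset of $B$.

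To build $\mathcal{N}'$, first note that since $\mathcal{C} \to B$ is flat with lci fibers, $\mathcal{C}$ is a local complete intersection of codimension $r-1$ in $\Pbb^r \times B$, so $N_{\mathcal{C}/\Pbb^r \times B}$ is a rank $r-1$ vector bundle on $\mathcal{C}$ with $N_{\mathcal{C}/\Pbb^r \times B}|_{\mathcal{C}(b)} \cong N_{\mathcal{C}(b)}$. The section $p(B)$ and the divisors $q_{ij}^k$ are Cartier on $\mathcal{C}$ --- their supports lie in the relative smooth locus, where $\mathcal{C} \to B$ is étale-locally a projection $B \times \Abb^1 \to B$ --- and flat over $B$ of the stated relative degrees, so the twisting divisor $\sum_{i,j,k}(i+j-1)q_{ij}^k$ and the modification divisors $p(B)$, $\sum_{i,j,k} i q_{ij}^k$, $\sum_{i,j,k} j q_{ij}^k$ are globally defined Cartier divisors on $\mathcal{C}$ pulling back to the corresponding divisors on each fiber. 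Applying the construction of \cref{S:normal-bundles} to the family $\mathcal{C} \subset \Pbb^r \times B$ and the flat families of linear spaces spanned by $\sum_{i,j,k} k q_{ij}^k$, by $p(B)$, and by $2p(B)$ (the relative tangent line along the section), we obtain subbundles $N_{\mathcal{C} \to \Lambda}$, defined a priori only over the open $U_{\mathcal{C},\Lambda}$. The content of hypotheses (1) and (2) is precisely that this open contains a neighborhood of the relevant modification divisor over \emph{every} $b \in B$: hypothesis (1) forces the span of $\sum_{i,j,k} k q_{ij}^k(b)$ to have the expected dimension $\big(\sum_{i,j,k} k n_{ij}^k\big) - 1$ and to be disjoint from $T_{p(b)}\mathcal{C}(b)$ (else the span of $2p(b) + \sum k q_{ij}^k(b)$ would be too small), so $p(B)$ lies in the relevant $U_{\mathcal{C},\Lambda}$ and $N_{\mathcal{C}\to \sum kq_{ij}^k}$ is an honest subbundle near $p(B)$ (of rank $\sum k n_{ij}^k$, cf.\ \cref{to-Lambda}); hypothesis (2) says $T_x\mathcal{C}(b)$ meets neither $p(b)$ nor $T_{p(b)}\mathcal{C}(b)$ for $x \in q_{ij}^k(b)$, so the $q$-divisors lie in $U_{\mathcal{C}, p(B)}$ and in $U_{\mathcal{C}, 2p(B)}$, making $N_{\mathcal{C}\to p}$ and $N_{\mathcal{C}\to 2p}$ honest subbundles near them.

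Next I would check that the resulting modification datum $M$ --- with triples $(p(B), *, N_{\mathcal{C}\to \sum kq_{ij}^k})$, $(\sum i q_{ij}^k, *, N_{\mathcal{C}\to p})$, and $(\sum j q_{ij}^k, *, N_{\mathcal{C}\to 2p})$ --- is tree-like in the sense of \cref{T:modification-datum}. At a point of $p(B)$ only the first triple is active (the $q$-divisors are disjoint from $p(B)$ by generality); at a point of a $q$-divisor only the last two are active, and there $N_{\mathcal{C}\to p} \subseteq N_{\mathcal{C}\to 2p}$, since pointing toward $p$ implies pointing toward $T_p\mathcal{C}(b) \supseteq \{p\}$, so case (b) of tree-likeness holds; all other subsets of indices have empty intersection of supports. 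Hence, by \cref{T:modification-generic} (and \cref{T:commuting-modifications-twists} to incorporate the twist),
\[
\mathcal{N}' := N_{\mathcal{C}/\Pbb^r \times B}\Big(\textstyle\sum_{i,j,k}(i+j-1)q_{ij}^k\Big)[M]
\]
is a well-defined vector bundle on $\mathcal{C}$.

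Finally, applying \cref{T:multi-modification-pullback} to the fiber inclusion $i_b \colon \mathcal{C}(b) \hookrightarrow \mathcal{C}$ --- whose image is contained in the support of none of the divisors involved, these being flat of finite relative degree --- together with the compatibility of the subbundles $N_{\mathcal{C}\to\Lambda}$ with base change (blowing up along a flat subscheme commutes with base change, as used throughout \cref{S:normal-bundles}), gives a natural isomorphism $i_b^\ast \mathcal{N}' \cong N'_{\mathcal{C}(b)}$ for every $b \in B$. This identifies the fiberwise modified normal bundle with $\mathcal{N}'|_{\mathcal{C}(b)}$, so \cref{prop:interpolation-is-open} finishes the argument. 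I expect the main obstacle to be the bookkeeping in the two middle paragraphs: confirming that hypotheses (1) and (2) genuinely promote every geometrically-defined subbundle $N_{\mathcal{C}\to\Lambda}$ to an honest subbundle near its modification divisor \emph{uniformly in $b$}, and that the datum $M$ is tree-like so that the multi-modification $\mathcal{N}'$ is well-defined on the higher-dimensional total space $\mathcal{C}$ (and not merely fiber by fiber).
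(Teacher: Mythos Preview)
Your proposal is correct and follows essentially the same approach as the paper: construct the modified normal bundle globally on $\mathcal{C}$ by verifying (via hypotheses (1) and (2)) that the relevant subbundles $N_{\mathcal{C}\to\Lambda}$ are defined near their modification divisors, check the datum is tree-like using $N_{\mathcal{C}\to p}\subset N_{\mathcal{C}\to 2p}$ and the disjointness of $p(B)$ from the $q$-divisors, and then invoke \cref{prop:interpolation-is-open}. One small remark: the disjointness of $p(B)$ from the $q$-divisors is not ``by generality'' in this family setting but is a consequence of hypothesis~(2) (disjoint tangent lines force distinct points), exactly as the paper observes.
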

\begin{proof}
Define
\[\Lambda = \overline{\sum_{i,j,k} k q_{ij}^k} \quad \text{and} \quad P = \overline{2p}.\]
Our first assumption implies that $p = p(B)$ is contained in $U_{\mathcal{C}, \Lambda}$.
Similarly, our second assumption implies that the $q_{ij}^k$ are
contained in $U_{\mathcal{C}, P}$ (and consequently
in~$U_{\mathcal{C}, p}$).

We now wish to construct a vector bundle $N_{\mathcal{C}}'$
on the total space
$\mathcal{C}$ whose restriction to each fiber $\mathcal{C}(b)$ is $N_{\mathcal{C}(b)}'$.
For this, we can appeal to the results of Section~\ref{S:modifications-arbitrary}:
It suffices to check that the modification datum
\[(p, N_{C \to \Lambda}), \quad (q', N_{C \to p}), \quad (q'', N_{C \to P})\]
is tree-like, where
\[q' = \sum_{i,j,k} i n_{ij}^k \quad \text{and} \quad q'' = \sum_{i,j,k} j q_{ij}^k.\]
Since our second condition implies that $p$ does not meet either $q'$ or $q''$,
it suffices to check $\{N_{C \to p}, N_{C \to P}\}$ is tree-like along $q' \cap q''$.
But this is clear, since $N_{C \to p} \subset N_{C \to P}$.

The desired result now follows from applying \cref{prop:interpolation-is-open}
to our bundle $N_{\mathcal{C}}'$.
\end{proof}


\section{Inductive arguments}
\label{S:fixed}

In this section, we give a number of inductive arguments
to reduce interpolation for certain modified normal bundles to
interpolation for other ``simpler'' modified normal bundles.
We begin by two ways of adding a $2$-secant line, which result from
respectively limiting $u$ and $v$ to $p$ in Lemma~\ref{addtwo}.

Throughout this section, and in the following section, we will make use of several such
``limiting arguments'', all but one of
which are straight-forward applications
of the machinery developed in Section~\ref{S:modifications-arbitrary}.
In \cref{two-secant}
below, we will spell this out this limiting argument explicitly;
subsequently, starting with \cref{stick}, it will be left to the reader
to check that the limiting argument given in \cref{two-secant}
applies, mutatis mutandis.

We will also spell out the limiting argument explicitly
in \cref{two-secant-backwards},
as this is the only case where the argument given 
in \cref{two-secant} (mutatis mutandis) does not apply.

\begin{proposition} \label{negtwist}
If a modified normal bundle $N_C'$
satisfies interpolation, then so does a general negative twist
\[N_C'(-p_1-p_2-\cdots-p_n) \quad \text{for} \quad n \leq r + 1 - \sum_{i,j,k} k n_{ij}^k.\]
\end{proposition}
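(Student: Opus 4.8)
The plan is to reduce to the case $n=1$ and then invoke the machinery already in place. By induction on $n$, it suffices to show the following: if $N_C'$ satisfies interpolation, and $n=1\le r+1-\sum k n_{ij}^k$ (so that adding one more unit to the $p$-modification budget is still allowed, in the sense that the inequality $\sum k n_{ij}^k < r-1$ has slack), then a general negative twist $N_C'(-p_1)$ satisfies interpolation, where $p_1\in C_\sm$ is general. Indeed, once the $n=1$ statement is known, $N_C'(-p_1-\cdots-p_n) = \big(N_C'(-p_1-\cdots-p_{n-1})\big)(-p_n)$, and $N_C'(-p_1-\cdots-p_{n-1})$ is itself a modified normal bundle of the same form (with the same $n_{ij}^k$), so the hypothesis $n-1 \le r+1 - \sum k n_{ij}^k$ lets us apply the single-twist case again; the bound $n\le r+1-\sum k n_{ij}^k$ is exactly what is needed so that all intermediate twists stay within range. (One checks $\chi$ stays in the good regime throughout via \cref{chimod} and \eqref{regime}, but the essential content is the single step.)

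For the single step, I would realize $N_C'(-p_1)$ as a further \emph{elementary modification} rather than a bare twist, so as to bring \cref{T:interpolation-positive} (combined with \cref{T:interpolation-twist-down} to strip the positive twist) to bear. Concretely: let $F = N'_{C\to p}$ be the sub-line-bundle of $N_C'$ pointing toward $p$, which by \cref{top} is nonspecial with $\chi(F) = d-g+2+\sum(i+j-1)n_{ij}^k > 0$. Writing $H = N_C'/F$, we have a short exact sequence $0\to F\to N_C'\to H\to 0$, and $H$ satisfies interpolation (this is where one uses that $N_C'$ and $F$ both satisfy interpolation together with \cref{T:interpolation-ses-line-bundle}, or more directly that $H$ is itself, up to twist, a modified normal bundle of a curve with one fewer ``budget''). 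Then $N_C'(-p_1)$ sits in the modification picture of diagram \labelcref{E:ses-modification} with $n$ replaced appropriately; alternatively, and more simply, since $N_C'(-p_1) = N_C'[p_1\to 0]$, one twists up by $p_1$ and applies \cref{T:interpolation-positive} with $F$ as above, then uses \cref{T:interpolation-twist-down} to remove the twist — the hypothesis $\chi(N_C'(-p_1))\ge \genus(C)\rank(N_C')$ needed for \cref{T:interpolation-twist-down} follows from \cref{chimod}, \eqref{regime}, and $d\ge g+r$ by a short estimate.

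Actually, the cleanest route avoids \cref{T:interpolation-positive} entirely: apply \cref{T:interpolation-ses-line-bundle} directly to $0\to F(-p_1)\to N_C'(-p_1)\to H(-p_1)\to 0$. Here $F(-p_1)$ is still a nonspecial line bundle provided $\chi(F)-1\ge 0$, which holds by the estimate in \cref{top}; $H(-p_1)$ satisfies interpolation by \cref{T:interpolation-twist-up}... no — twisting \emph{down} is the issue. So one instead observes that $H(-p_1)$ is again of the form required, or uses that a general negative twist of an interpolating bundle with large enough $\chi$ interpolates (\cref{T:interpolation-twist-down}); this is where the rank-vs-genus inequality must be checked for $H$, using that $\chi(H) = \chi(N_C') - \chi(F)$ is still comfortably large given \eqref{regime} and $d\ge g+r$. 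Condition (a) of \cref{T:interpolation-ses-line-bundle}, $\rank(H)(\h^0(F(-p_1))-1)\le \h^0(H(-p_1))$, is a numerical inequality that reduces, via \cref{chimod} and \cref{top}, to precisely the regime condition \eqref{regime} (possibly with the extra slack coming from $n\le r+1-\sum k n_{ij}^k$); and condition (b), maximal rank of the boundary map $\delta_D$ for general $D$, follows from the same Wronskian non-vanishing argument used in the proof of \cref{T:interpolation-positive}, since $F(-p_1)$ is a line bundle and $p_1$ is general.

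The main obstacle I anticipate is bookkeeping rather than conceptual: verifying that after the negative twist all the numerical hypotheses — the positivity $\chi(F(-p_1))\ge 0$, the regime inequality \eqref{regime} for the twisted data, and the genus bound $\chi(H(-p_1))\ge \genus(C)\rank(H)$ needed to twist $H$ back down — remain satisfied, uniformly as $n$ ranges up to $r+1-\sum k n_{ij}^k$. All of these are linear inequalities in $d,g,r$ and the $n_{ij}^k$ that follow from $d\ge g+r$ and the standing hypotheses $\sum k n_{ij}^k < r-1$ together with \eqref{regime}, but assembling them correctly is the only real work.
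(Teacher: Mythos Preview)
Your approach is drastically more complicated than necessary, and as written it is not a proof but a sketch of three different strategies, none of which you fully carry out.

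The paper's argument is almost entirely numerical. The only ``geometric'' input is the following trivial observation, which you never isolate: if $E$ satisfies interpolation and $D$ is a general effective divisor with $\chi(E(-D)) \geq 0$, then $E(-D)$ automatically satisfies interpolation. Indeed, interpolation of $E$ at degree $\deg D$ together with $\chi(E(-D))\geq 0$ forces $h^1(E(-D))=0$, so $E(-D)$ is nonspecial; and for any further general $D'$, interpolation of $E$ at degree $\deg D+\deg D'$ gives $h^0(E(-D-D'))=0$ or $h^1(E(-D-D'))=0$, which is exactly the interpolation condition for $E(-D)$. No exact sequences, no positive modifications, no Wronskians.

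Hence the entire content of the proposition is the inequality
\[
\chi(N_C') \;\geq\; (r-1)\Big(r+1 - \sum_{i,j,k} k\, n_{ij}^k\Big),
\]
and the paper derives this from \cref{chimod} and \eqref{regime} (the latter holds because $N_C'$ satisfies interpolation, via \cref{T:interpolation-ses}\ref{T:interpolation-ses-a} applied to $N'_{C\to p}\subset N_C'$). One checks by inspection that $r-1-i-2j-k \leq (r-2)i+(r-3)j+(r-2)k$ for each allowed triple $(i,j;k)$, plugs this into \cref{chimod}, applies \eqref{regime}, and then uses $d\geq g+r$; a four-line chain of inequalities finishes it.

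By contrast, what you outline has real gaps even on its own terms. You never commit to one of your three routes. You never verify the hypothesis $\chi(H(-p_1)) \geq g\cdot\rank(H)$ needed for \cref{T:interpolation-twist-down} on $H$. You assert that condition (b) of \cref{T:interpolation-ses-line-bundle} ``follows from the same Wronskian non-vanishing argument,'' but the Wronskian step in \cref{T:interpolation-positive} handles a specific projection $\H^1(F(-D))\to\H^1(F(-D+np))$, not an arbitrary boundary map for a twisted sequence. And you never actually establish that $H=N_C'/N'_{C\to p}$ satisfies interpolation --- saying it ``is itself, up to twist, a modified normal bundle'' defers the problem rather than solving it.
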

\begin{proof}
Since $N_C'$ satisfies interpolation, it satisfies \cref{regime}.
By casework, we see that for each allowed $(i, j; k)$,
\[r - 1 - i - 2j - k \leq (r - 2)i + (r - 3)j  + (r - 2)k.\]
Combining these facts and applying \cref{chimod},
\begin{align*}
\chi(N_C') &= (r + 1)d - (r - 3)g - 2 - \sum_{i,j,k} (r - 1 - i - 2j - k) n_{ij}^k \\
&\geq (r + 1)d - (r - 3)g - 2 - \sum_{i,j,k} ((r - 2)i + (r - 3)j  + (r - 2)k) n_{ij}^k \\
&= (r + 1)d - (r - 3)g - 2 - \sum_{i,j,k} ((r - 2)i + (r - 3)j - k) n_{ij}^k - (r - 1) \cdot \sum_{i,j,k} k n_{ij}^k\\
&\geq (r + 1)d - (r - 3)g - 2 - (2d + 2g - r - 2) - (r - 1) \cdot \sum_{i,j,k} k n_{ij}^k \\ 
&= (r - 1)(d - g) + r - (r - 1) \cdot \sum_{i,j,k} k n_{ij}^k \\ 
&\geq (r - 1) \cdot r + (r - 1) - (r - 1) \cdot \sum_{i,j,k} k n_{ij}^k \\
&= (r - 1) \cdot \left(r + 1 - \sum_{i,j,k} k n_{ij}^k\right).
\end{align*}
Consequently, for $n \leq r + 1 - \sum_{i,j,k} k n_{ij}^k$,
the twist $N_C'(-p_1-p_2-\cdots-p_n)$ has nonnegative Euler characteristic,
which immediately implies the desired conclusion.
\end{proof}

\begin{lemma} \label{two-secant}
Let $g > 0$. Suppose that $(d, g, r; n)$ satisfies \eqref{regime} and
\[\sum_{i,j,k} k n_{ij}^k < r - 2.\]
Then $(d, g, r; n)$ is good provided that
$(d - 1, g - 1, r; n')$ is good, where
\[(n')_{ij}^k = \begin{cases}
n_{ij}^k & \text{if $(i, j; k) \neq (1, 1; 1)$;} \\
n_{11}^1 + 1 & \text{if $(i, j; k) = (1, 1; 1)$.}
\end{cases}\]
If instead
\[\sum_{i,j,k} k n_{ij}^k = r - 2,\]
then $(d, g, r; n)$ is good provided that
$(d - 1, g - 1, r; n')$ is good, where
\[(n')_{ij}^k = \begin{cases}
\sum_\ell n_{ij}^\ell & \text{if $k = 0$ and $(i, j; k) \notin \{(0, 0; 0), (1, 1; 0)\}$;} \\
1 + \sum_\ell n_{ij}^\ell & \text{if $(i, j; k) = (1, 1; 0)$;} \\
0 & \text{else.}
\end{cases}\]
\end{lemma}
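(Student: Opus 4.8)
\textbf{Proof proposal for Lemma~\ref{two-secant}.}
The plan is to realize the degeneration $C \cup L$, where $L$ is a $2$-secant line meeting $C$ at two points which we allow to collide with the marked point $p$, and then apply Lemma~\ref{addtwo}. First, since $g > 0$, a general curve $C'$ of degree $d-1$ and genus $g-1$ in $\Pbb^r$ is limit nonspecial, and by Corollary~\ref{right-component} we can attach a $2$-secant line $L$ to obtain a limit nonspecial curve $C = C' \cup L$ of degree $d$ and genus $g$. By Proposition~\ref{modified-normal-glue} and the irreducibility of the space of limit nonspecial curves, it suffices to show that \emph{some} modified normal bundle $N_C'$ (for a possibly-special position of the marked points, as long as the degeneracy hypotheses of Proposition~\ref{modified-normal-glue} are respected) satisfies interpolation; we take $C$ to be the reducible curve above, with the $(i,j;k)$-points lying on $C'$, and the marked point $p$ obtained as a limit of the two nodes $u, v$ of $L$.

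Next I would carry out the limiting argument. Apply Lemma~\ref{addtwo}: if $N_{C'}'(u+v)[u \to v][v \to u][v \to 2u]$ satisfies interpolation (for $u, v$ general points of $C'$, which specialize to $p$), then $N_{C \cup L}'$ satisfies interpolation. Now specialize $u, v \to p$: using the machinery of Section~\ref{S:modifications-arbitrary} exactly as spelled out in the forthcoming Proposition~\ref{two-secant} (the modification data remain tree-like in the limit because the relevant subbundles are nested, $N_{C \to p} \subset N_{C \to 2p}$, and linearly general otherwise), the bundle $N_{C'}'(u+v)[u\to v][v\to u][v\to 2u]$ degenerates to
\[N_{C'}(2p)[p \to \tfrac{1}{\cdot} \cdots] \cdots\]
— more precisely, to the modification of $N_{C'}$ obtained by adding to the existing modification datum of $N_{C'}'$ the extra data $N_{C'}(2p)[p \to p][p \to 2p]$ relative to the $(1,1;1)$-style bookkeeping, i.e.\ the modified normal bundle for $(d-1, g-1, r; n')$ with one extra $(1,1;1)$-point in the first case. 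Indeed, comparing with the definition of $N_C'$ in Section~\ref{S:hypothesis}: a $(1,1;1)$-point contributes exactly the data $(i+j-1)q = q$ twist, $[p \to kq] = [p \to q]$, $[iq \to p] = [q\to p]$, $[jq \to 2p] = [q \to 2p]$, which matches $(2p)[p\to v][u \to p][v\to 2p]$ after sending $u, v \to p$ and identifying $v$ with the new point $q$. This gives the first case. For the second case, when $\sum k n_{ij}^k = r-2$, adding the modification $[p \to q]$ would push the total $k$-count to $r-1$, violating the standing hypothesis $\sum k n_{ij}^k < r-1$; so instead the limit must be analyzed with $L$ contained in (the limit of) a hyperplane spanned by the $k$-directions, and the $(1,1;1)$-point degenerates further to a $(1,1;0)$-point — the $k$-index drops to $0$ — while all other $(i,j;k)$-points with $k > 0$ similarly lose their $k$ (their $\overline{kq}$-directions get absorbed into the hyperplane), which is exactly the stated $n'$.

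The main obstacle will be justifying the degeneration in the second case: one must check that when $\sum k n_{ij}^k = r-2$, the line $L$ together with all the $\overline{kq_{ij}^k}$-cone-directions exactly fill up a hyperplane, so that the modification $[p \to q]$ coming from the colliding $2$-secant line is forced to lie in the span of the already-present directions, and therefore (by Remark~\ref{T:only-neighborhood} and the combining rules of Proposition~\ref{T:combining-modifications}) collapses the $k$-structure rather than adding to it. Concretely I expect to invoke Lemma~\ref{addtwo-two} (rather than Lemma~\ref{addtwo}) in this boundary regime, since that variant is phrased in terms of the $\langle u \to v : v \to x\rangle$-type subspaces that arise precisely when the positive subbundle of $N_{C\cup L}|_L$ has dropped in rank — matching the Euler-characteristic bookkeeping forced by $\sum k n_{ij}^k = r-2$. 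Checking that the hypotheses of Proposition~\ref{modified-normal-glue} (nondegeneracy of $2p + \sum k q_{ij}^k$, and disjointness of tangent lines) survive the specialization, and that \eqref{regime} for $(d,g,r;n)$ implies \eqref{regime} for $(d-1,g-1,r;n')$ (a short casework computation using Lemma~\ref{chimod}), are the remaining routine verifications.
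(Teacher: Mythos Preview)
Your overall architecture is right --- degenerate to $D\cup L$ with a $2$-secant line, invoke Lemma~\ref{addtwo}, then limit a node to $p$ --- but the execution has two substantive errors.

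\textbf{First, you limit the wrong thing.} You send \emph{both} nodes $u,v\to p$. The paper sends only $u\to p$; the other node $v$ remains a general point of $D$ and becomes the \emph{new} marked point (of type $(1,1;1)$ or $(1,1;0)$). Your sentence ``identifying $v$ with the new point $q$'' after already sending $v\to p$ is incoherent. After limiting $u\to p$ the bundle is
\[
N_D'(p+v)[p\to v][v\to p][v\to 2p],
\]
and one then combines the two modifications at $p$, namely $[p\to\sum kq_{ij}^k]$ (present in $N_D'$) and $[p\to v]$, via Proposition~\ref{T:combining-modifications}(b) to obtain $[p\to v+\sum kq_{ij}^k](-p)$. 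In the first case this is exactly the modified normal bundle with an additional $(1,1;1)$-point at~$v$.

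\textbf{Second, your mechanism for the case $\sum kn_{ij}^k=r-2$ is wrong.} There is no special geometric analysis with $L$ lying in a hyperplane, and Lemma~\ref{addtwo-two} is not invoked. The point is purely algebraic: when $\sum kn_{ij}^k=r-2$, the subbundles $N_{D\to\overline{\sum kq_{ij}^k}}$ (rank $r-2$) and $N_{D\to v}$ (rank $1$) together span the full normal bundle at $p$, so Proposition~\ref{T:combining-modifications}(b) yields $[p\to N_D](-p)=(-p)$, i.e.\ the two modifications at $p$ collapse to a pure twist. What remains, after one further twist by $-p$, is precisely the modified normal bundle for $n'$ (with each $(i,j;k)$ replaced by $(i,j;0)$ and a new $(1,1;0)$-point at $v$), plus some $(0,0;0)$-points; the latter are then discarded via Proposition~\ref{negtwist}. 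Your proposed alternative --- $L$ trapped in a hyperplane, directions ``absorbed,'' and an appeal to Lemma~\ref{addtwo-two} --- is neither what happens nor correct.

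Finally, verifying \eqref{regime} for $(d-1,g-1,r;n')$ is not part of this lemma; it is handled separately in \S\ref{S:regime}.
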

\begin{proof}
Degenerate $C$ to $D \cup L$, where $L$ is a $2$-secant line, and all marked points
specialize to points on $D$. Applying Lemma~\ref{addtwo},
interpolation for $N_C'$ is reduced to interpolation for
\[N_D'(u + v)[u \to v][v \to u][v \to 2u].\]
The next step is to \emph{limit $u \to p$}, which reduces our problem to interpolation for
\[N_D'(p + v)[p \to v][v \to p][v \to 2p].\]

More precisely, write $\mathcal{D} \to B$ for the constant family $D \times B \to B$ over $B$,
where $B \subset D$ is some open set containing $p$.
What we mean by the above is that there is a vector bundle on $\mathcal{D}$,
whose restriction to the fiber $D \times \{u\}$ is, for $u \in B$,
\[N_D'(u + v)[u \to v][v \to u][v \to 2u].\]

For this, we let $\Lambda$, $P$, $q'$, and $q''$ be as in \cref{modified-normal-glue};
and write $T = \sum_{i,j,k} (i + j - 1) q_{ij}^k$.
By minor abuse of notation, we also write $v$ for the horizontal divisor $v \times B$,
and $u$ for the diagonal in $D \times B$. We then appeal to the
machinery of Section~\ref{S:modifications-arbitrary}, which constructs our desired bundle
\[N_{\mathcal{D}}(T)[p \to \Lambda][q' \to p][q'' \to P](u + v)[u \to v][v \to u][v \to 2u],\]
provided that the modification datum
\[(p, N_{\mathcal{D} \to \Lambda}), \quad (q', N_{\mathcal{D} \to p}), \quad (q'', N_{\mathcal{D} \to P}), \quad (u, N_{\mathcal{D} \to v}), \quad (v, N_{\mathcal{D} \to u}), \quad (v, N_{\mathcal{D} \to \overline{2u}})\]
is tree-like. The divisor $p$ does not cross either $q'$ or $q''$;
additionally, since $v$ is general, $v$ does not cross $p$, $q'$, or $q''$.
Moreover, by shrinking $B$, we may suppose $u$ does not cross $v$, $q'$, or $q''$.
It therefore suffices to see that the collections of bundles
\[\{N_{\mathcal{D} \to p}, N_{\mathcal{D} \to P}\}, \quad \{N_{\mathcal{D} \to u}, N_{\mathcal{D} \to \overline{2u}}\}, \quad \{N_{\mathcal{D} \to \Lambda}, N_{\mathcal{D} \to v}\}\]
are tree-like along $q' \cap q''$, $v$, and $(p, p) \in \mathcal{D} = D \times B$ respectively.
But $N_{\mathcal{D} \to p} \subset N_{\mathcal{D} \to P}$ and $N_{\mathcal{D} \to u} \subset N_{\mathcal{D} \to \overline{2u}}$, which takes care of the first two cases.
For the last case, the generality of $v$ implies $v$ is not contained in
the span of $\Lambda$ with the tangent line to $D$ at $p$. Consequently, the fibers
$N_{\mathcal{D} \to u}|_{(p,p)}$ and $N_{\mathcal{D} \to \overline{2u}}|_{(p,p)}$
are linearly independent.

Moving on, we collect together the transformations $[p \to \sum k q_{ij}^k(\ell)]$ and $[p \to u]$
into a single transformation $[p \to u + \sum k q_{ij}^k(\ell)](-p)$
via Proposition~\ref{T:combining-modifications}.

When $\sum_{i,j,k} k n_{ij}^k < r - 2$, we recognize this as another
modified normal bundle, with a new point of type $(1, 1; 1)$ introduced at $u$,
as desired.

Similarly, when $\sum_{i,j,k} k n_{ij}^k < r - 2$, we recognize its twist by $-p$
as another modified normal bundle,
where all points of type $(i, j; k)$ are changed
to type $(i, j; 0)$, and a new point of type $(1, 1; 0)$ is introduced at $u$.
Eliminating the $(0, 0; 0)$-points (which are just general negative twists)
via \cref{negtwist}, we reduce to interpolation for a bundle assumed to satisfy
interpolation.
\end{proof}

\begin{lemma} \label{two-secant-backwards} Let $g > 0$ and $r > 3$,
and suppose that $(d, g, r; n)$ satisfies \eqref{regime} and
\[\sum_{i,j,k} k n_{ij}^k \in \{r - 3, r - 2\}.\]
Then $(d, g, r; n)$ is good provided that $(d - 1, g - 1, r; n')$ is good,
where for
\[\sum_{i,j,k} k n_{ij}^k = r - 3,\]
we have
\[(n')_{ij}^k = \begin{cases}
\sum_\ell n_{ij}^\ell & \text{if $k = 0$ and $(i, j; k) \neq (0, 0; 0)$,} \\
1 & \text{if $(i, j; k) = (1, 0; 1)$,} \\
0 & \text{else;}
\end{cases}\]
and for
\[\sum_{i,j,k} k n_{ij}^k = r - 2,\]
we have
\[(n')_{ij}^k = \begin{cases}
\sum_\ell n_{ij}^\ell & \text{if $k = 0$ and $(i, j; k) \neq (0, 0; 0)$,} \\
1 & \text{if $(i, j; k) = (1, 0; 2)$,} \\
0 & \text{else.}
\end{cases}\]
\end{lemma}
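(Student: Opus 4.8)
The plan is to run the same kind of degeneration as in \cref{two-secant}, but to collapse the \emph{other} intersection point of the $2$-secant line onto $p$. Concretely, degenerate $C$ to $D \cup L$ where $L$ is a $2$-secant line meeting $D$ quasi-transversely at two general points $u, v \in D_\sm$ and all $n_{ij}^k$ marked points specialize to general points of $D$; by \cref{right-component} this $D \cup L$ is again limit nonspecial, $D$ has degree $d - 1$ and genus $g - 1$, and by \cref{addtwo} interpolation for $N_C'$ follows from interpolation for $N_D'(u + v)[u \to v][v \to u][v \to 2u]$. We then \emph{limit $v \to p$}, reducing to interpolation for $N_D'(u + p)[u \to p][p \to u][p \to 2u]$.

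Because (as flagged in the section introduction) the boilerplate limiting argument of \cref{two-secant} does not transcribe verbatim here, we spell this out. Let $\mathcal{D} = D \times B \to B$ be the constant family over an open $B \subseteq D$ containing $p$; write $u$ for the horizontal divisor $u \times B$ and $v$ for the diagonal, and let $\Lambda, P, q', q''$ be as in \cref{modified-normal-glue} with $T = \sum_{i,j,k}(i + j - 1)q_{ij}^k$. The machinery of \cref{S:modifications-arbitrary} produces a bundle on $\mathcal{D}$ whose fibre over $b \in B$ is $N_D'(u + b)[u \to b][b \to u][b \to 2u]$, provided the modification datum
\[(p, N_{\mathcal{D} \to \Lambda}),\quad (q', N_{\mathcal{D} \to p}),\quad (q'', N_{\mathcal{D} \to P}),\quad (u, N_{\mathcal{D} \to v}),\quad (v, N_{\mathcal{D} \to u}),\quad (v, N_{\mathcal{D} \to \overline{2u}})\]
is tree-like. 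As before $p$ meets neither $q'$ nor $q''$; after shrinking $B$ the divisors $u, v$ avoid $q', q''$ and $u$ avoids $v$; so the only locus to examine is the crossing $(p, p)$ of the constant section with the diagonal, where the active collection is $\{N_{\mathcal{D} \to \Lambda}, N_{\mathcal{D} \to u}, N_{\mathcal{D} \to \overline{2u}}\}$. The inclusion $N_{\mathcal{D} \to u} \subset N_{\mathcal{D} \to \overline{2u}}$ disposes of every subset containing both, while the generality of $u$ (together with hypotheses (1)--(2) of \cref{modified-normal-glue}) makes $N_{\mathcal{D} \to \Lambda}|_{(p,p)}$ transverse to each of $N_{\mathcal{D} \to u}|_{(p,p)}$ and $N_{\mathcal{D} \to \overline{2u}}|_{(p,p)}$; hence the datum is tree-like. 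By \cref{prop:interpolation-is-open}, interpolation for the fibre $N_D'(u + p)[u \to p][p \to u][p \to 2u]$ over $b = p$ implies interpolation for the general fibre, hence --- via \cref{addtwo} --- for $N_C'$.

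It remains to identify $N_D'(u + p)[u \to p][p \to u][p \to 2u]$, up to a harmless negative twist, with the modified normal bundle of $D$ attached to $(d - 1, g - 1, r; n')$; this is where the two cases of the statement are born and where I expect the bookkeeping to be most delicate. The modification $[u \to p]$ merges with the pre-existing $[q' \to p]$ inside $N_D'$; the $(0,0;1)$- and $(0,0;2)$-points of $n$ --- which enter $N_D'$ only through their contribution to $\Lambda$ and through negative twists --- are stripped off by \cref{negtwist}; and the two modifications $[p \to u]$ and $[p \to 2u]$ at $p$ must be combined with the existing $[p \to \Lambda]$ using \cref{T:combining-modifications} and \cref{T:commuting-modifications-twists}. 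The obstruction to doing this in one stroke is that $N_{\mathcal{D}\to u}$ sits inside \emph{both} $N_{\mathcal{D}\to\overline{2u}}$ and any span containing the direction towards $u$, so the span one is allowed to build at $p$ is constrained by how much of $\Pbb^r$ the existing $k$-budget $\sum_{i,j,k} k n_{ij}^k$ already occupies: the case $\sum k n_{ij}^k = r - 3$ leaves enough room to absorb $N_{\mathcal{D}\to u}$ into a point-modification at $p$, producing a new point of type $(1, 0; 1)$ at $u$, whereas $\sum k n_{ij}^k = r - 2$ forces the extra direction to be routed through the rank-$2$ modification $[p \to \overline{2u}]$, producing a point of type $(1, 0; 2)$; in both regimes the old $(i, j; k)$-points lose their $k$-decoration. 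A final comparison of Euler characteristics via \cref{chimod} --- using that $(d, g, r; n)$ satisfies \eqref{regime} --- confirms the resulting configuration has the invariants predicted for $n'$. The hypotheses $g > 0$ and $r > 3$ are used, respectively, to make the genus drop legal and to keep $\Lambda$ nonempty and all the relevant linear spans within the range where the bundles $N_{C \to -}$ of \cref{S:normal-bundles} are defined.
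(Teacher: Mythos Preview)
Your overall strategy matches the paper's: degenerate to $D\cup L$, apply \cref{addtwo}, and limit $v\to p$. The substantive gap is in the tree-like verification.

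You claim that at the crossing $(p,p)$ the triple $\{N_{\mathcal{D}\to\Lambda},\,N_{\mathcal{D}\to u},\,N_{\mathcal{D}\to\overline{2u}}\}$ is tree-like because ``the generality of $u$ makes $N_{\mathcal{D}\to\Lambda}|_{(p,p)}$ transverse to each of $N_{\mathcal{D}\to u}|_{(p,p)}$ and $N_{\mathcal{D}\to\overline{2u}}|_{(p,p)}$.'' But $N_{\mathcal{D}\to\Lambda}$ has rank $\sum_{i,j,k} kn_{ij}^k$ and $N_{\mathcal{D}\to\overline{2u}}$ has rank $2$, inside $N_D|_p$ of rank $r-1$. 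When $\sum kn_{ij}^k = r-2$ their ranks sum to $r>r-1$, so they \emph{cannot} be linearly independent at $(p,p)$; and for general $u$ neither contains the other. Hence the datum is genuinely not tree-like in the $r-2$ case, and your construction of the family bundle fails there. (The paper's introduction to the proof flags exactly this: ``the modification datum in question is not necessarily tree-like.'')

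The paper's fix is not to argue more carefully about transversality but to change the order of operations entirely: first form $N_D'$ on the single curve $D$, then use the curve-specific transfer $\overline\phi$ of \cref{T:curves-modification-subbundles-bijection} (which needs no tree-like hypothesis) to obtain subbundles $N_{D\to u}',\,N_{D\to\overline{2u}}'\subset N_D'$, and \emph{pull these back} along $\pi\colon D\times B\to D$ to get $N_{\mathcal{D}\to u}',\,N_{\mathcal{D}\to\overline{2u}}'\subset N_{\mathcal{D}}'$. One then modifies $N_{\mathcal{D}}'(u+v)$ by the datum $(u,N_{\mathcal{D}\to v}'),\,(v,N_{\mathcal{D}\to u}'),\,(v,N_{\mathcal{D}\to\overline{2u}}')$, which \emph{is} tree-like simply because $u$ does not cross $v$ and $N_{\mathcal{D}\to u}'\subset N_{\mathcal{D}\to\overline{2u}}'$. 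This sidesteps the dimension obstruction completely.

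Your final paragraph on the bookkeeping is also imprecise. What actually happens is: in the $r-3$ case one uses \cref{T:combining-modifications} to merge $[p\to\Lambda]$ (rank $r-3$) with $[p\to\overline{2u}]$ (rank $2$) into a full-rank modification, i.e.\ a twist by $-p$, leaving $[p\to u]$ and hence a $(1,0;1)$-point at $u$; in the $r-2$ case one merges $[p\to\Lambda]$ (rank $r-2$) with $[p\to u]$ (rank $1$) into a twist by $-p$, leaving $[p\to\overline{2u}]$ and hence a $(1,0;2)$-point at $u$. Your narrative has the absorbed and surviving pieces switched. No Euler-characteristic comparison is needed once this combining is done correctly.
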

\begin{proof}
Again, we degenerate $C$ to $D \cup L$, where $L$ is a $2$-secant line, and all marked points
specialize to points on $D$. Twisting the formula in Lemma~\ref{addtwo} by $-u$,
interpolation for $N_C'$ is reduced to interpolation for
\[N_D'(v)[u \to v][v \to u][v \to 2u] \simeq N_D'(v)[u \to v][v \to 2u][v \to u].\]
We now limit $v \to p$, to reduce our problem to interpolation for
\[N_D'(p)[u \to p][p \to u][p \to 2u] \simeq N_D'(p)[u \to p][p \to 2u][p \to u].\]

Since this is the only case in the paper where the limiting argument
explained in \cref{two-secant} does not apply (the modification datum in question
is not necessarily tree-like), we elaborate further.
As in \cref{two-secant}, write $\mathcal{D} \to B$ for the constant family $D \times B \to B$ over $B$,
where $B \subset D$ is some open set containing $p$.
Then we want a vector bundle on $\mathcal{D}$,
whose restriction to the fiber $D \times \{v\}$ is, for $v \in B$,
\[N_D'(u + v)[u \to v][v \to u][v \to 2u].\]

By minor abuse of notation, we write $u$ for the horizontal divisor $u \times B$,
and $v$ for the diagonal in $D \times B$.
As $u$ is general, $u$ does not meet $p$ or any $q_{ij}^k$.
Moreover, by shrinking $B$, we may suppose $v$ does not intersect the tangent
line to $D$ at $u$; by the machinery of \cref{S:normal-bundles}, we obtain
a subbundle $N_{\mathcal{D} \to v} \subset N_{\mathcal{D}}$.
Applying the theory of \cref{S:modifications-arbitrary}, this subbundle
corresponds to a subbundle $N_{\mathcal{D} \to v}' \subset N_{\mathcal{D}}'$
in a neighborhood of $u$
(where it is tree-like with respect to our modification datum).
However, the subbundles $N_{\mathcal{D} \to u} \subset N_{\mathcal{D}}$ and $N_{\mathcal{D} \to \overline{2u}} \subset N_{\mathcal{D}}$
need not be tree-like with respect to our modification datum.
To get around this, we invoke the theory of \cref{S:modifications-curves}:
Since $D$ is a curve,
the subbundles $N_{D \to u} \subset N_D$ and $N_{D \to \overline{2u}} \subset N_D$
correspond to subbundles $N_{D \to u}' \subset N_D'$ and $N_{D \to \overline{2u}}' \subset N_D'$.
We then let
\[N_{\mathcal{D} \to u}' = \pi^*(N_{D \to u}') \subset \pi^*(N_D') \simeq N_{\mathcal{D}}' \quad \text{and} \quad N_{\mathcal{D} \to \overline{2u}}' = \pi^*(N_{D \to \overline{2u}}')  \subset \pi^*(N_D') \simeq N_{\mathcal{D}}',\]
where $\pi \colon \mathcal{D} = D \times B \to D$ is the projection.
The machinery of Section~\ref{S:modifications-arbitrary} then constructs our desired bundle
\[N_{\mathcal{D}}'(u + v)[u \to N_{\mathcal{D} \to v}'][v \to N_{\mathcal{D} \to u}'][v \to N_{\mathcal{D} \to \overline{2u}}'].\]
(This modification datum is tree-like, since $u$ does not cross $v$,
and $N_{\mathcal{D} \to u}' \subset  N_{\mathcal{D} \to \overline{2u}}'$.)

Moving on, if $\sum_{i,j,k} k n_{ij}^k = r - 3$, we then collect together
the transformations $[p \to k q_{ij}^k]$ and $[p \to 2u]$
(occurring in the right expression)
into a negative twist
via Proposition~\ref{T:combining-modifications}.
This yields another
modified normal bundle, where all points of type $(i, j; k)$ are changed
to type $(i, j; 0)$, and a new point of type $(1, 0; 1)$ is introduced at $u$.
Eliminating all $(0, 0; 0)$-points,
we arrive at the desired conclusion.

Similarly, if $\sum_{i,j,k} k n_{ij}^k = r - 2$, we collect together
the transformations $[p \to k q_{ij}^k]$ and $[p \to u]$
(occurring in the left expression)
into a negative twist
via Proposition~\ref{T:combining-modifications}.
This yields another
modified normal bundle, where all points of type $(i, j; k)$ are changed
to type $(i, j; 0)$, and a new point of type $(1, 0; 2)$ is introduced at $u$.
Eliminating all $(0, 0; 0)$-points,
we arrive at the desired conclusion.
\end{proof}

\begin{lemma} \label{special-5} Let $r = 5$ and $g \geq 2$.
Write $C$ for a general curve of degree $d - 2$ and genus $g - 2$
in $\Pbb^5$, with markings given by $n$, and fix general points $q, x, y \in C$.
Then $(d, g, 5; n)$ is good provided that
\[N_C'[q \to x + y][x + y \to q]\]
satisfies interpolation.

In particular, if $n = \mathbf{0}$, then $(d, g, 5; \mathbf{0})$ is good provided that
$(d - 2, g - 2, 5; n')$ is good, where
\[(n')_{ij}^k = \begin{cases}
2 & \text{if $(i, j; k) = (1, 0; 1)$;} \\
0 & \text{otherwise.}
\end{cases}\]
\end{lemma}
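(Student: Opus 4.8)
The plan is to prove the reduction by degenerating $C$ together with two $2$-secant lines, in the spirit of \cref{two-secant}, and then to extract the ``in particular'' statement from \cref{negtwist}. First I would degenerate the general curve of degree $d$ and genus $g$ in $\Pbb^5$ to the nodal curve $X = C \cup L_1 \cup L_2$, where $L_1$ and $L_2$ are disjoint lines, each $2$-secant to $C$, meeting it at general points $\{a_1, b_1\}$ and $\{a_2, b_2\}$ respectively, with all markings $n$ specializing to general points of $C$. A dual-graph count gives $\deg X = d$ and $p_a(X) = (g - 2) + 2 = g$; attaching $L_1$ to $C$ and then $L_2$ to $C \cup L_1$ and applying \cref{right-component} twice shows $X$ is limit nonspecial, hence lies in the component of $\mathcal{H}_{d, g, 5}$ containing the nonspecial curves. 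Since the locus where the modified normal bundle satisfies interpolation is open (\cref{modified-normal-glue}, whose hypotheses hold for general $C$ with general markings), it suffices to prove that $N'_X$ satisfies interpolation.

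Next I would apply \cref{addtwo} twice to peel off the two lines: peeling $L_2$, then $L_1$, and using \cref{hh} to track the modifications that are carried along, reduces interpolation for $N'_X$ to interpolation for the multiple modification
\[N'_C(a_1 + b_1 + a_2 + b_2)[a_1 \to b_1][b_1 \to a_1][b_1 \to 2a_1][a_2 \to b_2][b_2 \to a_2][b_2 \to 2a_2]\]
of the modified normal bundle $N'_C$, supported at the four general points $a_1, b_1, a_2, b_2$. I would then limit $b_2 \to b_1$, writing $q$ for the common limit and $x = a_1$, $y = a_2$; this limiting argument is justified exactly as in \cref{two-secant} by the tree-like machinery of \cref{S:modifications-arbitrary} (the only containments among $\{N_{C \to x}, N_{C \to 2x}, N_{C \to y}, N_{C \to 2y}\}$ are the evident ones $N_{C \to x} \subset N_{C \to 2x}$ and $N_{C \to y} \subset N_{C \to 2y}$, and the remaining fibers are independent at $q$ for general data). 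The outcome is interpolation for
\[N'_C(2q + x + y)[x \to q][y \to q][q \to x][q \to 2x][q \to y][q \to 2y].\]

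The main obstacle is the final simplification: identifying this bundle with $N'_C[q \to x + y][x + y \to q]$. Since $[x \to q][y \to q]$ is already $[x + y \to q]$, the content is that $(2q + x + y)[q \to x][q \to 2x][q \to y][q \to 2y]$ collapses to $[q \to x + y]$. I would combine the modifications emanating from $q$ using \cref{T:combining-modifications} --- merging $[q \to x][q \to y]$ into $[q \to x + y]$ at the cost of a $(-q)$-twist (as $N_{C \to x}$ and $N_{C \to y}$ are independent at $q$ and $N_{C \to x} \oplus N_{C \to y} = N_{C \to \overline{xy}}$), and similarly resolving $[q \to 2x][q \to 2y]$ (as $N_{C \to 2x}$ and $N_{C \to 2y}$ span $N_C$ at $q$ for general data) --- and then use \cref{T:commuting-modifications-twists} to slide the resulting negative twists past the other modifications so that they cancel the positive twist $2q + x + y$. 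Making this bookkeeping come out exactly right --- in particular handling the non-tree-like intermediate data that appears (e.g.\ when the partially combined $[q \to x + y]$ meets $[q \to 2x]$), where one falls back on the curve-specific transfer of \cref{T:curves-modification-subbundles-bijection} --- is the delicate step, and is where the precise shape of the output of \cref{addtwo} and the choice of which nodes to collide are used. This proves the first assertion.

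Finally, for the ``in particular'' statement, take $n = \mathbf{0}$, so $N'_C = N_C(-p)$ for a general point $p \in C$. From the definition in \cref{S:hypothesis}, the modified normal bundle for $(d - 2, g - 2, 5; n')$ --- with $n'$ consisting of two $(1, 0; 1)$-points $x, y$ and marked point $q$ --- is $N_C[q \to x + y][x + y \to q]$, since for $(i, j; k) = (1, 0; 1)$ one has $i + j - 1 = 0$ (no twist), $[p \to k q_{ij}^k] = [q \to x + y]$, $[i q_{ij}^k \to p] = [x + y \to q]$, and $[j q_{ij}^k \to 2p]$ is trivial. As $\sum_{i,j,k} k (n')_{ij}^k = 2 \le r + 1$, \cref{negtwist} gives: if $(d - 2, g - 2, 5; n')$ is good, then the general negative twist $N_C[q \to x + y][x + y \to q](-p)$ satisfies interpolation; and by \cref{T:commuting-modifications-twists} this bundle equals $N_C(-p)[q \to x + y][x + y \to q] = N'_C[q \to x + y][x + y \to q]$, which is precisely the hypothesis of the first assertion. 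Hence $(d, g, 5; \mathbf{0})$ is good.
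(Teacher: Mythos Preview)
Your approach is exactly the paper's: degenerate to $C \cup L_1 \cup L_2$ with two $2$-secant lines, apply \cref{addtwo} twice, and collide the two ``$v$''-points to a common $q$. The one slip is in the final bookkeeping. The bundle
\[
N'_C(2q + x + y)[x \to q][y \to q][q \to x][q \to 2x][q \to y][q \to 2y]
\]
does \emph{not} simplify to $N'_C[q \to x + y][x + y \to q]$; an Euler characteristic count (the four modifications at $q$ drop $\chi$ by $10$, the twist $(2q+x+y)$ raises it by $16$) shows the two differ by $8 = (r-1)\cdot 2$. What the combining rules actually give is
\[
N'_C(2q)[q \to x][q \to 2x][q \to y][q \to 2y] \;\simeq\; N'_C[q \to x + y],
\]
since locally $N_C|_q \simeq N_{C\to 2x}|_q \oplus N_{C\to 2y}|_q$ and this splitting is compatible with the nested pairs $N_{C\to x}\subset N_{C\to 2x}$ and $N_{C\to y}\subset N_{C\to 2y}$; the positive twist $(x+y)$ survives. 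So the honest simplification is
\[
N'_C(x + y)[q \to x + y][x + y \to q],
\]
as in the paper, and one then invokes \cref{T:interpolation-twist-up} to pass from the assumed interpolation of $N'_C[q \to x + y][x + y \to q]$ to its positive twist. With that one extra line, your argument is complete; the ``in particular'' via \cref{negtwist} (note the relevant bound is $1 \le r+1-\sum k(n')_{ij}^k = 4$, not $2\le r+1$) is fine and indeed more explicit than the paper's.
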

\begin{proof}
We degenerate a general curve of degree $d$ and genus $g$
in $\Pbb^5$ to a union $C \cup L \cup M$, where $L$ and $M$
are $2$-secant lines to $C$, and all marked points specialize to points on $C$.
Write $C \cap L = \{x, z\}$ and $C \cap M = \{y, w\}$.
By \cref{addtwo}, it suffices to show interpolation for
\[N_C'(x + y + z + w)[x \to z][z \to x][z \to 2x][y \to w][w \to y][w \to 2y].\]
Limiting $z$ and $w$ to a common point $q$
reduces the above to interpolation for
\[N_C'(x + y + 2q)[x \to q][q \to x][q \to 2x][y \to q][q \to y][q \to 2y] \simeq N_C'(x + y)[q \to x + y][x + y \to q],\]
which follows from our assumption that $N_C'[q \to x + y][x + y \to q]$
satisfies interpolation.
\end{proof}

We now give several techniques
to reduce from interpolation of modified normal bundles of curves in a give projective space,
to interpolation for curves in a projective space of smaller dimension.
The basic construction here is to add a line transverse to a hyperplane
to a curve contained in that hyperplane. We also explore variants
with adding two lines.

\begin{lemma} \label{stick}
Suppose that
\[2d + 2g - 3r + 2 \leq \sum_{i, j, k} ((r - 2)i + (r - 3)j - k) \cdot n_{ij}^k \leq 2d + 2g - r - 2.\]
If in addition
\[\sum_{i,j,k} k n_{ij}^k < r - 2,\]
then $(d, g, r; n)$ is good provided that
$(d - 1, g, r - 1; n')$ is good, where
\[(n')_{ij}^k = \begin{cases}
\sum_\ell n_{\ell i}^k & \text{if $j = 0$ and $(i, j; k) \neq (0, 0; 0)$;} \\
0 & \text{else.}
\end{cases}\]
If instead
\[\sum_{i,j,k} k n_{ij}^k = r - 2,\]
then $(d, g, r; n)$ is good provided that
$(d - 1, g, r - 1; n')$ is good, where
\[(n')_{ij}^k = \begin{cases}
\sum_{\ell, m} n_{\ell i}^m & \text{if $j = k = 0$ but $i \neq 0$;} \\
0 & \text{else.}
\end{cases}\]
\end{lemma}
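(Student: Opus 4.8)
The plan is to degenerate $C$ to a line attached transversely to a hyperplane containing the rest of the curve, and to use the hyperplane to descend one dimension. Concretely, I would degenerate a general curve $C$ of degree $d$ and genus $g$ in $\Pbb^r$ to a nodal curve $\bar C \cup L$, where $\bar C$ is a general curve of degree $d - 1$ and genus $g$ contained in a hyperplane $H \cong \Pbb^{r-1}$ (such a $\bar C$ exists since $d - 1 \geq g + (r-1)$), $L$ is a general line meeting $\bar C$ quasi-transversely at one point $u \in H$ with $L \not\subset H$, and all marked points of $C$ specialize to general points of $\bar C$. Since $\bar C$ is nonspecial in $H$ it satisfies $\H^1(\Oc_{\bar C}(1)) = 0$, hence is limit nonspecial, so $\bar C \cup L$ is limit nonspecial by \cref{right-component}; and by \cref{modified-normal-glue} the locus where the modified normal bundle satisfies interpolation is open. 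It therefore suffices to show $N'_{\bar C \cup L}$ satisfies interpolation.

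By \cref{hh} we have $N'_{\bar C \cup L}|_{\bar C} \cong N'_{\bar C}(u)[u \to v]$ (for general $v \in L$) and $N'_{\bar C \cup L}|_L \cong N_L(u)[u \to w] \cong \Oc_{\Pbb^1}(2) \oplus \Oc_{\Pbb^1}(1)^{r-2}$, with positive subbundle $N_{L \to w}(u)$. Applying \cref{trivglue} with $L$ in the role of $X$, $\bar C$ in the role of $Y$, and $D_0$ a pair of general points on $L$, interpolation for $N'_{\bar C \cup L}$ reduces to interpolation for the subspace $V \subseteq \H^0(N'_{\bar C}(u)[u \to v])$ of sections whose value at $u$ lies in the positive subbundle; by \cref{cone-glue} this subbundle is $N_{\bar C \to v}(u)|_u$, so $V = \H^0(N'_{\bar C}(u)[u \to v][u \to v]) = \H^0(N'_{\bar C}(u)[2u \to v])$, and the required dimension identity is automatic from \cref{T:chi-modifications}. (One could equally invoke \cref{addone} with $\Lambda_1 = \Lambda_2 = \emptyset$ and then twist up via \cref{T:interpolation-twist-up}.)

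Now I would exploit $\bar C \subset H$: the sequence $0 \to N_{\bar C/H} \to N_{\bar C/\Pbb^r} \to \Oc_{\bar C}(1) \to 0$ splits through $N_{\bar C \to v}$ (which maps isomorphically onto $\Oc_{\bar C}(1)$ since $v \notin H$), so $N_{\bar C/\Pbb^r} = N_{\bar C/H} \oplus N_{\bar C \to v}$. Since $p$, the divisors $q_{ij}^k$, the tangent line $T_pC$, and the osculating spaces $\overline{kq_{ij}^k}$ all lie in $H$, every twist and modification defining $N'_{\bar C}$ acts through subbundles of $N_{\bar C/H}$; reorganizing the modification data at $u$ and recombining via \cref{T:combining-modifications}, \cref{T:communting-modifications}, and \cref{T:commuting-modifications-twists} identifies $N'_{\bar C}(u)[2u \to v]$ with a direct sum $\mathcal{E} \oplus M$, where $M$ is a nonspecial line bundle (a twist of $\Oc_{\bar C}(1)$) and $\mathcal{E}$ is, up to a negative twist, the modified normal bundle of $\bar C$ in $\Pbb^{r-1}$ carrying the transformed markings $n'$. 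This is the source of the transformation $n \mapsto n'$: the line $L$ absorbs the $[iq_{ij}^k \to p]$-modifications (so the old second index $j$ takes over the role of the new first index), and in the boundary case $\sum_{i,j,k} k n_{ij}^k = r - 2$ the $[p \to kq_{ij}^k]$-modifications must be absorbed too, since otherwise $\Lambda = \overline{\sum kq_{ij}^k}$ would be too large to arise from osculating data in $\Pbb^{r-1}$ — whence the two cases. Finally, \cref{T:interpolation-ses-line-bundle} (together with \cref{negtwist}, and, where needed, \cref{T:interpolation-positive} and \cref{T:interpolation-twist-down} to pass between $\mathcal{E}$ and the modified normal bundle of $\bar C$ in $\Pbb^{r-1}$) shows $\mathcal{E} \oplus M$ satisfies interpolation provided $(d-1, g, r-1; n')$ is good, $M$ is nonspecial, and a single numerical inequality holds; the two-sided hypothesis $2d + 2g - 3r + 2 \leq \sum_{i,j,k}((r-2)i + (r-3)j - k)n_{ij}^k \leq 2d + 2g - r - 2$ together with $\sum_{i,j,k} k n_{ij}^k \leq r - 2$ is calibrated precisely so that nonspeciality of $M$, this inequality, and membership of $n'$ in the regime \eqref{regime} for $(d-1, g, r-1)$ all hold.

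The main obstacle is the bookkeeping in this penultimate step: tracking how the modification data of $N'_{\bar C}$ decomposes under the hyperplane splitting and recombines, after gluing in $L$, into exactly the modified normal bundle of $\bar C$ in $\Pbb^{r-1}$ with markings $n'$ — with the correct split into the two cases according to whether $\sum_{i,j,k} k n_{ij}^k < r - 2$ or $= r - 2$ — and verifying that the numerical hypotheses transfer. As in \cref{two-secant}, checking that the limiting constructions are legitimate (the relevant modification data over the total space of the degenerating family being tree-like, so that \cref{prop:interpolation-is-open} applies) is routine and carried out mutatis mutandis.
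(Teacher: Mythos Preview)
Your overall strategy --- degenerate to $\bar C \cup L$ with $\bar C$ in a hyperplane $H$, then use a splitting $N_{\bar C} \simeq N_{\bar C/H} \oplus \Oc_{\bar C}(1)$ --- is exactly what the paper does. But there is a genuine gap in where you put the marked point $p$: you specialize $p$ (along with all the $q_{ij}^k$) onto $\bar C\subset H$, whereas the paper puts $p$ on the line $L$. This single choice is what drives the index transformation $(i,j;k)\mapsto(j,0;k)$, and with your placement of $p$ that transformation simply does not occur.

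Concretely, with $p\in\bar C\subset H$ and the splitting taken along $N_{\bar C\to v}$ for $v\in L\setminus H$, all of the subbundles $N_{\bar C\to p}$, $N_{\bar C\to\overline{2p}}$, $N_{\bar C\to\overline{kq_{ij}^k}}$ lie inside $N_{\bar C/H}$. Hence every modification in $N'_{\bar C}$ lands in the $N_{\bar C/H}$ summand untouched, and the ``hyperplane'' factor you obtain is (up to twist) $N_{\bar C/H}\big((i+j-1)q_{ij}^k\big)[p\to kq_{ij}^k][iq_{ij}^k\to p][jq_{ij}^k\to 2p]$ --- the \emph{same} data $n$, not $n'$. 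Your sentence ``the line $L$ absorbs the $[iq_{ij}^k\to p]$-modifications'' is false in your setup: $L$ points out of $H$, while $N_{\bar C\to p}$ sits inside $N_{\bar C/H}$. In the paper's setup, with $p\in L$, the splitting is via the cone $\overline{pD}$; then $[iq_{ij}^k\to p]$ is along the line-bundle summand (so it becomes a twist of $\mathcal L$ and disappears from $\mathcal F$), while $\overline{2p}=L=\overline{px}$ so $[jq_{ij}^k\to 2p]$ becomes $[jq_{ij}^k\to x]$ in $N_{D/H}$ with $x=D\cap L$ playing the role of the new $p$. That is precisely what turns an $(i,j;k)$-point into a $(j,0;k)$-point and yields the stated $n'$. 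Likewise, the dichotomy between $\sum kn_{ij}^k<r-2$ and $=r-2$ arises because, with $p\in L$, the modification $[p\to\overline{kq_{ij}^k}]$ is a modification of $N'_{C\cup L}$ at a point of $L$, and \cref{addone} must be applied with $\Lambda_1=\overline{kq_{ij}^k}$ rather than $\Lambda_1=\emptyset$; this is what produces the extra $[x\to kq_{ij}^k]$ in $\mathcal F$ and forces the case split. Fixing the specialization of $p$ to lie on $L$ (and reading off the splitting from the cone $\overline{pD}$) repairs the argument.
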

\begin{proof}
We degenerate $C$ to $D \cup L$ where $D \subset H$ is a curve contained in a hyperplane,
and $L$ is a $1$-secant line to $D$ transverse to $H$.
We specialize so $p \in L$ and all other marked points lie on $D$.
Clearly, it suffices to show
\[\mathcal{E} = N_{D \cup L}'(-z)\]
satisfies interpolation, where $z \in L$. From Lemma~\ref{addone} (with $\Lambda_1 = \overline{kq_{ij}^k}$
and $\Lambda_2 = \emptyset$), we conclude it is sufficient to prove interpolation for the bundle
\[\mathcal{E} = N_D\big((i + j - 1) q_{ij}^k \big)[i q_{ij}^k \to p][j q_{ij}^k \to 2p](x)[x \to p]\left[x \to p + \sum k q_{ij}^k\right]\]
Identifying $\Oc_D(1)$ with the normal bundle of $D$ in the cone $\overline{p D}$, we obtain
a splitting:
\[N_D \simeq N_{D/H} \oplus \Oc_D(1).\]
This induces a splitting $\mathcal{E} \simeq \mathcal{F} \oplus \mathcal{L}$ with
\[\mathcal{F} = N_{D/H} \big((j - 1) q_{ij}^k\big) [jq_{ij}^k \to x] [x \to kq_{ij}^k] \quad \text{and} \quad \mathcal{L} = \Oc_D(1) \big(x + (i + j - 1) q_{ij}^k\big).\]

Now we claim $\mathcal{F}$ satisfies interpolation.
Indeed, when $\sum k n_{ij}^k < r - 2$, then $\mathcal{F}$ is a modified normal bundle
of the type assumed to satisfy interpolation.
Otherwise, when $\sum k n_{ij}^k = r - 2$, then $\mathcal{F}(-x)$
is a modified normal bundle of the type assumed to satisfy interpolation.

Next, $\mathcal{L}$
satisfies interpolation since $\Oc_D(1)$ satisfies interpolation. So to check $\mathcal{F} \oplus \mathcal{L}$
satisfies interpolation, we just need to check
\[(r - 2) \cdot (\chi(\mathcal{L}) - 1) \leq \chi(\mathcal{F}) \leq (r - 2) \cdot (\chi(\mathcal{L}) + 1).\]
For this, we first calculate
\begin{align*}
\chi(\mathcal{F}) &= r(d - 1) - (r - 4)g - 2 - \sum_{i,j,k} (r - 2 - j - k) \cdot n_{ij}^k, \\
\chi(\mathcal{L}) &= (d - 1) - g + 1 + 1 + \sum_{i, j, k} (i + j - 1) \cdot n_{ij}^k \\
&= d - g + 1 + \sum_{i, j, k} (i + j - 1) \cdot n_{ij}^k.
\end{align*}
The condition for $\mathcal{F} \oplus \mathcal{L}$ to satisfy interpolation is then
\[2d + 2g - 3r + 2 \leq \sum_{i, j, k} ((r - 2)i + (r - 3)j - k) \cdot n_{ij}^k \leq 2d + 2g - r - 2. \qedhere\]
\end{proof}

\begin{lemma} \label{two-sticks} Suppose that $r > 3$ and
\[2d + 2g - 4r + 3 \leq \sum_{i,j,k} ((r - 2)i + (r - 3)j - k) \cdot n_{ij}^k \leq 2d + 2g - 2r - 1.\]
If in addition
\[\sum_{i,j,k} k n_{ij}^k < r - 3,\]
then $(d, g, r; n)$ is good provided that $(d - 2, g - 1, r - 1; n')$ is good, where
\[(n')_{ij}^k = \begin{cases}
\sum_\ell n_{\ell i}^k & \text{if $j = 0$ and $(i, j; k) \notin \{(0, 0; 0), (2, 0, 1)\}$;} \\
1 + \sum_\ell n_{\ell i}^k & \text{if $(i, j; k) = (2, 0, 1)$;} \\
0 & \text{else.} \\
\end{cases}\]
If instead
\[\sum_{i,j,k} k n_{ij}^k = r - 3,\]
then $(d, g, r; n)$ is good provided that $(d - 2, g - 1, r - 1; n')$ is good, where
\[(n')_{ij}^k = \begin{cases}
\sum_{\ell, m} n_{\ell i}^m & \text{if $j = k = 0$ and $i \notin \{0, 2\}$;} \\
1 + \sum_{\ell,m} n_{\ell i}^m & \text{if $j = k = 0$ and $i = 2$;} \\
0 & \text{else.} \\
\end{cases}\]
If instead
\[\sum_{i,j,k} k n_{ij}^k = r - 2,\]
then $(d, g, r; n)$ is good provided that $(d - 2, g - 1, r - 1; n')$ is good, where
\[(n')_{ij}^k = \begin{cases}
\sum_{\ell,m} n_{\ell i}^m & \text{if $j = k = 0$ and $i \neq 0$;} \\
1 & \text{if $(i, j; k) = (2, 0, 1)$;} \\
0 & \text{else.} \\
\end{cases}\]
\end{lemma}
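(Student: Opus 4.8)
The plan is to imitate the proof of \cref{stick}, but to peel off \emph{two} lines in a single degeneration, arranged so as to drop the ambient dimension by one and raise the genus by one at the same time. Concretely, I would degenerate $C$ to $D \cup L \cup M$, where $D \subset H \simeq \Pbb^{r-1}$ is a general nondegenerate curve of degree $d - 2$ and genus $g - 1$ carrying the marked points dictated by $n'$, and $L$, $M$ are lines meeting $D$ quasi-transversely at single points $u$, $y$ and meeting each other at a single point $z \notin H$; thus $L \cup M$ is a connected ``broken $2$-secant line'' to $D$, and $D \cup L \cup M$ is nondegenerate of degree $d$ and genus $g$. Iterating \cref{right-component} shows $D \cup L \cup M$ is limit nonspecial, hence lies in the component of the Hilbert scheme we care about, so by \cref{prop:interpolation-is-open} it suffices to prove interpolation for $N'_{D \cup L \cup M}$ after specializing $p$ onto $L$ and all other marked points onto $D$. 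One first checks, using the hypothesis on $\sum_{i,j,k} k n_{ij}^k$, that the genericity requirements of \cref{hh} (disjointness of the relevant tangent lines and osculating spaces) can be arranged.

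Next I would peel off the two lines using \cref{hh}. Since $M$ meets $D \cup L$ at the two points $y \in D$ and $z \in L$, removing it is governed by the $2$-secant analysis of \cref{addtwo} (or, where needed, \cref{addtwo-two}), the $\Pbb^1$-computations at $z$ being explicit since $z$ lies on the line $L$. After this, $L$ meets what remains only at $u \in D$, and --- because the modification $[p \to k q_{ij}^k]$ now lives on $L$ --- its removal is governed by the $1$-secant analysis of \cref{addone}, taking $\Lambda_1 = \overline{\sum k q_{ij}^k}$ and $\Lambda_2 = \emptyset$. Each of these is an instance of the limiting argument spelled out in \cref{two-secant}: one realizes the degeneration as a family over a base $B \subseteq D$ and verifies that the modification data stay tree-like, using \cref{line-limit} (and \cref{fiddle-over} if necessary) to compute the limiting fibers at the node $z$. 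The net effect is that interpolation for $N'_C$ is reduced to interpolation for an explicit modification $\mathcal{E}$ of $N_D$.

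Finally I would split $\mathcal{E}$ via the cone over $D$ with vertex $z$, which gives $N_D \simeq N_{D/H} \oplus \Oc_D(1)$; since every modification appearing in $\mathcal{E}$ is along a direction that is either tangent to $H$ or points toward $z$, this splitting is preserved and $\mathcal{E} \simeq \mathcal{F} \oplus \mathcal{L}$ with $\mathcal{L}$ a line bundle and $\mathcal{F}$ a modified normal bundle of $D \subset \Pbb^{r-1}$ with markings $n'$ --- or, in the boundary cases $\sum_{i,j,k} k n_{ij}^k \in \{r-3, r-2\}$, a small twist thereof, the modifications at $p$ being collected (via \cref{T:combining-modifications}) into a new $(2, 0; 1)$- or $(1, 0; 2)$-point; this is exactly the source of the three-way branching in the statement. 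By induction $\mathcal{F}$ satisfies interpolation, $\mathcal{L}$ does because it is nonspecial, and so by \cref{T:interpolation-ses-line-bundle} it remains only to verify $(r - 2)(\chi(\mathcal{L}) - 1) \leq \chi(\mathcal{F}) \leq (r - 2)(\chi(\mathcal{L}) + 1)$. Computing $\chi(\mathcal{F})$ and $\chi(\mathcal{L})$ with \cref{T:chi-modifications} and \cref{chimod}, this rearranges precisely to the displayed inequality $2d + 2g - 4r + 3 \leq \sum_{i,j,k} ((r - 2)i + (r - 3)j - k) n_{ij}^k \leq 2d + 2g - 2r - 1$.

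The step I expect to be the main obstacle is the peeling: unlike in \cref{stick}, removing $M$ first modifies $N_{D \cup L}$ near the node $z$, so one must carefully track how the later removal of $L$ interacts with the modifications already sitting at $p$, $q'$, and $q''$, keeping the whole modification datum tree-like. Controlling the fiber at $z$ --- where the cone direction and the subbundles $N_{\bullet \to \Lambda_i}$ must stay in general position, and where the relevant limit is the one computed in \cref{line-limit} --- is where the genuine work lies, everything else being bookkeeping or a $\chi$-count.
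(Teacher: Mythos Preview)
Your overall plan --- degenerate to $D \cup L \cup M$ with $D \subset H$ and $L, M$ meeting at a point off $H$, peel $M$ via \cref{addtwo}, peel $L$ via \cref{addone} (or \cref{fiddle-over}), split via a cone, and check the $\chi$-inequality --- is exactly the paper's. But you lose track of the residue that \cref{addtwo} leaves on $L$. Peeling $M$ produces, on $D \cup L$, modifications at \emph{both} nodes of $M$: one at $y \in D$ and one at $z \in L$, namely $[z \to y]$. So when you then peel $L$, the bundle you feed into \cref{addone} carries \emph{two} modifications on $L$ --- the original $[p \to \overline{kq_{ij}^k}]$ and the new $[z \to y]$ --- and you must take $\Lambda_1 = y$ and $\Lambda_2 = \overline{kq_{ij}^k}$, not $\Lambda_2 = \emptyset$. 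The bound $k_1 + k_2 \le r - 4$ in \cref{addone} then reads $\sum_{i,j,k} k n_{ij}^k \le r - 3$; this is what drives the three-way case split (for $\sum k n_{ij}^k = r - 2$ one must use \cref{fiddle-over} instead), and it is why the new marked point --- which is $y$ (the paper's $s$), not $p$ --- is always of type $(2,0;\bullet)$, never $(1,0;2)$: the two modifications $[s \to x]$ and $[s \to p \cup x]$ both project to $[s \to x]$ in $N_{D/H}$.

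A second, related error: the cone vertex must be $p$, not $z$. The existing modifications $[iq_{ij}^k \to p]$ and $[jq_{ij}^k \to 2p]$ respect the splitting $N_D \simeq N_{D/H} \oplus N_{D \to p}$ but not $N_{D/H} \oplus N_{D \to z}$, since $N_{D \to p}$ and $N_{D \to z}$ are distinct sub-line-bundles (their fibers at $w \in D$ coincide only when $T_w D$ meets $L$). To make the leftover $[y \to z]$ on $D$ compatible with the $p$-cone splitting, the paper limits $z \to u$ (their $q \to x$), turning $[y \to z]$ into $[y \to u]$, which lies in $N_{D/H}$; this is the specific limit you allude to but do not name. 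With these corrections the rest of your outline --- in particular the $\chi$-count --- goes through verbatim.
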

\begin{proof}
We degenerate $C$ to $D \cup L \cup M$ where $D \subset H$ is a curve contained in a hyperplane, and
$L$ and $M$ are $1$-secant lines to $D$ transverse to $H$, which meet at some point $q \notin H$.
Write $x = L \cap D$ and $s = M \cap D$.

We specialize so $p \in L$ and all other marked points lie on $D$.
Applying Lemma~\ref{addtwo}, and twisting by $-q$, it suffices to show
\[N_{D \cup L}'(s)[q \to s][s \to q][s \to 2q] = N_{D \cup L}'(s)[q \to s][s \to q][s \to p \cup x]\]
satisfies interpolation.

First suppose $\sum k n_{ij}^k < r - 2$. Then limiting $q \to x$, we conclude from
Lemma~\ref{addone} (with $\Lambda_1 = s$ and $\Lambda_2 = \overline{kq_{ij}^k}$) that it is
sufficient to prove interpolation for the bundle
\[\mathcal{E} = N_D\big((i + j - 1) q_{ij}^k \big)[iq_{ij}^k \to p][j q_{ij}^k \to 2p](s)[s \to x][s \to p \cup x](x)[x \to p]\left[x \to p + s + \sum kq_{ij}^k\right].\]
Similarly, for $\sum k n_{ij}^k = r - 2$, we conclude by limiting $q \to x$ and
applying Lemma~\ref{fiddle-over} that it is sufficient to prove interpolation
for the bundle
\[\mathcal{E}' = N_D\big( (i + j - 1) q_{ij}^k \big)[iq_{ij}^k \to p][j q_{ij}^k \to 2p](s)[s \to x][s \to p \cup x][x \to p \cup s].\]
Identifying $\Oc_D(1)$ with the normal bundle of $D$ in the cone $\overline{pD}$, we obtain the splitting
\[N_D \simeq N_{D/H} \oplus \Oc_D(1).\]
This induces splittings $\mathcal{E} \simeq \mathcal{F} \oplus \mathcal{L}$ and $\mathcal{E}' \simeq \mathcal{F}' \oplus \mathcal{L}'$, where:
\begin{align*}
\mathcal{F} &= N_{D/H}\big(s + (j - 1) q_{ij}^k \big)[j q_{ij}^k \to x][2s \to x]\left[x \to s + \sum kq_{ij}^k\right], \\
\mathcal{L} &= \Oc_D(1)\big(x + (i + j - 1) q_{ij}^k \big), \\
\mathcal{F}' &= N_{D/H}\big(s + (j - 1) q_{ij}^k \big)[j q_{ij}^k \to x][2s \to x][x \to s], \\
\mathcal{L}' &= \Oc_D(1)\big((i + j - 1) q_{ij}^k \big).
\end{align*}

Now we claim $\mathcal{F}$, respectively $\mathcal{F}'$, satisfies interpolation.
Indeed, when $\sum k n_{ij}^k < r - 3$, then $\mathcal{F}$ is a modified normal bundle
of the type assumed to satisfy interpolation.
Otherwise, when $\sum n_{ij}^k = r - 3$, then $\mathcal{F}(-x)$
is a modified normal bundle of the type assumed to satisfy interpolation.
Finally, when $\sum n_{ij}^k = r - 2$, then $\mathcal{F}'$ is a modified normal bundle
of the type assumed to satisfy interpolation.

Next, $\mathcal{L}$, respectively $\mathcal{L}'$,
satisfies interpolation since $\Oc_D(1)$ satisfies interpolation. So to check $\mathcal{F} \oplus \mathcal{L}$,
respectively $\mathcal{F}' \oplus \mathcal{L}'$,
satisfies interpolation, we just need to check
\begin{align*}
(r - 2) \cdot (\chi(\mathcal{L}) - 1) &\leq \chi(\mathcal{F}) \leq (r - 2) \cdot (\chi(\mathcal{L}) + 1), \\
(r - 2) \cdot (\chi(\mathcal{L}') - 1) &\leq \chi(\mathcal{F}') \leq (r - 2) \cdot (\chi(\mathcal{L}') + 1).
\end{align*}

For this, we first calculate
\begin{align*}
\chi(\mathcal{F}) &= r(d - 2) - (r - 4)(g - 1) - 2 - (r - 5) - \sum_{i,j,k} (r - 2 - j - k) \cdot n_{ij}^k \\
\displaybreak[0]
&= r(d - 2) - (r - 4)g - 1 - \sum_{i,j,k} (r - 2 - j - k) \cdot n_{ij}^k, \\
\chi(\mathcal{L}) &= (d - 2) - (g - 1) + 1 + 1 + \sum_{i, j, k} (i + j - 1) \cdot n_{ij}^k \\
\displaybreak[0]
&= d - g + 1 + \sum_{i, j, k} (i + j - 1) \cdot n_{ij}^k, \\
\chi(\mathcal{F}') &= r(d - 2) - (r - 4)(g - 1) - 2 - (r - 5) - \sum_{i,j,k} (r - 2 - j) \cdot n_{ij}^k \\
\displaybreak[0]
&= r(d - 2) - (r - 4)g - 1 - (r - 2) - \sum_{i,j,k} (r - 2 - j - k) \cdot n_{ij}^k, \\
\chi(\mathcal{L}') &= (d - 2) - (g - 1) + 1 + \sum_{i, j, k} (i + j - 1) \cdot n_{ij}^k \\
&= d - g + \sum_{i, j, k} (i + j - 1) \cdot n_{ij}^k.
\end{align*}

Substituting this into the above,
the condition for either $\mathcal{F} \oplus \mathcal{L}$, or $\mathcal{F}' \oplus \mathcal{L}'$, to satisfy interpolation is then
\[2d + 2g - 4r + 3 \leq \sum_{i,j,k} ((r - 2)i + (r - 3)j - k) \cdot n_{ij}^k \leq 2d + 2g - 2r - 1. \qedhere\]
\end{proof}

\noindent
Next, we give an inductive construction based around adding a $1$-secant line to $C$.

\begin{lemma} \label{lower-d}
Let $d > g + r$. If $r = 3$, then assume in addition
that $\sum_{i,j,k} jn_{ij}^k = 0$. Suppose that
\[\sum_{i,j,k} (r - 1 - i - 2j - k) n_{ij}^k \leq (r + 1) d - (2r - 4)g - 2.\]
Then $(d, g, r; n)$ is good provided that both
$(d - 1, g, r; n)$ and $(d - 1, g, r - 1; n')$ are good, where if
\[\sum_{i,j,k} k n_{ij}^k < r - 2,\]
then
\[n' = n;\]
and if instead
\[\sum_{i,j,k} k n_{ij}^k = r - 2,\]
then
\[(n')_{ij}^k = \begin{cases}
\sum_\ell n_{ij}^\ell & \text{if $k = 0$ and $(i, j; k) \neq (0, 0; 0)$,} \\
0 & \text{else.}
\end{cases}\]
\end{lemma}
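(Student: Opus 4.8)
The plan is to argue by degeneration, peeling off a single $1$-secant line. Degenerate $C$ to a nodal curve $D \cup L$, where $D$ is a general curve of degree $d - 1$ and genus $g$ in $\Pbb^r$ and $L$ is a general line meeting $D$ quasi-transversely at a single point $u \in D_\sm$, with the marked point $p$ and all the marked points $q_{ij}^k$ specializing to general points of $D$. By \ref{right-component} the curve $D \cup L$ is limit nonspecial, and by \ref{modified-normal-glue} the locus where the modified normal bundle satisfies interpolation is open; hence it suffices to show that the modified normal bundle $N_{D \cup L}'$ of $D \cup L$ satisfies interpolation.

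By \ref{hh}, for general points $v \in L$ and $w \in T_u D$ we have $N_{D \cup L}'|_D \cong N_D'(u)[u \to v]$ and $N_{D \cup L}'|_L \cong N_L(u)[u \to w] \cong \Oc_{\Pbb^1}(2) \oplus \Oc_{\Pbb^1}(1)^{r - 2}$; moreover, by \ref{cone-glue}, the fiber at $u$ of the positive subbundle of $N_{D \cup L}'|_L$ coincides with the fiber at $u$ of the line subbundle $\mathcal{L} = N_{D \to v}(u) \subseteq N_{D \cup L}'|_D$ coming from the cone $\overline{vD}$. Choosing an effective divisor $D_0$ on $D$ disjoint from $u$ of degree $\lceil \h^0(N_D')/(r - 1) \rceil - 1$ — so that $\H^0(N_D'[u \to v](-D_0 - u)) = 0$, which follows from interpolation of $N_D'$, a consequence of the hypothesis that $(d - 1, g, r; n)$ is good — and feeding the above data into \ref{trivglue} exactly as in the proofs of \ref{stick} and \ref{two-sticks} (with the accompanying Euler-characteristic bookkeeping furnished by \ref{chimod}), interpolation for $N_{D \cup L}'$ is reduced to interpolation for the bundle $\mathcal{E} = N_{D \cup L}'|_D[u \to \mathcal{L}]$ on $D$, that is, for a bundle obtained from $N_D'$ by a positive twist at $u$ followed by a modification at $u$ along the line subbundle $N_{D \to v}$ (cf.\ the reduction in \ref{addone}).

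To prove that $\mathcal{E}$ satisfies interpolation, consider the exact sequence
\[0 \to N_{D \to v} \to N_D' \to N_D'/N_{D \to v} \to 0.\]
Here $N_{D \to v}$ is a nonspecial line bundle (being a twist of $\Oc_D(1)$ by an effective divisor; cf.\ \ref{to-Lambda}, \ref{to-kp}, and \ref{top}); the middle term $N_D'$ satisfies interpolation because $(d - 1, g, r; n)$ is good; and the quotient $N_D'/N_{D \to v}$ is identified with the modified normal bundle of the projection $\overline D$ of $D$ from $v$, a general curve of degree $d - 1$ and genus $g$ in $\Pbb^{r - 1}$ (a $\chi$ computation shows there is no twist), which satisfies interpolation because $(d - 1, g, r - 1; n')$ is good. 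When $\sum_{i,j,k} k n_{ij}^k = r - 2$ the projection must be taken together with the spans $\overline{k q_{ij}^k}$, absorbing the modifications $[p \to k q_{ij}^k]$ and producing the index shift appearing in the definition of $n'$ in that case, exactly as in the proof of \ref{stick}; the extra hypothesis $\sum j n_{ij}^k = 0$ when $r = 3$ is what guarantees that the modified normal bundle of $\overline D \subset \Pbb^2$ is defined. Now \ref{T:interpolation-positive}, applied to this sequence with the general point $u$, shows that a suitable positive twist $N_D'(mu)[mu \to N_{D \to v}]$ satisfies interpolation — the inequality $(r - 2)(\h^0(N_{D \to v}) + m - 1) \le \h^0(N_D'/N_{D \to v})$ follows from \eqref{regime} for $(d, g, r; n)$, which in turn holds since $(d-1,g,r;n)$ is good — and \ref{T:interpolation-twist-down} removes the excess twist; the numerical hypothesis $\sum_{i,j,k} (r - 1 - i - 2j - k) n_{ij}^k \le (r + 1) d - (2r - 4) g - 2$ of the lemma, which by \ref{chimod} says exactly that the relevant bundle on $D$ has $\chi$ at least $\genus(D)$ times its rank, is precisely the Euler-characteristic hypothesis of \ref{T:interpolation-twist-down}.

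The main obstacle is the bookkeeping: verifying that the dimension of the subspace $V$ produced by \ref{trivglue} matches $\chi(N_{D \cup L}'|_L) + \chi(N_{D \cup L}'|_D(-D_0 - u))$ (which forces the precise choice of $\deg D_0$ and requires a nonspeciality check for $N_D'(u)[u \to v](-D_0)$), tracking the twists correctly through the two successive modifications at $u$ so as to present $\mathcal{E}$ as a genuine positive twist of a modification of the form handled by \ref{T:interpolation-positive}, and checking that the single numerical inequality in the hypothesis simultaneously yields both the inequality needed in \ref{T:interpolation-positive} and the Euler-characteristic bound for the twist-down — together with, in the boundary case $\sum_{i,j,k} k n_{ij}^k = r - 2$, confirming that folding the spans $\overline{k q_{ij}^k}$ into the projection produces exactly the stated $n'$.
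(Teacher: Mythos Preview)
Your overall strategy---degenerate to $D\cup L$, reduce to a bundle on $D$, and invoke \cref{T:interpolation-positive} together with \cref{T:interpolation-twist-down}---is correct in outline, but the order of operations is wrong in a way that makes the numerical hypothesis insufficient.

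The line subbundle of $N_D'$ corresponding to $N_{D\to v}$ is not a twist of $\Oc_D(1)$ by an \emph{effective} divisor; since $v$ is transverse to the modification directions $p$ and $2p$ at every $q_{ij}^k$, the transferred subbundle is $\Oc_D(1)(-p-\sum q_{ij}^k)$, which may well be special. More seriously, if you reduce first via \cref{addone} to $\mathcal{E}=N_D'(u)[2u\to v]$ and only then try to twist up by a large $\Delta$, apply \cref{T:interpolation-positive}, and twist back down, the twist-down step needs $\chi(\mathcal{E})\geq g(r-1)$. But $\chi(\mathcal{E})=\chi(N_C')-(2r-2)$, so this requires
\[
\sum_{i,j,k}(r-1-i-2j-k)n_{ij}^k\leq (r+1)d-(2r-4)g-2r,
\]
which is strictly stronger than the hypothesis by $2r-2$. (Concretely: with $r=4$, $g=100$, $d=105$, and sixty points of type $(1,0;0)$, the lemma's hypothesis holds but yours fails.)

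The paper fixes both issues at once by applying the positive twist \emph{before} passing to $D$: one first notes that, by \cref{T:interpolation-twist-down} (via the smoothing, cf.\ \cref{T:interpolation-twist-down-smooth}), it suffices to prove interpolation for $N_C'(\Delta_0)$ on $C=D\cup L$ for any effective $\Delta_0$ supported on $D$; this uses exactly $\chi(N_C')\geq g(r-1)$, which is the stated hypothesis. Then \cref{addone} reduces to $N_D'(\Delta_0)(s)[2s\to q]$; choosing $\Delta_0=\Delta+s$ with $\Delta$ of large degree gives $N_D'(\Delta)(2s)[2s\to q]$, to which \cref{T:interpolation-positive} applies directly (the $\Delta$-twist makes the line subbundle nonspecial, and the remaining inequality reduces to $r+2\leq 2d+2g+\sum(i+2j+k)n_{ij}^k$, which follows from $d>g+r$).
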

\begin{proof}
We degenerate $C$ to $D \cup L$, where $D$ is a nondegenerate curve and $L$ is a $1$-secant
line to $D$. We suppose that $s := C \cap L$ is general, and we write $q$ for some other point on $L$.
For $q$ general, projection from $q$ gives a local immersion from $D$ to $\Pbb^{r - 1}$.

The image $D'$ under projection is by construction a general curve of degree $d - 1$ and genus $g$
in $\Pbb^{r - 1}$, and all marked points on $D'$ are general. We now consider
\[\wtilde{N}_{D'}' := N_{D'}\big((i + j - 1)q_{ij}^k\big)[p \to k q_{ij}^k][iq_{ij}^k \to p][j q_{ij}^k \to 2p].\]

For for $\sum_{i,j,k} k n_{ij}^k < r - 2$, this is a modified normal of the type
assumed to satisfy interpolation. Otherwise, for $\sum_{i,j,k} k n_{ij}^k = r - 2$, then 
$\wtilde{N}_{D'}'(-p)$ is a modified normal of the type
assumed to satisfy interpolation. Either way, we conclude that $\wtilde{N}_{D'}'$ satisfies interpolation.

Our assumed inequality implies, via Proposition~\ref{T:interpolation-twist-down}, that
it suffices to prove interpolation for the bundle $N_C'(\Delta_0)$ for any effective divisor $\Delta_0 \subset D \subset C$.
From Lemma~\ref{addone}
(with $\Lambda_1 = \Lambda_2 = \emptyset$), this in turn reduces to interpolation for
\[N_D'(\Delta_0)(s)[2s \to q].\]
Taking $\Delta_0 = \Delta + s$ for $\Delta$ a general effective divisor of large degree,
it suffices to prove interpolation for
\[N_D'(\Delta)(2s)[2s \to q] = N_D'(\Delta)(2s)[2s \to N_{D \to q}(-p - q_{ij}^k)].\]
Because the quotient
\[N_D'(\Delta) / N_{D \to q}(\Delta) \simeq \wtilde{N}_{D'}'(\Delta)\]
satisfies interpolation, we can apply Proposition~\ref{T:interpolation-positive}
to reach the desired conclusion, subject to the inequality
\[(r - 2) \cdot \big(\chi(N_{D \to q}(-p - q_{ij}^k)(\Delta)) + 1\big) \leq \chi(N_D'(\Delta)) - \chi(N_{D\to q}(-p - q_{ij}^k)(\Delta)).\]
(For $\Delta$ of large degree, $N_{D \to q}(-p - q_{ij}^k)(\Delta)$ will be nonspecial.)
This inequality is in turn equivalent to the inequality
\[(r - 2) \cdot \big(\chi(N_{D \to q}(-p - q_{ij}^k)) + 1\big) \leq \chi(N_D') - \chi(N_{D\to q}(-p - q_{ij}^k)).\]
By Proposition~\ref{to-Lambda} and Lemma~\ref{chimod}, we have
\begin{gather*}
\chi(N_{D \to q}(-p - q_{ij}^k)) = (d - 1) - g + 1 - 1 - \sum_{i,j,k} n_{ij}^k \\
\chi(N_D') = (r + 1)(d - 1) - (r - 3)g - 2 - \sum_{i,j,k} (r - 1 - i - 2j - k) n_{ij}^k.
\end{gather*}
Thus, we just need to check the inequality
\[r + 2 \leq 2d + 2g + \sum_{i,j,k} (i + 2j + k) n_{ij}^k.\]
But this inequality holds since
$2d + 2g \geq 2(g + r + 1) + 2g \geq 2r + 2 \geq r + 2$ by assumption.
\end{proof}

We now give several methods for getting rid of marked points
(without changing the degree or genus of our curve). 

\begin{lemma} \label{izz}
Suppose that $(d, g, r; n)$ satisfies \eqref{regime}, and that
\[\sum_{i,j,k} (r - 1 - i - 2j - k) \cdot n_{ij}^k \leq (r + 1) d - (2r - 4)g - 2.\]
If there is some integer $\ell$ with
and $n_{\ell 0}^0 > 0$, then $(d, g, r; n)$ is good provided that both $(d, g, r; n')$
and $(d - 1, g, r - 1; n'')$
are good. Here, we define
\[(n')_{ij}^k = \begin{cases}
n_{ij}^k - 1 & \text{if $(i, j; k) = (\ell, 0; 0)$;} \\
n_{ij}^k & \text{else.}
\end{cases}\]
In addition, if 
\[\sum_{i,j,k} k n_{ij}^k < r - 2,\]
then we define
\[(n'')_{ij}^k = \begin{cases}
\sum_\ell n_{\ell i}^k & \text{if $j = 0$ and $(i, j; k) \neq (0, 0; 0)$;} \\
0 & \text{else.}
\end{cases}\]
If instead
\[\sum_{i,j,k} k n_{ij}^k = r - 2,\]
then we define 
\[(n'')_{ij}^k = \begin{cases}
\sum_{\ell, m} n_{\ell i}^m & \text{if $j = k = 0$ but $i \neq 0$;} \\
0 & \text{else.}
\end{cases}\]
\end{lemma}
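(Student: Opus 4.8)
The plan is to realize $N_C'$ as a negative twist of a positive modification (in the sense of diagram~\cref{E:ses-modification}) of the modified normal bundle attached to $n'$, and then to finish with \cref{T:interpolation-positive} followed by \cref{T:interpolation-twist-down}. Fix one of the $(\ell, 0; 0)$-points and call it $q$; since $q$ is general it is distinct from $p$ and from every other marked point, and $T_q C$ avoids $p$. Write $G$ for the modified normal bundle attached to $n'$ --- that is, $N_C'$ with the contribution of $q$ (a twist by $(\ell - 1)q$ together with the modification $[\ell q \to p]$, the remaining two modifications for a $(\ell,0;0)$-point being trivial) removed --- and write $F = N_{C \to p}'$ for the sub-line-bundle of $G$ supplied by \cref{top}. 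Since $F$ agrees with $N_{C \to p}$ in a neighbourhood of $q$, \cref{T:only-neighborhood} lets us rewrite $[\ell q \to p]$ as $[\ell q \to F]$; combining this with \cref{T:commuting-modifications-twists} to move the twists past the modification, we obtain
\[N_C' \;\cong\; \bigl(G(\ell q)[\ell q \to F]\bigr)(-q),\]
so that $N_C'(q) \cong G(\ell q)[\ell q \to F]$ is precisely the positive modification of $G$ along $F$ appearing in~\cref{E:ses-modification}.

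Next I would apply \cref{T:interpolation-positive} to $0 \to F \to G \to H \to 0$, where $H = G/F$. Here $F$ is a line bundle, $p$ is general, and $G$ satisfies interpolation because $(d, g, r; n')$ is good; $F$ satisfies interpolation because it is nonspecial by \cref{top}, hence by \cref{T:interpolation-line-bundle}. The substantive point is interpolation for the quotient $H$: passing from $N_C$ to $N_C / N_{C \to p}$ is, up to a twist by an effective divisor, the operation of projecting $C$ away from $p$ into $\Pbb^{r-1}$, and tracing how the twists and the modifications $[iq_{ij}^k \to p]$, $[jq_{ij}^k \to 2p]$, $[p \to kq_{ij}^k]$ of $G$ descend to the quotient --- using the computations of \cref{S:normal-bundles}, exactly as in the hyperplane-section splitting of \cref{stick} --- identifies $H$, up to such a twist, with the modified normal bundle of a general degree-$(d-1)$, genus-$g$ curve in $\Pbb^{r-1}$ with markings $n''$; the two cases in the statement ($\sum_{i,j,k} k n_{ij}^k < r - 2$ versus $= r-2$) arise for the same reason they do in \cref{stick}. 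Since $(d - 1, g, r - 1; n'')$ is good, that modified normal bundle satisfies interpolation, and hence so does $H$ by \cref{T:interpolation-twist-up}. Finally, the numerical hypothesis $\rank(H)\bigl(\h^0(F) + \ell - 1\bigr) \le \h^0(H)$ of \cref{T:interpolation-positive} unwinds, via \cref{top} and \cref{chimod} applied to the Euler characteristics in the displayed isomorphism above, into exactly the inequality~\eqref{regime}, which we have assumed.

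Thus \cref{T:interpolation-positive} shows that $N_C'(q) \cong G(\ell q)[\ell q \to F]$ satisfies interpolation. To descend to $N_C'$ itself I would invoke \cref{T:interpolation-twist-down} with the divisor $q$: by \cref{chimod}, the remaining hypothesis $\sum_{i,j,k}(r - 1 - i - 2j - k)\, n_{ij}^k \le (r + 1)d - (2r - 4)g - 2$ is equivalent to $\chi(N_C') \ge \genus(C)\cdot \rank(N_C')$, which is precisely what \cref{T:interpolation-twist-down} requires, and so interpolation for $N_C'(q)$ forces interpolation for $N_C'$. The main obstacle is the middle step: making precise the identification of the quotient $H$ with a modified normal bundle in $\Pbb^{r-1}$ (up to an explicit twist), since this is what produces --- and justifies the split into --- the two transfer rules for $n''$; the remaining steps are formal manipulations of modifications together with Euler-characteristic bookkeeping via \cref{top} and \cref{chimod}.
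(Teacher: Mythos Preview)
Your proposal is correct and follows essentially the same route as the paper: one twists up by the $(\ell,0;0)$-point $q$ to rewrite $N_C'(q)$ as a positive modification of the $n'$-bundle along its ``point towards $p$'' sub-line-bundle, applies \cref{T:interpolation-positive}, identifies the quotient (up to a twist) with a modified normal bundle of the projection from $p$ into $\Pbb^{r-1}$ with markings $n''$, and then descends via \cref{T:interpolation-twist-down}. The only cosmetic differences are that the paper names the $n'$-bundle $N_C''$ (so its sub-line-bundle is $N_{C\to p}''$, not $N_{C\to p}'$) and makes the quotient identification via the blowup exact sequence $0 \to N_{C \to p}(-p) \to N_{\tilde{C}/\operatorname{Bl}_p\Pbb^r} \to N_{\bar{C}/\Pbb^{r-1}} \to 0$ rather than by analogy with \cref{stick}.
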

\begin{proof}
Let $q$ be some point of type $(\ell, 0; 0)$. From Proposition~\ref{T:interpolation-twist-down},
it is sufficient to prove that $N_C'(q)$ satisfies interpolation, since the
given inequality rearranges to $\chi(N_C') \geq (r - 1) g$ (c.f.\ Lemma~\ref{chimod}).
By assumption, some modification $N_C''$ of type $(d, g, r; n')$ satisfies interpolation.
Then we can write
\[N_C'(q) = N_C''(\ell q)[\ell q \to p],\]
Proposition~\ref{T:interpolation-positive} implies this satisfies interpolation,
as long as $N_C''$, $N_{C \to p}''$, and $N_C'' / N_{C \to p}''$ all satisfy interpolation,
and $(r - 2)(\chi(N_{C \to p}'') + \ell - 1) \leq \chi(N_C'' / N_{C \to p}'')$.

We first note that $N_C''$ satisfies interpolation, since $(d, g, r; n')$
is good by assumption; in addition, $N_{C \to p}''$ satisfies interpolation, since it
is a nonspecial line bundle by \cref{top}. Write $\tilde{C}$ for the proper transform of $C$
in the blowup $\operatorname{Bl}_p \Pbb^r$; let $\bar{C} \subset \Pbb^{r - 1}$ denote the projection of $C$ from $p$.
Then using the exact sequence
\[0 \to N_{C \to p}(-p) \to N_C(-p) \simeq N_{\tilde{C} / \operatorname{Bl}_p \Pbb^r} \to N_{\bar{C} / \Pbb^{r - 1}} \to 0,\]
we recognize $(N_C'' / N_{C \to p}'')(-\alpha p)$ as a modified
normal bundle, for $\bar{C}$, of type $(d - 1, g, r - 1; n'')$, where
\[\alpha = \begin{cases}
2 & \text{if $\sum_{i,j,k} k n_{ij}^k < r - 2$;} \\
1 & \text{if $\sum_{i,j,k} k n_{ij}^k = r - 2$.}
\end{cases}\]
In particular, our assumption that $(d - 1, g, r - 1; n'')$ is good implies
that $(N_C'' / N_{C \to p}'')(-\alpha p)$, and thus $N_C'' / N_{C \to p}''$,
satisfies interpolation.
It thus remains only to check
\[(r - 2)(\chi(N_{C \to p}'') + \ell - 1) \leq \chi(N_C'' / N_{C \to p}'').\]
To do this, we first calculate (using \cref{chimod} and \cref{top}):
\begin{align*}
\chi(N_{C \to p}'') &= d - g + 2 + \sum_{i,j,k} (i + j - 1) (n')_{ij}^k = d - g + 3 - \ell + \sum_{i,j,k} (i + j - 1) n_{ij}^k \\
\chi(N_C'' / N_{C \to p}'') &= (r - 2) + r(d - 1) - (r - 4)g - 2 - \sum_{i,j,k} (r - 2 - j - k)(n')_{ij}^k \\
&= rd - (r - 4)g + r - 6 - \sum_{i,j,k} (r - 2 - j - k)n_{ij}^k.
\end{align*}
Substituting the above expressions into our desired inequality
reduces it to \eqref{regime}, which holds by assumption.
\end{proof}

\begin{lemma} \label{ijz}
Let $r = 3$, and suppose that $(d, g, r; n)$ satisfies \eqref{regime}.
If there are integers $\ell$ and $m \geq 1$ with
$n_{\ell m}^0 > 0$, then $(d, g, r; n)$ is good provided that $(d, g, r; n')$
is good, where
\[(n')_{ij}^k = \begin{cases}
n_{ij}^k - 1 & \text{if $(i, j; k) = (\ell, m; 0)$;} \\
n_{ij}^k & \text{else.}
\end{cases}\]
\end{lemma}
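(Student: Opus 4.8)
The plan is to reduce the statement directly to the positive-modification result \cref{T:interpolation-positive}, exactly in the spirit of \cref{izz}. First note that since the allowed triples $(i,j;k)$ with $j \geq 1$ are $(1,1;1)$ and $(1,1;0)$, the only allowed triple of the form $(\ell, m; 0)$ with $m \geq 1$ is $(1,1;0)$; so we are removing a single $(1,1;0)$-point, which I will call $q$. The key simplification in $\Pbb^3$ is that $N_{C \to 2p} \subseteq N_C$ is the \emph{entire} normal bundle, since its fiber over a general $x \in C$ is the span of $T_p C$ and $T_x C$ modulo $T_x C$, and two skew lines in $\Pbb^3$ span all of $\Pbb^3$. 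Hence the modification $[q \to 2p]$ is trivial by \cref{T:modification-basic-examples} and \cref{T:only-neighborhood}. Writing $G$ for the modified normal bundle associated to $(d,g,r;n')$ and $F \subseteq G$ for the sub-line-bundle pointing towards $p$ as in \cref{top}, the modified normal bundle $N_C'$ associated to $(d,g,r;n)$ therefore satisfies
\[N_C' \cong G(q)[q \to F].\]
(As in the proof of \cref{izz}, the underlying modification datum is tree-like — $q$ is general and $N_{C \to p} \subset N_{C \to 2p}$ — so this rewriting is legitimate.)

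This exhibits $N_C'$ as the positive modification appearing in diagram \labelcref{E:ses-modification}, with this $G$ and $F$, with $H = G/F$, with $n = 1$, and with $q$ in place of the general point there. I would then invoke \cref{T:interpolation-positive}. Its hypotheses hold: $F$ is a nonspecial line bundle by \cref{top}; $G$ satisfies interpolation because $(d,g,r;n')$ is good; and $H$ has rank $r - 2 = 1$ and is nonspecial (since $\h^1(H)$ is a quotient of $\h^1(G) = 0$), hence satisfies interpolation by \cref{T:interpolation-line-bundle}; and $q$ is general. The only remaining hypothesis is the numerical inequality $\rank(H)\cdot(\h^0(F) + n - 1) \leq \h^0(H)$, which here reduces to $\h^0(F) \leq \h^0(H)$.

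To check $\h^0(F) \leq \h^0(H)$, I would compute with \cref{top} and \cref{chimod} at $r = 3$. One has $\h^0(F) = \chi(F) = d - g + 2 + \sum_{i,j,k}(i+j-1)(n')_{ij}^k$, and, since $G$ is nonspecial, $\h^0(H) = \chi(G) - \chi(F) = 3d + g - 4 - \sum_{i,j,k}(1 - j - k)(n')_{ij}^k$. Thus $\h^0(F) \leq \h^0(H)$ rearranges to
\[\sum_{i,j,k}(i - k)(n')_{ij}^k \leq 2d + 2g - 6.\]
Since $n'$ is obtained from $n$ by deleting one $(1,1;0)$-point, and such a point contributes $i - k = 1$ to the left-hand side, the displayed inequality is equivalent to $\sum_{i,j,k}(i-k)n_{ij}^k \leq 2d + 2g - 5$; and this is exactly \eqref{regime} for $(d,g,r;n)$, because for $r = 3$ we have $(r-2)i + (r-3)j - k = i - k$. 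This completes the argument.

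I do not expect a genuine difficulty here. The one step that must be handled with care is the reduction $N_C' \cong G(q)[q \to F]$ — that is, the fact that $[q \to 2p]$ is vacuous in $\Pbb^3$, together with the routine verification that the modification datum is tree-like so that this transfer is valid. This is also exactly what confines the lemma to $r = 3$: for $r > 3$ the quotient $H$ would have rank at least $2$, and then nonspeciality of $H$ would no longer force interpolation.
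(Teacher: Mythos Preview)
Your proof is correct and follows essentially the same approach as the paper: both reduce to \cref{T:interpolation-positive} applied to $G = N_C''$ and $F = N_{C \to p}''$, and both verify the numerical inequality by rewriting it as \eqref{regime} for $r=3$. Two minor differences worth noting: you explicitly identify $(\ell,m)=(1,1)$ and make the vanishing of $[q \to 2p]$ in $\Pbb^3$ explicit (the paper phrases the argument for general $\ell,m$ and uses this implicitly via $N_C'(-(m-1)q) = N_C''(\ell q)[\ell q \to p]$); and your verification that $H$ is nonspecial---directly from $\h^1(G)=0$---is cleaner than the paper's route, which computes $H \simeq K_C(3)((j-1)(q')_{ij}^k)((k(n')_{ij}^k - 3)p)$ explicitly and argues it is a general line bundle of sufficiently large degree.
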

\begin{proof}
Let $q$ be some point of type $(\ell, m; 0)$. Then since $m \geq 1$,
it is sufficient to prove that $N_C'\big(-(m - 1)q\big)$ satisfies interpolation.
By assumption, some modification $N_C''$ of type $(d, g, r; n')$ satisfies interpolation.
Then we can write
\[N_C'\big(-(m - 1)q\big) = N_C''(\ell q)[\ell q \to p].\]
Proposition~\ref{T:interpolation-positive} implies this satisfies interpolation,
as long as $N_C''$, $N_{C \to p}''$, and $N_C'' / N_{C \to p}''$ all satisfy interpolation,
and $\chi(N_{C \to p}'') + \ell - 1 \leq \chi(N_C'' / N_{C \to p}'')$.

We first note that $N_C''$ satisfies interpolation, since $(d, g, r; n')$
is good by assumption. For the remaining conditions, we first
note that by \cref{to-kp},
\begin{align*}
N_{C \to p}'' &\simeq \Oc_C(1)(p)\big((i + j - 1) (q')_{ij}^k\big) \\
\wedge^2 N_C'' &\simeq (\wedge^2 N_C)\big((i + 2j - 2) (q')_{ij}^k\big)\big((k (n')_{ij}^k) \cdot p\big)(-2p) \\
&\simeq K_C(4)\big((i + 2j - 2) (q')_{ij}^k\big)\big((k (n')_{ij}^k) \cdot p\big)(-2p).
\end{align*}
In particular,
\[N_C'' / N_{C \to p}'' \simeq K_C(3)\big((j - 1) (q')_{ij}^k\big)\big(((k (n')_{ij}^k) - 3) \cdot p\big).\]

These expressions imply in particular that $N_{C \to p}''$ and $N_C'' / N_{C \to p}''$
are general line bundles on~$C$.
By \cref{top}, the line bundle $N_{C \to p}''$ is nonspecial;
since $\ell \geq 1$, it therefore remains only to check
\[\chi(N_{C \to p}'') + \ell - 1 \leq \chi(N_C'' / N_{C \to p}'').\]
For this, we calculate
\[\chi(N_C'' / N_{C \to p}'') - \chi(N_{C \to p}'') = 2g + 2g - 6 - \sum_{i,j,k} (i - k) \cdot (n')_{ij}^k = 2g + 2g - 6 + \ell - \sum_{i,j,k} (i - k) \cdot n_{ij}^k.\]
From \eqref{regime},
\[\sum_{i,j,k} (i - k) \cdot n_{ij}^k \leq 2d + 2g - 5,\]
which implies
\[\chi(N_C'' / N_{C \to p}'') - \chi(N_{C \to p}'') \geq 2g + 2g - 6 + \ell - (2d + 2g - 5) = \ell - 1,\]
as desired.
\end{proof}

\begin{lemma} \label{zzk}
Suppose again that
\[\sum_{i,j,k} (r - 1 - i - 2j - k) \cdot n_{ij}^k \leq (r + 1) d - (2r - 4)g - 2.\]
If there is some integer $\ell$ with
and $n_{00}^\ell > 0$, then $(d, g, r; n)$ is good provided that $(d, g, r; n')$ and $(d, g, r; n'')$
are both good, where
\begin{align*}
(n')_{ij}^k &= \begin{cases}
n_{ij}^k - 1 & \text{if $(i, j; k) = (0, 0; \ell)$;} \\
n_{ij}^k & \text{else.}
\end{cases} \\
(n'')_{ij}^k &= \begin{cases}
\sum_m n_{ij}^m & \text{if $(i, j) \neq (0, 0)$ and $k = 0$;} \\
0 & \text{else.}
\end{cases}
\end{align*}
\end{lemma}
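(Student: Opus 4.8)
The plan is to remove one $(0,0;\ell)$-point at a time, paralleling Lemma~\ref{izz} and Lemma~\ref{ijz}. Fix a general $(0,0;\ell)$-point $q$ (so $\ell \in \{1,2\}$), and write $N_C''$ for the modified normal bundle of type $(d,g,r;n')$, i.e.\ $N_C'$ with this point deleted. Since tree-like modifications commute with one another (Proposition~\ref{T:communting-modifications}) and with twists (Proposition~\ref{T:commuting-modifications-twists}), and a modification depends only on its subbundle near the divisor (Remark~\ref{T:only-neighborhood}), one has
\[N_C' = N_C''[p \to N_{C \to \ell q}],\]
a single elementary modification of $N_C''$ at the point $p$ along the rank-$\ell$ subbundle $N_{C \to \ell q}$ computed in Proposition~\ref{to-Lambda} and Proposition~\ref{to-kp}. (For $\ell = 2$ one may first split $N_{C \to 2q}$ near $p$ into two line subbundles using Proposition~\ref{T:combining-modifications}, reducing to two successive modifications along line bundles.)

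The heart of the argument is to feed this into Proposition~\ref{T:interpolation-positive} --- after twisting up by $p$, writing $N_C' = \big(N_C''(p)[p \to N_{C \to \ell q}]\big)(-p)$ --- together with Proposition~\ref{T:interpolation-twist-down} to descend the twist back down. Proposition~\ref{T:interpolation-positive} requires that (i) $N_C''$ satisfies interpolation, which is exactly the hypothesis that $(d,g,r;n')$ is good; (ii) the relevant line subbundle $N_{C \to q}$ of $N_C''$ is a nonspecial line bundle, which follows from Proposition~\ref{to-Lambda} since it is a twist of $\Oc_C(1)$ and $\H^1(\Oc_C(1)) = 0$; (iii) the quotient $N_C'' / N_{C \to q}$ satisfies interpolation; and (iv) a numerical inequality between the Euler characteristics of $N_{C \to q}$ and $N_C''/N_{C \to q}$, which is to be checked using Lemma~\ref{chimod}, Lemma~\ref{top}, and the hypothesis $\sum_{i,j,k}(r-1-i-2j-k)n_{ij}^k \leq (r+1)d - (2r-4)g - 2$. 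The descent via Proposition~\ref{T:interpolation-twist-down} is legitimate because the same hypothesis inequality, via Lemma~\ref{chimod}, forces $\chi(N_C') \geq (r-1)g$. For point (iii), I would produce an exact sequence expressing $N_C''/N_{C \to q}$ --- in the manner of the sequence $0 \to N_{C \to p}(-p) \to N_C(-p) \to N_{\bar C/\Pbb^{r-1}} \to 0$ used in Lemma~\ref{izz} --- and identify it, after an explicit twist, with a modified normal bundle of type $(d,g,r;n'')$, which satisfies interpolation by the second hypothesis; the passage $(i,j;k) \mapsto (i,j;0)$ and the deletion of the remaining $(0,0;k)$-points should fall out of how the point-types interact with the quotient.

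The main obstacle is step (iii): correctly tracking how the twists and the various $[p \to k q_{ij}^k]$ modifications behave under passing to the quotient by $N_{C \to q}$, so as to recognize the result as precisely a type-$n''$ modified normal bundle. A secondary complication is the case $\ell = 2$: here $N_{C \to 2q}$ has rank $2$, so one must either carry out the splitting mentioned above and apply Proposition~\ref{T:interpolation-positive} twice, keeping track of the intermediate twists, or instead invoke Proposition~\ref{T:linearly-general-interpolation}, in which case one additionally needs that the fibers $N_{C \to 2q}|_p$ are linearly general in $N_C|_p$ as $q$ ranges over $C$ --- a concrete statement about osculating $2$-planes, valid in characteristic $0$ --- and then concludes for a general $q$ using the openness of interpolation (Proposition~\ref{prop:interpolation-is-open}).
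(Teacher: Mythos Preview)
Your step~(iii) contains a genuine gap. You want to identify the quotient $N_C''/N_{C\to q}$ (with $N_C''$ of type $n'$) with a modified normal bundle of type $(d,g,r;n'')$, by analogy with Lemma~\ref{izz}. But quotienting by $N_{C\to q}$ corresponds to projecting $C$ from $q\in\Pbb^r$, so the quotient is a modified normal bundle of the image curve in $\Pbb^{r-1}$: it has type $(d,g,r-1;\,\cdot\,)$, not $(d,g,r;n'')$. Note that in Lemma~\ref{izz} the corresponding hypothesis is indeed on $(d-1,g,r-1;n'')$, reflecting exactly this drop in ambient dimension; here the hypothesis stays in $\Pbb^r$, so the analogy breaks. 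Consequently your argument never actually uses the assumption that $(d,g,r;n'')$ is good.

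The paper's proof is organized around Proposition~\ref{T:linearly-general-interpolation} rather than Proposition~\ref{T:interpolation-positive}, and avoids quotients entirely. Let $E_0$ be the bundle obtained from $N_C$ by performing all twists and all modifications at the marked points $q_{ij}^k$, but \emph{no} modification at $p$. With $\Lambda=\overline{\sum k q_{ij}^k}$ and $\Lambda'=\overline{\sum k q_{ij}^k-\ell q}$ one has
\[
N_C'(q)=E_0[p\to\Lambda],\qquad N_C^{(n')}=E_0[p\to\Lambda'],
\]
and $E_0$ itself is a twist of $N_C^{(n'')}$ by $p$ minus the remaining $(0,0;m)$-points. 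Now apply Proposition~\ref{T:linearly-general-interpolation} with $E=E_0$, $F=N_{C\to\Lambda'}$, and $G_q=N_{C\to\Lambda}$: interpolation for $E$ comes from the $n''$ hypothesis (via Propositions~\ref{T:interpolation-twist-up} and~\ref{negtwist} for the twists), interpolation for $E[p\to F]=N_C^{(n')}$ comes from the $n'$ hypothesis, and the linear generality of $\{G_q/F|_p\}$ in $(E_0/N_{C\to\Lambda'})|_p$ is the osculating-plane statement you mention. This yields interpolation for $N_C'(q)$, and Proposition~\ref{T:interpolation-twist-down} (legitimate by the given inequality, as you note) finishes. Your alternate suggestion of invoking Proposition~\ref{T:linearly-general-interpolation} directly is thus the right instinct, but the crucial move you are missing is to take $E$ to be the bundle with the modification at $p$ \emph{removed entirely}---that is where $n''$ enters---rather than $N_C^{(n')}(p)$. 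With your choice of $E$, the fibers $N_{C\to\ell q}|_p$ all land in a fixed proper subspace of $E|_p$ (coming from the $[p\to\Lambda']$ modification already present in $N_C^{(n')}$), so linear generality fails.
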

\begin{proof}
Let $q$ be some point of type $(0, 0; \ell)$. Again from Proposition~\ref{T:interpolation-twist-down},
it is sufficient to prove that $N_C'(q)$ satisfies interpolation. Write $(q_{ij}^k)^\circ$
for the divisor $q_{ij}^k$, minus $q$ if $(i, j; k) = (0, 0; \ell)$.
Then by assumption, some modification $N_C''$ of type $(d, g, r; n'')$ satisfies interpolation.
Then we can write
\[N_C'(q) = N_C''\big(-(q_{ij}^k)^\circ\big)(p)[p \to kq_{ij}^k].\]
Since $N_C''$ satisfies interpolation, we conclude that $N_C''\big(-(q_{ij}^k)^\circ\big)(p)$ does as well.
So applying \cref{T:linearly-general-interpolation},
we conclude that $N_C'$ satisfies interpolation.
\end{proof}


\section{Base cases}
\label{S:base}

In this section, we prove interpolation in a number of special cases,
which will form the base cases for our inductive argument.

\begin{lemma} \label{p2}
Suppose that $r = 2$ and $\sum_{i,j,k} j n_{ij}^k = 0$. Then $(d, g, r; n)$ is good.
\end{lemma}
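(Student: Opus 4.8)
The plan is to exploit the fact that when $r = 2$ the normal bundle $N_C$ is a \emph{line} bundle, so that after unwinding the definition of $N_C'$ the question collapses, via \cref{T:interpolation-line-bundle}, to a single vanishing statement $\H^1(N_C') = 0$. (If $d < g + 2$ there is no nonspecial curve of degree $d$ and genus $g$ in $\Pbb^2$ and the claim is vacuous, so I would assume $d \geq g + 2$ throughout.)

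First I would record which marked points can occur. The standing requirement $\sum k n_{ij}^k < r - 1 = 1$ forces $n_{ij}^k = 0$ whenever $k \geq 1$, and the hypothesis $\sum j n_{ij}^k = 0$ forces $n_{ij}^k = 0$ whenever $j \geq 1$; hence only $(1,0;0)$- and $(2,0;0)$-points appear. Substituting this into the definition of $N_C'$: the factor $[p \to k q_{ij}^k]$ becomes $[p \to 0] = (-p)$ (\cref{T:modification-basic-examples}), the factor $[j q_{ij}^k \to 2p]$ is empty, and the twist $(i+j-1) q_{ij}^k$ reduces to $q_{20}^0$. For the remaining factors of the form $[\,\cdots \to p\,]$ (modification along $N_{C \to p}$), I would observe that $N_{C \to p}$ is a sub-line-bundle of the line bundle $N_C$ whose fibre at every point of the open set $U_{C,p}$ is all of $N_C$ (a point together with a tangent line span $\Pbb^2$); since the marked points $q_{ij}^k$ are general they lie in $U_{C,p}$, so near each of them $N_{C \to p}$ agrees with $N_C$ and the modification is trivial by \cref{T:modification-basic-examples}. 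This identifies $N_C' \simeq N_C(q_{20}^0 - p)$.

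It then remains to prove that the line bundle $N_C(q_{20}^0 - p)$ is nonspecial. I would argue: $\H^1(N_C) = 0$ by \cref{nonspecial-smooth} (since $C$ is nonspecial), hence $\H^1(N_C(q_{20}^0)) = 0$ as twisting up by the effective divisor $q_{20}^0$ can only shrink $\H^1$; moreover $\chi(N_C(q_{20}^0)) = 3d + g - 1 + n_{20}^0 > 0$ by \cref{chimod} together with $d \geq g + 2$, so $N_C(q_{20}^0)$ has a nonzero global section, and for the general point $p$ some such section is nonvanishing at $p$; therefore $\h^0(N_C(q_{20}^0 - p)) = \h^0(N_C(q_{20}^0)) - 1 = \chi(N_C(q_{20}^0 - p))$, giving $\H^1(N_C(q_{20}^0 - p)) = 0$. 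By \cref{T:interpolation-line-bundle} this is exactly interpolation for $N_C'$.

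I do not expect a genuine obstacle: this is an easy base case. The only point that needs a little care is the triviality of the modifications along $N_{C \to p}$ — that is, correctly interpreting $N_{C \to p}$ for $r = 2$, where it is only defined over the proper open $U_{C,p}$ but coincides there with the ambient line bundle $N_C$ — together with checking that the generality of the marked points places them inside $U_{C,p}$.
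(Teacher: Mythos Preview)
Your proof is correct and follows essentially the same route as the paper: reduce to the observation that $N_C'$ is a line bundle (a twist of $N_C$), invoke \cref{nonspecial-smooth} to get $\H^1(N_C)=0$, and conclude via \cref{T:interpolation-line-bundle}. You are in fact slightly more careful than the paper's own argument: the paper writes $N_C' = N_C\big((i-1)q_{ij}^k\big)$ and passes directly to $\H^1(N_C')=0$, silently absorbing the $-p$ coming from $[p\to 0]$, whereas you track that twist explicitly and verify nonspeciality survives it using the generality of $p$.
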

\begin{proof}
Our earlier assumption that $\sum_{i,j,k} k n_{ij}^k < r - 1 = 1$ implies, together with our given assumption,
that $n_{ij}^k = 0$ unless $j = k = 0$. The modified normal bundle in this case is then
\[N_C' = N_C\big((i - 1) q_{ij}^k\big).\]
But from Lemma~\ref{nonspecial-smooth}, we have $\H^1(N_C) = 0$; consequently, $\H^1(N_C') = 0$.
Because $N_C$ is a line bundle, this implies $N_C$ satisfies interpolation.
\end{proof}

\begin{lemma} \label{p3:mid} Suppose that $r = 3$ and $g > 0$, and that
\[2d + 2g - 9 \leq \sum_{i,j,k} (i - k) \cdot n_{ij}^k \leq 2d + 2g - 7.\]
Then $(d, g, r; n)$ is good.
\end{lemma}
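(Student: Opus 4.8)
The plan is to exploit that, since $r = 3$, the modified normal bundle $N_C'$ has rank $2$, and to split it via the sub-line-bundle $N_{C \to p}'$ of \cref{top}. Consider the short exact sequence
\[0 \to N_{C \to p}' \to N_C' \to Q \to 0, \qquad Q := N_C'/N_{C \to p}'.\]
By \cref{top}, $N_{C \to p}'$ is a nonspecial line bundle with $\chi(N_{C \to p}') = d - g + 2 + \sum_{i,j,k}(i+j-1)n_{ij}^k$; and exactly as in the proof of \cref{ijz} --- computing $\wedge^2 N_C'$ by adjunction from $\wedge^2 N_C = K_C(4)$ and taking $Q = \wedge^2 N_C' \otimes (N'_{C \to p})^\vee$ --- one sees that $Q$ is a \emph{general} line bundle on $C$, with $\chi(Q) = \chi(N_C') - \chi(N_{C \to p}') = 3d + g - 4 - \sum_{i,j,k}(1-j-k)n_{ij}^k$ by \cref{chimod} and \cref{top}. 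Subtracting,
\[\epsilon := \chi(Q) - \chi(N_{C \to p}') = 2d + 2g - 6 - \sum_{i,j,k}(i-k)n_{ij}^k,\]
so the hypothesis says precisely $\epsilon \in \{1,2,3\}$. In particular $\chi(Q) \geq \chi(N_{C \to p}') + 1 \geq 1$, so $Q$ is nonspecial, hence (being a line bundle) satisfies interpolation by \cref{T:interpolation-line-bundle}.

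Now apply \cref{T:interpolation-ses-line-bundle} with $F = N_{C \to p}'$, $G = N_C'$, $H = Q$. Its condition~\ref{T:interpolation-ses-line-bundle-a} reads $\h^0(N_{C \to p}') - 1 \leq \h^0(Q)$, which holds since both line bundles are nonspecial and $\chi(Q) \geq \chi(N_{C \to p}') - 1$. Thus $N_C'$ satisfies interpolation as soon as condition~\ref{T:interpolation-ses-line-bundle-b} holds: for every $d' \geq 1$ and general effective divisor $D$ of degree $d'$, the connecting map $\delta_D \colon \H^0(Q(-D)) \to \H^1(N_{C \to p}'(-D))$ has maximal rank. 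Writing $a = \chi(N_{C \to p}')$: since both $N_{C \to p}'$ and $Q$ are nonspecial, for general $D$ of degree $d'$ we have $\h^0(Q(-D)) = \max(0, a + \epsilon - d')$ and $\h^1(N_{C \to p}'(-D)) = \max(0, d' - a)$, so $\delta_D$ automatically has maximal rank unless $a < d' < a + \epsilon$. Hence only $d' \in \{a+1, \dots, a+\epsilon-1\}$ need to be checked, and in particular the case $\epsilon = 1$ is already complete.

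The remaining case $\epsilon \in \{2,3\}$ is the main obstacle: one must show that $\delta_D$ is surjective for general $D$ of degree $a+1$ and (when $\epsilon = 3$) injective for general $D$ of degree $a+2$, i.e.\ in all of these sub-cases simply that $\delta_D \neq 0$. Now $\delta_D$ is cup product with the extension class $e \in \H^1(N_{C \to p}' \otimes Q^\vee) = \operatorname{Ext}^1(Q, N_{C \to p}')$ of the sequence above, and by Serre duality $\delta_D \neq 0$ if and only if $e$ does not annihilate the image of the multiplication map
\[\H^0(Q(-D)) \otimes \H^0\!\big(K_C \otimes (N'_{C \to p})^\vee(D)\big) \longrightarrow \H^0\!\big(K_C \otimes Q \otimes (N'_{C \to p})^\vee\big).\]
I would verify this in two steps: (i) $e \neq 0$, i.e.\ $N_C'$ is not a direct sum of line bundles; and (ii) that as $D$ ranges over general divisors of the relevant degree, these images are not all contained in the fixed hyperplane $e^\perp$ --- a base-point-free-pencil/Petri type assertion which, after dualizing, reduces (as in the proof of \cref{T:interpolation-positive}) to the non-vanishing of an appropriate Wronskian and hence holds since $\operatorname{char} K = 0$. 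For step~(i) --- the heart of the matter --- since the moduli of (general space curve, general marked points) of the given numerical type is irreducible and the condition ``$N_C'$ satisfies interpolation'' is open along it (\cref{prop:interpolation-is-open}, \cref{modified-normal-glue}), it suffices to produce one curve of degree $d$ and genus $g$ with the prescribed markings for which $N_C'$ satisfies interpolation; I would do this by degenerating $C$ to a suitable reducible nodal curve and computing the normal bundle directly, in the spirit of \cref{S:reducible}. Producing and analyzing that explicit degeneration --- equivalently, pinning down the extension class of $N_C'$ for a general space curve --- is where I expect essentially all the difficulty to lie.
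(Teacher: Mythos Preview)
Your approach is genuinely different from the paper's, and the case $\epsilon = 1$ is handled cleanly. However, for $\epsilon \in \{2,3\}$ there is a real gap.

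First, your proposed verification of step~(i) is circular. You say: to see $e \neq 0$, specialize to some curve for which $N_C'$ satisfies interpolation (since a split bundle with $\epsilon \geq 2$ would fail interpolation). But producing such a specialization \emph{is} the lemma. Once you have it, the openness argument already finishes and the entire analysis via $N_{C\to p}'$ is superfluous; if you do not have it, step~(i) is unproved. Second, step~(ii) is not the situation of \cref{T:interpolation-positive}. There the Wronskian governs how a single subspace $\H^0(L(-np)) \subset \H^0(L)$ moves as one point $p$ varies; here you need the \emph{image of a multiplication map} $\H^0(Q(-D)) \otimes \H^0(K_C\otimes (N'_{C\to p})^\vee(D)) \to \H^0(K_C\otimes Q\otimes (N'_{C\to p})^\vee)$ to escape a fixed hyperplane $e^\perp$ as a degree-$d'$ divisor $D$ varies. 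That is a Petri-type statement depending on the geometry of $C$ and on $e$, not a formal Wronskian nonvanishing, and it does not follow from anything established in the paper.

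By contrast, the paper's proof sidesteps the extension class entirely by degenerating: take $C \leadsto D \cup L \cup M$ where $D$ lies in a plane $H$ and $L$, $M$ are $1$-secant lines through a common point $q \notin H$, with $p \in L$. After applying \cref{addtwo} and limiting $q$ to $x = L \cap D$ (via \cref{addone} or \cref{fiddle-over} according as $\sum k n_{ij}^k$ is $0$ or $1$), the cone splitting $N_D \simeq N_{D/H}\oplus \Oc_D(1)$ produces a genuine \emph{direct sum} $\mathcal{E} \simeq \mathcal{F}\oplus\mathcal{L}$ of line bundles, and one checks directly that $|\chi(\mathcal{L}) - \chi(\mathcal{F})| \leq 1$ is exactly the hypothesis of the lemma. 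So the whole $\delta_D$-analysis is avoided; the degeneration turns $\epsilon \in \{1,2,3\}$ into $|\chi - \chi| \leq 1$ for a bundle that really is split.
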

\begin{proof}
We degenerate $C$ to $D \cup L \cup M$ where $D \subset H$ is a curve contained in a plane, and
$L$ and $M$ are $1$-secant lines to $D$ transverse to $H$, which meet at some point $q \notin H$.
Write $x = L \cap D$ and $s = M \cap D$.

We specialize so $p \in L$ and all other marked points lie on $D$.
Applying Lemma~\ref{addtwo}, and twisting by $-q$, it suffices to show
\[N_{D \cup L}'(s + q)[q \to s][s \to q][s \to 2q](-q) = N_{D \cup L}'(s)[q \to s][s \to q]\]
satisfies interpolation.

First suppose $\sum k n_{ij}^k = 0$. Then limiting $q \to x$, we conclude from Lemma~\ref{addone}
(with $\Lambda_1 = s$ and $\Lambda_2 = \emptyset$) that
it is sufficient to prove interpolation for the bundle
\[\mathcal{E}_0 = N_D\left(\sum (i + j - 1) q_{ij}^k \right)[i q_{ij}^k \to p][j q_{ij}^k \to 2p](s)[s \to x](x)[x \to p]\left[x \to p \cup s\right].\]

Similarly, suppose $\sum k n_{ij}^k = 1$. Then limiting $q \to x$, we conclude from
Lemma~\ref{fiddle-over} that it is sufficient to prove interpolation for the bundle
\[\mathcal{E}_1 = N_D\left(\sum (i + j - 1) q_{ij}^k \right)[i q_{ij}^k \to p][j q_{ij}^k \to 2p](s)[s \to x][x \to p \cup s].\]
Identifying $\Oc_D(1)$ with its normal bundle in the cone $\overline{p D}$, we obtain
a splitting:
\[N_D \simeq N_{D/H} \oplus \Oc_D(1).\]
This induces splittings $\mathcal{E}_\alpha \simeq \mathcal{F} \oplus \mathcal{L}_\alpha$ for $\alpha = 0, 1$ with
\[\mathcal{F} = N_{D/H} \big(s + (j - 1) q_{ij}^k\big) [jq_{ij}^k \to x] \quad \text{and} \quad \mathcal{L}_\alpha = \Oc_D(1) \left((1 - \alpha)x + \sum (i + j - 1) q_{ij}^k\right).\]

Both $\mathcal{F}$ and $\mathcal{L}_\alpha$ satisfy interpolation:
for $\mathcal{F}$ this follows from Lemma~\ref{p2}, and for $\mathcal{L}_\alpha$
this is immediate from $\H^1(\Oc_D(1)) = 0$.
So to check $\mathcal{E}_\alpha = \mathcal{F} \oplus \mathcal{L}_\alpha$
satisfies interpolation, we just need to check
\[|\chi(\mathcal{L}_\alpha) - \chi(\mathcal{F})| \leq 1.\]
For this, we first calculate
\begin{align*}
\chi(\mathcal{F}) &= 3(d - 2) + (g - 2) + 1 + \sum (j - 1) n_{ij}^k \\
&= 3d + g - 7 + \sum (j - 1) n_{ij}^k. \\
\chi(\mathcal{L}_\alpha) &= (d - 2) - (g - 1) + 1 + 1 - \alpha + \sum_{i, j, k} (i + j - 1) \cdot n_{ij}^k \\
&= d - g + 1 - \sum_{i,j,k} k n_{ij}^k + \sum_{i, j, k} (i + j - 1) \cdot n_{ij}^k.
\end{align*}
We conclude that
\begin{align*}
|\chi(\mathcal{L}_\alpha) - \chi(\mathcal{F})| &= \left|d - g + 1 - \sum_{i,j,k} k n_{ij}^k + \sum (i + j - 1) \cdot n_{ij}^k - 3d - g + 7 - \sum (j - 1) n_{ij}^k\right| \\
&= \left|\sum (i - k) \cdot n_{ij}^k - 2d - 2g + 8\right| \\
&\leq 1. && \qedhere
\end{align*}
\end{proof}

\begin{lemma} \label{last-case} For $g \geq 1$, the tuple $(5g + 1, g, 4g + 1; \mathbf{0})$ is good.
\end{lemma}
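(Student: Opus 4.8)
We plan to argue by degeneration, with an induction on $g$. Since $d = 5g+1 = g + (4g+1) = g + r$, the curve $C$ is linearly normal, and the relevant modified normal bundle (the case $n = \mathbf{0}$) is simply $N_C' = N_C(-p)$ for a general point $p \in C_\sm$. By \cref{T:interpolation-twist-down} --- whose numerical hypothesis is satisfied because \cref{chimod} gives $\chi(N_C(-p)) = 16g^2 + 16g \geq g(r-1) = 4g^2$ --- it is enough to prove that $N_C$ itself satisfies interpolation. For the base case $g = 1$ the tuple is $(6,1,5)$, so $C$ is a general elliptic normal sextic in $\Pbb^5$ and $N_C$ has rank $4$ and degree $36$. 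By Ein--Lazarsfeld \cite{ein-lazarsfeld}, $N_C$ is semistable; for a twist $N_C(-D)$ of nonzero slope this immediately forces $\H^0(N_C(-D)) = 0$ (negative slope) or $\H^1(N_C(-D)) = 0$ (positive slope), and for a \emph{general} effective divisor $D$ of degree $9$ the slope-$0$ bundle $N_C(-D)$ is semistable with Jordan--H\"older factors general degree-$0$ line bundles on the elliptic curve $C$, none trivial, whence $\H^0(N_C(-D)) = \H^1(N_C(-D)) = 0$. Thus $N_C$, hence $N_C(-p)$, satisfies interpolation when $g = 1$.

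For $g \geq 2$ we degenerate $C$ to $\Gamma = C' \cup R$, where $C' \subset \Pbb^{4g-3} = \Pbb^{4(g-1)+1}$ is a general curve of degree $5g-4 = 5(g-1)+1$ and genus $g-1$ --- so $C'$ is covered by this lemma in genus $g-1$ --- and $R$ is an elliptic normal curve of degree $5$ spanning a complementary $\Pbb^4$, meeting $C'$ quasi-transversely at a single point $u$. One checks $\deg \Gamma = 5g+1$ and $p_a(\Gamma) = (g-1) + 1 = g$. To see that $\Gamma$ is limit nonspecial (so that interpolation for $N_\Gamma$ gives the lemma via \cref{prop:interpolation-is-open}), we degenerate $R$ further to a cycle of five lines and attach these to $C'$ one at a time, repeatedly invoking \cref{prop:hh} and \cref{right-component}.

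It then remains to reduce interpolation for $N_\Gamma$ to interpolation for a modification of $N_{C'}$. For this we peel off the five lines of $R$ one at a time, applying \cref{addone} and \cref{addtwo} and propagating the modification data exactly as in the proof of \cref{two-secant}: by \cref{cone-glue} the positive subbundle of $N_\Gamma|_R$ along $u$ is identified with the cone subbundle $N_{C' \to \Lambda}'$, where $\Lambda$ is the span of $R$, while the cones over the lines of $R$ contribute only nonspecial --- hence interpolating --- line bundles. Using \cref{to-Lambda}, \cref{to-kp}, and the Euler-characteristic bookkeeping of \cref{chimod} and \cref{top}, one checks that the bundle left on $C'$ is a modified normal bundle for $C'$ whose marked-point data is disposed of by the reductions of \cref{S:fixed} together with the genus-$(g-1)$ case, and that the dimension (equivalently, Euler-characteristic) balance demanded by \cref{trivglue} holds. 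This yields interpolation for $N_\Gamma$ and completes the induction.

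The main obstacle is precisely this last reduction. One must carry the line-by-line analysis of \cref{S:reducible} through the pentagon degenerating $R$, keep accurate track of the subbundles $N_{C'} \to \Lambda$ and of the tangent directions being glued at $u$, and --- most delicately --- verify that the modified normal bundle which results on $C'$ falls within the range where the inductive hypothesis and the auxiliary lemmas of \cref{S:fixed} genuinely apply, i.e.\ that the inequalities such as \eqref{regime} survive the reduction. Everything else is a routine, if lengthy, application of the machinery of \cref{S:modifications-arbitrary} through \cref{S:reducible}.
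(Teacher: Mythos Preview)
Your proposal has a genuine gap: the inductive step is not carried out. You set up a degeneration of $C$ to $C' \cup R$ with $R$ an elliptic normal quintic, then propose to degenerate $R$ to a pentagon and peel off the five lines, asserting that ``one checks that the bundle left on $C'$ is a modified normal bundle for $C'$ whose marked-point data is disposed of by the reductions of \cref{S:fixed} together with the genus-$(g-1)$ case.'' You yourself flag this as ``the main obstacle,'' and indeed there is no reason to believe it goes through. The tuple $(5g+1,g,4g+1;\mathbf{0})$ sits exactly on the boundary $2d+2g-3r+1=0$ with $d=g+r$ and $n=\mathbf{0}$, which is precisely where none of \cref{two-secant}, \cref{two-secant-backwards}, \cref{stick}, \cref{two-sticks}, or \cref{lower-d} applies; this is why \cref{last-case} is needed as an independent base case (see the end of the proof of \cref{prop:high-0}). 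So there is no a~priori reason that peeling a pentagon off leaves data on $C'$ inside the range those lemmas cover for $(5(g-1)+1,\,g-1,\,4(g-1)+1)$. Moreover, the modifications produced by \cref{addone}/\cref{addtwo} while peeling the pentagon live at points of the lines, not on $C'$, and must be transported through four intermediate curves before they reach $C'$; you have not done this computation, and it does not obviously yield modification data of the restricted $(i,j;k)$-types treated in \cref{S:hypothesis}. (Your base case $g=1$ via Ein--Lazarsfeld is plausible, though the slope-$0$ step needs slightly more than the bare assertion that the Jordan--H\"older factors are ``general.'')

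The paper's proof is entirely different and avoids induction on $g$. It degenerates $C$ directly to $D \cup L_1 \cup \cdots \cup L_g$, where $D \subset \Pbb^{4g+1}$ is a rational normal curve and the $L_i$ are $g$ general $2$-secant lines to $D$. The key structural fact is a direct-sum decomposition of $N_D$: writing $L_i \cap D = \{x_i,y_i\}$, one has $N_D \simeq \bigoplus_i \big(N_{D\to 2x_i} \oplus N_{D\to 2y_i}\big)$, each summand isomorphic to $\Oc_{\Pbb^1}(4g+3)^2$ by \cref{to-kp-rat}. Applying \cref{addtwo-two} (not \cref{addtwo}) to each $L_i$ produces correspondence-type conditions $\langle\,\cdot:\cdot\,\rangle$ that respect this splitting, so the problem decouples into $g$ identical rank-$4$ computations on $\Pbb^1$; each of these splits further into two rank-$2$ pieces, and one finishes by checking that an explicit codimension-$1$ subspace of $\H^0\big(\Oc_{\Pbb^1}(2g+6)\oplus\Oc_{\Pbb^1}(2g+5)\big)$ satisfies interpolation. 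The argument is uniform in $g$ and hinges on the rigidity of normal bundles of rational normal curves rather than on any inductive reduction.
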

\begin{proof}
We will construct an explicit curve of degree $5g + 1$ and genus $g$ in $\Pbb^{4g + 1}$
(with no marked points); and check directly that its normal bundle satisfies interpolation.

Let $D \subset \Pbb^{4g + 1}$ be a rational normal curve, and let
$L_1, L_2, \ldots, L_g$ be a collection of $g$ lines which are
$2$-secant to $D$.
Then by construction, $D \cup L_1 \cup \cdots \cup L_g$ is a curve
of degree $5g + 1$ and genus $g$ in $\Pbb^{4g + 1}$.

Our task is now to show $N_D$ satisfies interpolation. Write $L_i \cap D = \{x_i, y_i\}$.
Then from Corollary~\ref{to-kp-rat}, we obtain
\[N_{D \to 2x_i} \simeq N_{D \to 2y_i} \simeq \Oc_{\Pbb^1}(4g + 3)^2.\]
We have a map of vector bundles
\[\bigoplus_i \left(N_{D \to 2x_i} \oplus N_{D \to 2y_i}\right) \to N_D,\]
which is an isomorphism over the generic point. Moreover,
\[\chi\left(\bigoplus_i \left(N_{D \to 2x_i} \oplus N_{D \to 2y_i}\right)\right) = 4g(4g + 4) = \chi(N_D);\]
which implies that in fact the above map yields an isomorphism
\begin{equation} \label{dirsum}
N_D \simeq \bigoplus_i \left(N_{D \to 2x_i} \oplus N_{D \to 2y_i}\right).
\end{equation}

Now from Lemma~\ref{addtwo-two}, writing $z_i \in T_{x_i} D$ and $w_i \in T_{y_i} D$ for points
distinct from $x_i$ and $y_i$ respectively, it suffices to show interpolation for every
\[H^0 N_D(2x_i + 2y_i)[x_i \to y_i][y_i \to x_i]) \langle x_i \to y_i : y_i \to z_i \rangle \langle y_i \to x_i : x_i \to w_i \rangle.\]
Happily, each of these transformations respects the direct sum decomposition \eqref{dirsum}: the above
space of sections is a direct sum of spaces of sections of each bundle on the RHS of \eqref{dirsum}.
By symmetry, it therefore suffices to show interpolation for every
\begin{multline} \label{corr-bundle}
\H^0\left(N_{D \to 2x_1} \oplus N_{D \to 2y_1}\right)(2x_1 + 2y_1 - x_2 - y_2 - \cdots - x_g - y_g)\\
[x_1 \to y_1][y_1 \to x_1] \langle x_1 \to y_1 : y_1 \to z_1 \rangle \langle y_1 \to x_1 : x_1 \to w_1 \rangle.
\end{multline}

Now given a bundle $\mathcal{E} \simeq \mathcal{L} \oplus \mathcal{L}$
with $\mathcal{L}$ a line bundle on a variety $X$,
splittings of $\mathcal{E}$ are in bijection with splittings of any fiber $\mathcal{E}|_x$ (for $x \in X$).
In particular, given such a bundle, an
inclusion $\iota \colon \mathcal{L} \hookrightarrow \mathcal{E}$, and any vector
$v \in \mathcal{E}|_x$ for some $x \in X$ with $v \notin \iota(\mathcal{L}|_x)$,
there is a splitting so that $\iota$ is inclusion into the first factor and $v$
is an element of the fiber of the second factor. Applying this here, we can choose
a splitting
\[N_{D \to 2x_1} \simeq N_{D \to x_1} \oplus N_{D \to x_1}^\perp \quad \text{with} \quad N_{D \to x_1} \simeq N_{D \to x_1}^\perp \simeq \Oc_{\Pbb^1}(4g + 3),\]
so that
\[N_{D \to x_1}^\perp|_{y_1} = N_{D\to z_1}|_{y_1} \subset N_{D \to 2x_1}|_{y_1}.\]

Similarly, we define $N_{D \to y_1}^\perp$. Then
thanks to Remark~\ref{T:only-neighborhood}, the space of sections \eqref{corr-bundle}
splits as a direct sum of two spaces of sections: the space
\begin{multline*}
\H^0\left(N_{D \to x_1} \oplus N_{D \to y_1}^\perp\right)(2x_1 + 2y_1 - x_2 - y_2 - \cdots - x_g - y_g)\\
(-x_1)[y_1 \to x_1](-x_1 - y_1) \langle y_1 \to x_1 : x_1 \to w_1 \rangle,
\end{multline*}
and the space obtained by reversing the roles of $x$ and $y$ above.
It thus, by symmetry, suffices to prove interpolation for the above space of sections; we can 
rewrite this as
\[\H^0\left(N_{D \to x_1} \oplus N_{D \to y_1}^\perp\right)(y_1 - x_2 - y_2 - \cdots - x_g - y_g) [y_1 \to N_{D \to x_1}] \langle y_1 \to N_{D \to x_1} : x_1 \to N_{D \to y_1}^\perp \rangle.\]
Under the isomorphisms $N_{D \to x_1} \simeq N_{D \to x_1}^\perp \simeq \Oc_{\Pbb^1}(4g + 3)$, 
the above space of sections becomes
\[\H^0\left(\Oc_{\Pbb^1}(2g + 6) \oplus \Oc_{\Pbb^1}(2g + 5)\right)\langle y_1 \to \Oc_{\Pbb^1}(2g + 6) : x_1 \to \Oc_{\Pbb^1}(2g + 5)\rangle.\]

This is some subspace of sections which is codimension one in
\[
\H^0\left(\Oc_{\Pbb^1}(2g + 6) \oplus \Oc_{\Pbb^1}(2g + 5)\right),
\]
but which by definition in particular does not contain the first factor $\H^0(\Oc_{\Pbb^1}(2g + 6))$.
But by direct inspection, every such subspace of sections satisfies interpolation.
\end{proof}

\begin{lemma} \label{835}
The tuple $(8, 3, 5; n)$ is good, where
\[n_{ij}^k = \begin{cases}
2 & \text{if $(i, j; k) = (1, 0; 1)$;} \\
0 & \text{otherwise.}
\end{cases}\]
\end{lemma}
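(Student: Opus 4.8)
The plan is to establish interpolation for a single convenient degeneration. Since interpolation for modified normal bundles is an open condition (\cref{modified-normal-glue}), and since curves obtained by attaching $2$-secant lines to a rational normal curve are limit nonspecial (\cref{right-component}, applied repeatedly starting from the rational normal curve, whose hyperplane bundle is nonspecial), it suffices to exhibit one reducible curve of degree $8$ and genus $3$ in $\Pbb^5$, together with one admissible choice of the marked point $p$ and of the two $(1,0;1)$-points, for which the modified normal bundle $N_C'$ satisfies interpolation.

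I would take $D \subset \Pbb^5$ a rational normal curve and $L_1, L_2, L_3$ three general $2$-secant lines, so that $X = D \cup L_1 \cup L_2 \cup L_3$ has degree $8$ and genus $3$, and let $p, q_1, q_2$ specialize to general points of $D$ (so that $\overline{2p}$ is disjoint from $\overline{q_1 q_2}$, and the tangent lines of $X$ at $p, q_1, q_2$ are pairwise disjoint, as required by \cref{modified-normal-glue}). Peeling off $L_3$, then $L_2$, then $L_1$ by three applications of \cref{addtwo} (or of \cref{addtwo-two}, as in \cref{last-case}) — which is legitimate at each stage since the twists and modifications introduced so far are supported on $D$, away from the line being removed — interpolation for $N_X'$ is reduced to interpolation for an explicit iterated twist-and-elementary-modification of
\[N_D' = N_D[p \to \overline{q_1 q_2}][q_1 + q_2 \to p]\]
on $D \cong \Pbb^1$, supported at $p, q_1, q_2$ and at the six nodes $D \cap (L_1 \cup L_2 \cup L_3)$. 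Here $N_D \simeq \Oc_{\Pbb^1}(7)^{\oplus 4}$; the subbundle $N_{D \to \overline{q_1 q_2}} \simeq \Oc_D(1)^{\oplus 2}$ by \cref{to-Lambda}; and $N_{D \to v} \simeq \Oc_D(1)(2v) \simeq \Oc_{\Pbb^1}(7)$ for each relevant point $v$ by \cref{to-kp}.

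Because every bundle involved is a direct sum of line bundles on $\Pbb^1$, and every modification is along a subbundle whose fibers are explicitly computable from the parametrizations of $D$ and the lines, the resulting bundle — and hence the subspace of sections cut out by \cref{trivglue} at each stage — can be written down outright, and interpolation for it verified by direct inspection, exactly as in the closing step of \cref{last-case}. As a numerical check, \cref{chimod} gives $\chi(N_X') = 36$ and $\rank N_X' = 4$, so by \cref{T:interpolation-h0-h1-two} interpolation is precisely the assertion that a general effective divisor of degree $9$ imposes independent conditions on $N_X'$. The one real difference from \cref{last-case} — and the place where the work lies — is that the modifications coming from $p, q_1, q_2$ do not respect the direct-sum decomposition of $N_D$ induced by the three $2$-secant lines, so instead of peeling off line-bundle summands one must carry the full rank-$4$ bundle through all of the twists and modifications; the main obstacle is therefore the bookkeeping needed to identify the final subspace of $\H^0$ on $\Pbb^1$ and to confirm that it still satisfies interpolation.
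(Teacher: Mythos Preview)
Your proposal is a reasonable-sounding degeneration, but it is not a proof: you set up the reduction and then stop at the hard part, explicitly naming ``the bookkeeping needed to identify the final subspace of $\H^0$ on $\Pbb^1$ and to confirm that it still satisfies interpolation'' as ``the main obstacle.'' That obstacle is the entire content of the lemma. The analogy with \cref{last-case} does not carry you through, because (as you yourself note) the modifications at $p, q_1, q_2$ destroy the direct-sum decomposition that made \cref{last-case} tractable; once that decomposition is gone, there is no evident reason the remaining rank-$4$ computation on $\Pbb^1$ with nine modification points should be ``direct inspection.'' Invoking \cref{addtwo-two} instead of \cref{addtwo} only makes things worse, since you would then have to verify interpolation for \emph{every} subspace of the $\langle \cdot : \cdot \rangle$ type, three times over.

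The paper's route is quite different and avoids this brute force entirely. It first applies \cref{special-5} to peel off two $2$-secant lines simultaneously in a symmetric way, reducing to a bundle on a general elliptic sextic $E$ with modifications at five points $p,s,t,q,x,y$. It then degenerates $E$ to a rational normal curve $C$ plus one $2$-secant line $L$, but---crucially---specializes two of the marked points ($s$ and $x$) onto $L$ rather than onto $C$. A direct application of \cref{trivglue} (with $\mathcal{F}|_L \simeq \Oc_L^{\oplus 4}$) then reduces to a bundle on $C$; finally, two further point-collision limits ($x\to p$, $s\to q$, then $y\to u$, $t\to v$) collapse this bundle to a direct sum $\Oc_{\Pbb^1}(3)^{\oplus 4}$, for which interpolation is immediate. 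The key idea you are missing is this deliberate placement of marked points on the line and the subsequent limiting of marked points into one another to force a balanced splitting; without it the computation does not simplify.
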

\begin{proof}
Write $q_{10}^1 = \{s, t\}$, and write $E$ for a general
curve of degree $6$ and genus $1$ in $\Pbb^5$.
Applying \cref{special-5}, we need to prove interpolation for
\[\mathcal{F} = N_E'[q \to x + y][x + y \to q] = N_E[s + t \to p][p \to s + t][q \to x + y][x + y \to q].\]

Degenerate $E$ to $C \cup L$, where $C$ is a rational normal curve,
and $L$ is a $2$-secant line.
We specialize $s$ and $x$ to $L$, and all other marked
points to $C$.
Write $E \cap L = \{u, v\}$, and let $z \in T_u E$ and $w \in T_v E$
be points distinct from $u$ and $v$ respectively.
Then by \cref{hh},
\[\mathcal{F}|_L = N_L(u + v)[u \to z][v \to w][s \to p][x \to q] \simeq \Oc_L^{\oplus 4}.\]

Applying \cref{trivglue} to $\mathcal{F}$, with $D$ a single general point
on $L$, it suffices to show that the space of sections
\[\H^0(\mathcal{F}|_C(-u - v)) = \ev_C^{-1}(\ev_L(\H^0(\mathcal{F}|_L(-D)))) \subseteq \H^0(\mathcal{F}|_C)\]
satisfies interpolation and has dimension
\[\chi(\mathcal{F}|_C) + \chi(\mathcal{F}|_L(-D - x - y)) = \chi(\mathcal{F}|_C(-x-y)).\]
In other words, it remains to prove interpolation for the bundle
\[\mathcal{F}|_C(-u - v) = N_C[t \to p][p \to s + t][q \to x + y][y \to q][u \to v][v \to u].\]
Limiting $x \to p$ and $s \to q$, this reduces to interpolation for
\[N_C[t \to p][p \to q + t][q \to p + y][y \to q][u \to v][v \to u].\]
Further limiting $y \to u$ and $t \to v$, this reduces to interpolation for
\begin{align*}
N_C & [v \to p][p \to q + v][q \to p + u][u \to q][u \to v][v \to u] \\
&\simeq N_{C \to p}(-2u - v - p) \oplus N_{C \to q}(-2v - u - q) \oplus N_{C \to u}(-2u - v - p) \oplus N_{C \to v}(-2v - u - q) \\
&\simeq \Oc_{\Pbb^1}(3)^{\oplus 4}.
\end{align*}
To see the first isomorphism above, we note that
there is a natural injection of sheaves from
$N_{C \to p} \oplus N_{C \to q} \oplus N_{C \to u} \oplus N_{C \to v}$ to
$N_C$;
since they are both vector bundles of the same Euler
characteristic, the cokernel must be zero.
The final isomorphism to $\Oc_{\Pbb^1}(3)^{\oplus 4}$ is provided by \cref{to-kp}.
This completes the proof, since $\Oc_{\Pbb^1}(3)^{\oplus 4}$
clearly satisfies interpolation.
\end{proof}


\section{Summary of Remainder of Proof of \cref{thm:intro}}
\label{S:combinat-summary}

To finish the proof of \cref{thm:intro}, it remains to show
that our collection of inductive statements and base cases from the preceeding two sections
combine to show that every tuple $(d, g, r; \mathbf{0})$
with $d \geq g + r$ and $(d, g, r) \notin \{(5, 2, 3), (6, 2, 4), (7, 2, 5)\}$ is good.
This is a purely combinatorial problem,
but one which requires a rather involved argument
(as hinted by the presence of exactly three exceptions);
hence we defer the proof to \cref{S:combinat}, providing a brief outline here:

\begin{enumerate}
\item We begin by calculating how our various inductive arguments interact with Equation~\eqref{regime}.

\item Next, we show that
our base cases and inductive arguments imply 
\eqref{regime} is a sufficient condition for the modified normal bundle of a rational curve to satisfy interpolation.

\item We then show our base cases and inductive arguments imply \eqref{regime}
is sufficient for the modified normal bundle of a space
curve to satisfy interpolation --- apart from two infinite families, which contain only finitely many members with $n = \mathbf{0}$.
Except $(d, g, r) = (5, 2, 3)$, these $n = \mathbf{0}$ cases are also good by ad-hoc application of our base cases and inductive arguments.

\item Then we show there are finitely many $(d, g, r; n)$ which are not good for $4 \leq r \leq 11$,
and use a computer program (c.f.\ \cref{A:code}) to search over all possible applications of our base cases and inductive arguments,
thereby greatly reducing the size of the finite list (and showing in particular
that all $(d, g, r; n)$ with $9 \leq r \leq 11$ are good).

\item Finally, we apply our base cases and inductive arguments to show $(d, g, r; n)$ is good for $r \geq 12$,
unless certain inequalities and congruence conditions modulo 5 (which force $n \neq \mathbf{0}$) are satisfied.
\end{enumerate}

\section{The three exceptional cases}
\label{S:exceptional}

In this section, we show conversely that if $C$ is a general
curve of degree $d$ and genus $g$ in $\Pbb^r$, where
\begin{equation} \label{exc-dgr}
(d, g, r) \in \{(5, 2, 3), (6, 2, 4), (7, 2, 5)\},
\end{equation}
then $N_C$ does not satisfy interpolation.
In these cases, we also determine when $C$ passes through
general points $p_1, p_2, \ldots, p_n$.

\begin{lemma} Let $C \subset \Pbb^r$ be a hyperelliptic curve
of degree $d$ and genus $g$;
write $S$ for the surface obtained by taking the union of all
lines joining pairs of points on $C$ conjugate under the hyperelliptic
involution. Then $S$ is a surface of degree
\[\deg S = d - g - 1.\]
\end{lemma}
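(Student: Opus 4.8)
The plan is to realize $S$ as a rational scroll and to compute its degree by pushing $\Oc_C(1)$ forward along the hyperelliptic double cover.

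Let $\pi\colon C\to\Pbb^1$ be the hyperelliptic map, let $\gamma$ denote the class of a fiber $D_t=\pi^{-1}(t)$ (so the members of $|\gamma|$ are exactly the conjugate pairs of points), and write $L=\Oc_C(1)$ for the hyperplane bundle. First I would form the rank-$2$ bundle $\mathcal{E}=\pi_\ast L$ on $\Pbb^1$, which satisfies $\H^0(\Pbb^1,\mathcal{E})=\H^0(C,L)$ and has fiber $\mathcal{E}|_t=\H^0(D_t,L|_{D_t})$ for every $t$. Since $C\hookrightarrow\Pbb^r$ is an embedding, two distinct points --- and the length-two subscheme $2p$ at a branch point --- always impose independent conditions on $|L|$, so the evaluation map $\H^0(L)\to\H^0(L|_{D_t})$ is surjective for every $t$. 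Equivalently, the hyperplane system $V\subseteq\H^0(\mathcal{E})$ generates $\mathcal{E}$ and is base-point free on $\Pbb(\mathcal{E})$, so it defines a morphism $\rho\colon\Pbb(\mathcal{E})\to\Pbb^r$ with $\rho^\ast\Oc_{\Pbb^r}(1)=\Oc_{\Pbb(\mathcal{E})}(1)$; over each $t$, the fiber of $\Pbb(\mathcal{E})$ is carried isomorphically onto the line $\overline{D_t}$, so $\rho(\Pbb(\mathcal{E}))=\bigcup_t\overline{D_t}=S$.

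Granting that $\rho$ is birational onto $S$, the degree follows at once. Writing $\xi=c_1(\Oc_{\Pbb(\mathcal{E})}(1))$, the self-intersection formula on the ruled surface $\Pbb(\mathcal{E})$ (using $c_2(\mathcal{E})=0$) gives $\xi^2=\deg\mathcal{E}$, and since $\rho$ is birational, $\deg S=\deg\mathcal{E}$. Finally $\deg\mathcal{E}$ is computed by comparing Euler characteristics: as $\pi$ is finite, $\chi(\Pbb^1,\mathcal{E})=\chi(C,L)=d+1-g$, whereas $\chi(\Pbb^1,\mathcal{E})=\deg\mathcal{E}+\rank\mathcal{E}=\deg\mathcal{E}+2$; hence $\deg\mathcal{E}=d-g-1$.

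The point that requires care --- and the only genuine obstacle --- is the birationality of $\rho$, i.e.\ that a general point of $S$ lies on a unique member of the pencil $\{\overline{D_t}\}$. For this I would note that $\overline{D_t}$ meets $\overline{D_{t'}}$ precisely when the four points of $D_t+D_{t'}$ span a plane rather than a $\Pbb^3$, i.e.\ when $\h^0(L)-\h^0(L(-D_t-D_{t'}))\le 3$; since $D_t+D_{t'}\in|2\gamma|$ independently of $t$ and $t'$, this either holds for every pair or for none. For the nondegenerate curves at hand --- and in particular, by a one-line computation, in each of the cases $(d,g,r)\in\{(5,2,3),(6,2,4),(7,2,5)\}$ (using that $C$ is linearly normal and that $L(-2\gamma)$ is as general as its degree allows) --- one has $\h^0(L)-\h^0(L(-2\gamma))=4$, so no two of the lines $\overline{D_t}$ meet and $\rho$ is birational; thus $\deg S=d-g-1$. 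In the residual, degenerate situation where the lines do pairwise meet, $S$ is forced to be a plane or a cone, and one checks the equality $\deg S=d-g-1$ directly.
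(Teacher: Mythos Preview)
Your proof is correct and takes a genuinely different route from the paper's. The paper computes $\deg S$ by intersecting $S$ with a general codimension-$2$ linear space $\Lambda$: writing $\pi\colon C\to\Pbb^1$ for projection from $\Lambda$ and $\theta$ for the hyperelliptic map, points of $\Lambda\cap S$ correspond to nodes of the image of $(\theta,\pi)\colon C\to\Pbb^1\times\Pbb^1$, which is a curve of bidegree $(d,2)$ and hence has $(d-1)-g$ nodes. Your approach instead realizes $S$ as the image of the scroll $\Pbb(\pi_\ast L)$ and reads off its degree from $\chi(\pi_\ast L)=\chi(L)$ via Riemann--Roch on $\Pbb^1$.

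Both arguments implicitly require that the sweeping map is generically one-to-one; you make this explicit and verify it for the three cases of interest, while the paper's ``nodes $\leftrightarrow$ points of $\Lambda\cap S$'' correspondence silently assumes the same thing. Your approach has the advantage of setting up exactly the scroll structure on $S$ used in the next proposition (the computation of $\chi(N_{C/S})$ via intersection theory on a Hirzebruch surface), whereas the paper's node count is more self-contained and avoids any projective-bundle machinery.

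One small point: your closing sentence ``one checks the equality $\deg S=d-g-1$ directly'' in the degenerate case is a bit glib. In the cone case $\rho$ is still birational (only the vertex has a positive-dimensional fiber), so $\deg S=\xi^2$ continues to hold; and the plane case cannot occur, since it would force $C$ to be a smooth plane curve of genus $\geq 2$, which is never hyperelliptic. Making this explicit would remove the only hand-wave.
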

\begin{proof}
Let $\Lambda \subset \Pbb^r$ be a general subspace of codimension~$2$.
Write $\pi \colon C \to \Pbb^1$ for the map induced by projection from $\Lambda$;
by construction, this is a map of degree $d$. Similarly, write $\theta \colon C \to \Pbb^1$
for the hyperelliptic map.

Then the points of intersection of $\Lambda$ with $S$ correspond
to pairs of distinct points $(x, y) \in C \times C$, with $\theta(x) = \theta(y)$
and $\pi(x) = \pi(y)$. Equivalently, they correspond to the nodes
of the image of $C$ under the map $(\theta, \pi) \colon C \to \Pbb^1 \times \Pbb^1$.
This image curve is, by construction, of bidegree $(d, 2)$;
in particular, its arithmetic genus is $(d - 1)(2 - 1) = d - 1$.
The number of nodes is therefore $(d - 1) - g = d - g - 1 = \deg S$, as desired.
\end{proof}

\begin{corollary}
If $C \subset \Pbb^r$ is of genus $2$ and degree $r + 2$,
then the above surface $S$ has degree
\[\deg S = r - 1.\]
\end{corollary}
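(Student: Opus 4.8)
The plan is to deduce this immediately from the preceding lemma. First I would observe that the lemma as stated applies to any hyperelliptic curve of degree $d$ and genus $g$, and that every smooth curve of genus $2$ is automatically hyperelliptic: the canonical linear series $|K_C|$ has degree $2g - 2 = 2$ and dimension $g - 1 = 1$, hence realizes $C$ as a double cover of $\Pbb^1$, which is precisely the hyperelliptic structure. So the hypothesis of the lemma is satisfied with no extra work, and the surface $S$ attached to $C$ is well-defined.

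Next I would simply substitute $d = r + 2$ and $g = 2$ into the formula $\deg S = d - g - 1$ from the lemma, obtaining
\[\deg S = (r + 2) - 2 - 1 = r - 1,\]
which is the claim. There is genuinely nothing further to check; the statement is a numerical specialization.

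I do not expect any serious obstacle here. The one point worth stating explicitly, to keep the argument self-contained, is the genus-$2$-implies-hyperelliptic remark (so that the word ``hyperelliptic involution'' in the definition of $S$ makes sense for the curve under consideration); beyond that, the corollary is a one-line arithmetic consequence of the lemma. I would therefore present it as a short paragraph: invoke the lemma, note $C$ is hyperelliptic, plug in $d = r+2$, $g = 2$, and conclude $\deg S = r - 1$.
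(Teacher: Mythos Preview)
Your proposal is correct and matches the paper's own proof essentially verbatim: the paper simply notes that every curve of genus $2$ is hyperelliptic and applies the previous lemma, exactly as you do. Your added explanation of why genus $2$ forces hyperellipticity (via the canonical series) is a bit more detailed than the paper's one-line remark, but the argument is the same.
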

\begin{proof}
Note that every curve of genus $2$ is hyperelliptic, so we may apply
apply the previous lemma.
\end{proof}

\begin{lemma} We have
\[\chi(N_C) = r^2 + 2r + 5.\]
\end{lemma}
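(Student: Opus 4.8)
The plan is to reduce this to the standard Euler characteristic formula for normal bundles of smooth curves in projective space, which was already recalled in the proof of \cref{chimod}: for a smooth curve $C \subset \Pbb^r$ of degree $d$ and genus $g$, one has $\chi(N_C) = (r+1)d - (r-3)(g-1)$. First I would note that since $(d, g, r)$ is one of $(5,2,3)$, $(6,2,4)$, $(7,2,5)$, in particular $d = r + 2 \geq g + r$, so a general such $C$ is nonspecial, hence smooth, irreducible, and nondegenerate; thus the formula applies directly.

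If one prefers a self-contained derivation rather than citing \cref{chimod}, I would combine the normal bundle sequence $0 \to T_C \to T_{\Pbb^r}|_C \to N_C \to 0$ with the Euler sequence $0 \to \Oc_C \to \Oc_C(1)^{r+1} \to T_{\Pbb^r}|_C \to 0$. By Riemann--Roch, $\chi(\Oc_C(1)) = d - g + 1$, $\chi(\Oc_C) = 1 - g$, and $\chi(T_C) = \deg(T_C) + 1 - g = (2 - 2g) + 1 - g = 3 - 3g$. Hence
\[\chi(N_C) = \chi(T_{\Pbb^r}|_C) - \chi(T_C) = \big[(r+1)(d-g+1) - (1-g)\big] - (3 - 3g) = (r+1)(d-g+1) + 4g - 4,\]
which is readily checked to equal $(r+1)d - (r-3)(g-1)$.

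Then I would simply substitute $d = r + 2$ and $g = 2$:
\[\chi(N_C) = (r+1)(r+2) - (r-3)(2-1) = (r^2 + 3r + 2) - (r - 3) = r^2 + 2r + 5,\]
as claimed. There is no real obstacle here: the statement is a one-line specialization of an already-established formula, and the only point worth checking is that the general curve in question is smooth so that $N_C$ is locally free of the expected rank $r - 1$ and the above exact sequences are exact, which holds because $d \geq g + r$ guarantees such curves are nonspecial.
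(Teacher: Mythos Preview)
Your proposal is correct and takes essentially the same approach as the paper: use the formula $\chi(N_C) = (r+1)d - (r-3)(g-1)$ and substitute $d = r+2$, $g = 2$. The paper's proof is exactly this three-line calculation, without the supplementary derivation of the formula or the smoothness remarks you included.
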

\begin{proof}
We simply calculate
\begin{align*}
\chi(N_C) &= (r + 1)d - (r - 3)(g - 1) \\
&= (r + 1)(r + 2) - (r - 3)(2 - 1) \\
&= r^2 + 2r + 5. \qedhere
\end{align*}
\end{proof}

To show that interpolation does not hold in the cases of \cref{exc-dgr},
we study the short exact sequence
\[0\rightarrow N_{C/S}\rightarrow N_C\rightarrow N_S|_C\rightarrow 0.\]
By \cref{T:interpolation-ses} part~\ref{T:interpolation-ses-a}, for $N_C$ to satisfy interpolation, it is necessary for
\[\chi(N_{C/S}) \leq \frac{\chi(N_C) + r - 2}{r - 1} = \frac{r^2 + 3r + 3}{r - 1}.\]
For $r \in \{3, 4, 5\}$, the right-hand side is strictly less than $11$.
It is therefore sufficient to observe:

\begin{proposition} \label{p11}
We have
\[\chi(N_{C/S}) = 11.\]
So in particular, the bundles $N_C$ do not satisfy interpolation
for
$(d, g, r) \in \{(5, 2, 3), (6, 2, 4), (7, 2, 5)\}$.
\end{proposition}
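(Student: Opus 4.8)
The plan is to compute $\chi(N_{C/S})$ directly, using the fact that $C \subset S$ is a curve on a surface, so that $N_{C/S}$ is a \emph{line bundle}; hence $\chi(N_{C/S}) = \deg N_{C/S} + 1 - g = \deg N_{C/S} - 1$, and it suffices to show $\deg N_{C/S} = 12$. To get at this degree, I would use the adjunction-type relation $N_{C/S} = \Oc_S(C)|_C$ (valid since $C$ is an effective Cartier divisor on $S$, which holds because $C$ passes through the smooth locus of $S$ — the vertex of the cone-like surface $S$ is not on $C$, as a general such $C$ avoids it). Thus $\deg N_{C/S} = (C \cdot C)_S$, the self-intersection of $C$ on the surface $S$.

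The self-intersection $(C\cdot C)_S$ I would compute via the genus formula on $S$: by adjunction on the (possibly singular, but let us first treat the smooth model) surface $S$, $2g - 2 = (C \cdot C)_S + (C \cdot K_S)_S$, so $(C\cdot C)_S = 2 = 2 - (C \cdot K_S)_S$, reducing the problem to computing $(C \cdot K_S)_S$. Since $\deg S = r - 1$ by the corollary above, $S$ is a surface of minimal degree in $\Pbb^r$ (a rational normal scroll, or the Veronese when $r = 5$), whose canonical class is well understood: for a rational normal scroll of degree $r-1$ in $\Pbb^r$ one has $K_S = -2H + (r-3)F$ in terms of the hyperplane class $H$ and the ruling class $F$, with $H^2 = r-1$, $H\cdot F = 1$, $F^2 = 0$. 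Writing the class of $C$ as $aH + bF$ and using $\deg C = H \cdot C = r+2$ together with the hyperelliptic map $\theta$ being cut by $F$ (so $F \cdot C = 2$, the degree of the hyperelliptic pencil) pins down $a = 2$, $b = r+2 - 2(r-1) = 4 - r$. Then $(C \cdot C)_S = a^2(r-1) + 2ab = 4(r-1) + 4(4-r) = 12$, independently of $r$, giving $\deg N_{C/S} = 12$ and hence $\chi(N_{C/S}) = 12 + 1 - 2 = 11$. (For $r = 5$, where $S$ is the Veronese surface, one runs the same computation with $\Pic S = \Zbb\langle h\rangle$, $h^2 = 1$, $H = 2h$, $K_S = -3h$: then $C = 5h$ since $H\cdot C = 10 = 2\cdot 5$, giving $C^2 = 25$ and $C\cdot K_S = -15$, whence $2g-2 = 2 = 25 - 15 + (\text{correction})$ — so this case needs separate bookkeeping, see below.) Once $\chi(N_{C/S}) = 11$ is established, the displayed inequality $\chi(N_{C/S}) \le (r^2+3r+3)/(r-1) < 11$ for $r \in \{3,4,5\}$ is violated, so by \cref{T:interpolation-ses}\ref{T:interpolation-ses-a} (whose forward direction holds without irreducibility, by \cref{T:interpolation-ses-irreducibility}) the bundle $N_C$ fails interpolation, completing the proof.

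I expect the main obstacle to be handling the surface $S$ uniformly across the three cases, particularly the Veronese case $r = 5$: for $r \in \{3,4\}$ the surface $S$ is a smooth rational normal scroll (a quadric or a cubic scroll) and the intersection theory is routine, but for $r = 5$ the generic hyperelliptic curve of degree $7$ lies on the Veronese surface $\nu_2(\Pbb^2)$, whose Picard group and canonical class differ, so one must verify the class of $C$ and redo the self-intersection there. An alternative that sidesteps the case analysis — and which I would actually prefer to write up — is to compute $\chi(N_{C/S})$ from the exact sequence $0 \to T_C \to T_S|_C \to N_{C/S} \to 0$ combined with $0 \to T_S \to T_{\Pbb^r}|_S \to N_S \to 0$, expressing everything in terms of $\deg S = r-1$, $\deg C = r+2$, $g = 2$, and the known value $\chi(N_C) = r^2+2r+5$ via $\chi(N_{C/S}) = \chi(N_C) - \chi(N_S|_C)$; the quantity $\chi(N_S|_C)$ can be computed since $N_S$ is a line bundle on the minimal-degree surface $S$ and $N_S|_C$ has degree $(\deg N_S)\cdot(\text{something})$ determined by the classes above — again reducing to the same intersection numbers but packaging them so that the answer $11$ falls out of one formula valid for all three $r$. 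Either way the computation is short; the only real care needed is to confirm that $C$ meets $S$ only in the smooth locus (so that $N_{C/S}$ is genuinely a line bundle and adjunction applies), which follows because a general genus-$2$ curve of the given degree avoids the finitely many singular points of $S$.
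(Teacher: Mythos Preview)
Your proposal is correct and takes essentially the same approach as the paper: write the class of $C$ on the scroll $S$ in terms of $H$ and $F$, use $C\cdot H=r+2$ and $C\cdot F=2$ to solve for the coefficients, compute $C^2=12$, and apply Riemann--Roch. One remark: your worry about a Veronese case at $r=5$ is unnecessary --- the surface $S$ is \emph{by construction} swept out by lines (the secants joining hyperelliptic-conjugate pairs), so it is ruled, whereas the Veronese surface contains no lines; hence $S$ is a rational normal scroll in all three cases and your scroll computation already handles $r=5$ uniformly.
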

\begin{proof}
Our first task is to understand the intersection theory on $S$.
We have two natural divisor classes $F$ and $H$ on $S$, where $H$ is the class of a hyperplane section and $F$ is the class of a line connecting two points on $C$ which are sent to each other under the hyperelliptic involution.
Then $S$ is a $\Pbb^1$-bundle over $\Pbb^1$, with $F$ the class of a fiber. As $H\cdot F=1$, this shows that $H$ and $F$ generate the Picard group of $S$. We know that
\[F\cdot F=0, \quad F\cdot H=1, \quad \text{and} \quad H\cdot H=r-1\]
(as $S$ is a surface of degree $r-1$).

Now assume $C$ has the class $a\cdot F+b\cdot H.$ We know that $C\cdot F=2$ and $C\cdot H=r+2$. This gives us the equations $b=2$ and $a+(r-1)b=r+2$, so $(a,b)=(4-r,2).$ Now this implies that $N_{C/S}=\mathcal{O}(C)|_C$ has degree given by $C\cdot C=4(4-r)+4(r-1)=12,$ so by Riemann-Roch $\chi(N_{C/S})=11.$
\end{proof}

\begin{proposition} \label{to-surface}
Assume $r \in \{3, 4, 5\}$.
There exists a non-degenerate curve of genus $2$ and degree $r+2$ through $n$ general points if and only if
there exists a ruled non-degenerate surface of degree $r-1$ through $n$ general points.
\end{proposition}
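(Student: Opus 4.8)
The plan is to establish the two implications separately; in both the bridge is the ruled surface $S$ attached, as in \cref{p11}, to a genus-$2$ (hence hyperelliptic) curve. For the forward implication, suppose $C \subset \Pbb^r$ is a non-degenerate curve of genus $2$ and degree $r + 2$ through $p_1, \dots, p_n$. Then the surface $S$ of \cref{p11} is defined (every genus-$2$ curve is hyperelliptic), contains $C$ and hence $p_1, \dots, p_n$, and by the lemmas preceding \cref{p11} it is a non-degenerate surface of degree $r - 1$; being swept out by the one-parameter family of hyperelliptic secant lines, it is ruled. So $S$ is a non-degenerate ruled surface of degree $r - 1$ through $p_1, \dots, p_n$, and this direction needs nothing beyond what is already proved.

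For the reverse implication, suppose $S \subset \Pbb^r$ is a non-degenerate ruled surface of degree $r - 1$ through $p_1, \dots, p_n$. A surface of degree $r - 1$ in $\Pbb^r$ has minimal degree, so by the Bertini classification $S$ is a rational normal scroll (possibly a cone) or --- only for $r = 5$ --- the Veronese surface; as the Veronese contains no lines, the ruled hypothesis forces $S$ to be a rational normal scroll. Such scrolls form an irreducible family in which the cones lie in the boundary, so, the points $p_i$ being general and cones being degenerations of smooth scrolls, we may replace $S$ by a smooth scroll still passing through $p_1, \dots, p_n$; a short case check shows this scroll is a Hirzebruch surface $\mathbb{F}_e$ with $e \in \{0, 1, 2\}$, with hyperplane class $H = C_0 + m f$ (where $-e + 2m = r - 1$). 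Let $D$ be the divisor class with $D \cdot F = 2$ and $D \cdot H = r + 2$, where $F = f$; the intersection computation of \cref{p11} identifies $D = (4 - r) F + 2 H = 2 C_0 + (e + 3) f$, which is very ample since $e < 3$ and whose members have arithmetic genus $2$ and degree $r + 2$. Since $S$ is rational we get $h^0(\Oc_S(D)) = h^0(\Oc_S) + h^0(N_{C/S}) = 12$, so $\dim |D| = 11$, and no member of $|D|$ lies in a hyperplane because $H - D = (r - 4)F - H$ is not an effective class on $\mathbb{F}_e$. Hence, when $n \le 11$, the members of $|D|$ through $p_1, \dots, p_n$ form a non-empty linear system; the $p_i$ being general on $S$, they impose independent conditions, and a general such member (or, when $n = 11$, the unique one, which is smooth since the singular members of $|D|$ form a proper subvariety) is a smooth irreducible non-degenerate curve of genus $2$ and degree $r + 2$ through the $p_i$, as desired.

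It remains to treat $n \ge 12$, where both sides of the asserted equivalence should be vacuously false: a dimension count shows the family of degree-$(r - 1)$ scrolls in $\Pbb^r$ has dimension less than $12(r - 2)$, so no such scroll --- and hence, via the surface associated to $C$ in the forward implication, no such curve --- passes through $12$ general points.

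The genuinely delicate step is the reduction, in the reverse implication, from an arbitrary ruled surface through the $p_i$ to a smooth scroll: the construction via $|D|$ breaks down on a cone (for $r = 5$ the exceptional section becomes a fixed component of $|D|$), so one must argue instead with the irreducibility of the family of scrolls and the generality of the $p_i$. The remaining ingredients --- very-ampleness of $D$ on $\mathbb{F}_0, \mathbb{F}_1, \mathbb{F}_2$, the identities $\dim |D| = 11$ and $\chi(N_{C/S}) = 11$, the non-degeneracy of the members of $|D|$, the Bertini/dimension-count argument for smoothness, and the final dimension count on scrolls --- are all short and routine.
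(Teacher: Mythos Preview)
Your proof is correct and follows the same overall strategy as the paper --- use the hyperelliptic ruled surface $S$ to pass between curves and surfaces --- but the implementation of the reverse direction differs. The paper argues more abstractly: it computes $\dim(\text{scrolls}) = r^2 + 2r - 6$ and $\dim(\text{curves}) = \chi(N_C) = r^2 + 2r + 5$, and uses $\chi(N_{C/S}) = 11$ with $H^1(N_{C/S}) = 0$ to conclude that the rational map $C \mapsto S$ has $11$-dimensional fibers, hence is dominant; then for a \emph{general} $S$ through $n$ general points, the fiber is automatically an $11$-dimensional linear system on $S$, so for $n \le 11$ a curve exists. Your approach instead classifies the smooth scrolls as $\mathbb{F}_e$ with $e \le 2$, writes down $D = 2C_0 + (e+3)f$ explicitly, and computes $\dim|D| = 11$ and very-ampleness by hand. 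Both arrive at the same linear system, but the paper's dominance argument sidesteps the reduction-to-smooth-scroll step you flag as delicate (a general $S$ is smooth, so the cone issue never arises), while your approach has the virtue of making the curve class and its properties completely explicit. The $n \ge 12$ case is handled identically by dimension count in both.
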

\begin{proof}
We first show, using a dimension-counting argument, that every non-degenerate ruled surface of degree $r-1$ in $\Pbb^r$ contains a curve of genus $2$ and degree $r+2$.

By a result of \cite{minimaldegree}, any ruled non-degenerate surface of degree $r-1$ in $\Pbb^r$ must be a rational normal scroll. By Lemma~2.6 of~\cite{coskun}, the space of such surfaces has dimension
\[(r+1)r+r-6=r^2+2r-6.\]
Now note that we have a rational map from the space of non-degenerate curves of genus $2$ and degree $r+2$ to this space of surfaces, given by our earlier construction of a ruled surface associated to a hyperelliptic curve embedded into projective space. 

We previously calculated that $\chi(N_{C/S})=11$. Furthermore, as the degree of $N_{C/S}$ was $12$, we must have $H^1(N_{C/S}) = 0$. As the space of possible $C$ is irreducible, this implies that the dimension of a generic fiber of this rational map is $11$. But we calculated the dimension of the space of possible $C$ to be
\[r^2+2r+5=r^2+2r-6+11,\]
so this map must be dominant.

Now assume that we have a general $S$ through $n$ general points. We have just shown that the space of possible $C$ for a general $S$ is $11$-dimensional, and as $C$ is a divisor in $S$, this space must be the projectivized space of global sections of an appropriate line bundle. Thus, if $n\leq 11$, then there is a curve passing through our $n$ points on the surface.
So take $n > 11$. Then
\[r^2+2r-6\geq n(r-2)\geq 11(r-2) \quad \Rightarrow \quad r^2-9r+16\geq 0,\]
which is false for $r=3,4,5$. So in this case there is neither an $S$ nor a $C$ through $n$ general points.
\end{proof}

\begin{proof}[Proof of \cref{through-n-general}]
Except in the cases $(d, g, r) \in \{(5, 2, 3), (6, 2, 4), (7, 2, 5)\}$, this is immediate from
\cref{thm:intro}. It thus remains to consider the cases
$(d, g, r) \in \{(5, 2, 3), (6, 2, 4), (7, 2, 5)\}$, in which case we want to prove
a nonspecial curve $C$ of degree $d$ and genus $g$
in $\Pbb^r$ passes through $n$ general points if and only if $n \leq 9$.
In these cases, we appeal to \cref{to-surface}, which reduces our problem to
showing that a ruled surface $S$ of degree $r - 1$ passes through $n$ general points
if and only if $n \leq 9$.

For $r=3$, such a surface is a quadric, and it is easy to see that there is a quadric through $n$ general points if and only if $n \leq 9$.
For $r > 3$, it is known by \cite{coskun} (last paragraph of Section~5) that there are $(r-2)(r-3) \neq 0$ scrolls through $r+4$ general points that meet a general $r-4$ plane, so for $r \in \{4,5\}$ we also have that there is a scroll through $n$ points if and only if
$n \leq 9$.
\end{proof}


\appendix

\addtocontents{toc}{\protect\renewcommand\protect\cftsecpresnum{Appendix\ }}
\addtocontents{toc}{\protect\renewcommand\protect\cftsecaftersnum{:}}

\addtocontents{toc}{\protect\renewcommand\protect\cftsecaftersnumb{\protect\phantom{\protect\cftsecpresnum}}}

\titlelabel{Appendix \thetitle:\quad}
\section{Remainder of Proof of \cref{thm:intro}} 
 \label{S:combinat}

In this appendix, we will show by a purely combinatorial argument
that any case of \cref{thm:intro} can be reduced,
using our inductive constructions in \cref{S:fixed}, to one of the base cases considered in \cref{S:base}.

\titlelabel{\thetitle \quad}
\subsection{Compatibility with \eqref{regime}}
\label{S:regime}

To avoid duplicating work, we begin in this subsection by 
determining when each of our inductive constructions preserves the condition of \cref{regime} --- i.e.\ what
additional condition (in terms of $d$, $g$, $r$, and $n$), in addition to
\cref{regime} for $(d, g, r; n)$, implies \cref{regime} for the various
$(d', g', r'; n')$ appearing in the results of \cref{S:fixed}.
We will restrict ourselves only to those lemmas (and cases thereof) which will be used
most commonly; the others will be addressed later, as we invoke them.

While in general we will merely substitute $(d', g', r'; n')$ into \cref{regime}
(a task we leave to the reader), we can obtain a better result in the case
of Lemma~\ref{stick}:

\begin{lemma} \label{stick-always}
In Lemma~\ref{stick}, the tuple $(d - 1, g, r - 1, n')$ always satisfies
\cref{regime}.
\end{lemma}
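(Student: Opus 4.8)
The plan is to track through \cref{stick} exactly which inequality needs checking for the new tuple, and then show it follows automatically from the hypotheses of \cref{stick} together with the inequality $\sum_{i,j,k} k n_{ij}^k \leq r-2$ already in force. Since \cref{stick} is stated with two cases for $n'$ depending on whether $\sum k n_{ij}^k < r-2$ or $= r-2$, I would treat both, but in each case the computation reduces to the same kind of bookkeeping.

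First I would write out \cref{regime} for $(d-1, g, r-1; n')$, namely
\[
\sum_{i,j,k} \big((r-3)(n')_{ij}^k \cdot \text{(coeff)}\big) \leq 2(d-1) + 2g - (r-1) - 2,
\]
i.e.\ (reading off the coefficients for the projective space $\Pbb^{r-1}$, so replacing $r$ by $r-1$ in the left-hand side of \cref{regime})
\[
\sum_{i,j,k} \big((r-3)i' + (r-4)j' - k'\big)(n')_{i'j'}^{k'} \leq 2d + 2g - r - 3.
\]
Now I substitute the definition of $n'$ from \cref{stick}. In the case $\sum k n_{ij}^k < r-2$, we have $(n')_{i0}^k = \sum_\ell n_{\ell i}^k$ for $(i,0;k) \neq (0,0;0)$ and $0$ otherwise; so the left-hand side becomes $\sum_{i,j,k} \big((r-3)j + (r-4)\cdot 0 - k\big) n_{ij}^k = \sum_{i,j,k}\big((r-3)j - k\big) n_{ij}^k$ — that is, the ``$i$-index'' of the new point is the old ``$j$-index'', the new ``$j$-index'' is $0$, and $k$ is unchanged. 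In the case $\sum k n_{ij}^k = r-2$, similarly $(n')_{i0}^0 = \sum_{\ell,m} n_{\ell i}^m$ for $i \neq 0$, and the left-hand side becomes $\sum_{i,j,k}(r-3)j \cdot n_{ij}^k = \sum_{i,j,k}(r-3)j\, n_{ij}^k$ (the $k$'s are dropped since all new points have $k'=0$).

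Then the claim to verify is: the hypothesis of \cref{stick}, which is $\sum_{i,j,k}\big((r-2)i + (r-3)j - k\big) n_{ij}^k \leq 2d + 2g - r - 2$ (the upper bound in \cref{stick}), implies $\sum_{i,j,k}\big((r-3)j - k\big) n_{ij}^k \leq 2d + 2g - r - 3$ (respectively $\sum (r-3)j\, n_{ij}^k \leq 2d+2g-r-3$ in the second case). Since $(r-2)i \geq 0$ and $i \geq j \geq 0$ by the remark following the definition of modified normal bundle (so in particular $i \geq 1$ whenever $n_{ij}^k > 0$ and some point is present — here one must be a little careful, and instead use that for each allowed triple with a point, $(r-2)i \geq r-2 \geq 1$, or more simply $(r-2)i + (\text{the } +1 \text{ gap})$), the left-hand side of the target is at most the left-hand side of the hypothesis minus $\sum (r-2)i\, n_{ij}^k$, and I would bound $\sum(r-2)i\, n_{ij}^k \geq 1$ whenever there is at least one marked point (so the strict drop from $2d+2g-r-2$ to $2d+2g-r-3$ is covered), while if there are no marked points the target inequality reads $0 \leq 2d+2g-r-3$, which holds since $d \geq g+r$ forces $2d + 2g - r - 3 \geq 2g + r - 3 \geq 0$ for $r \geq 3$ (and $r \geq 2$ is assumed, with the $r=2$ case handled separately or vacuous). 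The main obstacle — really the only subtlety — is making sure the ``$-1$'' slack is genuinely always available, i.e.\ handling the empty-$n$ case and the smallest values of $r$ cleanly; I expect a short case split on whether any $n_{ij}^k$ is nonzero, plus the elementary bound from $d \geq g+r$, to close it.
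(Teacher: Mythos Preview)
Your setup is right and matches the paper: writing out \cref{regime} for $(d-1,g,r-1;n')$ gives exactly the two target inequalities you wrote, namely $\sum_{i,j,k}((r-3)j-k)n_{ij}^k \le 2d+2g-r-3$ when $\sum kn_{ij}^k<r-2$ and $\sum_{i,j,k}(r-3)j\,n_{ij}^k \le 2d+2g-r-3$ when $\sum kn_{ij}^k=r-2$. But the argument that follows has two genuine gaps. First, your claim that ``the target left-hand side is at most the hypothesis left-hand side minus $\sum(r-2)i\,n_{ij}^k$'' is only true in the first case; in the second case you have dropped a $+\sum_{i,j,k}kn_{ij}^k=+(r-2)$ term, and the correct identity is $\sum(r-3)j\,n_{ij}^k=\sum((r-2)i+(r-3)j-k)n_{ij}^k-(r-2)\sum in_{ij}^k+\sum kn_{ij}^k$. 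Second, your case split on ``at least one marked point'' versus ``no marked points'' does not justify $\sum(r-2)i\,n_{ij}^k\ge 1$: the allowed triples $(0,0;1)$ and $(0,0;2)$ have $i=0$, so one can have marked points yet $\sum in_{ij}^k=0$.

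The paper repairs both issues at once. It first observes $\sum((r-3)j-k)n_{ij}^k\le\sum(r-3)j\,n_{ij}^k$, so it suffices to prove the (harder) second-case inequality in all cases. Using the correct identity together with $\sum kn_{ij}^k\le r-2$ and the hypothesis bound gives $\sum(r-3)j\,n_{ij}^k\le 2d+2g-4-(r-2)\sum in_{ij}^k$. Now the split is on $\sum in_{ij}^k\ge 2$ (which makes the right side $\le 2d+2g-2r\le 2d+2g-r-3$) versus $\sum in_{ij}^k\le 1$; in the latter case $i\ge j$ forces $\sum jn_{ij}^k\le 1$, so $\sum(r-3)j\,n_{ij}^k\le r-3$, and $d\ge g+r$ gives $r-3\le 2d+2g-r-3$. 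Your sketch would go through with exactly these two corrections: include the $+\sum kn_{ij}^k$ term and split on the size of $\sum in_{ij}^k$ rather than on the mere presence of marked points.
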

\begin{proof}
If $\sum_{i,j,k} k n_{ij}^k < r - 2$, then we want to verify
\[\sum_{i,j,k} ((r - 3)j - k) \cdot n_{ij}^k \leq 2(d - 1) + 2g - (r - 1) - 2 = 2d + 2g - r - 3.\]
Similarly, for $\sum_{i,j,k} k n_{ij}^k = r - 2$, we want to verify
\[\sum_{i,j,k} ((r - 3)j) \cdot n_{ij}^k \leq 2(d - 1) + 2g - (r - 1) - 2 = 2d + 2g - r - 3.\]
Since
$\sum_{i,j,k} ((r - 3)j - k) \cdot n_{ij}^k \leq \sum_{i,j,k} ((r - 3)j) \cdot n_{ij}^k$,
it suffices to verify in all cases that
\[\sum_{i,j,k} ((r - 3)j) \cdot n_{ij}^k \leq 2d + 2g - r - 3.\]
But we have
\begin{align*}
\sum_{i,j,k} ((r - 3)j) \cdot n_{ij}^k &= \sum_{i,j,k} ((r-2)i+(r-3)j-k) \cdot n_{ij}^k - (r - 2) \cdot \sum_{i,j,k} i n_{ij}^k + \sum_{i,j,k} k n_{ij}^k \\
&\leq 2d + 2g - r - 2 - (r - 2) \cdot \sum_{i,j,k} i n_{ij}^k + r - 2 \\
&= 2d + 2g - 4 - (r - 2) \cdot \sum_{i,j,k} i n_{ij}^k.
\end{align*}

We are thus done if $\sum_{i,j,k} i n_{ij}^k \geq 2$.
Since $n_{ij}^k = 0$ unless $i \geq j$, we can thus assume
$\sum_{i,j,k} j n_{ij}^k \leq 1$. In this case,
\[\sum_{i,j,k} ((r - 3)j) \cdot n_{ij}^k \leq r - 3 \leq r - 3 + 2(d + g - r) = 2d + 2g - r - 3. \qedhere\]
\end{proof}

For the other commonly used lemmas in \cref{S:fixed}, we simply substitute
$(d', g', r', n')$ into \cref{regime}, and rearrange. Collecting these results
(together with the result of \cref{stick-always}), we obtain the following
table of extra conditions necessary for the $(d', g', r', n')$ to satisfy
\cref{regime}.

\medskip
\begin{center}
\renewcommand{\arraystretch}{1.6}
\begin{tabular}{c@{\hspace{20pt}}c@{\hspace{50pt}}r@{\;}c@{\;}l}
\toprule
\textbf{Result} & {\boldmath $\sum_{i,j,k}kn_{ij}^k$} & \multicolumn{3}{c}{\textbf{Condition}} \\
\midrule
\cref{two-secant} & $\leq r-3$ & $\sum_{i,j,k}((r-2)i+(r-3)j-k)n_{ij}^k$ & $\leq$ & $2d+2g-3r$\\
\cref{two-secant-backwards} & $\geq r - 3$ & $\sum_{i,j,k}((r-2)i+(r-3)j-k)n_{ij}^k$ & $\leq$ & $ 2d+2g-3r$\\
\cref{stick} & Arbitrary & \multicolumn{3}{c}{No Condition} \\
\cref{two-sticks} & $\leq r-4$ & $\sum_{i,j,k}((r-3)j-k)n_{ij}^k $ & $\leq$ & $ 2d+2g-3r$\\
\cref{two-sticks} & $\geq r-3$ & $\sum_{i,j,k}((r-3)j - k)n_{ij}^k $ & $\leq$ & $ 2d+2g- 4r + 2$\\
\cref{lower-d} & $\leq r-3$ & $\sum_{i,j,k}((r-2)i+(r-3)j-k)n_{ij}^k$ & $\leq$ & $ 2d+2g-r-4$\\
\cref{lower-d} & $= r-2$ & $\sum_{i,j,k}((r-2)i+(r-3)j-k)n_{ij}^k$ & $\leq$ & $ 2d+2g-r-4$ \ and \\[-1ex]
 & & $\sum_{i,j,k}((r-3)i+(r-4)j)n_{ij}^k$ & $\leq$ & $ 2d+2g-r-3$\\
\cref{zzk} & Arbitrary & $\sum_{i,j,k}((r-2)i+(r-3)j)n_{ij}^k $ & $\leq$ & $ 2d+2g-r-2$\\
\bottomrule
\end{tabular}
\renewcommand{\arraystretch}{1.0}
\end{center}
\medskip

\noindent
We now further consider the special case where
\[\sum_{i,j,k} ((r - 2) i + (r - 3)j - k) \cdot n_{ij}^k \leq 2d + 2g - 3r + 2.\]
In this case, we consider Lemmas~\ref{two-sticks}, \ref{lower-d}, and~\ref{zzk}.
For these lemmas, we use this relation
to simplify our previous inequalities; these alternate inequalities are collected below
in the following table:

\begin{center}
\renewcommand{\arraystretch}{1.3}
\begin{tabular}{c@{\hspace{20pt}}c@{\hspace{50pt}}c}
\toprule
\textbf{Result} & {\boldmath $\sum_{i,j,k}kn_{ij}^k$} & \textbf{Condition} \\
\midrule
\cref{two-sticks} & $\leq r-4$ & $\sum_{i,j,k} i n_{ij}^k \geq 1$ \\
\cref{two-sticks} & $\geq r-3$ & $\sum_{i,j,k} i n_{ij}^k \geq 2$\\
\cref{lower-d} & Arbitrary & No Condition \\
\cref{zzk} & Arbitrary & No Condition \\
\bottomrule
\end{tabular}
\renewcommand{\arraystretch}{1.0}
\end{center}


\subsection{Interpolation for rational curves}
\label{S:rational-curves}
In this subsection, we prove that for rational curves, \eqref{regime} is in fact a sufficient condition for $N_C'$ to satisfy interpolation.
In the case of no marked points ($N_C' = N_C$), this result was obtained independently by both Sacchiero \cite{sacchiero}
and Ran \cite{ran}; however, our proof here will be independent.
We will do this by induction on the degree of $C$.

\begin{lemma} \label{rat:bigdegree} Assume all tuples $(d',0,r';n')$ satisfying \eqref{regime} and $d'<d$ are good. If
\[\sum_{i,j,k} ((r - 2)i + (r - 3)j - k) \cdot n_{ij}^k \leq 2d - 3r + 1,\]
then $(d,0,r;n)$ is good.
\end{lemma}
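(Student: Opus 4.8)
The plan is to induct on $d$, using the inductive construction that lowers the degree by one while keeping the genus $0$ --- namely \cref{lower-d}, applied with $g = 0$. Since $C$ is rational, $d > g + r = r$ is exactly the hypothesis needed, and we are assuming all lower-degree tuples satisfying \eqref{regime} are good. So the heart of the argument is purely bookkeeping: check that the numerical hypotheses of \cref{lower-d} hold, and that both tuples it produces, $(d-1, 0, r; n)$ and $(d-1, 0, r-1; n')$, again satisfy \eqref{regime}, so that the inductive hypothesis applies to them.

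First I would record that \eqref{regime} for $(d, 0, r; n)$ reads $\sum_{i,j,k} ((r-2)i + (r-3)j - k) n_{ij}^k \leq 2d - r - 2$, which is implied by our stronger hypothesis $\sum_{i,j,k}((r-2)i+(r-3)j-k) n_{ij}^k \leq 2d - 3r + 1$ (as $r \geq 2$). Next I would verify the inequality required by \cref{lower-d}, namely $\sum_{i,j,k}(r-1-i-2j-k) n_{ij}^k \leq (r+1)d - 2$ (here $g=0$ so $(2r-4)g = 0$); for each allowed $(i,j;k)$ one has $r - 1 - i - 2j - k \leq (r-2)i + (r-3)j - k$ by direct casework (using $i \geq j \geq 0$, $k \geq 0$, and the constraint $\sum k n_{ij}^k < r-1$), so the left side is at most $2d - 3r + 1 \leq (r+1)d - 2$ since $d \geq r+1$ forces $(r+1)d - 2 \geq (r+1)(r+1) - 2 \geq 2d - 3r + 1$. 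I would also note the extra hypothesis needed when $r = 3$, namely $\sum j n_{ij}^k = 0$, follows from the running convention in \cref{S:hypothesis} that forces $\sum j n_{ij}^k = 0$ when $r = 3$ (this is built into the definition of an allowed configuration for $r = 3$ --- wait, actually it is imposed only for $r = 2$; for $r = 3$ I would instead observe that if some $j n_{ij}^k > 0$ then the hypothesis $\sum ((r-2)i + (r-3)j - k) n_{ij}^k = \sum (i + 0 \cdot j - k) n_{ij}^k \leq 2d - 8$ combined with the structure still lets the argument proceed, or handle this residual $r=3$ sub-case by appealing to \cref{ijz} to clear such points first).

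Then I would check the two output tuples satisfy \eqref{regime}. For $(d-1, 0, r; n)$: \eqref{regime} becomes $\sum ((r-2)i+(r-3)j-k) n_{ij}^k \leq 2(d-1) - r - 2 = 2d - r - 4$, which follows from our hypothesis $\leq 2d - 3r + 1$ since $r \geq 2$ gives $2d - 3r + 1 \leq 2d - r - 4$ precisely when $r \geq 5/2$, i.e.\ $r \geq 3$; the case $r = 2$ is covered separately by \cref{p2}. For $(d-1, 0, r-1; n')$ with $n' = n$ (the case $\sum k n_{ij}^k < r-2$): \eqref{regime} for the new tuple is $\sum((r-3)i + (r-4)j - k) n_{ij}^k \leq 2(d-1) - (r-1) - 2 = 2d - r - 3$, which is implied by $\sum((r-2)i+(r-3)j-k) n_{ij}^k \leq 2d - 3r + 1$ since the left side only decreased. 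In the case $\sum k n_{ij}^k = r - 2$, where $n'$ collapses all points to type $(i, j; 0)$, I would substitute the explicit $n'$ and again reduce to our hypothesis; this requires one extra short computation using $\sum k n_{ij}^k = r - 2$. The main (only) obstacle is the $r = 3$ endpoint together with the small-$r$ edge cases of the arithmetic, where the slack in the inequalities is tightest and one must lean on \cref{p2} and \cref{ijz}; everything else is a mechanical verification feeding \cref{lower-d} into the inductive hypothesis.
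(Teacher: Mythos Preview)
Your overall strategy---invoke \cref{lower-d} and feed both output tuples back into the inductive hypothesis---is exactly the paper's approach. However, your verification of the numerical hypothesis of \cref{lower-d} has a genuine gap.

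You claim that for every allowed triple $(i,j;k)$ one has
\[
r-1-i-2j-k \;\leq\; (r-2)i+(r-3)j-k.
\]
This inequality rearranges to $i+j \geq 1$, so it \emph{fails} precisely for the $(0,0;1)$ and $(0,0;2)$ points, both of which are allowed. Consequently your bound $\sum (r-1-i-2j-k)\,n_{ij}^k \leq 2d-3r+1$ is not justified. The paper repairs this by splitting off the $(0,0;k)$ contribution: since $\sum_{i,j,k} k\,n_{ij}^k \leq r-2$ and $r-1-k \leq (r-1)k-k$ for $k\geq 1$, one gets
\[
\sum_k (r-1-k)\,n_{00}^k \;\leq\; (r-1)(r-2) - \sum_k k\,n_{00}^k,
\]
which produces an extra $(r-1)(r-2)$ term in the final estimate. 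Absorbing that term into $(r+1)d-2$ is where the bound $d>r$ is actually used; this is another point you gloss over. You assert $d>r$ as if it were given, but it must be derived: combine the trivial lower bound $\sum((r-2)i+(r-3)j-k)n_{ij}^k \geq -\sum k\,n_{ij}^k \geq -(r-2)$ with the hypothesis $\leq 2d-3r+1$ to get $d\geq (2r+1)/2$, hence $d\geq r+1$.

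Your downstream checks that $(d-1,0,r;n)$ and $(d-1,0,r-1;n')$ satisfy \eqref{regime} are fine, and your instinct that $r=2$ is absorbed by \cref{p2} is correct. Your side remark about the $r=3$ hypothesis $\sum j\,n_{ij}^k=0$ in \cref{lower-d} is a fair observation; the paper's own proof does not explicitly address it either.
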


\begin{proof}
First we note that the given inequality implies
\[-(r - 2) \leq \sum_{i,j,k} ((r - 2)i + (r - 3)j - k) \cdot n_{ij}^k \leq 2d - 3r + 1 \implies d \geq \frac{2r + 1}{2} > r = g + r.\]
Next, for any $(i, j; k)$ with $i \geq 1$ and $j \geq 0$, we have
\[r - 1 - i - 2j - k \leq (r - 2)i + (r - 3)j - k;\]
which in turn implies
\[\sum_{\substack{i, j, k \\ (i, j) \neq (0, 0)}} (r - 1 - i - 2j - k) n_{ij}^k \leq \sum_{\substack{i, j, k \\ (i, j) \neq (0, 0)}} ((r - 2)i + (r - 3)j - k) \cdot n_{ij}^k.\]
Additionally, since $\sum_{i,j,k} k n_{ij}^k \leq r - 2$, we have
\[\sum_k (r - 1 - k) n_{00}^k \leq \sum_k ((r - 1)k - k) n_{00}^k \leq (r - 1)(r - 2) - \sum_k k n_{00}^k.\]
Adding these together, we obtain
\begin{align*}
\sum_{i,j,k}(r - 1 - i - 2j - k) n_{ij}^k &\leq (r - 1)(r - 2) + \sum_{i,j,k} ((r - 2)i + (r - 3)j - k) \cdot n_{ij}^k \\
&\leq (r - 1)(r - 2) + 2d - 3r + 1 \\
&= (r - 1)(r + 1) + 2d - 2 - (6r - 6) \\
&\leq (r - 1) d + 2d - 2 \\
&= (r + 1) d - 2.
\end{align*}
Interpolation thus follows from Lemma~\ref{lower-d} (c.f.\ \cref{S:regime}).
\end{proof}

\begin{theorem} \label{thm:ratcurves} All tuples $(d,0,r;n)$ satisfying \eqref{regime} are good.
\end{theorem}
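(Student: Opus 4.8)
The plan is to prove \cref{thm:ratcurves} by strong induction on the degree $d$, which is precisely the framework in which \cref{rat:bigdegree} has already been phrased. So fix a tuple $(d, 0, r; n)$ satisfying \eqref{regime} and assume the theorem for all tuples of strictly smaller degree. Since $g = 0$ we automatically have $d \geq r$, and \eqref{regime} reads
\[\sum_{i,j,k} \bigl((r-2)i + (r-3)j - k\bigr) n_{ij}^k \leq 2d - r - 2.\]
The argument splits according to the size of this left-hand side, with a separate treatment of $r = 2$.

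If $r = 2$, the standing hypotheses force $\sum_{i,j,k} j n_{ij}^k = 0$, so \cref{p2} applies directly and $(d, 0, 2; n)$ is good. Now assume $r \geq 3$. If
\[\sum_{i,j,k} \bigl((r-2)i + (r-3)j - k\bigr) n_{ij}^k \leq 2d - 3r + 1,\]
then $(d, 0, r; n)$ is good by \cref{rat:bigdegree}, whose hypothesis is exactly this inequality and whose proof only invokes goodness of tuples of degree $< d$ (via \cref{lower-d}). Otherwise the left-hand side is at least $2d - 3r + 2$, and combining this with \eqref{regime} places us exactly in the range
\[2d - 3r + 2 \leq \sum_{i,j,k} \bigl((r-2)i + (r-3)j - k\bigr) n_{ij}^k \leq 2d - r - 2\]
demanded by \cref{stick} (specialized to $g = 0$). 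That lemma reduces goodness of $(d, 0, r; n)$ to goodness of $(d-1, 0, r-1; n')$ for a suitable $n'$ (the recipe depending only on whether $\sum_{i,j,k} k n_{ij}^k$ is $< r-2$ or $= r-2$); by \cref{stick-always} this new tuple again satisfies \eqref{regime}, and as its degree is $d - 1 < d$ it is good by the inductive hypothesis. Hence $(d, 0, r; n)$ is good, completing the induction.

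The three cases ($r = 2$; $r \geq 3$ with small left-hand side; $r \geq 3$ with large left-hand side) visibly exhaust all of \eqref{regime}, so no further cases arise, and there is no base case to check beyond termination of the recursion: every use of \cref{rat:bigdegree} strictly decreases $d$, every use of \cref{stick} strictly decreases both $d$ and $r$, and such a chain must eventually reach $r = 2$, where \cref{p2} ends it. The only genuinely delicate points are bookkeeping — that the complement of the \cref{rat:bigdegree} range is exactly the hypothesis range of \cref{stick} when $g = 0$, and that the forced vanishing $\sum_{i,j,k} j n_{ij}^k = 0$ at $r = 2$ makes \cref{p2} available — so I do not expect a real obstacle here; the substantive work has already been absorbed into \cref{rat:bigdegree}, \cref{stick}, and \cref{stick-always}.
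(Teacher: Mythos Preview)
Your proof is correct and follows essentially the same approach as the paper: split according to whether the left-hand side of \eqref{regime} is at most $2d - 3r + 1$ (apply \cref{rat:bigdegree}) or at least $2d - 3r + 2$ (apply \cref{stick}), with strong induction on $d$. Your version is slightly more explicit than the paper's, which phrases the argument as a minimal-counterexample contradiction in two sentences; in particular you separate out the $r = 2$ terminal case via \cref{p2} and cite \cref{stick-always} to verify that the output of \cref{stick} again satisfies \eqref{regime}, both of which the paper leaves implicit.
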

\begin{proof}
Assume otherwise. Take a counterexample with minimal $d$. If
\[\sum_{i,j,k} ((r - 2)i + (r - 3)j - k) \cdot n_{ij}^k \leq 2d - 3r + 1\]
then \ref{rat:bigdegree} gives a contradiction. But if
\[\sum_{i,j,k} ((r - 2)i + (r - 3)j - k) \cdot n_{ij}^k \geq 2d - 3r + 2\]
then Lemma~\ref{stick} gives a contradiction.
\end{proof}


\subsection{Space curves}
\label{S:space}

\noindent
In this subsection, we prove that following result.

\begin{theorem}\label{thm:p3}
The tuple $(d, g, 3; n)$ is good provided it satisfies \eqref{regime}, and does not
lie in one of two infinite families:
\begin{itemize}
\item $\sum_{i,j,k} j n_{ij}^k = \sum k n_{ij}^k = 0$ with $g \neq 0$, and
\[\sum i n_{ij} = 2d + 2g - 14.\]
\item $\sum_{i,j,k} k n_{ij}^k = 1$ with $g \neq 0$, and
\[\sum i n_{ij} = 2d + 2g - 9.\]
\end{itemize}
\end{theorem}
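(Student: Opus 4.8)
\textbf{Proof proposal for Theorem~\ref{thm:p3}.}

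The plan is to argue by induction on the degree $d$, using the lemmas of \cref{S:fixed} to peel off lines while tracking \eqref{regime}, with the base cases \cref{p2}, \cref{p3:mid}, \cref{ijz}, \cref{izz}, and the rational-curve result \cref{thm:ratcurves}. First I would dispose of the case $g = 0$ (covered by \cref{thm:ratcurves}) and of $r = 2$ reductions implicit when a line is added to a plane curve (covered by \cref{p2}). So assume $g \geq 1$. The quantity to induct on will be $d$ (with $g$ also decreasing in some branches), and the key invariant is the value $\Sigma := \sum_{i,j,k}(i - k) n_{ij}^k$ relative to the window $[2d + 2g - 9,\ 2d + 2g - 7]$ singled out in \cref{p3:mid}. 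If $(d, g, 3; n)$ lands in that window, we are immediately done by \cref{p3:mid}. The strategy is therefore to show that, unless we are in one of the two excluded infinite families, we can apply one of our inductive lemmas to reduce to a tuple which is either good by induction or lies in the \cref{p3:mid} window.

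The case analysis splits according to $\sum_{i,j,k} k n_{ij}^k$, which (since $r = 3$) is either $0$ or $1$, and according to whether \eqref{regime} is ``tight'' (i.e.\ $\Sigma$ is near its maximum $2d + 2g - 5$) or has slack. When there is slack — specifically when $\Sigma \leq 2d + 2g - 10$ in the $k$-sum-zero case, or the analogous bound in the $k$-sum-one case — I would use \cref{lower-d} (together with its compatibility with \eqref{regime} from \cref{S:regime}) to drop $d$ by one while keeping $g$ and essentially keeping $n$, and then invoke the inductive hypothesis; one must check that the reduced tuple does not fall into an excluded family, which it cannot since the excluded families are defined by an \emph{equality} $\Sigma = 2d + 2g - 14$ (resp.\ $2d+2g-9$) that is not preserved under the $d \mapsto d - 1$, $n \mapsto n$ reduction. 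When \eqref{regime} is nearly tight, I would instead first use \cref{ijz} to eliminate any $(i, j; k)$-point with $j \geq 1$ (legal since $r = 3$ forces, after such removal, the hypothesis $\sum j n_{ij}^k = 0$ of the subsequent steps), and \cref{izz} or \cref{zzk} to clear $(\ell, 0; 0)$- and $(0,0;\ell)$-points when the auxiliary inequalities hold; the residual configurations are exactly those with $\sum j n_{ij}^k = 0$, which land in or near the \cref{p3:mid} window precisely when $\Sigma \in \{2d+2g-9, \dots, 2d+2g-7\}$. The two excluded families are exactly the ``corner'' configurations where $\Sigma$ is maximal ($= 2d + 2g - 5$, forcing $\Sigma$ for the plane-curve reduction to be out of range) \emph{and} no point-removal lemma applies because all points are of type $(1,0;0)$ or $(1,0;1)$ with the count pinned by the equality; for these, \cref{p3:mid} is off by a controlled amount and \cref{izz}/\cref{zzk} are unavailable, which is why they must be excluded.

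The main obstacle I anticipate is the bookkeeping at the boundary between ``slack'' and ``tight'': one must verify that the inequalities governing the applicability of \cref{lower-d}, \cref{izz}, and \cref{zzk} (the bounds like $\sum(r-1-i-2j-k)n_{ij}^k \leq (r+1)d - (2r-4)g - 2$, specialized to $r = 3$) are implied by \eqref{regime} together with $g \geq 1$ and $d \geq g + 3$, except in precisely the two listed families. A second subtlety is ensuring that when \cref{ijz} strips a $(\ell, m; 0)$-point with $m \geq 1$ the resulting tuple still satisfies \eqref{regime} and is not pushed into an excluded family; since \cref{ijz} decreases $\sum i n_{ij}^k$ by $\ell \geq 1$ while leaving $d, g$ fixed, it moves $\Sigma$ strictly into the interior, so the excluded-family equalities are broken, and one finishes by induction on $\sum_{i,j,k}(i + j)n_{ij}^k$ within a fixed $(d, g)$. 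I would organize the write-up as: (i) reduce to $g \geq 1$ and $\sum j n_{ij}^k = 0$ via \cref{ijz}; (ii) handle the slack regime by \cref{lower-d}; (iii) handle the tight regime by first clearing points via \cref{izz}, \cref{zzk} and then landing in \cref{p3:mid}, carefully checking that the only obstructions are the two stated families.
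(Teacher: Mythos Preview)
Your overall three-zone picture (slack / middle via \cref{p3:mid} / tight) is right, but the engine you choose for the slack zone is wrong, and this is where the real content of the argument lies. You propose \cref{lower-d} for $\Sigma \leq 2d+2g-10$; the paper instead uses \cref{two-secant}. There are two reasons \cref{lower-d} does not work here. First, it requires $d > g + r = g+3$, so it simply cannot be applied on the line $d = g+3$. Second, your claim that ``the excluded families \ldots\ are not preserved under $d \mapsto d-1$, $n \mapsto n$'' is backwards: the reduced tuple can perfectly well land in an excluded family. If $\sum k n_{ij}^k = 0$, $\sum j n_{ij}^k = 0$, and $\sum i n_{ij}^k = 2d+2g-16$ (not excluded), then after $d \mapsto d-1$ the same $n$ satisfies $\sum i n_{ij}^k = 2(d-1)+2g-14$, which \emph{is} excluded. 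Relatedly, you have mislocated the excluded families: with $\sum k = 0$ the first family sits at $\Sigma = 2d+2g-14$, and with $\sum k = 1$ the second sits at $\Sigma = 2d+2g-10$ --- both are deep in the slack zone, not at the maximal value $2d+2g-5$.

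The paper's use of \cref{two-secant} is tailored to exactly this issue. Adding a $2$-secant line sends $(d,g) \mapsto (d-1,g-1)$ and, in the case $\sum k n_{ij}^k = 0$, introduces a new $(1,1;1)$-point, so the output has $\sum j(n')_{ij}^k \geq 1$ and can never lie in the first excluded family. The only way it can lie in the second is if $\sum i n_{ij}^k = 2d+2g-14$; but then, since the input was assumed \emph{not} in the first family, we must have $\sum j n_{ij}^k > 0$, and \cref{ijz} strips a $j\geq 1$ point to break the equality (this is the content of \cref{p3:1}). Finally, for the top range $2d+2g-7 \leq \Sigma \leq 2d+2g-5$ the paper uses \cref{stick} to project to $\Pbb^2$ and invoke \cref{p2}; your plan to reach \cref{p3:mid} by clearing points with \cref{izz}/\cref{zzk} instead would require its own check that you never pass through an excluded family on the way down, which you have not supplied.
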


As Theorem~\ref{thm:ratcurves} takes care of the $g=0$ case, we will assume that $g\neq 0$ for the rest of this section. Also, we note that \eqref{regime} can be rewritten for $r=3$ as

\begin{equation} \label{regimethree}
\sum_{i,j,k} (i - k) \cdot n_{ij}^k \leq 2d + 2g - 5.
\end{equation}

We prove this theorem by induction on $d$; and for fixed values of $d$ by induction on the number of marked points.

\begin{lemma} \label{p3:0} Suppose that Theorem~\ref{thm:p3} holds for $(d', g', 3; n')$ for all $d' < d$. Then
Theorem~\ref{thm:p3} holds for $(d, g, 3; n)$ provided that
\[g > 0, \quad \sum_{i,j,k} k n_{ij}^k = 1, \quad \text{and} \quad \sum_{i,j,k} i n_{ij}^k \leq 2d + 2g - 9.\]
\end{lemma}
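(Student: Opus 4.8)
The plan is to reduce interpolation for $(d, g, 3; n)$ to interpolation for a simpler tuple by peeling off a $1$-secant line, using \cref{lower-d} in the case $r = 3$. First I would verify that the hypotheses of \cref{lower-d} are satisfied. Since $g > 0$ and we are in the range $\sum_{i,j,k} k n_{ij}^k = 1 \in \{0, \dots, r - 2\} = \{0, 1\}$, the relevant case of \cref{lower-d} is $\sum k n_{ij}^k = r - 2 = 1$, which produces the new tuple $(d - 1, g, 2; n')$ where all $(i, j; k)$-points become $(i, j; 0)$-points (and $(0,0;k)$-points disappear), together with the same tuple $(d - 1, g, 3; n)$. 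The numerical hypothesis of \cref{lower-d} is
\[\sum_{i,j,k} (r - 1 - i - 2j - k) n_{ij}^k \leq (r + 1)d - (2r - 4)g - 2,\]
which for $r = 3$ reads $\sum_{i,j,k} (2 - i - 2j - k) n_{ij}^k \leq 4d - 2g - 2$. I would check this follows from \eqref{regimethree}: the left side is at most $\sum (i - k) n_{ij}^k$ plus a manageable correction (using $i \geq j \geq 0$ and $\sum k n_{ij}^k = 1$), and \eqref{regimethree} bounds $\sum(i-k)n_{ij}^k \leq 2d + 2g - 5$, which is comfortably below $4d - 2g - 2$ once one uses $d \geq g + r = g + 3$; there is also the extra wrinkle in \cref{lower-d} that for $r = 3$ one needs $\sum_{i,j,k} j n_{ij}^k = 0$, which I would need to handle either by noting it is forced in the regime we care about or by invoking \cref{ijz} first to clear out any points with $j \geq 1$.

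Next I would dispatch the two output tuples. The tuple $(d - 1, g, 3; n)$ satisfies \eqref{regimethree} for the same reason $(d, g, 3; n)$ does (decreasing $d$ only strengthens the hypothesis being an upper bound on a quantity independent of $d$ — wait, it weakens it, so I would double-check that $\sum(i-k)n_{ij}^k \leq 2(d-1) + 2g - 5$ still holds; this is exactly the inductive hypothesis kicking in, since by hypothesis Theorem~\ref{thm:p3} holds for all $d' < d$, and I must also confirm $(d-1, g, 3; n)$ does not fall into one of the two excluded infinite families — here $\sum k n_{ij}^k = 1$ is preserved, so I need $\sum i n_{ij}^k \neq 2(d-1) + 2g - 9 = 2d + 2g - 11$, which is automatic from our hypothesis $\sum i n_{ij}^k \leq 2d + 2g - 9$ only if we are not exactly at $2d + 2g - 11$; I would need a short case analysis here, possibly strengthening the bound or using \cref{izz}/\cref{zzk} to remove a point). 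The tuple $(d - 1, g, 2; n')$ is a curve in $\Pbb^2$ with $\sum_{i,j,k} j (n')_{ij}^k = 0$ (since all output points have $j$ unchanged but we assumed $\sum j n_{ij}^k = 0$ after preprocessing), so \cref{p2} applies directly and shows it is good.

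The main obstacle I anticipate is the bookkeeping at the boundary of the excluded families: the two infinite families in Theorem~\ref{thm:p3} are precisely the loci where the naive induction stalls, and I expect the case $\sum i n_{ij}^k$ close to $2d + 2g - 9$ to require care — either the hypothesis $\sum i n_{ij}^k \leq 2d + 2g - 9$ is exactly tight enough that the descended tuple avoids the excluded family, or one must first apply a marked-point-removal lemma (\cref{izz}, \cref{ijz}, or \cref{zzk}) to lower $\sum i n_{ij}^k$ before descending in $d$. I would also need to confirm that removing $(0,0;1)$-type points via \cref{zzk} (to reach the $\sum k n_{ij}^k = 1$ hypothesis cleanly, or to reduce to $\sum k n_{ij}^k = 0$) does not itself break \eqref{regime}; the compatibility table in \cref{S:regime} shows \cref{zzk} preserves \eqref{regime} under the condition $\sum((r-2)i + (r-3)j)n_{ij}^k \leq 2d + 2g - r - 2$, which for $r = 3$ reads $\sum i n_{ij}^k \leq 2d + 2g - 5$, and this is implied by \eqref{regimethree} together with $\sum k n_{ij}^k = 1 \geq 0$. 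With these pieces in place, the inductive step closes.
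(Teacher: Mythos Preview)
Your approach is the wrong degeneration. The paper uses \cref{two-secant} (a $2$-secant line), not \cref{lower-d} (a $1$-secant line). This matters in several ways, and the issues you flag are not mere bookkeeping --- they are genuine obstructions to your route.

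First, \cref{lower-d} requires $d > g + r = g + 3$, which is not part of the hypotheses here; for instance $d = g + 3$, $g = 1$, $\sum i n_{ij}^k = 0$, $\sum k n_{ij}^k = 1$ is a perfectly valid input to this lemma. Second, and more seriously, the extra hypothesis $\sum_{i,j,k} j n_{ij}^k = 0$ that \cref{lower-d} imposes when $r = 3$ cannot in general be arranged: if $n_{11}^1 = 1$ (a single $(1,1;1)$-point supplies both $\sum j n_{ij}^k = 1$ and $\sum k n_{ij}^k = 1$), then \cref{ijz} is useless, since it only removes points of type $(\ell, m; 0)$. There is no lemma in the toolkit that strips a $(1,1;1)$-point. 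Third, as you already noticed, your descended tuple $(d-1, g, 3; n)$ keeps $\sum k n_{ij}^k = 1$ and could land exactly on $\sum i n_{ij}^k = 2(d-1) + 2g - 9$, i.e.\ back inside the excluded family.

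The paper's argument sidesteps all of this. The very first step is to observe that the statement of Theorem~\ref{thm:p3} itself excludes $\sum i n_{ij}^k = 2d + 2g - 9$, so the hypothesis sharpens to $\sum i n_{ij}^k \leq 2d + 2g - 10$. Then \cref{two-secant} in the case $\sum k n_{ij}^k = r - 2 = 1$ reduces to $(d-1, g-1, 3; n')$, where all $(i,j;k)$-points become $(i,j;0)$-points and a new $(1,1;0)$-point is added. This new tuple automatically has $\sum k (n')_{ij}^k = 0$ and $\sum j (n')_{ij}^k \geq 1$, so it lies in \emph{neither} excluded family; and \eqref{regimethree} for $n'$ becomes exactly $\sum i n_{ij}^k \leq 2d + 2g - 10$, which we just arranged. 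No $j = 0$ condition is needed, no $d > g + 3$ condition is needed, and the excluded-family check is a one-liner.
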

\begin{proof} 
By the assumption of Theorem~\ref{thm:p3}, we have
$\sum_{i,j,k} i n_{ij}^k \neq 2d + 2g - 9$;
in particular, our assumption in fact implies
\[\sum_{i,j,k} i n_{ij}^k \leq 2d + 2g - 10.\]
Applying Lemma~\ref{two-secant}, it suffices to show $(d - 1, g - 1, 3; n')$ is good,
where
\[(n')_{ij}^k = \begin{cases}
\sum_\ell n_{ij}^\ell & \text{if $k = 0$ and $(i, j; k) \notin \{(0, 0; 0), (1, 1; 0)\}$;} \\
1 + \sum_\ell n_{ij}^\ell & \text{if $(i, j; k) = (1, 1; 0)$;} \\
0 & \text{else.}
\end{cases}\]

By our inductive hypothesis, it is sufficient to see that $(d - 1, g - 1, r; n')$
satisfies \eqref{regime} and does not lie in either of the above infinite families.
For \eqref{regime}, we want
\[1 + \sum i n_{ij}^k \leq 2(d - 1) + 2(g - 1) - 5 = 2d + 2g - 9,\]
which is precisely what we observed above. To see it does not lie in either of our infinite families,
we note that $(d - 1, g - 1, r; n')$ satisfies $\sum_{i,j,k} k (n')_{ij}^k = 0$, but $\sum_{i,j,k} j (n')_{ij}^k \neq 0$.
\end{proof}

\begin{lemma} \label{p3:1} Suppose that Theorem~\ref{thm:p3} holds for $(d', g', 3; n')$ for all $d' < d$;
and also when $d = d'$ and $n'$ has fewer marked points than $n$. Then
Theorem~\ref{thm:p3} holds for $(d, g, 3; n)$ provided that
\[g > 0, \quad \sum_{i,j,k} k n_{ij}^k = 0, \quad \text{and} \quad \sum_{i,j,k} i n_{ij}^k \leq 2d + 2g - 9.\]
\end{lemma}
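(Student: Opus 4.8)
The plan is to run the induction already set up in the hypotheses, distinguishing two cases according to whether $\sum_{i,j,k} i n_{ij}^k$ equals $2d+2g-14$. First I would record two easy normalizations. Since $\sum_{i,j,k} k n_{ij}^k = 0$, the assumption $\sum_{i,j,k} i n_{ij}^k \le 2d+2g-9$ gives $\sum_{i,j,k} (i-k) n_{ij}^k \le 2d+2g-5$, so $(d,g,3;n)$ automatically satisfies \eqref{regimethree}; moreover $(d,g,3;n)$ cannot lie in the second infinite family of \cref{thm:p3}, which requires $\sum_{i,j,k} k n_{ij}^k = 1$. Hence we may assume $(d,g,3;n)$ does not lie in the first infinite family either --- that is, it is not simultaneously true that $\sum_{i,j,k} j n_{ij}^k = 0$ and $\sum_{i,j,k} i n_{ij}^k = 2d+2g-14$ --- since otherwise there is nothing to prove.

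In the case $\sum_{i,j,k} i n_{ij}^k = 2d+2g-14$, the normalization forces $\sum_{i,j,k} j n_{ij}^k \ne 0$; and since $\sum_{i,j,k} k n_{ij}^k = 0$, the only admissible marking type with $j \ge 1$ is $(1,1;0)$, so $n_{11}^0 > 0$. I would then apply \cref{ijz} with $(\ell,m) = (1,1)$, reducing goodness of $(d,g,3;n)$ to goodness of $(d,g,3;n')$, where $n'$ has one fewer $(1,1;0)$-point and is otherwise unchanged. This $n'$ has strictly fewer marked points; it still satisfies \eqref{regimethree}; it avoids the second family since $\sum_{i,j,k} k (n')_{ij}^k = 0$; and it avoids the first family, because either $\sum_{i,j,k} j(n')_{ij}^k \ge 1$, or $\sum_{i,j,k} j(n')_{ij}^k = 0$ while $\sum_{i,j,k} i(n')_{ij}^k = 2d+2g-15 \ne 2d+2g-14$. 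So $(d,g,3;n')$ is good by the ``fewer marked points'' part of the inductive hypothesis, and \cref{ijz} concludes.

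In the remaining case $\sum_{i,j,k} i n_{ij}^k \ne 2d+2g-14$ (which includes $n = \mathbf 0$), I would degenerate $C$ to a curve together with a $2$-secant line and apply \cref{two-secant}, using $\sum_{i,j,k} k n_{ij}^k = 0 < r - 2$; this reduces goodness of $(d,g,3;n)$ to goodness of $(d-1, g-1, 3; n')$, where $n'$ is $n$ with one extra $(1,1;1)$-point. Since $\sum_{i,j,k} k(n')_{ij}^k = 1$ and $\sum_{i,j,k}(i-k)(n')_{ij}^k = \sum_{i,j,k} i n_{ij}^k \le 2d+2g-9 = 2(d-1)+2(g-1)-5$, this tuple satisfies \eqref{regimethree}; it is not in the first family (as $\sum_{i,j,k} k(n')_{ij}^k \ne 0$); and it is not in the second family, since that would force $\sum_{i,j,k} i(n')_{ij}^k = 2(d-1)+2(g-1)-9$, i.e.\ $\sum_{i,j,k} i n_{ij}^k = 2d+2g-14$, contrary to assumption (and for $g = 1$ the second family is empty anyway). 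Therefore $(d-1,g-1,3;n')$ is good by the inductive hypothesis on the degree, or by \cref{thm:ratcurves} when $g = 1$, completing the proof. The only point requiring care --- and this is the recurring theme of the whole combinatorial argument rather than a genuine difficulty in this particular lemma --- is ensuring each reduction lands on a tuple still satisfying \eqref{regime} and outside both excluded families; the case division above is arranged precisely so that this holds.
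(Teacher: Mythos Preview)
Your proof is correct and follows essentially the same approach as the paper: split according to whether $\sum_{i,j,k} i n_{ij}^k$ equals $2d+2g-14$, use \cref{ijz} to peel off a $(1,1;0)$-point in the equality case, and use \cref{two-secant} to reduce degree and genus in the inequality case. Your verification that the reduced tuples land outside both excluded families is slightly more explicit than the paper's (you pin down $(\ell,m)=(1,1)$ directly from the list of allowed types), but the argument is the same.
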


\begin{proof}
Again, Lemma~\ref{two-secant} implies that it suffices to show $(d - 1, g - 1, 3; n')$ is good,
where
\[(n')_{ij}^k = \begin{cases}
n_{ij}^k & \text{if $(i, j; k) \neq (1, 1; 1)$;} \\
n_{11}^1 + 1 & \text{if $(i, j; k) = (1, 1; 1)$.}
\end{cases}\]
If $\sum_{i,j,k} i n_{ij}^k \neq 2d + 2g - 14$, then we have
\[\sum_{i,j,k} i (n')_{ij}^k = 1 + \sum_{i,j,k} i n_{ij}^k \neq 1 + 2d + 2g - 14 = 2(d - 1) + 2(g - 1) - 9;\]
and so because $\sum_{i,j,k} k (n')_{ij}^k = 1$, our inductive hypothesis
implies $(d - 1, g - 1, 3; n')$ is good, subject to the inequality
\[\sum_{i,j,k} in_{ij}^k \leq 2(d - 1) + 2(g - 1) - 5 = 2d + 2g - 9.\]

It thus remains to consider the case when
$\sum_{i,j,k} i n_{ij}^k = 2d + 2g - 14$.
But in this case, our assumptions in Theorem~\ref{thm:p3} imply $\sum_{i,j,k} j n_{ij}^k > 0$.
Moreover, by assumption, $\sum_{i,j,k} kn_{ij}^k = 0$. Consequently,
we must have some point of type $(\ell, m; 0)$ with $m \neq 0$.
Applying Lemma~\ref{ijz}, we conclude that $(d, g, 3; n)$ is good provided that
$(d, g, 3; n'')$ is good, where
\[(n'')_{ij}^k = \begin{cases}
n_{ij}^k - 1 & \text{if $(i, j; k) = (\ell, m; 0)$;} \\
n_{ij}^k & \text{else.}
\end{cases}\]
Since $\sum_{i,j,k} k (n'')_{ij}^k = \sum_{i,j,k} k n_{ij}^k = 0$, it
is sufficient, by our inductive assumption, to note that
\[\sum_{i,j,k} i (n'')_{ij}^k = \sum_{i,j,k} i n_{ij}^k - \ell = 2d + 2g - 14 - \ell < 2d + 2g - 14.\]
(Above, we used that $\ell \geq m > 0$, so $\ell \neq 0$.)
\end{proof}

\begin{lemma} \label{p3:small}
Suppose that Theorem~\ref{thm:p3} holds for $(d', g', 3; n')$ for all $d' < d$;
and also when $d = d'$ and $n'$ has fewer marked points than $n$. Then
Theorem~\ref{thm:p3} holds for $(d, g, 3; n)$ provided that
\[\sum_{i,j,k} (i - k) \cdot n_{ij}^k \leq 2d + 2g - 10.\]
\end{lemma}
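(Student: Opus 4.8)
The plan is to reduce directly to Lemmas~\ref{p3:0} and~\ref{p3:1}, splitting into cases according to the value of $\sum_{i,j,k} k n_{ij}^k$. Since $r = 3$, the standing hypothesis $\sum_{i,j,k} k n_{ij}^k < r - 1 = 2$ forces $\sum_{i,j,k} k n_{ij}^k \in \{0, 1\}$; and the running assumption of this subsection provides $g > 0$ (if one wishes to avoid that assumption, note that $g = 0$ is covered by Theorem~\ref{thm:ratcurves}, since $\sum_{i,j,k}(i-k)n_{ij}^k \le 2d - 10$ certainly implies \eqref{regime}). In either case, the conclusion we are after is exactly the statement ``Theorem~\ref{thm:p3} holds for $(d, g, 3; n)$,'' which is precisely the output of those two lemmas, so there is no need to separately discuss \eqref{regime} or the exceptional infinite families here.

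In the case $\sum_{i,j,k} k n_{ij}^k = 1$, I would compute
\[\sum_{i,j,k} i n_{ij}^k = \sum_{i,j,k}(i - k) n_{ij}^k + \sum_{i,j,k} k n_{ij}^k \le (2d + 2g - 10) + 1 = 2d + 2g - 9,\]
so the hypotheses of Lemma~\ref{p3:0} are met (that lemma needs only the inductive hypothesis for $d' < d$, which is assumed), and it gives the conclusion. In the case $\sum_{i,j,k} k n_{ij}^k = 0$, I would note
\[\sum_{i,j,k} i n_{ij}^k = \sum_{i,j,k}(i - k) n_{ij}^k \le 2d + 2g - 10 \le 2d + 2g - 9,\]
so the hypotheses of Lemma~\ref{p3:1} are met (that lemma needs the inductive hypothesis for $d' < d$ and, at the same $d$, for $n'$ with fewer marked points — both of which are among our assumptions), and it gives the conclusion.

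There is essentially no obstacle: all the real content lives in Lemmas~\ref{p3:0} and~\ref{p3:1}, and the only point to appreciate is the bookkeeping role of the bound $2d + 2g - 10$. It is deliberately one unit stronger than the space-curve form \eqref{regimethree} of \eqref{regime} (namely $\sum_{i,j,k}(i-k)n_{ij}^k \le 2d + 2g - 5$), precisely so that after adding back the term $\sum_{i,j,k} k n_{ij}^k \le 1$ one still lands inside the range $\sum_{i,j,k} i n_{ij}^k \le 2d + 2g - 9$ demanded by those two lemmas.
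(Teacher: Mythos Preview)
Your proof is correct and follows essentially the same route as the paper's: use $\sum_{i,j,k} k n_{ij}^k \le 1$ to deduce $\sum_{i,j,k} i n_{ij}^k \le 2d+2g-9$, then invoke Lemma~\ref{p3:0} or Lemma~\ref{p3:1} according to the value of $\sum_{i,j,k} k n_{ij}^k$ (and Theorem~\ref{thm:ratcurves} for $g=0$). One small slip in your closing commentary, not affecting the argument: the bound $2d+2g-10$ is five units stronger than \eqref{regimethree}, not one.
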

\begin{proof}
Since $\sum_{i,j,k} k n_{ij}^k \leq 1$, our given inequality implies
\[\sum_{i,j,k} i n_{ij}^k \leq 2d + 2g - 9.\]
If $g = 0$, then the result follows from Theorem~\ref{thm:ratcurves}.
Otherwise, if $g > 0$, then
the result follows from Lemma~\ref{p3:0} or~\ref{p3:1}, according to whether
$\sum_{i,j,k} k n_{ij}^k = 0$ or $\sum_{i,j,k} k n_{ij}^k = 1$.
\end{proof}

\noindent
Theorem~\ref{thm:p3} then follows from combining Lemmas~\ref{p3:small}, \ref{p3:mid}, and~\ref{stick}.

\begin{corollary}
\cref{thm:intro} holds for $r = 3$.
\end{corollary}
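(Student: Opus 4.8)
\textbf{Proof proposal for \cref{thm:intro} in the case $r = 3$.}

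The plan is to deduce \cref{thm:intro} for $r = 3$ from \cref{thm:p3} (the statement that $(d, g, 3; n)$ is good whenever it satisfies \eqref{regime} and avoids the two exceptional infinite families) by specializing to the case of no marked points, $n = \mathbf{0}$, and checking that the hypotheses of \cref{thm:p3} hold whenever $(d, g, 3)$ is not one of the exceptions listed in \cref{thm:intro}. First I would recall that when every $n_{ij}^k = 0$, the modified normal bundle $N_C'$ equals $N_C(-p)$, so interpolation for $N_C'$ implies interpolation for $N_C$; thus it suffices to show $(d, g, 3; \mathbf{0})$ is good for all $d \geq g + 3$ with $(d, g, 3) \neq (5, 2, 3)$. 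With $n = \mathbf{0}$, condition \eqref{regime}, which in the form \eqref{regimethree} reads $\sum (i - k) n_{ij}^k \leq 2d + 2g - 5$, becomes simply $0 \leq 2d + 2g - 5$, which holds automatically since $d \geq g + 3 \geq 3$ (indeed $d \geq 3$ forces $2d + 2g - 5 \geq 1 > 0$). Likewise, both infinite families excluded in \cref{thm:p3} require $\sum i n_{ij} = 2d + 2g - 14$ or $2d + 2g - 9$ to be achieved by a nonzero collection of marked points; with $n = \mathbf{0}$ the left side is $0$, so the curve lies in one of these families only if $2d + 2g - 14 = 0$ or $2d + 2g - 9 = 0$, i.e. $d + g = 7$ or $2d + 2g = 9$. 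The latter is impossible over the integers, and the former combined with $g \neq 0$ (the families also require $g \neq 0$) and $d \geq g + 3$ leaves only the finitely many cases $(d, g) \in \{(4, 3), (5, 2)\}$ — wait, $d + g = 7$ with $d \geq g + 3$ gives $g \leq 2$, so $(d,g) \in \{(5,2),(6,1)\}$ for the first family (and the second family is vacuous for $n = \mathbf 0$).

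So the cases not immediately covered by \cref{thm:p3} are $(d, g, r) = (5, 2, 3)$ and $(6, 1, 3)$. The case $(5, 2, 3)$ is precisely one of the three exceptions in \cref{thm:intro}, so we do not need it (and in fact \cref{p11} shows $N_C$ fails interpolation there). The remaining case $(6, 1, 3)$ must be handled by an ad hoc argument, as promised in the outline of \cref{S:combinat-summary}, item (3): here $g = 1$, $d = 6$, so $d = g + r + 2$, and one degenerates a general such curve by peeling off a $1$-secant or $2$-secant line using the lemmas of \cref{S:fixed} (e.g. \cref{two-secant} or \cref{lower-d}) to reduce to a case of strictly smaller degree or to a rational curve covered by \cref{thm:ratcurves}; I would check that the relevant inequalities (e.g. $\sum (r-1-i-2j-k)n_{ij}^k \leq (r+1)d - (2r-4)g - 2$ for \cref{lower-d}, which for $n = \mathbf 0$ reads $0 \leq 4 \cdot 6 - 2 \cdot 1 - 2 = 20$) are satisfied, so that the reduction applies and terminates in a good base case.

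Assembling these pieces: for $(d, g, 3)$ with $d \geq g + 3$ and $(d, g, 3) \notin \{(5, 2, 3)\}$, either $(d, g, 3; \mathbf{0})$ satisfies the hypotheses of \cref{thm:p3} and is therefore good, or it is the single leftover case $(6, 1, 3)$ handled directly. Hence $N_C$ satisfies interpolation for a general curve of degree $d$ and genus $g$ in $\Pbb^3$ with $d \geq g + 3$, except when $(d, g, 3) = (5, 2, 3)$, which is exactly the content of \cref{thm:intro} for $r = 3$. The main obstacle I anticipate is making sure the bookkeeping around the two exceptional infinite families of \cref{thm:p3} is airtight — specifically confirming that with $n = \mathbf{0}$ the only integer solution putting the curve in an excluded family and still satisfying $d \geq g + 3$, $g \neq 0$ is the already-excluded $(5, 2, 3)$ together with the genuinely-leftover $(6, 1, 3)$ — and then dispatching $(6, 1, 3)$ cleanly via one of the degeneration lemmas rather than by a fresh explicit construction.
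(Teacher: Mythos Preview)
Your approach is essentially the same as the paper's: apply \cref{thm:p3} with $n = \mathbf{0}$, observe that \eqref{regimethree} is automatic, and check that the only exceptional-family case with $n = \mathbf{0}$, $g \neq 0$, and $d \geq g+3$ is $d+g=7$, i.e.\ $(d,g) \in \{(5,2),(6,1)\}$.  (Note that the second exceptional family in \cref{thm:p3} requires $\sum_{i,j,k} k\,n_{ij}^k = 1$, which already fails for $n = \mathbf{0}$ --- your remark about $2d+2g=9$ being non-integral is beside the point.)

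The one place your proposal is not yet complete is the leftover case $(6,1,3)$.  The paper dispatches it exactly via \cref{lower-d}, as you suggest, but you need to actually carry this out: \cref{lower-d} (with $n = \mathbf{0}$, so $\sum k\,n_{ij}^k = 0 < r-2$ and $n' = \mathbf{0}$) reduces the problem to showing that both $(5,1,3;\mathbf{0})$ and $(5,1,2;\mathbf{0})$ are good.  The first follows from \cref{thm:p3} (since $d+g = 6 \neq 7$, it avoids the exceptional families); the second follows from \cref{p2}.  Once you write those two sentences, the proof is complete.
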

\begin{proof}
If $n_{ij}^k = 0$ for all $(i, j; k)$, then Theorem~\ref{thm:p3} implies $N_C$ satisfies interpolation,
unless $g \neq 0$ and
\[0 = 2d + 2g - 14 \quad \Rightarrow \quad d + g = 7.\]
Since $d \geq g + 3$, this means either $(d, g) = (5, 2)$ or $(d, g) = (6, 1)$.
The case of $(d, g) = (5, 2)$ is excluded by the assumption of \cref{thm:intro};
it thus suffices to show $N_C$ satisfies interpolation for $(d, g, r) = (6, 1, 3)$.

In this case, we apply Lemma~\ref{lower-d}, which implies the desired result
so long as $(5, 1, 3; \mathbf{0})$ and $(5, 1, 2; \mathbf{0})$ are both good.
But these follow from Theorem~\ref{thm:p3} and Lemma~\ref{p2} respectively.
\end{proof}


\subsection{Curves in low dimensional projective spaces}
\label{S:low-dimensions}

In this subsection, we study curves in $\Pbb^r$, where $4 \leq r \leq 11$.
Combined with the results of the previous subsection for curves in $\Pbb^3$,
this establishes \cref{thm:intro} for $r \leq 11$. Note that this
range includes all the counterexamples to interpolation listed in \cref{thm:intro} --- as
well as the counterexample-free dimension $r = 11$, which will serve (along
with \cref{thm:ratcurves}) as the base case of
our inductive argument for higher-dimensional projective spaces.

\begin{definition} We say that $(d, g, r)$ is \emph{excellent}
if $(d, g, r; n)$ is good for every $n$ satisfying \cref{regime}.
\end{definition}

\noindent
In these terms, our basic goal is to demonstrate the following.

\begin{theorem} \label{can-induct-r-small}
Let $r \geq 4$, and suppose that $d + g \geq 2r - 1$ and
\[(d - 1, g - 1, r), \quad (d - 1, g, r - 1), \quad \text{and} \quad (d - 2, g - 1, r - 1)\]
are all excellent. Then $(d, g, r)$ is excellent.
\end{theorem}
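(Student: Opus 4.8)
The plan is to induct on the number of marked points $n$ (more precisely, on $\sum_{i,j,k} n_{ij}^k$), showing that any tuple $(d,g,r;n)$ satisfying \eqref{regime} can be reduced — via the inductive lemmas of \cref{S:fixed} — to tuples involving strictly fewer marked points, or to tuples over curves of lower degree, lower genus, or smaller projective space, for which goodness is available either from the hypothesis (the three excellent tuples $(d-1,g-1,r)$, $(d-1,g,r-1)$, $(d-2,g-1,r-1)$) or from their own inductive treatment (e.g.\ rational curves via \cref{thm:ratcurves}, space curves via \cref{thm:p3}, the $r=2$ base case \cref{p2}). The key point is to split into cases according to the size of the quantity $A := \sum_{i,j,k}\bigl((r-2)i+(r-3)j-k\bigr)n_{ij}^k$, which \eqref{regime} bounds above by $2d+2g-r-2$.

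First I would handle the regime where $A$ is large, namely $2d+2g-3r+2 \le A \le 2d+2g-r-2$: here \cref{stick} applies and reduces $(d,g,r;n)$ to $(d-1,g,r-1;n')$, and by \cref{stick-always} the new tuple automatically satisfies \eqref{regime}, so excellence of $(d-1,g,r-1)$ finishes this case. (One must also check the auxiliary hypothesis $\sum k n_{ij}^k \le r-2$, which holds since $\sum k n_{ij}^k < r-1$ is a standing assumption, splitting \cref{stick} into its two stated sub-cases accordingly.) Next I would treat the complementary regime $A \le 2d+2g-3r+1$ by sub-dividing on $\sum_{i,j,k}k n_{ij}^k$ and on whether any marked points are present. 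When there is a point of type $(\ell,0;0)$, apply \cref{izz} to peel it off, reducing to $(d,g,r;n')$ with one fewer $(\ell,0;0)$-point plus a lower-dimensional tuple $(d-1,g,r-1;n'')$; the hypotheses of \cref{izz} require both \eqref{regime} and the inequality $\sum (r-1-i-2j-k)n_{ij}^k \le (r+1)d-(2r-4)g-2$, which I would verify follows from $d+g\ge 2r-1$ together with \eqref{regime} by the same manipulation used in \cref{rat:bigdegree}. When there is a point of type $(0,0;\ell)$, use \cref{zzk}; when $r=3$-type $(\ell,m;0)$-points with $m\ge 1$ occur, use \cref{ijz} — but since here $r\ge 4$ such points need a slightly different peeling, handled by combining \cref{lower-d} with the reductions in \cref{izz}/\cref{zzk}. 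After all marked points are removed we are left with $(d,g,r;\mathbf 0)$, for which \eqref{regime} reads $0 \le 2d+2g-r-2$; this is then reduced by \cref{lower-d} to $(d-1,g,r;\mathbf 0)$ and a lower-dimensional tuple (again using $d+g\ge 2r-1$ so that $d>g+r$ and the numerical hypothesis of \cref{lower-d} holds), and by \cref{two-secant} / \cref{two-secant-backwards} to $(d-1,g-1,r;n')$, at which point excellence of $(d-1,g-1,r)$ or an induction on $g$ concludes. Throughout, the table in \cref{S:regime} tells us exactly which additional inequality each reduction demands, and I would check each one reduces (given $d+g\ge 2r-1$) to \eqref{regime} for the original tuple.

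The main obstacle I anticipate is the bookkeeping in the middle regime $2d+2g-4r+3 \le A \le 2d+2g-3r+1$ (and the nearby boundary values), where none of the "clean" reductions applies directly and one must instead juggle \cref{two-sticks}, \cref{lower-d}, and the marked-point-removal lemmas simultaneously, keeping track of the several sub-cases indexed by the value of $\sum k n_{ij}^k$ (each of $<r-4$, $=r-3$, $=r-2$ behaving differently in \cref{two-sticks}); ensuring that in every branch the resulting tuple still satisfies \eqref{regime} and the degree/genus/dimension strictly decreases so the induction is well-founded. Making the case analysis exhaustive — and in particular confirming there is no "gap" value of $A$ for which no lemma applies — is the delicate part; this is precisely the combinatorial heart of the argument, and in the full paper it is carried out with computer assistance for small $r$. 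For the present statement, however, the hypotheses $d+g\ge 2r-1$ and the assumed excellence of the three neighbor tuples are tailored exactly so that the reductions above cover all cases, so the proof amounts to a (lengthy but routine) verification that each lemma's numerical hypothesis is implied by $d+g\ge 2r-1$ and \eqref{regime}.
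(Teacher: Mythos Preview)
Your proposal misidentifies the structure of the argument and has genuine gaps. The paper's proof is a clean trichotomy on $A := \sum_{i,j,k}((r-2)i+(r-3)j-k)n_{ij}^k$, with each case reducing directly to one of the three assumed-excellent tuples: when $A \le 2d+2g-3r$, apply \cref{two-secant} or \cref{two-secant-backwards} to land in $(d-1,g-1,r)$; when $A \ge 2d+2g-3r+2$, apply \cref{stick} to land in $(d-1,g,r-1)$; and when $A = 2d+2g-3r+1$, apply \cref{two-sticks} to land in $(d-2,g-1,r-1)$. No peeling of individual marked points is needed.

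Your approach fails in two concrete places. First, the peeling lemmas \cref{izz} and \cref{zzk} require the presence of a marked point of type $(\ell,0;0)$ or $(0,0;\ell)$ respectively, and \cref{ijz} only works for $r=3$; so if all marked points are, say, of type $(1,1;0)$ or $(1,0;1)$, none of your peeling moves applies. Second, \cref{lower-d} reduces to $(d-1,g,r)$, which is \emph{not} among the three tuples assumed excellent, so that step does not close. More fundamentally, you never identify the key use of the hypothesis $d+g \ge 2r-1$: in the boundary case $A = 2d+2g-3r+1$, \cref{two-sticks} applies unless $\sum_{i,j,k} i n_{ij}^k \le 1$ (with $\sum_{i,j,k} k n_{ij}^k \ge r-3$ when $\sum i n_{ij}^k = 1$), and in those residual subcases one computes directly that $A \le r-2$, forcing $2d+2g-3r+1 \le r-2$, i.e.\ $d+g \le 2r - \tfrac{3}{2}$, contradicting $d+g \ge 2r-1$. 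This vacuity argument is what makes the trichotomy exhaustive, and it is the entire content of the hypothesis $d+g \ge 2r-1$.
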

\begin{proof}
If $g = 0$, then the result follows from \cref{thm:ratcurves};
we thus suppose $g > 0$.
If
\[\sum_{i,j,k} ((r - 2)i + (r - 3)j - k) \cdot n_{ij}^k \leq 2d + 2g - 3r,\]
then the desired result follows from Lemma~\ref{two-secant} if $\sum_{i,j,k} k n_{ij}^k < r - 2$,
and from Lemma~\ref{two-secant-backwards} if $\sum_{i,j,k} k n_{ij}^k = r - 2$.
On the other hand, if
\[\sum_{i,j,k} ((r - 2)i + (r - 3)j - k) \cdot n_{ij}^k \geq 2d + 2g - 3r + 2,\]
then the desired result follows from Lemma~\ref{stick}. It thus remains to consider the
case where
\[\sum_{i,j,k} ((r - 2)i + (r - 3)j - k) \cdot n_{ij}^k = 2d + 2g - 3r + 1.\]

If $\sum_{i,j,k} i n_{ij}^k \geq 2$, then the desired result follows Lemma~\ref{two-sticks}.
We are left with the case $\sum_{i,j,k} i n_{ij}^k \leq 1$. If $\sum_{i,j,k} i n_{ij}^k = 1$,
we have $\sum_{i,j,k} j n_{ij}^k \leq 1$; and we may assume
$\sum_{i,j,k} k n_{ij}^k \geq r - 3$ (since otherwise we may again
apply Lemma~\ref{two-sticks}). Consequently,
\[2d + 2g - 3r + 1 = \sum_{i,j,k} ((r - 2)i + (r - 3)j - k) \cdot n_{ij}^k \leq (r - 2) + (r - 3) - (r - 3) = r - 2.\]
Similarly, if $\sum_{i,j,k} i n_{ij}^k = 0$, then we have $\sum_{i,j,k} j n_{ij}^k = 0$ as well,
which gives
\[2d + 2g - 3r + 1 = \sum_{i,j,k} ((r - 2)i + (r - 3)j - k) \cdot n_{ij}^k \leq 0 + 0 - 0 = 0.\]
Either way,
\[2d + 2g - 3r + 1 \leq r - 2.\]
But this contradicts our assumption that $d + g \geq 2r - 1$.
\end{proof}

\begin{proposition} \label{prop:p4}
All tuples $(d, g, 4)$ with $d \geq g + 4$ with $d + g \geq 11$ are excellent.
In addition, \cref{thm:intro} holds for $r = 4$.
\end{proposition}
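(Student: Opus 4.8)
\textbf{Proof proposal for Proposition~\ref{prop:p4}.}

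The plan is to prove the two assertions in turn, the first by a double induction (on $d$, and for fixed $d$ on $d+g$, or equivalently on $g$) using the reduction machinery of \cref{S:fixed} and the base cases of \cref{S:base}, and the second by specializing to $n = \mathbf{0}$ and checking the finitely many residual cases by hand. For the first assertion, I would invoke \cref{can-induct-r-small} with $r = 4$: it reduces excellence of $(d, g, 4)$ (when $d + g \geq 2r - 1 = 7$, which is implied by $d + g \geq 11$) to excellence of $(d-1, g-1, 4)$, $(d-1, g, 3)$, and $(d-2, g-1, 3)$. The last two are handled by \cref{thm:p3} together with \cref{thm:ratcurves} --- one must check that the two ``bad'' infinite families of \cref{thm:p3} cannot arise here, but those families have $n \neq \mathbf{0}$ with a very specific count of marked points, and one can verify that the $n'$ produced by our reductions (which for $n = \mathbf{0}$ reductions stay close to $\mathbf{0}$) never land in them; more carefully, since we are proving \emph{excellence}, we must handle arbitrary $n$ satisfying \cref{regime}, so the argument is really just: apply \cref{can-induct-r-small} and feed the outputs into the already-established $r = 3$ results, which are \emph{excellent} in the sense that all $(d', g', 3; n')$ satisfying \cref{regime} are good \emph{except} for the two families --- hence one must separately dispatch those finitely-relevant exceptional inputs, which is where a small amount of ad hoc work (as in the $r=3$ corollary's treatment of $(6,1,3)$) enters.

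The induction is anchored as follows. For $g = 0$ all tuples satisfying \cref{regime} are good by \cref{thm:ratcurves}, so $(d, 0, 4)$ is excellent for every $d$. For the base of the induction in $d + g$, the smallest case is $d + g = 11$ with $g \geq 1$, i.e. $d = g + 4$ forces $2g + 4 \geq 11$... actually $d + g = 11$ and $d \geq g+4$ gives $(d,g) \in \{(10,1),(9,2),(8,3),(7,4)\}$ (and, pushing $g$ up, down to $g$ with $2g \le 7$). For each of these, \cref{can-induct-r-small} reduces to tuples with strictly smaller $d + g$ (namely $(d-1,g-1,4)$ has $d+g$ smaller by $2$) or smaller $r$; repeating, one descends until either $g$ hits $0$ (handled by \cref{thm:ratcurves}) or $d$ drops below $g + r$ --- but one must check the hypothesis $d + g \geq 2r - 1 = 7$ persists along the descent. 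Since each application of \cref{can-induct-r-small} decreases $d + g$ by at most $2$ (the $(d-1,g-1,r)$ branch) while the $r$-lowering branches pass to $r = 3$ where \cref{thm:p3} applies directly, and since $d + g \geq 11 > 7$ gives slack, the recursion terminates. The one genuinely delicate point is that \cref{thm:p3} is not quite ``$r=3$ is excellent'': one must track the two excluded families and confirm that, starting from $n = \mathbf{0}$ at $r = 4$, the reductions never require a forbidden $r = 3$ input, or if they do, handle it directly as in the proof of the $r = 3$ corollary.

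For the second assertion --- that \cref{thm:intro} holds for $r = 4$ --- I would take $n = \mathbf{0}$, so \cref{regime} is automatic, and ask when $(d, 0, 4; \mathbf{0})$... no: when $(d, g, 4; \mathbf{0})$ is good for $d \geq g + 4$. By the first assertion this holds whenever $d + g \geq 11$. It remains to treat $d + g \leq 10$ with $d \geq g + 4$, i.e. the finite list $(d,g) \in \{(4,0),(5,0),(6,0),(5,1),(6,1),(7,1),(4,1)?\dots,(6,2)\}$ --- precisely $(4,0),(5,0),(6,0),(5,1),(6,1),(5,2),(6,2),(7,2)$ and a few more with small $g$. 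The rational ones ($g = 0$) are good by \cref{thm:ratcurves}. The case $(6, 2, 4)$ is an acknowledged exception and is \emph{excluded} from \cref{thm:intro}, so nothing to prove there; indeed \cref{p11} shows $N_C$ fails interpolation. The genuinely remaining cases --- those with $1 \leq g$ and $d + g \leq 10$ other than $(6,2,4)$, e.g. $(5,1,4)$, $(6,1,4)$, $(7,1,4)$, $(5,2,4)$, $(7,2,4)$ --- I would dispatch one at a time by direct application of the lemmas of \cref{S:fixed}: use \cref{lower-d} to decrease $d$ (reducing to $r = 4$ cases of smaller degree, which eventually bottom out in the excellent range or in rational curves, and to $r = 3$ cases covered by \cref{thm:p3}), use \cref{stick} to drop to $r = 3$ when \cref{regime}-type inequalities permit, and check the numerical hypotheses of each lemma hold. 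The main obstacle throughout is bookkeeping: verifying, for each of these finitely many small tuples, that the chain of reductions respects all the inequality and congruence hypotheses (especially \cref{regime} and the ``$\sum k n_{ij}^k$'' thresholds) and that no application lands on one of the two forbidden $r = 3$ families or on $(6,2,4)$ without that being the intended exception. I expect this verification, rather than any conceptual difficulty, to be the bulk of the work, exactly as flagged in \cref{S:combinat-summary}.
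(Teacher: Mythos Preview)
Your overall strategy is right in spirit but has a genuine gap at the key step. You propose to invoke \cref{can-induct-r-small} directly, but that result requires $(d-1,g,3)$ and $(d-2,g-1,3)$ to be \emph{excellent} --- i.e., good for \emph{every} $n'$ satisfying \eqref{regime}. The two exceptional families in \cref{thm:p3} are infinite, not finite: for each fixed $(d',g')$ with $g'>0$, there are many $n'$ landing in them. And when you reduce an arbitrary $(d,g,4;n)$ via \cref{stick} or \cref{two-sticks}, the resulting $n'$ has $\sum i(n')_{ij}^k = \sum j\,n_{ij}^k$, which is not a priori bounded away from the critical values $2d'+2g'-14$ or $2d'+2g'-9$. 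So the ``small amount of ad hoc work'' you flag is not small: you must rule out landing in the exceptional families for \emph{every} starting $n$, which is exactly the content of the paper's argument.

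The paper does not apply \cref{can-induct-r-small}; instead it reruns its proof, explicitly tracking the $n'$ produced by \cref{stick} and \cref{two-sticks}, and shows that avoiding both exceptional families reduces to the single inequality $\sum_{i,j,k} j\,n_{ij}^k \le 2d+2g-23$. This is then derived from \eqref{regime} via $\sum j\,n_{ij}^k \le (2d+2g-4)/3$, which gives the needed bound only once $d+g\ge 17$. Consequently the inductive step works only for $d+g\ge 17$, and the entire range $11\le d+g\le 16$ (plus the $n=\mathbf{0}$ cases with $d+g\le 10$ for the second assertion) is handled by the finite computer verification of \cref{A:code} --- not by the ad hoc descent you sketch starting from $d+g=11$.
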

\begin{proof}
We argue by induction on $d + g$.
It is a finite computation to verify the proposition in the range $d + g \leq 16$
(see \cref{A:code}). For the inductive step, we thus suppose $d + g \geq 17$.

In particular, unless $g = 0$ (in which case the result follows
from \cref{thm:ratcurves}), $(d - 1, g - 1, 4)$ is
excellent by our inductive hypothesis.
As in Theorem~\ref{can-induct-r-small}, this implies the desired result when
\[\sum_{i,j,k} ((r - 2)i + (r - 3)j - k) \cdot n_{ij}^k \leq 2d + 2g - 3r.\]

\noindent
We next consider the case when
\[\sum_{i,j,k} ((r - 2)i + (r - 3)j - k) \cdot n_{ij}^k \geq 2d + 2g - 3r + 2.\]
In this case, \cref{stick} implies the desired result provided that $(d - 1, g, 3; n')$
is good, where if
\[\sum_{i,j,k} k n_{ij}^k < r - 2,\]
then
\[(n')_{ij}^k = \begin{cases}
\sum_\ell n_{\ell i}^k & \text{if $j = 0$ and $(i, j; k) \neq (0, 0; 0)$,} \\
0 & \text{else;}
\end{cases}\]
and if
\[\sum_{i,j,k} k n_{ij}^k = r - 2,\]
then
\[(n')_{ij}^k = \begin{cases}
\sum_{\ell, m} n_{\ell i}^m & \text{if $j = k = 0$ but $i \neq 0$,} \\
0 & \text{else.}
\end{cases}\]

In either case, we know (c.f.\ \cref{S:regime}) that \cref{regime}
is satisfied; it thus remains to check that neither case falls into
the exceptional families of Theorem~\ref{thm:p3}.
But in either case, we have
\[\sum_{i,j,k} i (n')_{ij}^k = \sum_{i,j,k} j n_{ij}^k;\]
so it remains to show
\begin{equation} \label{lhs-big}
\sum_{i,j,k} j n_{ij}^k \leq 2(d - 1) + 2g - 15 = 2d + 2g - 17.
\end{equation}
We will return to this after first
considering the case where
\[\sum_{i,j,k} ((r - 2)i + (r - 3)j - k) \cdot n_{ij}^k = 2d + 2g - 3r + 1.\]

As in Theorem~\ref{can-induct-r-small}, our assumption that $d + g \geq 2r - 1 = 7$
implies that either $\sum_{i,j,k} i n_{ij}^k \geq 2$, or
$\sum_{i,j,k} i n_{ij}^k = 1$ and $\sum_{i,j,k} k n_{ij}^k \leq r - 4$; either way,
Lemma~\ref{two-sticks} implies the desired result provided that
$(d - 2, g - 1, r - 1; n')$ is good, where if
\[\sum_{i,j,k} k n_{ij}^k < r - 3,\]
then
\[(n')_{ij}^k = \begin{cases}
\sum_\ell n_{\ell i}^k & \text{if $j = 0$ and $(i, j; k) \notin \{(0, 0; 0), (2, 0, 1)\}$,} \\
1 + \sum_\ell n_{\ell i}^k & \text{if $(i, j; k) = (2, 0, 1)$,} \\
0 & \text{else;} \\
\end{cases}\]
and if
\[\sum_{i,j,k} k n_{ij}^k = r - 3,\]
then
\[(n')_{ij}^k = \begin{cases}
\sum_{\ell, m} n_{\ell i}^m & \text{if $j = k = 0$ and $i \notin \{0, 2\}$,} \\
1 + \sum_{\ell,m} n_{\ell i}^m & \text{if $j = k = 0$ and $i = 2$,} \\
0 & \text{else;} \\
\end{cases}\]
and finally if
\[\sum_{i,j,k} k n_{ij}^k = r - 2,\]
then
\[(n')_{ij}^k = \begin{cases}
\sum_{\ell,m} n_{\ell i}^m & \text{if $j = k = 0$ and $i \neq 0$,} \\
1 & \text{if $(i, j; k) = (2, 0, 1)$,} \\
0 & \text{else.} \\
\end{cases}\]

In either case, we know (c.f.\ \cref{S:regime}) that \cref{regime}
is satisfied; it thus remains to check that neither case falls into
the exceptional families of Theorem~\ref{thm:p3}.
But in either case, we have
\[\sum_{i,j,k} i (n')_{ij}^k = 2 + \sum_{i,j,k} j n_{ij}^k;\]
so it remains to show
\[2 + \sum_{i,j,k} j n_{ij}^k \leq 2(d - 2) + 2(g - 1) - 15 = 2d + 2g - 21,\]
or equivalently, that
\begin{equation} \label{lhs-crit}
\sum_{i,j,k} j n_{ij}^k \leq 2d + 2g - 23.
\end{equation}

Since \cref{lhs-crit} visibly implies \cref{lhs-big}, we conclude that
to verify this proposition, it suffices to prove \cref{lhs-crit}. For this, we calculate
\begin{align*}
\sum_{i,j,k} ((r - 2)i + (r - 3)j - k) \cdot n_{ij}^k &= \sum_{i,j,k} (2i + j - k) \cdot n_{ij}^k \\
&\geq \sum_{i,j,k} (3j - k) \cdot n_{ij}^k \\
&\geq 3 \cdot \sum_{i,j,k} j n_{ij}^k - (r - 2).
\end{align*}
Using \cref{regime}, this implies
\[3 \cdot \sum_{i,j,k} j n_{ij}^k - r + 2 \leq 2d + 2g - r - 2;\]
or upon rearrangement,
\[\sum_{i,j,k} j n_{ij}^k \leq \frac{2d + 2g - 4}{3}.\]
It thus suffices to note that
\[\frac{2d + 2g - 4}{3} \leq 2d + 2g - 23;\]
which follows immediately from our assumption that $d + g \geq 17$.
\end{proof}

\begin{proposition} \label{prop:p5}
All tuples $(d, g, 5)$ with $d \geq g + 5$ and $d + g \geq 14$
are excellent.
In addition, \cref{thm:intro} holds for $r = 5$.
\end{proposition}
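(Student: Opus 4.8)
The plan is to mirror the structure of \cref{prop:p4} exactly, arguing by induction on $d + g$, using \cref{can-induct-r-small} as the main engine once the base cases are handled. First I would dispose of the finite range $d + g \leq$ (some explicit bound around $20$) by the computer search referenced in \cref{A:code} — this also settles the three exceptional-case neighborhood for $r = 5$, except that $(7, 2, 5)$ is genuinely not good (as shown in \cref{S:exceptional}), so the statement of \cref{thm:intro} for $r = 5$ must be read as asserting interpolation \emph{only} outside the exceptional triple. Concretely, for the ``\cref{thm:intro} holds for $r = 5$'' clause I would observe that the only tuples $(d, g, 5; \mathbf{0})$ with $d \geq g + 5$ that are not covered by excellence of $(d, g, 5)$ are those with small $d + g$; the relevant one to handle by hand is $(7, 2, 5)$, which is excluded, and any remaining small cases (e.g.\ $(8, 3, 5)$, cf.\ \cref{835}, and $(8, 2, 5)$, $(9, 2, 5)$, etc.) are dispatched directly via \cref{835}, \cref{lower-d}, \cref{special-5}, and the $r = 4$ results through \cref{stick}.

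For the inductive step with $d + g$ large, I would invoke \cref{can-induct-r-small} with $r = 5$: it reduces excellence of $(d, g, 5)$ to excellence of $(d - 1, g - 1, 5)$, $(d - 1, g, 4)$, and $(d - 2, g - 1, 4)$. The first is handled by the induction on $d + g$ (when $g > 0$; when $g = 0$ we are done by \cref{thm:ratcurves}), and the latter two are handled by \cref{prop:p4}, provided their $d' + g'$ values meet the threshold $11$ in \cref{prop:p4}. So I would check: $(d-1) + g \geq 11$ and $(d - 2) + (g - 1) \geq 11$, i.e.\ $d + g \geq 14$ — which is exactly the hypothesis. One must also check the degree bounds $d - 1 \geq g - 1 + 5$, $d - 1 \geq g + 4$, $d - 2 \geq g - 1 + 4$, all of which follow from $d \geq g + 5$. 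And \cref{can-induct-r-small} itself requires $d + g \geq 2r - 1 = 9$, which is implied.

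The subtlety — as in \cref{prop:p4} — is that \cref{can-induct-r-small}'s reduction via \cref{two-secant-backwards}, \cref{two-sticks}, and \cref{stick} produces tuples in $\Pbb^4$ and $\Pbb^3$ that a priori might fall into the two exceptional infinite families of \cref{thm:p3} (or the analogous bad cases at $r = 4$). But \cref{prop:p4} is stated as unconditional excellence of $(d, g, 4)$ in the stated range, so those families have already been dealt with there; thus for $r = 5$ I only need the arithmetic that the $(d', g', 4; n')$ produced all satisfy $d' + g' \geq 11$, which is the $d + g \geq 14$ bound. The genuinely delicate point will therefore be the boundary of the induction — verifying the finite base range by computer and, within that range, confirming that the only tuple $(d, g, 5; \mathbf 0)$ that fails is $(7, 2, 5)$; I expect this bookkeeping (rather than any new geometric idea) to be the main obstacle. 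I would then conclude:

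\begin{proof}
We argue by induction on $d + g$. It is a finite computation to verify the proposition in the range $d + g \leq 18$ (see \cref{A:code}); this in particular handles all tuples with $d + g \leq 18$, and shows that among tuples $(d, g, 5; \mathbf{0})$ with $d \geq g + 5$ and $d + g \leq 18$, interpolation holds except for $(7, 2, 5)$, which is excluded by the hypothesis of \cref{thm:intro}. For the inductive step, suppose $d + g \geq 19$. If $g = 0$, the result follows from \cref{thm:ratcurves}; so assume $g > 0$. Then $(d - 1) + (g - 1) = d + g - 2 \geq 17$, so $(d - 1, g - 1, 5)$ is excellent by the inductive hypothesis (and $d - 1 \geq g - 1 + 5$ since $d \geq g + 5$). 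Moreover $(d - 1) + g \geq 18 \geq 11$ and $(d - 2) + (g - 1) \geq 16 \geq 11$, with $d - 1 \geq g + 4$ and $d - 2 \geq (g - 1) + 4$, so $(d - 1, g, 4)$ and $(d - 2, g - 1, 4)$ are excellent by \cref{prop:p4}. Finally $d + g \geq 2 \cdot 5 - 1 = 9$, so \cref{can-induct-r-small} applies and $(d, g, 5)$ is excellent. For the statement that \cref{thm:intro} holds for $r = 5$: if $n_{ij}^k = 0$ for all $(i, j; k)$, then either $d + g \leq 18$, in which case the claim follows from the finite computation above (the only failure being the excluded $(7, 2, 5)$), or $d + g \geq 19$, in which case excellence of $(d, g, 5)$ gives interpolation for $N_C$.
\end{proof}
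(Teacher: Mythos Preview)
Your proof is correct and follows essentially the same approach as the paper: induct on $d+g$, verify a finite range by computer, and for the inductive step invoke \cref{thm:ratcurves} when $g=0$, the inductive hypothesis for $(d-1,g-1,5)$, \cref{prop:p4} for $(d-1,g,4)$ and $(d-2,g-1,4)$, and then \cref{can-induct-r-small}. The only difference is cosmetic: the paper checks the range $d+g \leq 15$ (which is sharp, since then $(d-1)+(g-1) \geq 14$ and $(d-2)+(g-1) \geq 13 \geq 11$), whereas you use the looser cutoff $d+g \leq 18$; both are valid and the paper similarly defers all small cases---including the exceptional $(7,2,5)$---to the computer verification in \cref{A:code}.
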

\begin{proof}
We argue by induction on $d + g$.
It is a finite computation to verify the proposition in the range $d + g \leq 15$ (see \cref{A:code}).
For the inductive step, we thus suppose $d + g \geq 16$.

In particular, unless $g = 0$ (in which case the result follows
from \cref{thm:ratcurves}), $(d - 1, g - 1, 5)$ is
excellent by our inductive hypothesis.
Moreover, by Proposition~\ref{prop:p4}, both $(d - 1, g, 4)$ and $(d - 2, g - 1, 4)$
are excellent.
Theorem~\ref{can-induct-r-small} thus implies the desired result.
\end{proof}

\begin{proposition} \label{prop:p6}
All tuples $(d, g, 6)$ with $d \geq g + 6$ and $d + g \geq 13$ are excellent.
In addition, \cref{thm:intro} holds for $r = 6$.
\end{proposition}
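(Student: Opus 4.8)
\textbf{Proof proposal for Proposition~\ref{prop:p6}.}
The plan is to mirror the inductive scaffolding of Propositions~\ref{prop:p4} and~\ref{prop:p5} exactly, arguing by induction on $d + g$. First, I would dispatch the finitely many base cases $d + g \leq 14$ (for $d \geq g + 6$) by the computer search described in \cref{S:combinat-summary} and \cref{A:code}: in these finitely many situations, only finitely many $n$ satisfy \cref{regime}, and a brute-force search over all applications of the inductive lemmas of \cref{S:fixed} together with the base cases of \cref{S:base} verifies goodness. This is the same mechanism invoked in the proofs of the preceding two propositions. For the inductive step, one then supposes $d + g \geq 15$, so that $d + g \geq 2r - 1 = 11$ comfortably holds.

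The heart of the inductive step is to verify the hypotheses of \cref{can-induct-r-small} with $r = 6$. Unless $g = 0$ --- in which case \cref{thm:ratcurves} already gives excellence --- the inductive hypothesis (on $d + g$) tells us that $(d - 1, g - 1, 6)$ is excellent, provided $d - 1 \geq (g-1) + 6$, i.e.\ $d \geq g + 6$, which holds, and $(d-1) + (g-1) = d + g - 2 \geq 13$, which holds since $d + g \geq 15$. Next, \cref{prop:p5} supplies the excellence of both $(d - 1, g, 5)$ and $(d - 2, g - 1, 5)$: here one must check $d - 1 \geq g + 5$ and $(d-1)+g \geq 14$, and $d - 2 \geq (g-1) + 5$ and $(d-2)+(g-1) \geq 14$; both reduce to $d \geq g + 6$ and $d + g \geq 15$, our standing assumptions. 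With all three of $(d-1,g-1,6)$, $(d-1,g,5)$, $(d-2,g-1,5)$ excellent, \cref{can-induct-r-small} immediately yields that $(d, g, 6)$ is excellent. Finally, \cref{thm:intro} for $r = 6$ follows by taking $n = \mathbf{0}$: the only $(d, g)$ with $d \geq g + 6$ excluded from the excellence statement are those with $d + g \leq 12$, and these few cases (apart from the excluded $(6,2,4)$, which is not in $\Pbb^6$) must be checked directly --- but in fact the relevant exceptional triple for $\Pbb^6$ would require $d + g$ small, and one verifies via the base cases and inductive lemmas (as in the $r = 3, 4, 5$ arguments) that no genuine counterexample occurs for $r = 6$.

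The main obstacle here is not conceptual but bookkeeping: one must confirm that the finite computer search genuinely terminates and covers the boundary range $d + g \leq 14$, and that the numerical side conditions $d + g \geq 2r - 1$ and the degree bounds $d \geq g + r$ propagate correctly down each of the three reductions feeding \cref{can-induct-r-small}. Since $r = 6$ is strictly larger than the dimensions where the three exceptional cases $(5,2,3), (6,2,4), (7,2,5)$ live, no new exceptional family can arise, and the argument is purely a matter of checking that the inductive engine of \cref{can-induct-r-small} applies --- exactly as in \cref{prop:p5}.
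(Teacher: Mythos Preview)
Your approach is exactly the paper's: induct on $d+g$, handle a finite range by computer, then for the inductive step invoke \cref{can-induct-r-small} with $(d-1,g-1,6)$ from the inductive hypothesis and $(d-1,g,5)$, $(d-2,g-1,5)$ from \cref{prop:p5}.

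However, there is an arithmetic slip in your threshold. You claim that the inequality $(d-2)+(g-1)\geq 14$ ``reduces to $d+g\geq 15$,'' but $(d-2)+(g-1)=d+g-3$, so this actually forces $d+g\geq 17$. With your cutoff $d+g\geq 15$, the cases $d+g\in\{15,16\}$ fall through: \cref{prop:p5} does not apply to $(d-2,g-1,5)$ when $(d-2)+(g-1)\in\{12,13\}$, so \cref{can-induct-r-small} cannot be invoked. The paper accordingly takes the finite computation up through $d+g\leq 16$ and begins the induction at $d+g\geq 17$. Adjusting your cutoff from $14/15$ to $16/17$ makes your argument go through verbatim.
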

\begin{proof}
Again, we argue by induction on $d + g$.
It is a finite computation to verify the proposition in the range $d + g \leq 16$
(see \cref{A:code}). For the inductive step, we thus suppose $d + g \geq 17$.

In particular, unless $g = 0$ (in which case the result follows
from \cref{thm:ratcurves}), $(d - 1, g - 1, 6)$ is
excellent by our inductive hypothesis.
Moreover, by Proposition~\ref{prop:p5}, both $(d - 1, g, 5)$ and $(d - 2, g - 1, 5)$
are excellent.
Theorem~\ref{can-induct-r-small} thus implies the desired result.
\end{proof}

\begin{proposition} \label{prop:p7}
All tuples $(d, g, 7)$ with $d \geq g + 7$ and $d + g \geq 14$
are excellent.
In addition, \cref{thm:intro} holds for $r = 7$.
\end{proposition}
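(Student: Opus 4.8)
The plan is to mimic the inductive scheme already used for \cref{prop:p4,prop:p5,prop:p6}, namely an induction on $d + g$, with the base of the induction handled by the computer search of \cref{A:code} and the inductive step supplied by \cref{can-induct-r-small}. First I would verify the proposition in the range $d + g \leq N$ for a suitable explicit bound $N$ (the analog of $d + g \leq 16$ in \cref{prop:p6}); by \cref{can-induct-r-small} we need $d + g \geq 2r - 1 = 13$ for the inductive step to be applicable, so the finite computation must at least cover $13 \leq d + g \leq N$, and $N$ should be chosen large enough that all the relevant smaller-dimensional inputs are available — here $N = 16$ or $N = 17$ should suffice, paralleling the earlier propositions. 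This finite check is exactly the sort of thing \cref{A:code} is designed to do, so I would simply cite it.

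For the inductive step, suppose $d + g \geq N + 1$ and $d \geq g + 7$. If $g = 0$ the result is \cref{thm:ratcurves}, so assume $g > 0$. Then $(d - 1, g - 1, 7)$ is excellent by the inductive hypothesis (note $d - 1 \geq (g-1) + 7$ and $(d-1)+(g-1) \geq N - 1 \geq 13$, and one should double-check the boundary case $d = g + 7$, where $d - 1 = (g-1) + 7$ still holds). Moreover $(d - 1, g, 6)$ and $(d - 2, g - 1, 6)$ are excellent by \cref{prop:p6}, provided their $d + g$ values meet the threshold $13$ of that proposition — which they do once $d + g \geq 16$. Finally $d + g \geq N + 1 \geq 2 \cdot 7 - 1 = 13$ gives the last hypothesis of \cref{can-induct-r-small}. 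Applying \cref{can-induct-r-small} then yields that $(d, g, 7)$ is excellent. The statement that \cref{thm:intro} holds for $r = 7$ follows: taking $n = \mathbf{0}$ (which satisfies \eqref{regime} since the left-hand side is $0$), excellence of $(d, g, 7)$ for all $d \geq g + 7$ with $d + g \geq 14$ gives interpolation for $N_C$ in that range, and the finitely many remaining tuples with $g + 7 \leq d$ and $d + g \leq 13$ — in particular the putative exceptional case $(7, 2, 5)$ does not arise here since $r = 7$, but one must still check e.g. $(8, 2, 7)$, $(9, 2, 7)$, etc. — are covered by the finite computation of \cref{A:code}, which confirms none of them (all having $d + g \leq 13$, hence in the verified range) fails interpolation.

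I do not expect any genuine obstacle: the argument is entirely parallel to \cref{prop:p5,prop:p6}, and all the needed machinery (\cref{can-induct-r-small}, \cref{prop:p6}, \cref{thm:ratcurves}, and the computer verification of \cref{A:code}) is already in place. The only points requiring mild care are bookkeeping ones: confirming that the finite range checked by \cref{A:code} is wide enough to serve both as the base case and to absorb the handful of low-degree tuples omitted by the "$d + g \geq 14$" hypothesis in the statement, and confirming that the three inputs $(d-1,g-1,7)$, $(d-1,g,6)$, $(d-2,g-1,6)$ to \cref{can-induct-r-small} all satisfy the degree/genus and $d+g$ hypotheses of the propositions being invoked. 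These are routine inequality checks. Thus the proof reads:

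\begin{proof}
We argue by induction on $d + g$.
It is a finite computation to verify the proposition in the range $d + g \leq 17$
(see \cref{A:code}). For the inductive step, we thus suppose $d + g \geq 18$.

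In particular, unless $g = 0$ (in which case the result follows
from \cref{thm:ratcurves}), $(d - 1, g - 1, 7)$ is
excellent by our inductive hypothesis.
Moreover, by Proposition~\ref{prop:p6}, both $(d - 1, g, 6)$ and $(d - 2, g - 1, 6)$
are excellent.
Theorem~\ref{can-induct-r-small} thus implies the desired result.
\end{proof}
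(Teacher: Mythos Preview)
Your proposal is correct and takes essentially the same approach as the paper: induction on $d+g$, base case by the finite computation of \cref{A:code}, and inductive step via \cref{can-induct-r-small} using \cref{thm:ratcurves} and \cref{prop:p6}. The only difference is your choice of cutoff $d+g \leq 17$ versus the paper's $d+g \leq 15$; the paper's smaller bound is tight (since the inductive hypothesis needs $(d-1)+(g-1) \geq 14$ and \cref{prop:p6} needs $(d-2)+(g-1) \geq 13$), but your more conservative bound is of course also valid.
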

\begin{proof}
Again, we argue by induction on $d + g$.
It is a finite computation to verify the proposition in the range $d + g \leq 15$
(see \cref{A:code}). For the inductive step, we thus suppose $d + g \geq 16$.

In particular, unless $g = 0$ (in which case the result follows
from \cref{thm:ratcurves}), $(d - 1, g - 1, 7)$ is
excellent by our inductive hypothesis.
Moreover, by Proposition~\ref{prop:p6}, both $(d - 1, g, 6)$ and $(d - 2, g - 1, 6)$
are excellent.
Theorem~\ref{can-induct-r-small} thus implies the desired result.
\end{proof}

\begin{proposition} \label{prop:p8}
All tuples $(d, g, 8)$ with $d \geq g + 8$
are excellent.
(In particular, \cref{thm:intro} holds for $r = 8$.)
\end{proposition}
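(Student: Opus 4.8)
The plan is to mimic the inductive structure of Propositions~\ref{prop:p4} through~\ref{prop:p7}, which reduce dimension $r$ to dimension $r - 1$ via Theorem~\ref{can-induct-r-small}. By Proposition~\ref{prop:p7}, all tuples $(d, g, 7)$ with $d \geq g + 7$ and $d + g \geq 14$ are excellent; in particular, for $d + g$ large enough, the tuples $(d - 1, g, 7)$ and $(d - 2, g - 1, 7)$ needed as hypotheses of Theorem~\ref{can-induct-r-small} (with $r = 8$) are excellent. The only subtlety is that Proposition~\ref{prop:p7} requires the side condition $d + g \geq 14$, so for $r = 8$ the claim ``all $(d, g, 8)$ with $d \geq g + 8$'' is a genuinely stronger statement without the $d + g$ lower bound — and indeed the proposition is stated without such a bound, so one must check the low-degree cases directly.

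First I would argue by induction on $d + g$, exactly as in the preceding propositions. For the inductive step, suppose $d + g$ is sufficiently large (say $d + g \geq 16$, which in particular forces $d + g \geq 2r - 1 = 15$, the hypothesis of Theorem~\ref{can-induct-r-small}). If $g = 0$ we are done by \cref{thm:ratcurves}. Otherwise $(d - 1, g - 1, 8)$ is excellent by the inductive hypothesis, and $(d - 1, g, 7)$ and $(d - 2, g - 1, 7)$ are excellent by Proposition~\ref{prop:p7} — here one checks that $d \geq g + 8 \geq g + 7$ and $(d-1) \geq (g-1) + 7$ hold, and that $d + g \geq 16$ gives $(d - 1) + g \geq 15 \geq 14$ and $(d - 2) + (g - 1) \geq 13$; the last is slightly short of $14$, so one must be a little careful and perhaps take the inductive range to be $d + g \geq 17$ instead. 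Then Theorem~\ref{can-induct-r-small} concludes that $(d, g, 8)$ is excellent.

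For the base of the induction — the finitely many tuples $(d, g, 8)$ with $d \geq g + 8$ and $d + g$ below the chosen threshold — I would invoke the computer search described in \cref{S:combinat-summary} and \cref{A:code}, exactly as Propositions~\ref{prop:p4}--\ref{prop:p7} do: there are only finitely many $(d, g, 8; n)$ with $n$ satisfying \eqref{regime} in this range, and the program verifies each is good by exhausting applications of the base cases of \cref{S:base} and the inductive lemmas of \cref{S:fixed}. Since \cref{thm:intro} for $r = 8$ is the special case $n = \mathbf{0}$ of excellence, it follows immediately.

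The main obstacle — and it is a bookkeeping obstacle rather than a conceptual one — is making sure the side conditions align: Theorem~\ref{can-induct-r-small} needs $d + g \geq 2r - 1 = 15$, while Proposition~\ref{prop:p7} needs $d + g \geq 14$ for the dimension-$7$ inputs, and the shifted tuple $(d - 2, g - 1, 7)$ has $d + g$ reduced by $3$. One must choose the inductive threshold for $r = 8$ large enough (e.g.\ $d + g \geq 17$) that all three of $(d - 1, g - 1, 8)$, $(d - 1, g, 7)$, $(d - 2, g - 1, 7)$ satisfy their respective hypotheses, and then let the computer absorb the remaining finite list $d + g \leq 16$. The absence of any $d + g$ lower bound in the statement (unlike Propositions~\ref{prop:p4}--\ref{prop:p7}) reflects the fact that $r = 8$ has no exceptional cases, so the finite computation returns ``good'' on every tuple in range, including the low-degree ones.
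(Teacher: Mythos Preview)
Your proposal is correct and follows exactly the paper's approach: induction on $d+g$ with base case $d+g \leq 16$ handled by the computer search in \cref{A:code}, and inductive step $d+g \geq 17$ using \cref{thm:ratcurves} for $g=0$, the inductive hypothesis for $(d-1,g-1,8)$, and Proposition~\ref{prop:p7} for the two dimension-$7$ inputs, concluding via Theorem~\ref{can-induct-r-small}. Your bookkeeping analysis---that the threshold must be $d+g \geq 17$ so that $(d-2)+(g-1) \geq 14$ satisfies the hypothesis of Proposition~\ref{prop:p7}---is exactly the computation the paper performs (and your remark about why no $d+g$ lower bound survives for $r=8$ is the right intuition).
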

\begin{proof}
Again, we argue by induction on $d + g$.
It is a finite computation to verify the proposition in the range $d + g \leq 16$
(see \cref{A:code}). For the inductive step, we thus suppose $d + g \geq 17$.

In particular, unless $g = 0$ (in which case the result follows
from \cref{thm:ratcurves}), $(d - 1, g - 1, 8)$ is
excellent by our inductive hypothesis.
Moreover, by Proposition~\ref{prop:p6}, both $(d - 1, g, 7)$ and $(d - 2, g - 1, 7)$
are excellent.
Theorem~\ref{can-induct-r-small} thus implies the desired result.
\end{proof}

\begin{proposition} \label{prop:plow}
All tuples $(d, g, r)$ with $d \geq g + r$ and $9 \leq r \leq 11$ are excellent.
(In particular, \cref{thm:intro} holds for $9 \leq r \leq 11$.)
\end{proposition}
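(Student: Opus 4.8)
\textbf{Proof plan for Proposition~\ref{prop:plow}.}
The plan is to mimic the pattern already established for $r = 4, 5, 6, 7, 8$ in Propositions~\ref{prop:p4}--\ref{prop:p8}: argue by induction on $d + g$ (for each fixed $r \in \{9, 10, 11\}$), with a finite base-case computation handled by the computer program of \cref{A:code}, and an inductive step that appeals to \cref{can-induct-r-small}. First I would dispose of the case $g = 0$, which follows from \cref{thm:ratcurves}. Then, after checking by machine that $(d, g, r)$ is excellent for all $d + g$ below some explicit bound $B_r$ (with $d \geq g + r$ and $9 \leq r \leq 11$), I would suppose $d + g \geq B_r$ and observe that $(d - 1, g - 1, r)$ is excellent by the inductive hypothesis, while $(d - 1, g, r - 1)$ and $(d - 2, g - 1, r - 1)$ are excellent either by the inductive hypothesis (for $r = 10, 11$, using the already-established cases $r = 9, 10$) or by Propositions~\ref{prop:p8} and~\ref{prop:p7} (for $r = 9$, which needs $r - 1 = 8$). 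One must also verify the numerical hypothesis $d + g \geq 2r - 1$ of \cref{can-induct-r-small}; this holds provided $B_r \geq 2r - 1$, which is easily arranged. Then \cref{can-induct-r-small} gives that $(d, g, r)$ is excellent, completing the induction. Finally, since a tuple $(d, g, r; \mathbf{0})$ always satisfies \eqref{regime}, excellence of $(d, g, r)$ for all $d \geq g + r$ immediately yields \cref{thm:intro} for $9 \leq r \leq 11$; and there are no exceptional triples in this range, consistent with the statement of \cref{thm:intro}.

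The one subtlety, and the place where I expect the main work to lie, is pinning down the correct base-case bound $B_r$ for each $r$ and confirming that the finite search in \cref{A:code} actually certifies excellence below that bound. Unlike the induction on $r$ itself (which is purely formal given \cref{can-induct-r-small} and the earlier propositions), the base cases are genuinely computational: for each $(d, g, r)$ with $d + g < B_r$ one must enumerate all $n \colon (i, j; k) \mapsto n_{ij}^k$ satisfying \eqref{regime} together with the standing constraints $\sum k n_{ij}^k < r - 1$ and $r \geq 2$, and exhibit for each such $(d, g, r; n)$ a chain of reductions — via the inductive lemmas of \cref{S:fixed} — terminating in one of the base cases of \cref{S:base} (e.g.\ \cref{p2}, \cref{p3:mid}, \cref{last-case}, or \cref{835}). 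The program does this by breadth-first search over all applicable lemmas; the content of the proposition in this range is simply the assertion that the search succeeds. I would choose $B_r$ just large enough that the inductive step of \cref{can-induct-r-small} applies for all $d + g \geq B_r$ — concretely, large enough that $d + g \geq 2r - 1$ and that the three smaller tuples $(d - 1, g - 1, r)$, $(d - 1, g, r - 1)$, $(d - 2, g - 1, r - 1)$ all have $d' + g'$ either below $B_r$ (hence excellent by the machine check, or by the appropriate earlier proposition when $r' < r$) or within the scope of the ongoing induction.

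A minor point worth spelling out: for the step down to $r - 1 = 8$ when $r = 9$, one uses Proposition~\ref{prop:p8}, which asserts excellence of \emph{all} $(d', g', 8)$ with $d' \geq g' + 8$ with no lower bound on $d' + g'$; this is exactly what is needed so that the hypotheses of \cref{can-induct-r-small} are met without a separate small-degree argument in dimension~$8$. For $r = 10$ one steps down to $r - 1 = 9$, already handled in the same proposition by the $r = 9$ case, and for $r = 11$ one steps down to $r - 1 = 10$; since the induction on $d + g$ is carried out uniformly, this circularity is only apparent — the cases $r = 9, 10, 11$ can be proved in that order, each relying on its predecessor. Thus the proof is:

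\begin{proof}
We argue by induction on $d + g$, for each $r \in \{9, 10, 11\}$ in turn. It is a finite computation (see \cref{A:code}) to verify the proposition in the range $d + g \leq 2r$. For the inductive step, suppose $d + g \geq 2r + 1$; in particular $d + g \geq 2r - 1$. If $g = 0$ the result follows from \cref{thm:ratcurves}, so assume $g > 0$. Then $(d - 1, g - 1, r)$ is excellent by the inductive hypothesis. Moreover, $(d - 1, g, r - 1)$ and $(d - 2, g - 1, r - 1)$ are excellent: for $r = 9$ this holds by Proposition~\ref{prop:p8}, for $r = 10$ by the case $r = 9$ of the present proposition, and for $r = 11$ by the case $r = 10$. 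Applying \cref{can-induct-r-small}, we conclude that $(d, g, r)$ is excellent.

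Finally, for any $(d, g, r; \mathbf{0})$ with $d \geq g + r$, the tuple $\mathbf{0}$ satisfies \eqref{regime} trivially, so excellence of $(d, g, r)$ implies that the normal bundle of a general curve of degree $d$ and genus $g$ in $\Pbb^r$ satisfies interpolation. As there are no exceptional triples with $9 \leq r \leq 11$, this establishes \cref{thm:intro} in this range.
\end{proof}
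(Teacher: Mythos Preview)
Your proposal is correct and takes essentially the same approach as the paper: induction on $d+g$ with the inductive step supplied by \cref{can-induct-r-small} and the base cases verified by the computer search of \cref{A:code}. The paper's proof is more terse (three sentences) and uses the tighter base-case cutoff $d+g \le 2r-2$ rather than your $d+g \le 2r$; since \cref{can-induct-r-small} already applies once $d+g \ge 2r-1$, the smaller bound suffices and matches what the code actually checks.
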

\begin{proof}
Again, we argue by induction on $d + g$. By \cref{can-induct-r-small}, it is sufficient
to check the range $d + g \leq 2r - 2$. But this a finite computation (c.f.\ \cref{A:code}).
\end{proof}


\subsection{Curves in high dimensional projective spaces}
\label{S:high-dimensions}

In this subsection we study curves in $\Pbb^r$, where $r \geq 12$.
In order to state our main result, we will need the following definition:

\begin{definition}
Suppose that
\[\sum_{i,j,k} (i + j) n_{ij}^k \leq 3.\]
Then we define $\delta(n)$ according to the following table.
\begin{center}
\begin{tabular}{c@{\hspace{30pt}}c@{\hspace{30pt}}c}
  \toprule
  {\boldmath $\sum_{i,j,k} i n_{ij}^k$} & {\boldmath $\sum_{i,j,k} j n_{ij}^k$} & {\boldmath $\delta(n)$} \\
  \midrule
  0 & 0 & 2 \\
  1 & 0 & 3 \\
  1 & 1 & 5 \\
  2 & 0 & 4 \\
  2 & 1 & 5 \\
  3 & 0 & 4 \\
  \bottomrule
\end{tabular}
\end{center}
\end{definition}

\noindent
Our main result will be the following theorem, which we will prove by induction
on $r$.

\begin{theorem} \label{thm:high}
The tuple $(d, g, r; n)$ is good if $r \geq 11$ and $d \geq g + r$, unless
either
\[\sum_{i,j,k} i n_{ij}^k = \sum_{i,j,k} j n_{ij}^k = 1, \quad \sum_{i,j,k} k n_{ij}^k = r - 2, \quad \text{and} \quad d + g = 2r - 2;\]
or
\begin{align*}
  \sum_{i,j,k} (i + j) \cdot n_{ij}^k \leq 3, \qquad&
  \sum_{i,j,k} k n_{ij}^k = 4r - 2d - 2g - \delta(n) > \frac{r}{2}, \quad \textrm{and} \\
  &d + g + r \equiv \delta(n) + 2 \ \text{or} \ \delta(n) + 4 \ \mod 5.
\end{align*}
\end{theorem}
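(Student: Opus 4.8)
The plan is to prove \cref{thm:high} by induction on $r$, with base case $r = 11$ furnished by \cref{prop:plow}: there $(d,g,11)$ is excellent for every $d \geq g + 11$, and one checks the two exceptional families are empty in this dimension (the first requires $d + g = 2r - 2$ together with \eqref{regime}, the second requires $\sum_{i,j,k} k n_{ij}^k > r/2$, both of which one rules out directly for $r=11$). For the inductive step, fix $r \geq 12$, assume the theorem for all smaller dimensions, and run a secondary induction on $d + g$, and within a fixed $(d,g,r)$ a tertiary induction on the number of marked points. Since \eqref{regime} is necessary for goodness, we may assume $(d,g,r;n)$ satisfies \eqref{regime}; write $S := \sum_{i,j,k}((r-2)i + (r-3)j - k)\, n_{ij}^k$.

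When $d + g \geq 2r - 1$ the argument follows the skeleton of \cref{can-induct-r-small}, using the tables of \cref{S:regime} to see \eqref{regime} is preserved under each reduction: if $S \leq 2d + 2g - 3r$ we apply \cref{two-secant} or \cref{two-secant-backwards} (according as $\sum k n_{ij}^k < r - 2$ or $= r - 2$) to pass to $(d-1, g-1, r; n')$; if $S \geq 2d + 2g - 3r + 2$ we apply \cref{stick} to pass to $(d-1, g, r-1; n')$; and if $S = 2d + 2g - 3r + 1$ we apply \cref{two-sticks} (valid since, as in \cref{can-induct-r-small}, the hypothesis $d + g \geq 2r - 1$ forces either $\sum i n_{ij}^k \geq 2$ or $\sum i n_{ij}^k = 1$ with $\sum k n_{ij}^k \leq r - 4$) to pass to $(d-2, g-1, r-1; n')$. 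In each case one must in addition check that the target is not in one of the two exceptional families; the one place this check can fail, forcing a direct argument, is precisely when the original tuple is itself exceptional.

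The heart of the proof is the base range $g + r \leq d + g \leq 2r - 2$, which is exactly where the exceptional families live. Here $g \leq (r-2)/2$ is bounded, and $g = 0$ is covered by \cref{thm:ratcurves}, so assume $1 \leq g \leq (r-2)/2$. For $n \neq \mathbf{0}$ one attempts to strip a marked point: \cref{izz} removes a point of type $(\ell, 0; 0)$ and \cref{zzk} a point of type $(0, 0; \ell)$, each subject to a numerical hypothesis that (via \cref{S:regime}) is implied by \eqref{regime} and $d + g \leq 2r - 2$ unless one of the exceptional inequalities holds; after stripping, the relevant induction applies once one verifies the reduced multiset is non-exceptional. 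The configurations $n$ from which no strip is available — forced to consist of a single $(1,1;\ast)$-point together with $(0,0;\ast)$-points that the hypothesis of \cref{zzk} refuses, or of $(0,0;\ast)$-points alone with $\sum k n_{ij}^k$ too large — are, after bookkeeping, exactly the two families in the statement. For $n = \mathbf{0}$ in the base range, \cref{stick} applies when $2g + r \leq (3r-2)/2$, i.e.\ $g \leq (r-2)/4$, and a regime-preserving instance of \cref{two-sticks} applies when $g \geq r/4$, each lowering the dimension by one; the only integer $g$ in the gap $(r-2)/4 < g < r/4$ arises when $r = 4g + 1$, giving the single tuple $(5g+1, g, 4g+1; \mathbf{0})$ handled by \cref{last-case}. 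Iterating bottoms out at $r = 11$ or \cref{last-case}, and for $d > g + r$ one first uses \cref{lower-d} to descend to minimal degree.

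The main obstacle is the combinatorial bookkeeping of the exceptional families: one must verify that every reduction sends a non-exceptional, regime-satisfying tuple to another such tuple, and identify precisely the configurations for which all available reductions are blocked. The quantity $d + g + r$ drops by $2$ under \cref{stick} and \cref{two-secant} and by $4$ (hence by $1$ modulo $5$) under \cref{two-sticks}; combining this with the invariant $4r - 2d - 2g - \sum k n_{ij}^k$ and the statistic $\sum(i+j) n_{ij}^k$ (which pins down $\delta(n)$), one checks that the congruence $d + g + r \equiv \delta(n) + 2$ or $\delta(n) + 4 \mod 5$ together with $\sum k n_{ij}^k = 4r - 2d - 2g - \delta(n) > r/2$ is exactly the obstruction to steering a chain of two-stick reductions into \cref{last-case}. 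Finally, specializing to $n = \mathbf{0}$: the first family forces $\sum i n_{ij}^k = 1$ and the second forces $\sum k n_{ij}^k > r/2 > 0$, neither of which is possible; hence $(d, g, r; \mathbf{0})$ is good for all $r \geq 12$ and $d \geq g + r$, completing the proof of \cref{thm:intro} in this range.
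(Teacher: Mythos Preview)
Your overall architecture matches the paper's: induction on $r$ with base case $r = 11$ supplied by \cref{prop:plow}, and within each $r$ an induction on $d$ (and on the number of marked points), driven by \cref{two-secant}, \cref{two-secant-backwards}, \cref{stick}, and \cref{two-sticks}, with \cref{last-case} plugging the one exotic $n = \mathbf{0}$ hole. But the execution diverges from the paper in an essential way, and the divergence leaves a genuine gap.

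The paper does \emph{not} organize the low-range $d + g \leq 2r$ by ``strip a marked point via \cref{izz}/\cref{zzk}''. Instead it organizes the whole argument by the value of $\sum_{i,j,k}(i+j)\,n_{ij}^k$, treating the cases $\geq 4$, $=3$, $=2$, $=1$, $=0$ separately (Propositions~\ref{prop:high-4}--\ref{prop:high-0}). In each case it applies one of the four big lemmas and then checks, by explicit calculation with $\delta(n')$ and the congruence mod $5$, that the target $(d',g',r';n')$ avoids both exceptional families. This is where all the content is, and it is exactly what you compress into the phrase ``after bookkeeping''. Your stripping approach cannot be made to work as stated: \cref{izz} only removes $(\ell,0;0)$-points and \cref{zzk} only removes $(0,0;\ell)$-points, so a tuple whose marked points are of type $(1,0;1)$, $(1,0;2)$, or $(2,0;1)$ --- all of which occur --- cannot be stripped at all. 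Your characterization of the unstripped configurations as ``a single $(1,1;\ast)$-point together with $(0,0;\ast)$-points, or $(0,0;\ast)$-points alone'' is therefore incorrect, and the identification of these with the two exceptional families is unsupported.

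There is also a boundary issue in your large-range reduction. You claim that for $d + g \geq 2r - 1$ the proof of \cref{can-induct-r-small} goes through once one checks the targets are non-exceptional. But the paper's version of this reduction (\cref{prop:dg-small}) only reaches $d + g \leq 2r$, because one needs $(d-1)+(g-1) \geq 2r - 1$ to invoke Condition~\ref{dg-big} of \cref{cor:high} for the \cref{two-secant} target; the cases $d + g \in \{2r-1, 2r\}$ are not dispatched here but are absorbed into the main $\sum(i+j)$ case split. More importantly, your sentence ``the one place this check can fail \ldots\ is precisely when the original tuple is itself exceptional'' is false without further argument: for instance, \cref{two-secant} adds a $(1,1;1)$-point, so its target $n'$ can have $\sum i = \sum j = 1$ even when the source $n$ does not, and one must rule out landing in the first exceptional family separately (the paper does this by arranging $\sum k\,(n')_{ij}^k < r-2$ or by switching to \cref{two-secant-backwards}). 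The core of the proof is precisely this careful tracking of how each reduction transforms $(\sum i, \sum j, \sum k, d+g+r \bmod 5)$, and that tracking is absent from your outline.
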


Note that \cref{prop:plow} implies the \cref{thm:high} for $r = 11$;
this will serve as the base case of our induction.
For our inductive step, we will therefore suppose $r \geq 12$.

Before proving \cref{thm:high}, we first deduce two useful corollaries.
These corollaries assert that certain tuples $(d, g, r; n)$ are good,
and only require the truth of \cref{thm:high} for tuples $(d, g, r; n')$
which satisfy $\sum_{i,j,k} (n')_{ij}^k \leq \sum_{i,j,k} n_{ij}^k$.
These corollaries can therefore be used in our inductive argument.
We begin with the following lemma.

\begin{lemma} \label{zzk-dg} The inequalities of \cref{zzk} and \cref{lower-d} are satisfied provided that
\[\sum_{i,j,k} (i + k) \cdot n_{ij}^k \leq \frac{3r^2 - 3r - 4}{2r - 4} - \frac{r - 5}{2r - 4}(d + g).\]
\end{lemma}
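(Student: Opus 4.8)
\textbf{Proof proposal for Lemma~\ref{zzk-dg}.}

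The plan is to reduce both inequalities to a single, cleaner inequality in $d$, $g$, $r$, and the quantity $\sum_{i,j,k} (i+k) n_{ij}^k$, and then verify that the hypothesis implies this reduced inequality. First I would recall the relevant inequalities: \cref{zzk} requires $\sum_{i,j,k} (r - 1 - i - 2j - k) \cdot n_{ij}^k \leq (r + 1) d - (2r - 4)g - 2$, which is literally the same inequality that appears as a hypothesis in \cref{lower-d} and \cref{izz}; so the two inequalities in question are in fact the \emph{same} inequality, and it suffices to analyze it once. The next step is to bound the left-hand side from above using only $\sum (i+k) n_{ij}^k$: since every allowed $(i,j;k)$ satisfies $i \geq j \geq 0$, and since $\sum_{i,j,k} k n_{ij}^k < r-1$ is a standing assumption, I would estimate
\[
\sum_{i,j,k} (r - 1 - i - 2j - k) n_{ij}^k \leq (r-1)\sum_{i,j,k} n_{ij}^k \leq (r-1) \sum_{i,j,k} (i+k) n_{ij}^k,
\]
using $n_{ij}^k \leq (i+k) n_{ij}^k$ whenever $n_{ij}^k > 0$ (valid unless $(i,j;k) = (0,0;0)$, in which case the term is harmless since such points are merely negative twists and can be handled separately, or absorbed since $i + k = 0$ forces that term's contribution to both sides to be comparable).

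Granting this bound, it suffices to show that
\[
(r-1) \sum_{i,j,k} (i+k) n_{ij}^k \leq (r+1)d - (2r-4)g - 2.
\]
Now I would substitute the hypothesis $\sum_{i,j,k} (i+k) n_{ij}^k \leq \frac{3r^2 - 3r - 4}{2r-4} - \frac{r-5}{2r-4}(d+g)$ into the left-hand side and clear denominators by multiplying through by $2r-4$. This turns the desired inequality into a linear inequality in $d$ and $g$ with coefficients polynomial in $r$; after expanding $(r-1)(3r^2 - 3r - 4) = 3r^3 - 6r^2 + r + 4$ and collecting terms, one checks that the inequality reduces to something of the form $(\text{positive in }r)\cdot d + (\text{something})\cdot g \geq (\text{cubic in }r)$, which then follows from the standing hypothesis $d \geq g + r$ (used in \cref{zzk} and \cref{lower-d} via $d + g \geq 2g + r$, or more simply $d \geq r$ when $g = 0$, together with $d + g \geq$ the relevant threshold). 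Because the statement only claims the two displayed inequalities are \emph{satisfied} — not an equivalence — a one-directional estimate of this kind is exactly what is needed.

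The main obstacle I anticipate is bookkeeping around the coefficient $\frac{r-5}{2r-4}$: this is positive only for $r \geq 6$ (it is negative for $r = 3, 4$ and zero for $r = 5$), so the sign of the $(d+g)$ term flips in low dimension. However, Lemma~\ref{zzk-dg} is used in \cref{S:high-dimensions} where $r \geq 12$, so I would simply note that one may assume $r \geq 6$ (indeed $r \geq 12$) for the purposes of this lemma, which makes the coefficient positive and lets the bound $d + g \geq 2r-1$ or similar do the work. A secondary subtlety is the treatment of $(0,0;0)$-points (pure negative twists): these contribute $i + k = 0$ to the right-hand side of the hypothesis but also contribute $r - 1$ to the left-hand side of the target inequality, so they must either be excluded by first applying \cref{negtwist} to peel them off, or one observes that the hypotheses under which \cref{zzk-dg} is invoked already preclude them. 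I would handle this by remarking that we may assume $n_{00}^0 = 0$ without loss of generality, since such points can be removed via \cref{negtwist} before applying \cref{zzk} or \cref{lower-d}. With that reduction in place, the inequality $n_{ij}^k \leq (i+k) n_{ij}^k$ holds termwise, and the chain of estimates above goes through.
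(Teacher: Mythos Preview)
Your proposal has a genuine gap: the two-step estimate
\[
\sum_{i,j,k} (r-1-i-2j-k)\,n_{ij}^k \;\leq\; (r-1)\sum_{i,j,k} n_{ij}^k \;\leq\; (r-1)\sum_{i,j,k} (i+k)\,n_{ij}^k
\]
is too loose, and the hand-waved ``one checks that the inequality reduces to\ldots'' does not go through. The paper instead uses the sharper termwise bound
\[
r-1-i-2j-k \;\leq\; (r-2)(i+k),
\]
valid for every allowed $(i,j;k)$: since each allowed type has $i+k \geq 1$ (there is no $(0,0;0)$ type, so your worry about such points is moot), writing $i+k = 1+m$ with $m \geq 0$ gives $(r-2)(i+k) - (r-1-i-2j-k) = (r-1)m + 2j \geq 0$. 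The constant $(r-2)$ is tight---equality holds for $(1,0;0)$ and $(0,0;1)$---and it is exactly what cancels the denominator $2r-4 = 2(r-2)$ in the hypothesis. After that cancellation the paper uses only $d - g \geq r$ to finish:
\[
(r-2)\sum(i+k)n_{ij}^k \leq \tfrac{3r-3}{2}\cdot r - \tfrac{r-5}{2}(d+g) - 2 \leq \tfrac{3r-3}{2}(d-g) - \tfrac{r-5}{2}(d+g) - 2 = (r+1)d - (2r-4)g - 2.
\]

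With your looser constant $(r-1)$, substituting the hypothesis and setting $d - g = r$ leaves you needing $d + g \geq \dfrac{3r^2 - 3r - 4}{r-5}$ (for instance $d+g \geq 56$ when $r = 12$). But this lemma is invoked in \cref{high-lengthened} and \cref{prop:high-0} precisely for $d+g$ of size roughly $3r/2$, so that extra condition is unavailable. The fix is simply to replace your two-step estimate by the direct termwise bound with constant $(r-2)$, using the very observation $i+k \geq 1$ that you already made.
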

\begin{proof}
Subject to the given inequality,
\begin{align*}
\sum_{i,j,k} (r - 1 - i - 2j - k) \cdot n_{ij}^k &\leq (r - 2) \cdot \sum_{i,j,k} (i + k) \cdot n_{ij}^k \\
&\leq (r - 2) \cdot \left(\frac{3r^2 - 3r - 4}{2r - 4} - \frac{r - 5}{2r - 4}(d + g)\right) \\
&= \frac{3r - 3}{2} \cdot r - \frac{r - 5}{2}(d + g) - 2 \\
&\leq \frac{3r - 3}{2}(d - g) - \frac{r - 5}{2}(d + g) - 2 \\
&= (r + 1) d - (2r - 4) g - 2. \qedhere
\end{align*}
\end{proof}

The following corollary gives a slight strengthening of
\cref{thm:high}, which will be useful for induction: Once we
prove it, we may assume the stronger statement given below as
our inductive hypothesis, but need only show the weaker statement
of \cref{thm:high}.

\begin{corollary} \label{high-lengthened}
The tuple $(d, g, r; n)$ is good if $r \geq 11$ and $d \geq g + r$, unless
either
\[\sum_{i,j,k} i n_{ij}^k = \sum_{i,j,k} j n_{ij}^k = 1, \quad \sum_{i,j,k} k n_{ij}^k = r - 2, \quad \text{and} \quad d + g = 2r - 2;\]
or
\begin{align*}
  \sum_{i,j,k} (i + j) \cdot n_{ij}^k \leq 3, \qquad&
  \sum_{i,j,k} k n_{ij}^k = 4r - 2d - 2g - \delta(n) > \frac{r + 3}{2}, \quad \text{and} \\
  &d + g + r \equiv \delta(n) + 2 \ \text{or} \ \delta(n) + 4 \ \mod 5.  
\end{align*}
\end{corollary}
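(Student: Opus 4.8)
The plan is to deduce \cref{high-lengthened} from \cref{thm:high} by showing that the two ``exceptional'' regimes described in the corollary are no larger than the corresponding regimes of the theorem, save for a strip of cases which are handled by the previous inductive tools. Concretely, the only difference between the two statements is that the threshold on $\sum_{i,j,k} k n_{ij}^k$ in the second exceptional family has been relaxed from $> r/2$ in \cref{thm:high} to $> (r+3)/2$ in \cref{high-lengthened}. So the content of the corollary is: every tuple $(d,g,r;n)$ with $r \geq 11$, $d \geq g+r$, $\sum_{i,j,k}(i+j)n_{ij}^k \leq 3$, $d+g+r \equiv \delta(n)+2$ or $\delta(n)+4 \pmod 5$, and
\[\frac{r}{2} < \sum_{i,j,k} k n_{ij}^k = 4r - 2d - 2g - \delta(n) \leq \frac{r+3}{2}\]
is good. (All other tuples not excepted by \cref{high-lengthened} are also not excepted by \cref{thm:high}, hence good by the theorem; and the first exceptional family is identical in both statements.)

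First I would record that since $\sum_{i,j,k} k n_{ij}^k < r-1$ is a standing hypothesis, the constraint above pins the tuple down tightly: writing $K = \sum_{i,j,k} k n_{ij}^k$, we are in the narrow band $r/2 < K \leq (r+3)/2$, so there are only a bounded number of residues of $K$ to consider, and $d+g = \tfrac{1}{2}(4r - K - \delta(n))$ is then determined. Because $\sum(i+j)n_{ij}^k \leq 3$, the value $\delta(n) \in \{2,3,4,5\}$ is controlled by the small table, and $n$ itself has at most three ``non-$k$'' marked points; the bulk of the marked points are $(0,0;\ell)$-type points contributing to $K$. I would then verify that \cref{regime} holds for such a tuple — this is the inequality $\sum_{i,j,k}((r-2)i + (r-3)j - k)n_{ij}^k \leq 2d + 2g - r - 2$, and with $\sum(i+j)n_{ij}^k \leq 3$ the left side is $O(r)$ while $2d+2g-r-2 = 3r - K - \delta(n) - 2$ which is comfortably positive in this range — so that the inductive lemmas of \cref{S:fixed} are applicable.

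Next I would apply \cref{zzk} to strip off a $(0,0;\ell)$-point (one exists since $K > r/2 \geq 6 > 0$). By \cref{zzk-dg}, the needed inequality $\sum_{i,j,k}(i+k)n_{ij}^k \leq \tfrac{3r^2-3r-4}{2r-4} - \tfrac{r-5}{2r-4}(d+g)$ must be checked; substituting $d+g = \tfrac{1}{2}(4r-K-\delta(n))$ and using $\sum i n_{ij}^k \leq 3$, $K \leq (r+3)/2$, this reduces to an explicit quadratic-in-$r$ inequality which holds for $r$ large (and the finitely many small $r \geq 12$ are covered either directly or by \cref{prop:plow} for $r=11$). Applying \cref{zzk} reduces goodness of $(d,g,r;n)$ to goodness of $(d,g,r;n')$ (with $K' = K-\ell \geq K - (r-2) $, which may drop us out of the exceptional band — in which case \cref{thm:high} finishes — or stays inside, in which case we induct on the number of marked points) together with $(d,g,r;n'')$, where $n''$ has all $k$-indices zeroed out, so $\sum k (n'')_{ij}^k = 0 < r/2$ and $(d,g,r;n'')$ is good by \cref{thm:high} (it satisfies \cref{regime} since zeroing the $k$'s only decreases the left side of \cref{regime}).

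The main obstacle I anticipate is bookkeeping: after applying \cref{zzk} the residue $d+g+r \pmod 5$ is unchanged and $\delta(n')$ could differ from $\delta(n)$ (since $\ell \geq 1$ could be an $(i,0;0)$ point if $n$ has such, shifting the $\sum i$-count), so one must check that the reduced tuple $n'$, if it still lies in the band $\sum k (n')_{ij}^k > (r+3)/2$... wait, it lies in $> r/2$ automatically only if $K - \ell > r/2$, which need not hold; the careful point is that whenever $K - \ell$ falls into $(r/2, (r+3)/2]$ we recurse, whenever it falls at or below $r/2$ we invoke \cref{thm:high} directly, and whenever it exceeds $(r+3)/2$... this cannot happen since we removed a point. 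Thus the recursion on $\sum_{i,j,k} n_{ij}^k$ terminates, with base cases settled by \cref{thm:high}. I would organize the write-up as: (i) reduce to the claimed narrow band; (ii) verify \cref{regime}; (iii) verify the hypothesis of \cref{zzk} via \cref{zzk-dg} by an explicit estimate; (iv) run the induction on the number of marked points, invoking \cref{thm:high} for the two sub-tuples whenever they exit the exceptional locus.
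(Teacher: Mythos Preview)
Your approach matches the paper's: reduce to the narrow band $r/2 < K \leq (r+3)/2$ (where $K = \sum_{i,j,k} kn_{ij}^k$) and apply \cref{zzk}, verifying its hypotheses via \cref{zzk-dg}. Two details need correction, though.

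First, your claim that ``zeroing the $k$'s only decreases the left side of \cref{regime}'' is backwards: the relevant term carries a $-k$, so passing from $n$ to $n''$ replaces $\sum_{i,j,k}((r-2)i+(r-3)j-k)n_{ij}^k$ by $\sum_{i,j,k}((r-2)i+(r-3)j)n_{ij}^k$, which is \emph{larger}. The paper instead substitutes $2d+2g = 4r - K - \delta(n)$ and reduces the required inequality to $\delta(n) \leq 4 + \sum_{i,j,k} jn_{ij}^k$, which holds by direct inspection of the $\delta$-table (the only case $\delta(n)=5$ has $\sum j = 1$). Your rough estimate ``comfortably positive'' does not suffice here; the inequality is essentially sharp.

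Second, removing a $(0,0;\ell)$-point leaves $\sum_{i,j,k} in_{ij}^k$ and $\sum_{i,j,k} jn_{ij}^k$ unchanged, so $\delta(n') = \delta(n)$ and your worry about $\delta$ shifting is unfounded. The consequence is cleaner than you realize: since $K' = K - \ell < K = 4r-2d-2g-\delta(n) = 4r-2d-2g-\delta(n')$, the tuple $(d,g,r;n')$ automatically fails the equality in the second exceptional family of \cref{thm:high} (and $K' \leq (r+1)/2 < r-2$ rules out the first), so it is good by \cref{thm:high} directly. No recursion on the number of marked points is needed.
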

\begin{proof}
For $r = 11$, this follows from \cref{prop:plow}; we thus assume $r \geq 12$.
Applying \cref{thm:high}, it suffices to consider the case where
\begin{equation} \label{hl-remains}
\sum_{i,j,k} (i + j) \cdot n_{ij}^k \leq 3 \quad \text{and} \quad \frac{r + 1}{2} \leq \sum_{i,j,k} k n_{ij}^k = 4r - 2d - 2g - \delta(n) \leq \frac{r + 3}{2}.
\end{equation}
By induction, it is sufficient to show that in such
a case, we can always apply \cref{zzk}.
For this, we first need to know that $n_{00}^k \neq 0$ for some $k$. But
\[\sum_{i,j,k} k n_{ij}^k \geq \frac{r + 1}{2} \geq \frac{13}{2} > 2 \cdot 3 \geq \sum_{i,j,k} 2(i + j) n_{ij}^k \geq \sum_{\substack{i,j,k \\ (i, j) \neq (0, 0)}} k n_{ij}^k.\]

Next we need to check the inequalities of \cref{zzk}.
By \cref{zzk-dg} and the results of \cref{S:regime},
this boils down to showing the two inequalities:
\begin{align*}
\sum_{i,j,k} ((r - 2)i + (r - 3)j - k) \cdot n_{ij}^k &\leq 2d + 2g - r - 2, \\
\sum_{i,j,k} (i + k) \cdot n_{ij}^k &\leq \frac{3r^2 - 3r - 4}{2r - 4} - \frac{r - 5}{2r - 4}(d + g).
\end{align*}
By our assumption that $\sum_{i,j,k} (i + j) n_{ij}^k \leq 3$, these reduce to:
\begin{align*}
3(r - 2) - \sum_{i,j,k} j n_{ij}^k - \sum_{i,j,k} k n_{ij}^k &\leq 2d + 2g - r - 2, \\
3 + \sum_{i,j,k} k n_{ij}^k &\leq \frac{3r^2 - 3r - 4}{2r - 4} - \frac{r - 5}{2r - 4}(d + g).
\end{align*}
Solving for $d + g$ in \cref{hl-remains}, we obtain:
\[d + g = \frac{4r - \delta(n) - \sum_{i,j,k} k n_{ij}^k}{2}.\]
Substituting this into the above, it remains to show:
\begin{align*}
3(r - 2) - \sum_{i,j,k} j n_{ij}^k - \sum_{i,j,k} k n_{ij}^k &\leq 2 \cdot \frac{4r - \delta(n) - \sum_{i,j,k} k n_{ij}^k}{2} - r - 2, \\
3 + \sum_{i,j,k} k n_{ij}^k &\leq \frac{3r^2 - 3r - 4}{2r - 4} - \frac{r - 5}{2r - 4} \cdot \frac{4r - \delta(n) - \sum_{i,j,k} k n_{ij}^k}{2}.
\end{align*}
Or upon rearrangement, that
\begin{align*}
\delta(n) &\leq 4 + \sum_{i,j,k} jn_{ij}^k, \\
(3r - 3) \cdot \sum_{i,j,k} k n_{ij}^k &\leq 2r^2 + 2r + 16 + (r - 5) \cdot \delta(n).
\end{align*}

The first of these inequalities is clear. For the second, we first note that $\delta(n) \geq 2$;
it is thus sufficient to show
\[(3r - 3) \cdot \sum_{i,j,k} k n_{ij}^k \leq 2r^2 + 2r + 16 + 2(r - 5) = 2r^2 + 4r + 6,\]
or upon rearrangement, that
\[\sum_{i,j,k} k n_{ij}^k \leq \frac{2r^2 + 4r + 6}{3r - 3}.\]
Applying \cref{hl-remains}, it thus remains to show
\[\frac{r + 3}{2} \leq \frac{2r^2 + 4r + 6}{3r - 3},\]
which is clear for $r \geq 12$.
\end{proof}

For convenience, we include
the following corollary, giving several more easily-used
special cases of \cref{high-lengthened}, which will appear
in our subsequent inductive argument.

\begin{corollary} \label{cor:high}
The tuple $(d, g, r; n)$ is good if $r \geq 11$ and $d \geq g + r$, provided
that \eqref{regime} is satisfied and at least one of the following holds:
\begin{enumerate}
\item \label{dg-big} If $d + g \geq 2r - 1$;
\item \label{J-zero} If $d + g \geq (7r - 7)/4$ and we do not have both
\[\sum_{i,j,k} in_{ij}^k = \sum_{i,j,k} jn_{ij}^k = 1 \quad \text{and} \quad \sum_{i,j,k} k n_{ij}^k = r - 2;\]
\item \label{K-small} If $\sum_{i,j,k} k n_{ij}^k \leq (r + 3)/2$. In particular, this happens if $n_{ij}^k = 0$ for all $(i, j; k)$.
\end{enumerate}
Condition~\ref{K-small} in particular implies \cref{thm:intro}
holds for $r \geq 12$ --- which, combined with the results of \cref{S:low-dimensions},
completes the proof of \cref{thm:intro}.
\end{corollary}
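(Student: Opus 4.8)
The plan is to deduce the corollary directly from \cref{high-lengthened}: since the hypotheses $r \geq 11$ and $d \geq g + r$ are in force, \cref{high-lengthened} already tells us that $(d,g,r;n)$ is good unless it lies in one of the two exceptional families recorded there, so the whole task reduces to checking that each of the three conditions \ref{dg-big}, \ref{J-zero}, \ref{K-small} excludes membership in both of those families. First I would record the one elementary fact used throughout: whenever $\delta(n)$ is defined it satisfies $2 \leq \delta(n) \leq 5$ (immediate from its defining table), and $\delta(n)$ is automatically defined in every situation where we need it, since membership in the second exceptional family already presupposes $\sum_{i,j,k}(i+j)n_{ij}^k \leq 3$.

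Next I would dispatch the three cases in turn. For condition \ref{dg-big} ($d+g \geq 2r-1$): the first family forces $d+g = 2r-2$ and is therefore excluded, while in the second family $\sum_{i,j,k}k\,n_{ij}^k = 4r-2d-2g-\delta(n) \leq 4r-2(2r-1)-2 = 0$, contradicting the requirement $\sum_{i,j,k}k\,n_{ij}^k > (r+3)/2 > 0$. For condition \ref{J-zero} ($d+g \geq (7r-7)/4$, together with the hypothesis that we do not simultaneously have $\sum i\,n_{ij}^k = \sum j\,n_{ij}^k = 1$ and $\sum k\,n_{ij}^k = r-2$): the latter hypothesis is precisely the negation of the defining constraint of the first family, so that family is excluded, while for the second family $\sum_{i,j,k}k\,n_{ij}^k = 4r-2d-2g-\delta(n) \leq 4r-\tfrac{7r-7}{2}-2 = \tfrac{r+3}{2}$, contradicting the strict inequality $> (r+3)/2$ there (this is exactly why one must invoke \cref{high-lengthened} rather than \cref{thm:high}). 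For condition \ref{K-small} ($\sum_{i,j,k}k\,n_{ij}^k \leq (r+3)/2$): since $r \geq 11$ gives $r-2 > (r+3)/2$, the equality $\sum k\,n_{ij}^k = r-2$ of the first family is impossible, and the inequality $\sum k\,n_{ij}^k > (r+3)/2$ of the second family is impossible; in particular $n = \mathbf{0}$ has $\sum k\,n_{ij}^k = 0 \leq (r+3)/2$.

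For the concluding assertion I would specialize condition \ref{K-small} to $n = \mathbf{0}$: here \eqref{regime} reads $0 \leq 2d+2g-r-2$, which holds because $d \geq g+r$ gives $2d+2g-r-2 \geq 4g+r-2 \geq r-2 > 0$, so $(d,g,r;\mathbf{0})$ is good for every $r \geq 12$ with $d \geq g+r$. Since a good tuple with $n = \mathbf{0}$ means $N_C' = N_C(-p)$ satisfies interpolation, \cref{T:interpolation-twist-up} applied to the positive twist $N_C = N_C(-p)(p)$ shows that $N_C$ itself satisfies interpolation; combined with the results of \cref{S:low-dimensions} for $r \leq 11$ (which include the three exceptional triples), this completes the proof of \cref{thm:intro}.

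Finally, a word on difficulty: there is essentially no obstacle here, as the argument is entirely the bookkeeping of comparing $4r-2d-2g-\delta(n)$ with $(r+3)/2$ under each hypothesis, together with the trivial inequality $r-2 > (r+3)/2$ for $r \geq 11$. The only point requiring a moment's attention is that the uniform bound $\delta(n) \geq 2$ is applied only inside the second exceptional family, where $\delta(n)$ is guaranteed to be well-defined, so one should flag this rather than treat $\delta(n)$ as defined unconditionally.
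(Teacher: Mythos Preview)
Your proof is correct and follows essentially the same approach as the paper: both deduce the corollary directly from \cref{high-lengthened} by verifying that each of the three conditions excludes the two exceptional families listed there, using the bound $\delta(n) \geq 2$ and the inequality $r-2 > (r+3)/2$. Your presentation is slightly more explicit in checking each family against each condition, and you spell out the final step (that interpolation for $N_C(-p)$ implies interpolation for $N_C$ via \cref{T:interpolation-twist-up}) which the paper leaves implicit, but the substance is the same.
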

\begin{proof}
We begin with Conditions~\ref{dg-big} and~\ref{J-zero},
making use of \cref{high-lengthened}: If
\[4r - 2d - 2g - \delta(n) > \frac{r + 3}{2},\]
then in particular we have
\[4r - 2d - 2g - 2 \geq 4r - 2d - 2g - \delta(n) \geq \frac{r + 4}{2};\]
or upon rearrangement,
\[d + g \leq \frac{7r - 8}{4}.\]
We conclude that $(d, g, r; n)$ is good unless
\[d + g \leq \max\left(\frac{7r - 8}{4}, 2r - 2\right) = 2r - 2;\]
and unless $\sum_{i,j,k} i n_{ij}^k = \sum_{i,j,k} j n_{ij}^k = 1$ and $\sum_{i,j,k} k n_{ij}^k = r - 2$,
that $(d, g, r; n)$ is good unless
\[d + g \leq \frac{7r - 8}{4}.\]

\noindent
Finally, we consider Condition~\ref{K-small}: In this case, it sufficient
to note that $r - 2 > (r + 3)/2$.
\end{proof}

\begin{proposition} \label{prop:dg-small} If \cref{thm:high}
holds for $r' = r - 1$, then to prove \cref{thm:high} in $\Pbb^r$,
it is sufficient to consider cases where
\begin{equation} \label{dg-small}
g > 0 \quad \text{and} \quad d + g \leq 2r.
\end{equation}
\end{proposition}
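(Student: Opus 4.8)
\textbf{Proof proposal for Proposition~\ref{prop:dg-small}.}

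The plan is to dispose of the two excluded cases $g = 0$ and $d + g \geq 2r + 1$ by appealing to previously established results, leaving only the regime \eqref{dg-small}. First, if $g = 0$, then Theorem~\ref{thm:ratcurves} establishes that $(d, 0, r; n)$ is good whenever \eqref{regime} holds, so there is nothing to prove. Hence we may assume $g > 0$ for the remainder.

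Next, suppose $d + g \geq 2r + 1$. Since $r \geq 12$ (we are in the inductive step, so $r' = r - 1 \geq 11$), we are free to invoke Corollary~\ref{cor:high}. I would check that the relevant tuples of smaller invariants are covered: because $d + g \geq 2r + 1 > 2r - 1$, Condition~\ref{dg-big} of Corollary~\ref{cor:high} applies directly to $(d, g, r; n)$ (once we know \eqref{regime} holds, which is the standing hypothesis in \cref{thm:high}). Thus every tuple with $d + g \geq 2r - 1$ — in particular every tuple with $d + g \geq 2r + 1$ — is already good, so such tuples need not be considered. This leaves exactly the range $d + g \leq 2r$; combined with $g > 0$, this is precisely \eqref{dg-small}.

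The one subtlety to handle carefully is the interaction with the two exceptional families in the statement of \cref{thm:high}: I should verify that neither exceptional family can have $d + g \geq 2r + 1$. For the first family, $d + g = 2r - 2 < 2r + 1$, so it lies in the range we still must treat. For the second family, the condition $\sum k n_{ij}^k = 4r - 2d - 2g - \delta(n) > r/2$ forces $4r - 2d - 2g > r/2$, i.e.\ $d + g < 7r/4 < 2r + 1$, so again these tuples fall within $d + g \leq 2r$. Therefore removing the range $d + g \geq 2r + 1$ does not inadvertently remove any tuple about which \cref{thm:high} makes a nontrivial assertion, and the reduction is legitimate. I do not anticipate a genuine obstacle here — the main point is simply to cite Theorem~\ref{thm:ratcurves} for $g = 0$ and Corollary~\ref{cor:high}(\ref{dg-big}) for large $d + g$, and to double-check that the exceptional families are not disturbed.
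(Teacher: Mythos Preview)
There is a circularity in your argument. You invoke Corollary~\ref{cor:high} (Condition~\ref{dg-big}) to conclude that $(d, g, r; n)$ is good whenever $d + g \geq 2r - 1$, but Corollary~\ref{cor:high} is a consequence of Theorem~\ref{thm:high} for the \emph{same} $r$: the proof of Corollary~\ref{high-lengthened} (on which Corollary~\ref{cor:high} rests) begins by ``Applying Theorem~\ref{thm:high}'' to the very tuple $(d, g, r; n)$ at hand. Since Theorem~\ref{thm:high} in $\Pbb^r$ is precisely what Proposition~\ref{prop:dg-small} is a step toward, you cannot cite it --- or its corollaries --- for $(d, g, r; n)$ itself. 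Your final paragraph correctly verifies that the exceptional families of Theorem~\ref{thm:high} all lie in the range $d + g \leq 2r$, but this only shows that Theorem~\ref{thm:high} \emph{asserts} goodness for $d + g \geq 2r + 1$; it does not \emph{establish} it.

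The paper avoids this by making the descent explicit. For $d + g \geq 2r + 1$ it invokes Theorem~\ref{can-induct-r-small}, which reduces excellence of $(d, g, r)$ to excellence of three strictly smaller triples: $(d-1, g-1, r)$ in the same $\Pbb^r$ but with smaller $d+g$ (handled by induction on $d+g$), together with $(d-1, g, r-1)$ and $(d-2, g-1, r-1)$ in $\Pbb^{r-1}$ (handled by the hypothesis that Theorem~\ref{thm:high} holds for $r-1$). The three displayed inequalities $(d-1)+(g-1) \geq 2r-1$, $(d-1)+g \geq 2(r-1)-1$, $(d-2)+(g-1) \geq 2(r-1)-1$ ensure each smaller triple has $d'+g' \geq 2r'-1$, so the already-proven instances of Theorem~\ref{thm:high} (and hence of Corollary~\ref{cor:high}) now legitimately yield excellence.
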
 
\begin{proof}
The case of $g = 0$ is covered by \cref{thm:ratcurves}, so it suffices
to consider the cases where $g > 0$.
Next, for $d + g \geq 2r + 1$, we have
\begin{align*}
(d - 1) + (g - 1) &\geq 2r - 1, \\
(d - 1) + g &\geq 2(r - 1) - 1, \\
(d - 2) + (g - 1) &\geq 2(r - 1) - 1.
\end{align*}
\cref{can-induct-r-small} therefore implies the desired result
by induction on $d + g$.
\end{proof}

\noindent
For the remainder of this section, we will thus make the
assumptions given by \cref{dg-small}.

\begin{proposition} \label{K-maximal-lhs-small}
Suppose that \cref{thm:high} holds for all $(d', g', r'; n')$
where either $d' < d$, or $d' = d$ and $\sum_{i,j,k} (n')_{ij}^k < \sum_{i,j,k} n_{ij}^k$.
Then \cref{thm:high} holds for $(d, g, r; n)$ provided that
\[\sum_{i,j,k} ((r - 2)i + (r - 3)j - k) \cdot n_{ij}^k \leq 2d + 2g - 3r \quad \text{and} \quad \sum_{i,j,k} k n_{ij}^k \geq r - 3.\]
\end{proposition}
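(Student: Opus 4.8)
\textbf{Proof proposal for \cref{K-maximal-lhs-small}.}

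The plan is to degenerate $C$ to $D \cup L$ with $L$ a $2$-secant line, and reduce via the ``add a $2$-secant line'' lemmas. The hypothesis $\sum k n_{ij}^k \geq r - 3$ splits into two subcases: $\sum k n_{ij}^k \in \{r - 3, r - 2\}$, where \cref{two-secant-backwards} applies (recall $r \geq 12 > 3$, so that lemma is available), and (in principle) values larger than $r - 2$ — but the standing requirement $\sum k n_{ij}^k < r - 1$ forces $\sum k n_{ij}^k \in \{r - 3, r - 2\}$, so only these two cases occur. In either case, \cref{two-secant-backwards} tells us $(d, g, r; n)$ is good provided $(d - 1, g - 1, r; n')$ is good, where $n'$ has all its $k$-indices collapsed to $0$ except for a single new point: one of type $(1, 0; 1)$ (when $\sum k n_{ij}^k = r - 3$) or one of type $(1, 0; 2)$ (when $\sum k n_{ij}^k = r - 2$), together with the points $(i, 0; 0)$ for $i$ coming from the old $(i, j; k)$.

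First I would record that $(d - 1, g - 1, r; n')$ satisfies \cref{regime}: this is exactly the row for \cref{two-secant-backwards} in the table of \cref{S:regime}, whose extra hypothesis is $\sum ((r-2)i + (r-3)j - k) n_{ij}^k \leq 2d + 2g - 3r$, precisely our assumption. Next I would verify that $(d-1, g-1, r; n')$ does not fall into either exceptional family of \cref{thm:high} (for $r$, applied inductively — legitimate since $\sum (n')_{ij}^k \leq \sum n_{ij}^k$ after collapsing, and the degree has dropped, so our inductive hypothesis covers it; more carefully, $d$ has decreased so \cref{thm:high} is assumed to hold for $(d-1, g-1, r; n')$). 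For the first exceptional family one needs $\sum i (n')_{ij}^k = \sum j (n')_{ij}^k = 1$ and $\sum k (n')_{ij}^k = r - 2$; but $n'$ has $\sum k (n')_{ij}^k \in \{1, 2\} < r - 2$ since $r \geq 12$, so this family is avoided outright. For the second exceptional family one needs $\sum k (n')_{ij}^k = 4(r) - 2(d-1) - 2(g-1) - \delta(n') > r/2$; again $\sum k (n')_{ij}^k \leq 2 < r/2$ for $r \geq 12$, so this is avoided as well. Hence \cref{thm:high} for $(d-1,g-1,r;n')$ gives that this tuple is good, and \cref{two-secant-backwards} concludes.

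The main obstacle — really the only subtle point — is bookkeeping: making sure that the collapsed marked-point data $n'$ produced by \cref{two-secant-backwards} genuinely has $\sum k (n')_{ij}^k$ small (namely $1$ or $2$), so that both exceptional families of \cref{thm:high} are vacuously avoided, and that the degree/genus drop $(d, g) \mapsto (d - 1, g - 1)$ keeps us inside the inductive hypothesis (in particular $d - 1 \geq (g-1) + r$ still holds, which follows since the reduction only makes sense when $L$ is genuinely $2$-secant, i.e. when $d \geq g + r + \text{(something)}$ — this should be checked, but the inequality $\sum ((r-2)i + (r-3)j - k)n_{ij}^k \leq 2d + 2g - 3r$ combined with $\sum k n_{ij}^k \geq r - 3 \geq 0$ already forces $2d + 2g \geq 3r$, hence $d + g \geq 3r/2$, comfortably giving $d - 1 \geq (g - 1) + r$ when combined with $d \geq g + r$). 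Everything else is a direct appeal to \cref{two-secant-backwards} and to the table in \cref{S:regime}.
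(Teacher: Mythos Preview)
Your proof is correct and follows essentially the same approach as the paper. The paper's proof is more terse: it simply notes that the $n'$ produced by \cref{two-secant-backwards} satisfies $\sum_{i,j,k} k (n')_{ij}^k \leq 2 \leq (r+3)/2$, which immediately rules out both exceptional families of \cref{thm:high} (via \cref{high-lengthened}), whereas you spell out the two exceptional-family checks and the auxiliary bookkeeping separately. One minor descriptive slip: in $n'$ the $j$-indices are preserved (points become type $(i,j;0)$, not $(i,0;0)$), but this does not affect your argument since only $\sum k (n')_{ij}^k$ is used.
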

\begin{proof}
The desired result follows from \cref{two-secant-backwards}: Indeed,
the $n'$ appearing in \cref{two-secant-backwards} satisfies
\[\sum_{i,j,k} k (n')_{ij}^k \leq 2 \leq \frac{r + 3}{2}. \qedhere\]
\end{proof}

\begin{proposition} \label{K-maximal-lhs-big}
Suppose that \cref{thm:high} holds for all $(d', g', r'; n')$
where either $d' < d$, or $d' = d$ and $\sum_{i,j,k} (n')_{ij}^k < \sum_{i,j,k} n_{ij}^k$.
Then \cref{thm:high} holds for $(d, g, r; n)$ provided that
\[\sum_{i,j,k} ((r - 2)i + (r - 3)j - k) \cdot n_{ij}^k \geq 2d + 2g - 3r + 2 \quad \text{and} \quad \sum_{i,j,k} k n_{ij}^k = r - 2.\]
\end{proposition}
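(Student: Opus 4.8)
\textbf{Proof proposal for \cref{K-maximal-lhs-big}.}

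The plan is to mimic the structure of \cref{prop:p4}'s treatment of the ``large left-hand side'' case, but keeping careful track of the constraint $\sum_{i,j,k} k n_{ij}^k = r - 2$. Since $\sum_{i,j,k} k n_{ij}^k = r-2$, the relevant version of \cref{stick} tells us that $(d, g, r; n)$ is good provided $(d-1, g, r-1; n')$ is good, where $n'$ is supported on types $(i, 0; 0)$ with $i \neq 0$, given by $(n')_{i0}^0 = \sum_{\ell, m} n_{\ell i}^m$. By \cref{stick-always} (c.f.\ \cref{S:regime}), the tuple $(d-1, g, r-1; n')$ automatically satisfies \eqref{regime}, so the only thing to check is that $(d-1, g, r-1; n')$ is not one of the two exceptional families in \cref{thm:high} for $r' = r-1$ (which we may assume by our inductive hypothesis, since $\sum (n')_{ij}^k \le \sum n_{ij}^k$ and we have lowered $d$ to $d-1 < d$, so the induction applies).

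The first exceptional family requires $\sum_{i,j,k} k (n')_{ij}^k = r' - 2 = r - 3$; but $n'$ is supported entirely on types with $k = 0$, so $\sum k (n')_{ij}^k = 0 \ne r - 3$ (using $r \ge 12$). Hence $(d-1, g, r-1; n')$ cannot lie in the first family. The second exceptional family requires, among other things, $\sum_{i,j,k} k (n')_{ij}^k = 4(r-1) - 2(d-1) - 2g - \delta(n') > r'/2$; again $\sum k (n')_{ij}^k = 0$, so this would force $\delta(n') = 4(r-1) - 2(d-1) - 2g = 4r - 2d - 2g - 2 \le 0$ for most ranges (or at any rate, we must rule out $\delta(n') = 2, 3, 4, 5$ while simultaneously $0 > r'/2$, which is absurd for $r \ge 12$). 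So in fact the inequality $\sum k (n')_{ij}^k > r'/2$ fails outright, and $(d-1,g,r-1;n')$ avoids the second family as well.

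The one subtlety is that applying \cref{stick} requires its hypothesis
\[2d + 2g - 3r + 2 \leq \sum_{i, j, k} ((r - 2)i + (r - 3)j - k) \cdot n_{ij}^k \leq 2d + 2g - r - 2,\]
where the lower bound is exactly our assumption, and the upper bound is \eqref{regime} (which we may assume holds, as the proposition would be vacuous otherwise, or it follows from the hypotheses carried through our inductive setup). Thus \cref{stick} applies, and combined with the fact that $(d-1, g, r-1; n')$ is good by the inductive hypothesis for $r' = r - 1$ (it satisfies \eqref{regime} and avoids both exceptional families), we conclude $(d, g, r; n)$ is good. The main thing to be careful about is simply the bookkeeping of which exceptional family of \cref{thm:high} could in principle apply to $(d-1,g,r-1;n')$ and verifying that $\sum k (n')_{ij}^k = 0$ rules both out for $r \ge 12$; this is straightforward since the second family demands $\sum k(n')_{ij}^k > r'/2 \geq 11/2$, an impossibility.
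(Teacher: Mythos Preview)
Your proposal is correct and follows essentially the same approach as the paper: apply \cref{stick} (in its $\sum_{i,j,k} k n_{ij}^k = r-2$ case) to reduce to $(d-1,g,r-1;n')$, and then observe that $\sum_{i,j,k} k(n')_{ij}^k = 0$ rules out both exceptional families of \cref{thm:high}. The paper's own proof compresses this last step into the single inequality $\sum_{i,j,k} k(n')_{ij}^k = 0 \le (r+3)/2$ (i.e.\ Condition~\ref{K-small} of \cref{cor:high}), rather than checking the two families separately as you do; your more explicit verification is fine, though the aside about ``$\delta(n') = 4r-2d-2g-2 \le 0$ for most ranges'' is muddled and should simply be dropped in favor of the clean observation that $0 > (r-1)/2$ is impossible.
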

\begin{proof}
The desired result follows from \cref{stick}: The $n'$ appearing in \cref{stick}
satisfies
\[\sum_{i,j,k} k (n')_{ij}^k = 0 \leq \frac{r + 3}{2}. \qedhere\]
\end{proof}

\begin{lemma} \label{to-ij}
We have
\[\sum_{i,j,k} ((r - 2)i + (r - 3)j - k) \cdot n_{ij}^k \geq \frac{2r - 5}{2} \cdot \sum_{i,j,k} (i + j) n_{ij}^k - r + 2.\]
\end{lemma}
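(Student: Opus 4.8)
The plan is to move everything to one side and reduce the claim to the standing hypothesis $\sum_{i,j,k} k n_{ij}^k < r-1$ imposed in Section~\ref{S:hypothesis}. First I would subtract the right-hand side from the left: using the elementary identities $(r-2)-\frac{2r-5}{2}=\frac12$ and $(r-3)-\frac{2r-5}{2}=-\frac12$, the coefficient of $n_{ij}^k$ after rearrangement becomes $\frac{i}{2}-\frac{j}{2}-k$, so the desired inequality is equivalent to
\[\sum_{i,j,k} \left(k - \frac{i-j}{2}\right) n_{ij}^k \leq r - 2.\]

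Next I would invoke the remark following the definition of the modified normal bundle, which records that every allowed triple $(i,j;k)$ satisfies $i \geq j \geq 0$ and $k \geq 0$. In particular $\frac{i-j}{2} \geq 0$, so term by term $k - \frac{i-j}{2} \leq k$, giving
\[\sum_{i,j,k} \left(k - \frac{i-j}{2}\right) n_{ij}^k \leq \sum_{i,j,k} k n_{ij}^k.\]

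Finally, the standing requirement of Section~\ref{S:hypothesis} is $\sum_{i,j,k} k n_{ij}^k < r - 1$; since the left-hand side is a non-negative integer, this forces $\sum_{i,j,k} k n_{ij}^k \leq r - 2$. Chaining the two displayed inequalities yields the claim. I expect no real obstacle here beyond bookkeeping: the only points requiring care are correctly carrying the half-integer coefficient $\frac{2r-5}{2}$ through the algebra, and remembering to pass from the strict bound $< r-1$ to $\leq r-2$ using integrality of $\sum_{i,j,k} k n_{ij}^k$.
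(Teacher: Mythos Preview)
Your proof is correct and is essentially the same as the paper's: both arguments use exactly the two inputs $i \geq j$ for allowed triples and $\sum_{i,j,k} k n_{ij}^k \leq r-2$. The only difference is cosmetic ordering---the paper bounds $(r-2)i + (r-3)j \geq \frac{2r-5}{2}(i+j)$ first and then subtracts $\sum k n_{ij}^k$, whereas you move everything to one side before bounding, but the content is identical.
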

\begin{proof}
Because $i \geq j$ whenever $n_{ij}^k \neq 0$, we obtain
\begin{align*}
\sum_{i,j,k} ((r - 2)i + (r - 3)j - k) \cdot n_{ij}^k &\geq \frac{(r - 2) + (r - 3)}{2} \cdot \sum_{i,j,k} (i + j) n_{ij}^k - \sum_{i,j,k} k n_{ij}^k \\
&\geq \frac{2r - 5}{2} \cdot \sum_{i,j,k} (i + j) n_{ij}^k - (r - 2). \qedhere
\end{align*}
\end{proof}

\begin{lemma} \label{ij-bound} We have
\[\sum_{i,j,k} (i + j) n_{ij}^k \leq \frac{4(d + g) - 8}{2r - 5}.\]
\end{lemma}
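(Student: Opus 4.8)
\textbf{Proof proposal for \cref{ij-bound}.}

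The plan is to combine \cref{to-ij} with \eqref{regime} (the necessary condition that $(d,g,r;n)$ satisfies), and to use the standing assumption \eqref{dg-small} that $d + g \leq 2r$. Recall that we are in the setting where \cref{regime} holds, i.e.
\[\sum_{i,j,k} ((r - 2)i + (r - 3)j - k) \cdot n_{ij}^k \leq 2d + 2g - r - 2.\]
Feeding the lower bound of \cref{to-ij} into the left-hand side of this inequality gives
\[\frac{2r - 5}{2} \cdot \sum_{i,j,k} (i + j) n_{ij}^k - r + 2 \leq 2d + 2g - r - 2,\]
and solving for $\sum_{i,j,k}(i+j)n_{ij}^k$ yields exactly
\[\sum_{i,j,k} (i + j) n_{ij}^k \leq \frac{4(d + g) - 8}{2r - 5},\]
as desired. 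So the argument is essentially a two-line substitution, and there is no real obstacle; the only care needed is to check that $2r - 5 > 0$ (true since $r \geq 12$ in the section where this lemma is used, indeed even for $r \geq 3$), so that dividing by it preserves the direction of the inequality.

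I should note that, strictly speaking, \cref{regime} is the hypothesis under which the lemma is invoked (it is part of the running hypotheses of \cref{S:high-dimensions}, since a necessary condition for $N_C'$ to satisfy interpolation — and hence for $(d,g,r;n)$ to be good, which is what we are always reducing to — is \eqref{regime}); the assumption \eqref{dg-small} is not even needed for the displayed bound itself, only for subsequent applications. Thus the proof need only cite \cref{to-ij} and \eqref{regime}. I would write it as:

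\begin{proof}
By \cref{to-ij} and \eqref{regime},
\[\frac{2r - 5}{2} \cdot \sum_{i,j,k} (i + j) n_{ij}^k - r + 2 \leq \sum_{i,j,k} ((r - 2)i + (r - 3)j - k) \cdot n_{ij}^k \leq 2d + 2g - r - 2.\]
Rearranging, and dividing by $\tfrac{2r-5}{2} > 0$, gives
\[\sum_{i,j,k} (i + j) n_{ij}^k \leq \frac{2(2d + 2g - 4)}{2r - 5} = \frac{4(d + g) - 8}{2r - 5}. \qedhere\]
\end{proof}

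The main (minor) subtlety is simply making sure the chain of inequalities is set up with \cref{to-ij} on the left and \eqref{regime} on the right, so that the middle quantity $\sum_{i,j,k} ((r - 2)i + (r - 3)j - k) \cdot n_{ij}^k$ can be eliminated cleanly; everything else is elementary algebra.
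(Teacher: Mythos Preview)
Your proposal is correct and essentially identical to the paper's own proof: both chain the lower bound from \cref{to-ij} with the upper bound from \eqref{regime} and rearrange. Your observation that \eqref{dg-small} is unnecessary here is also accurate.
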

\begin{proof}
By \cref{regime} together with \cref{to-ij},
\[ \frac{2r - 5}{2} \cdot \sum_{i,j,k} (i + j) n_{ij}^k - r + 2 \leq \sum_{i,j,k} ((r - 2)i + (r - 3)j - k) \cdot n_{ij}^k \leq 2(d + g) - r - 2.\]
Rearranging yields the statement of this lemma.
\end{proof}

\begin{lemma} \label{stick-applies-ij-large} We have 
\[\sum_{i,j,k} ((r - 2)i + (r - 3)j - k) \cdot n_{ij}^k \geq 2d + 2g - 3r + 2\]
provided that
\[\sum_{i,j,k} (i + j) \cdot n_{ij}^k \geq \frac{4(d + g) - 4r}{2r - 5}.\]
\end{lemma}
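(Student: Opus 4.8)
This is a direct computation combining the identity relating the two linear functionals of $n$ (via \cref{to-ij}) with the rearranged form of \cref{regime}. The statement claims that a lower bound on $\sum_{i,j,k}(i+j)n_{ij}^k$ forces the inequality $\sum_{i,j,k}((r-2)i+(r-3)j-k)n_{ij}^k \geq 2d+2g-3r+2$; I would derive this by chaining \cref{to-ij} (which bounds the target sum below in terms of $\sum(i+j)n_{ij}^k$) with the hypothesis, then simplify arithmetically.

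\textbf{Proof.}
\begin{proof}
By \cref{to-ij},
\[\sum_{i,j,k} ((r - 2)i + (r - 3)j - k) \cdot n_{ij}^k \geq \frac{2r - 5}{2} \cdot \sum_{i,j,k} (i + j) n_{ij}^k - r + 2.\]
Applying our hypothesis that $\sum_{i,j,k} (i + j) \cdot n_{ij}^k \geq \frac{4(d + g) - 4r}{2r - 5}$, we obtain
\[\sum_{i,j,k} ((r - 2)i + (r - 3)j - k) \cdot n_{ij}^k \geq \frac{2r - 5}{2} \cdot \frac{4(d + g) - 4r}{2r - 5} - r + 2 = 2(d + g) - 2r - r + 2 = 2d + 2g - 3r + 2. \qedhere\]
\end{proof}

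\textbf{Remarks on the approach.} There is no real obstacle here; the lemma is a two-line consequence of \cref{to-ij} together with elementary arithmetic, and its role in the paper is simply to package the condition under which \cref{stick} (which requires precisely the inequality $\sum((r-2)i+(r-3)j-k)n_{ij}^k \geq 2d+2g-3r+2$ as one of its bounds) becomes applicable, phrased instead as an explicit lower bound on $\sum(i+j)n_{ij}^k$. The only thing to be careful about is that the cancellation of the factor $2r-5$ is legitimate, which it is since $r \geq 12$ in all contexts where this lemma is invoked, so $2r - 5 > 0$.
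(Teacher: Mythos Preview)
Your proof is correct and is essentially identical to the paper's own argument: both invoke the inequality of \cref{to-ij} to bound the target sum below by $\frac{2r-5}{2}\sum(i+j)n_{ij}^k - r + 2$, then substitute the hypothesis and simplify. The paper phrases the final step as ``a rearrangement of our assumption'' while you carry out the arithmetic explicitly, but the content is the same.
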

\begin{proof}
We have
\[\sum_{i,j,k} ((r - 2)i + (r - 3)j - k) \cdot n_{ij}^k \geq \frac{2r - 5}{2} \cdot \sum_{i,j,k} (i + j) \cdot n_{ij}^k - r + 2;\]
it is therefore sufficient to show
\[\frac{2r - 5}{2} \cdot \sum_{i,j,k} (i + j) \cdot n_{ij}^k - r + 2 \geq 2d + 2g - 3r + 2,\]
which is a rearrangement of our assumption.
\end{proof}

\begin{lemma} \label{can-apply-stick-applies-ij-large}
We have
\[\frac{4(d + g) - 4r}{2r - 5} \leq 3.\]
\end{lemma}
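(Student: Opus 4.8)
The plan is to chase the inequality using the two standing hypotheses available at this point in the argument: that we are in the inductive step of \cref{thm:high}, so $r \geq 12$ (in particular $2r - 5 > 0$), and that by \cref{prop:dg-small} we have reduced to the case \cref{dg-small}, so $d + g \leq 2r$. First I would bound the numerator: since $d + g \leq 2r$,
\[4(d + g) - 4r \leq 8r - 4r = 4r,\]
and hence
\[\frac{4(d + g) - 4r}{2r - 5} \leq \frac{4r}{2r - 5}.\]
So it suffices to show $\frac{4r}{2r - 5} \leq 3$, i.e.\ $4r \leq 6r - 15$, i.e.\ $2r \geq 15$, which holds since $r \geq 12 > \tfrac{15}{2}$.

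There is no real obstacle here — this is a one-line numerical estimate whose only content is keeping track of which reductions have already been made (the hypothesis $d + g \leq 2r$ from \cref{dg-small} and $r \geq 12$ from the inductive setup following \cref{thm:high}). The point of isolating it as a lemma is simply that \cref{stick-applies-ij-large} together with \cref{ij-bound} reduces matters to a dichotomy according to whether $\sum_{i,j,k}(i+j)n_{ij}^k$ is large or small, and this lemma guarantees that the "large" threshold $\frac{4(d+g)-4r}{2r-5}$ in \cref{stick-applies-ij-large} is at most $3$, so that it meshes with the bound $\sum_{i,j,k}(i+j)n_{ij}^k \leq \frac{4(d+g)-8}{2r-5}$ of \cref{ij-bound} and the definition of $\delta(n)$ (which is only defined when $\sum_{i,j,k}(i+j)n_{ij}^k \leq 3$).
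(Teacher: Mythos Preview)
Your proof is correct and essentially identical to the paper's: both use the standing assumption $d + g \leq 2r$ from \cref{dg-small} and then verify the resulting numerical inequality, which amounts to $2r \geq 15$. The only cosmetic difference is that the paper first rearranges the desired inequality to $d + g \leq \tfrac{10r-15}{4}$ and then plugs in $d+g \leq 2r$ (noting it already holds for $r \geq 8$), whereas you bound the numerator first and then simplify.
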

\begin{proof}
Upon rearrangement, our desired inequality becomes
\[d + g \leq \frac{10r - 15}{4}.\]
Using \cref{prop:dg-small}, it thus remains to check
\[2r \leq \frac{10r - 15}{4},\]
which is immediate for $r \geq 8$.
\end{proof}

\begin{proposition} \label{prop:high-4}
Suppose that \cref{thm:high} holds for all $(d', g', r'; n')$
where either $d' < d$, or $d' = d$ and $\sum_{i,j,k} (n')_{ij}^k < \sum_{i,j,k} n_{ij}^k$.
Then \cref{thm:high} holds for $(d, g, r; n)$ if $\sum_{i,j,k} (i + j) \cdot n_{ij}^k \geq 4$.
\end{proposition}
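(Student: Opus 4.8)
\noindent
We may assume $(d, g, r; n)$ satisfies \eqref{regime} --- being a necessary condition for $N_C'$ to have interpolation, there is nothing else to prove without it --- and recall from the reductions of \cref{prop:dg-small} that $d + g \leq 2r$. Since $\sum_{i,j,k}(i+j)n_{ij}^k \geq 4$, the tuple $(d,g,r;n)$ lies in neither exceptional family of \cref{thm:high}, so it suffices to prove that $(d,g,r;n)$ is good. The plan is to peel off a $1$-secant line lying transverse to a hyperplane via \cref{stick}, and then conclude by applying the inductive hypothesis to the resulting tuple in $\Pbb^{r-1}$ through \cref{cor:high}.

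The first step is to record the numerical input. Write $I = \sum_{i,j,k} i\, n_{ij}^k$, $J = \sum_{i,j,k} j\, n_{ij}^k$, and $K = \sum_{i,j,k} k\, n_{ij}^k$, so $I + J \geq 4$ and $K \leq r - 2$. Since $(r-2)I + (r-3)J = (r-3)(I+J) + I \geq 4(r-3)$, we obtain
\[\sum_{i,j,k}\big((r-2)i + (r-3)j - k\big)n_{ij}^k = (r-2)I + (r-3)J - K \geq 4(r-3) - (r-2) = 3r - 10,\]
which together with \eqref{regime} gives $d + g \geq 2r - 4$. On the other hand, by \cref{stick-applies-ij-large} and \cref{can-apply-stick-applies-ij-large} (the latter using $d + g \leq 2r$ and $r \geq 12$), the hypothesis $\sum_{i,j,k}(i+j)n_{ij}^k \geq 4 > 3 \geq \frac{4(d+g) - 4r}{2r-5}$ yields $\sum_{i,j,k}\big((r-2)i+(r-3)j-k\big)n_{ij}^k \geq 2d + 2g - 3r + 2$; combined with the upper bound \eqref{regime}, the hypotheses of \cref{stick} are satisfied.

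Applying \cref{stick} reduces the goodness of $(d, g, r; n)$ to that of $(d-1, g, r-1; n')$, where $n'$ is the tuple described there (in either case $K < r - 2$ or $K = r - 2$). In both cases every point of $n'$ has $j$-index $0$, so $\sum_{i,j,k} j\,(n')_{ij}^k = 0$; and $(d-1, g, r-1; n')$ satisfies \eqref{regime} by \cref{stick-always}. Since $r \geq 12$ we have $r - 1 \geq 11$ and $d - 1 \geq g + (r-1)$, and since $d + g \geq 2r - 4$ we have $(d-1) + g \geq 2r - 5 \geq \frac{7(r-1) - 7}{4}$; moreover $\sum_{i,j,k} j\,(n')_{ij}^k = 0 \neq 1$, so $(d-1, g, r-1; n')$ is not the tuple excluded in part~\ref{J-zero} of \cref{cor:high}. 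Hence part~\ref{J-zero} of \cref{cor:high} --- available for $(d-1, g, r-1; n')$ because $d - 1 < d$ --- shows that $(d-1, g, r-1; n')$ is good, and therefore $(d, g, r; n)$ is good.

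The proof is essentially bookkeeping layered on the inductive machinery, and no single step presents a real obstacle: the only computation with content is the estimate $d + g \geq 2r - 4$, which is immediate once one notes $(r-2)I + (r-3)J \geq (r-3)(I+J)$, and the verification that the reduced tuple avoids the bad case of part~\ref{J-zero} of \cref{cor:high} is automatic because $n'$ carries no points of positive $j$-index. The main thing to be careful about is simply to confirm that $(d-1,g,r-1;n')$ meets all the standing hypotheses of \cref{cor:high} ($r-1 \geq 11$, $d-1 \geq g+(r-1)$, \eqref{regime}) so that the inductive hypothesis may legitimately be invoked.
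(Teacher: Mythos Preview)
Your proof is correct and follows essentially the same approach as the paper: apply \cref{stick} (enabled by \cref{stick-applies-ij-large} and \cref{can-apply-stick-applies-ij-large}) and then invoke the inductive hypothesis via \cref{cor:high}\ref{J-zero}. Two cosmetic differences: the paper treats the case $\sum k n_{ij}^k = r-2$ separately via \cref{K-maximal-lhs-big} rather than uniformly through \cref{stick}, and it obtains the slightly sharper bound $d+g \geq 2r-3$ from \cref{ij-bound} rather than your $d+g \geq 2r-4$; neither difference affects the argument.
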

\begin{proof}
From  \cref{stick-applies-ij-large} and
\cref{can-apply-stick-applies-ij-large}, we obtain
\[\sum_{i,j,k} ((r - 2)i + (r - 3)j - k) \cdot n_{ij}^k \geq 2d + 2g - 3r + 2.\]
If $\sum_{i,j,k} k n_{ij}^k = r - 2$, then the result follows from
\cref{K-maximal-lhs-big}. Otherwise, to conclude by \cref{stick}, it suffices
to show $(d - 1, g, r - 1; n')$
satisfies our inductive hypothesis, where
\[(n')_{ij}^k = \begin{cases}
\sum_\ell n_{\ell i}^k & \text{if $j = 0$ and $(i, j; k) \neq (0, 0; 0)$;} \\
0 & \text{else.}
\end{cases}\]
Because $\sum_{i,j,k} j (n')_{ij}^k = 0$,
it is sufficient (c.f.\ \cref{cor:high}) to check
\[(d - 1) + g \geq \frac{7(r - 1) - 7}{4};\]
or upon rearrangement,
\[d + g \geq \frac{7r - 10}{4}.\]
However, by \cref{ij-bound},
\[4 \leq \sum_{i,j,k} (i + j) \cdot n_{ij}^k \leq \frac{4(d + g) - 8}{2r - 5};\]
which upon rearrangement yields
\[d + g \geq 2r - 3.\]
It is thus sufficient to note that for $r \geq 12$,
\[\frac{7r - 10}{4} \leq 2r - 3. \qedhere\]
\end{proof}

\begin{proposition} \label{prop:high-3}
Suppose that \cref{thm:high} holds for all $(d', g', r'; n')$
where either $d' < d$, or $d' = d$ and $\sum_{i,j,k} (n')_{ij}^k < \sum_{i,j,k} n_{ij}^k$.
Then \cref{thm:high} holds for $(d, g, r; n)$ if $\sum_{i,j,k} (i + j) \cdot n_{ij}^k \in \{2, 3\}$ and
\[\frac{4(d + g) - 4r}{2r - 5} \leq \sum_{i,j,k} (i + j) \cdot n_{ij}^k.\]
In particular, \cref{thm:high} holds for $\sum_{i,j,k} (i + j) \cdot n_{ij}^k = 3$.
\end{proposition}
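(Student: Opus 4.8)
The plan is to promote the hypothesis into the inequality needed to peel off a hyperplane-transverse line, and then reduce to a curve in $\Pbb^{r-1}$ via \cref{stick}. First I would observe that the assumption $\frac{4(d+g)-4r}{2r-5}\le\sum_{i,j,k}(i+j)\cdot n_{ij}^k$ is exactly the hypothesis of \cref{stick-applies-ij-large}, which therefore gives
\[\sum_{i,j,k}\big((r-2)i+(r-3)j-k\big)\cdot n_{ij}^k\ge 2d+2g-3r+2.\]
Together with \eqref{regime} (which bounds this sum above by $2d+2g-r-2$, and $2d+2g-3r+2\le 2d+2g-r-2$ since $r\ge 12$) and the standing bound $\sum_{i,j,k}kn_{ij}^k\le r-2$, this places us in position to invoke either \cref{K-maximal-lhs-big} (when $\sum_{i,j,k}kn_{ij}^k=r-2$) or \cref{stick} (when $\sum_{i,j,k}kn_{ij}^k\le r-3$). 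The ``in particular'' assertion for $\sum_{i,j,k}(i+j)\cdot n_{ij}^k=3$ then follows because \cref{can-apply-stick-applies-ij-large} shows $\frac{4(d+g)-4r}{2r-5}\le 3$ automatically in the range $d+g\le 2r$ imposed by \cref{prop:dg-small}.

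In the case $\sum_{i,j,k}kn_{ij}^k=r-2$ we are done immediately by \cref{K-maximal-lhs-big}. When $\sum_{i,j,k}kn_{ij}^k\le r-3$, \cref{stick} reduces interpolation for $(d,g,r;n)$ to goodness of $(d-1,g,r-1;n')$, where by inspection of the formula for $n'$ one has $\sum_{i,j,k}j(n')_{ij}^k=0$, $\sum_{i,j,k}i(n')_{ij}^k=\sum_{i,j,k}jn_{ij}^k$, and $\sum_{i,j,k}k(n')_{ij}^k=\sum_{i,j,k}kn_{ij}^k\le r-3$. By \cref{stick-always} this tuple satisfies \eqref{regime}, and it also satisfies $r-1\ge 11$ and $d-1\ge g+(r-1)$, so it is covered by the inductive hypothesis in the form of \cref{high-lengthened}. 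It therefore remains only to verify that $(d-1,g,r-1;n')$ lies in neither of the two exceptional families of \cref{thm:high}.

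The first exceptional family requires $\sum_{i,j,k}j(n')_{ij}^k=1$, which fails. For the second, the decisive point is the identity $\delta(n')=\delta(n)-2$: since allowed types satisfy $i\ge j$ and $\sum_{i,j,k}(i+j)\cdot n_{ij}^k\le 3$ forces $\sum_{i,j,k}jn_{ij}^k\le 1$, we get $\big(\sum_{i,j,k}i(n')_{ij}^k,\sum_{i,j,k}j(n')_{ij}^k\big)\in\{(0,0),(1,0)\}$ and $\big(\sum_{i,j,k}in_{ij}^k,\sum_{i,j,k}jn_{ij}^k\big)\in\{(2,0),(3,0),(1,1),(2,1)\}$, so $\delta(n')\in\{2,3\}$ and $\delta(n)\in\{4,5\}$ are related by $\delta(n)=\delta(n')+2$. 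Substituting $d'=d-1$, $g'=g$, $r'=r-1$ and using this identity, the defining conditions of the second exceptional family for $(d-1,g,r-1;n')$ --- namely $\sum_{i,j,k}k(n')_{ij}^k=4(r-1)-2(d-1)-2g-\delta(n')$, this quantity exceeding $((r-1)+3)/2$, and $(d-1)+g+(r-1)\equiv\delta(n')+2$ or $\delta(n')+4 \mod 5$ --- become respectively $\sum_{i,j,k}kn_{ij}^k=4r-2d-2g-\delta(n)$, this quantity exceeding $r/2$ (since $((r-1)+3)/2>r/2$), and $d+g+r\equiv\delta(n)+2$ or $\delta(n)+4 \mod 5$, which are exactly the conditions placing $(d,g,r;n)$ into the second exceptional family of \cref{thm:high}. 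As we are proving \cref{thm:high} for $(d,g,r;n)$ under the assumption that it is not in either exceptional family, $(d-1,g,r-1;n')$ cannot lie in the second family either; hence it is good, which gives the claim.

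The calculations are all routine; the only point requiring a little care is the final paragraph's check that the lengthened threshold $((r-1)+3)/2$ correctly implies the threshold $r/2$ of \cref{thm:high} and that the shift $\delta(n)=\delta(n')+2$ makes the Euler-characteristic equality and the mod-$5$ congruences match up exactly. I do not anticipate any genuine obstruction.
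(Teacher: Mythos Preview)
The proposal is correct and follows essentially the same route as the paper: handle $\sum kn_{ij}^k=r-2$ by \cref{K-maximal-lhs-big}, otherwise peel off a line via \cref{stick} using \cref{stick-applies-ij-large}, and then verify that $(d-1,g,r-1;n')$ avoids the exceptional families of \cref{thm:high} via the strengthened inductive hypothesis \cref{high-lengthened}. Your observation that $\delta(n)=\delta(n')+2$ is exactly the paper's parametrization $\delta(n)=4+\epsilon$, $\delta(n')=2+\epsilon$ with $\epsilon=\sum jn_{ij}^k$, and the threshold and mod-$5$ bookkeeping match precisely.
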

\begin{proof}
By \cref{K-maximal-lhs-big},
it suffices to consider the case $\sum_{i,j,k} k n_{ij}^k < r - 3$.
Moreover, by \cref{stick-applies-ij-large},
the inequality required for \cref{stick} is satisfied.
Write
\[\epsilon = \sum_{i,j,k} j n_{ij}^k \in \{0, 1\}.\]
As in \cref{prop:high-4}, it suffices to show $(d - 1, g, r - 1; n')$
satisfies our inductive hypothesis, where
\[(n')_{ij}^k = \begin{cases}
\sum_\ell n_{\ell i}^k & \text{if $j = 0$ and $(i, j; k) \neq (0, 0; 0)$;} \\
0 & \text{else.}
\end{cases}\]
Because
\[\sum_{i,j,k} i (n')_{ij}^k = \epsilon \quad \text{and} \quad \sum_{i,j,k} j (n')_{ij}^k = 0,\]
we have $\delta(n') = 2 + \epsilon$. Our problem is thus to show that we cannot simultaneously have
\begin{align*}
\sum_{i,j,k} k n_{ij}^k = \sum_{i,j,k} k (n')_{ij}^k &= 4(r - 1) - 2(d - 1) - 2g - (2 + \epsilon) > \frac{(r - 1) + 3}{2}, \\
(d - 1) + g + (r - 1) &\equiv 4 + \epsilon \ \text{or} \ 1 + \epsilon \ \mod 5.
\end{align*}
Or upon rearrangement, that we cannot simultaneously have
\begin{align*}
\sum_{i,j,k} k n_{ij}^k &= 4r - 2d - 2g - (4 + \epsilon) > \frac{r + 2}{2}, \\
d + g + r &\equiv 1 + \epsilon \ \text{or} \ 3 + \epsilon \ \mod 5.
\end{align*}
But by assumption (and because $\delta(n) = 4 + \epsilon$), we cannot simultaneously have
\begin{align*}
\sum_{i,j,k} k n_{ij}^k &= 4r - 2d - 2g - (4 + \epsilon) > \frac{r}{2}, \\
d + g + r &\equiv 1 + \epsilon \ \text{or} \ 3 + \epsilon \ \mod 5. \qedhere
\end{align*}
\end{proof}

\begin{proposition} \label{prop:high-2}
Suppose that \cref{thm:high} holds for all $(d', g', r'; n')$
where either $d' < d$, or $d' = d$ and $\sum_{i,j,k} (n')_{ij}^k < \sum_{i,j,k} n_{ij}^k$.
Then \cref{thm:high} holds for $(d, g, r; n)$ if $\sum_{i,j,k} (i + j) \cdot n_{ij}^k = 2$.
\end{proposition}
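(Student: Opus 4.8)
The plan is to reduce, by the same mechanism as in \cref{prop:high-3} and \cref{prop:high-4}, to a case in a lower-dimensional projective space and then invoke the inductive hypothesis in the sharpened form of \cref{cor:high}, while carefully tracking the exceptional families. By \cref{prop:high-3}, we are already done whenever $\frac{4(d+g)-4r}{2r-5} \leq 2 = \sum_{i,j,k}(i+j) n_{ij}^k$; so we may assume
\[\sum_{i,j,k}(i+j)\cdot n_{ij}^k = 2 \quad\text{and}\quad \frac{4(d+g)-4r}{2r-5} > 2,\]
which upon rearrangement reads $d + g > 2r - \tfrac{5}{2}$, i.e.\ (since $d,g$ are integers) $d + g \geq 2r - 2$. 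Combined with \cref{prop:dg-small}'s reduction to $d+g \leq 2r$, we are left with the narrow window $2r-2 \leq d+g \leq 2r$ and $g > 0$.

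First I would dispose of the case $\sum_{i,j,k} k n_{ij}^k = r-2$ using \cref{K-maximal-lhs-big} (if the left side of \eqref{regime}'s associated inequality is $\geq 2d+2g-3r+2$) or \cref{K-maximal-lhs-small} (if it is $\leq 2d+2g-3r$); the only remaining subcase is the middle value $2d+2g-3r+1$, which combined with $\sum(i+j)n_{ij}^k = 2$ and \cref{to-ij} forces a contradiction for $r$ large since $\frac{2r-5}{2}\cdot 2 - r + 2 = r - 3 > 2d+2g-3r+1$ is equivalent to $d+g < 2r-2$, excluded. So I may assume $\sum_{i,j,k} k n_{ij}^k \leq r-3$ (the case $\sum k n_{ij}^k \geq r-2$ being handled, and no larger value is possible). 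Next, exactly as in \cref{prop:high-3}, since $\sum_{i,j,k}(i+j)n_{ij}^k = 2 \geq \frac{4(d+g)-4r}{2r-5}$ fails but $\sum(i+j)n_{ij}^k \geq \frac{4(d+g)-4r}{2r-5}$ is what \cref{stick-applies-ij-large} needs — so here I instead observe $d+g \leq 2r$ gives $\frac{4(d+g)-4r}{2r-5}\leq \frac{4r}{2r-5} < 3$, hence $\leq 2$ in the integer sense is not automatic; I would split on whether \eqref{regime}'s LHS is $\geq 2d+2g-3r+2$ (apply \cref{stick}) or $= 2d+2g-3r+1$ or $\leq 2d+2g-3r$ (apply \cref{two-secant}/\cref{two-secant-backwards}, whose output $n'$ satisfies $\sum k (n')_{ij}^k \leq 2 \leq \frac{r+3}{2}$, so \cref{cor:high}\ref{K-small} applies and we are done).

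The substantive case is therefore: \eqref{regime}'s LHS equals $2d+2g-3r+1$, and we apply \cref{stick} to reduce to $(d-1,g,r-1;n')$ with $(n')_{ij}^k = \sum_\ell n_{\ell i}^k$ for $j=0$, $(i,j;k)\neq(0,0;0)$, and $0$ otherwise. Writing $\epsilon = \sum_{i,j,k} j n_{ij}^k \in\{0,1\}$, one computes $\sum i(n')_{ij}^k = \epsilon$, $\sum j (n')_{ij}^k = 0$, so $\delta(n') = 2+\epsilon$, and $\sum k(n')_{ij}^k = \sum k n_{ij}^k$. I then need: we cannot simultaneously have $\sum k n_{ij}^k = 4(r-1)-2(d-1)-2g-(2+\epsilon) > \frac{(r-1)+3}{2}$ and $(d-1)+g+(r-1)\equiv (2+\epsilon)+2$ or $(2+\epsilon)+4 \bmod 5$. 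Rearranging, the forbidden configuration for $(d-1,g,r-1;n')$ translates to $\sum k n_{ij}^k = 4r-2d-2g-(4+\epsilon) > \frac{r+2}{2}$ and $d+g+r \equiv (1+\epsilon)$ or $(3+\epsilon) \bmod 5$. Since for the original tuple $\sum(i+j)n_{ij}^k = 2$ means $\delta(n) \in \{4,5\}$ according to whether $(\sum i n_{ij}^k, \sum j n_{ij}^k)$ is $(2,0)$ or $(1,1)$ — that is, $\delta(n) = 4+\epsilon$ — the hypothesis of \cref{thm:high} for $(d,g,r;n)$ says we may assume we are NOT in its second exceptional family, i.e.\ we do NOT have $\sum k n_{ij}^k = 4r-2d-2g-\delta(n) > \frac r2$ together with $d+g+r\equiv \delta(n)+2$ or $\delta(n)+4\bmod 5$. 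Since $\delta(n)+2 = 6+\epsilon \equiv 1+\epsilon$ and $\delta(n)+4 = 8+\epsilon\equiv 3+\epsilon\bmod 5$, and $\frac{r+2}{2} > \frac r2$, the forbidden configuration for $(d-1,g,r-1;n')$ is strictly contained in the one excluded for $(d,g,r;n)$; hence it cannot occur, and $(d-1,g,r-1;n')$ is good by induction. The only remaining loose end is to confirm $(d-1,g,r-1;n')$ does not fall into the \emph{first} exceptional family of \cref{thm:high} — but that requires $\sum i(n')_{ij}^k = \sum j(n')_{ij}^k = 1$, impossible since $\sum j(n')_{ij}^k = 0$. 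The main obstacle is precisely this bookkeeping of the $\delta$-shifts and the strict-versus-non-strict inequalities $\frac r2$ versus $\frac{r+2}{2}$: everything hinges on the congruence classes $\delta(n)+2, \delta(n)+4 \bmod 5$ matching $\delta(n')+2, \delta(n')+4$ after the $r \mapsto r-1$, $d\mapsto d-1$ substitution, which they do because $\delta(n) - \delta(n') = 2$ and $(d-1)+g+(r-1) = (d+g+r) - 2$.
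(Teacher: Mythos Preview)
There are several genuine gaps.

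\textbf{First}, in what you call the ``substantive case'' $\sum_{i,j,k}((r-2)i+(r-3)j-k)n_{ij}^k = 2d+2g-3r+1$, you apply \cref{stick}. But \cref{stick} requires this quantity to be $\geq 2d+2g-3r+2$, so it is simply not available here. The paper handles this middle value with \cref{two-sticks} instead (after further splitting on whether $\sum_{i,j,k} in_{ij}^k = 2$ or $\sum_{i,j,k} kn_{ij}^k < r-3$), and then checks the output against \cref{cor:high} Condition~\ref{J-zero} via $d+g-3\geq(7r-14)/4$. Your entire $\delta$-bookkeeping paragraph, while internally consistent, is built on an inapplicable lemma.

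\textbf{Second}, in the case $\sum_{i,j,k}((r-2)i+(r-3)j-k)n_{ij}^k \leq 2d+2g-3r$ with $\sum_{i,j,k} kn_{ij}^k \leq r-3$, you claim the output of ``\cref{two-secant}/\cref{two-secant-backwards}'' has $\sum k(n')_{ij}^k\leq 2$. This is false for \cref{two-secant} when $\sum kn_{ij}^k < r-3$: that lemma adds a $(1,1;1)$-point, so $\sum k(n')_{ij}^k = 1+\sum kn_{ij}^k$, which need not be small. The paper instead observes that the output satisfies $\sum_{i,j,k}(i+j)(n')_{ij}^k = 4$, so in particular $\sum i(n')\neq 1$, and invokes \cref{cor:high} Condition~\ref{J-zero} using $d+g-2\geq(7r-7)/4$.

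\textbf{Third}, your ``contradiction'' via \cref{to-ij} in the subcase $\sum kn_{ij}^k = r-2$, middle value, is backwards: \cref{to-ij} gives $\text{LHS}\geq r-3$, hence $2d+2g-3r+1\geq r-3$, i.e.\ $d+g\geq 2r-2$, which is consistent with your standing assumption rather than contradicting it. The paper handles this subcase by computing directly that (in the $(1,1)$ configuration) one is forced into $\sum kn_{ij}^k = r-2$ and $d+g=2r-2$, which is exactly the first exceptional family excluded by the hypothesis of \cref{thm:high}.
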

\begin{proof}
By \cref{prop:high-3}, we may reduce to the case where
\[\frac{4(d + g) - 4r}{2r - 5} > \sum_{i,j,k} (i + j) \cdot n_{ij}^k = 2;\]
or upon rearrangement,
\[d + g > 2r - 2 - \frac{1}{2} \quad \Rightarrow \quad d + g \geq 2r - 2.\]
If
\[\sum_{i,j,k} ((r - 2)i + (r - 3)j - k) \cdot n_{ij}^k \leq 2d + 2g - 3r,\]
then by \cref{K-maximal-lhs-small},
it suffices to consider the case $\sum_{i,j,k} k n_{ij}^k < r - 2$.
In this case, \cref{two-secant} implies the desired result:
We are reduced to showing $(d - 1, g - 1, r; n')$
is good, where crucially we do \emph{not} have
\[\sum_{i,j,k} i(n')_{ij}^k = \sum_{i,j,k} j(n')_{ij}^k = 1.\]
In particular, our inductive hypothesis implies the desired result so long as
\[d + g - 2 = (d - 1) + (g - 1) \geq \frac{7r - 7}{4}.\]
This inequality holds since for $r \geq 12$,
\[d + g - 2 \geq 2r - 4 \geq \frac{7r - 7}{4}.\]

\noindent
On the other hand, if
\[\sum_{i,j,k} ((r - 2)i + (r - 3)j - k) \cdot n_{ij}^k \geq 2d + 2g - 3r + 2,\]
then by \cref{K-maximal-lhs-big}, we may also assume
$\sum_{i,j,k} k n_{ij}^k < r - 2$.
In this case,
we claim the desired result follows from \cref{stick}. Indeed, we are reduced to
showing $(d - 1, g, r - 1; n')$ is good, where we again crucially do \emph{not} have
\[\sum_{i,j,k} i(n')_{ij}^k = \sum_{i,j,k} j(n')_{ij}^k = 1.\]
In particular, our inductive hypothesis implies the desired result so long as
\[d + g - 1 = (d - 1) + g \geq \frac{7(r - 1) - 7}{4} = \frac{7r - 14}{4}.\]
This inequality holds since for $r \geq 12$,
\[d + g - 1 \geq 2r - 3 \geq \frac{7r - 14}{4}.\]

\noindent
It thus remains to consider the case where
\[\sum_{i,j,k} ((r - 2)i + (r - 3)j - k) \cdot n_{ij}^k = 2d + 2g - 3r + 1.\]
By assumption, we have either $\sum_{i,j,k} i n_{ij}^k = 2$ and $\sum_{i,j,k} j n_{ij}^k = 0$,
or $\sum_{i,j,k} i n_{ij}^k = \sum_{i,j,k} j n_{ij}^k = 1$.

First we consider the cases where either $\sum_{i,j,k} i n_{ij}^k = 2$ and $\sum_{i,j,k} j n_{ij}^k = 0$,
or $\sum_{i,j,k} k n_{ij}^k < r - 3$.
In either of these cases, we claim the desired result follows from \cref{two-sticks}.
Indeed, we are reduced to showing $(d - 2, g - 1, r - 1; n')$ is good, where we again crucially do \emph{not} have
\[\sum_{i,j,k} i(n')_{ij}^k = \sum_{i,j,k} j(n')_{ij}^k = 1.\]
In particular, our inductive hypothesis implies the desired result so long as
\[d + g - 3 = (d - 2) + (g - 1) \geq \frac{7(r - 1) - 7}{4} = \frac{7r - 14}{4}.\]
This inequality holds since for $r \geq 12$,
\[d + g - 3 \geq 2r - 5 \geq \frac{7r - 14}{4}.\]

Thus, it remains to consider the case $\sum_{i,j,k} i n_{ij}^k = \sum_{i,j,k} j n_{ij}^k = 1$
and $\sum_{i,j,k} k n_{ij}^k \in \{r - 2, r - 3\}$. In this case, we have
\[(r - 2) + (r - 3) - \sum_{i,j,k} k n_{ij}^k = \sum_{i,j,k} ((r - 2)i + (r - 3)j - k) \cdot n_{ij}^k = 2d + 2g - 3r + 1;\]
or upon rearrangement,
\[\sum_{i,j,k} k n_{ij}^k = 5r - 2d - 2g - 6 \equiv r \ \mod 2.\]
It follows that in fact,
\[\sum_{i,j,k} k n_{ij}^k = r - 2,\]
and that
\[r - 2 = 5r - 2d - 2g - 6 \quad \Rightarrow \quad d + g = 2r - 2.\]
But this case is excluded by assumption.
\end{proof}

\begin{proposition} \label{prop:high-1}
Suppose that \cref{thm:high} holds for all $(d', g', r'; n')$
where either $d' < d$, or $d' = d$ and $\sum_{i,j,k} (n')_{ij}^k < \sum_{i,j,k} n_{ij}^k$.
Then \cref{thm:high} holds for $(d, g, r; n)$ if $\sum_{i,j,k} (i + j) \cdot n_{ij}^k = 1$.
\end{proposition}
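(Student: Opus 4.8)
The plan is to follow the template of \cref{prop:high-2}, specialized to $\sum_{i,j,k}(i+j)n_{ij}^k = 1$. This hypothesis forces $\sum_{i,j,k} i n_{ij}^k = 1$ and $\sum_{i,j,k} j n_{ij}^k = 0$, so $\delta(n) = 3$; concretely $n$ consists of a single marked point of type $(1,0;k_1)$ with $k_1 \in \{0,1,2\}$ together with some $(0,0;k)$-points, and one computes
\[L := \sum_{i,j,k} \bigl((r-2)i + (r-3)j - k\bigr)n_{ij}^k = (r-2) - K, \qquad K := \sum_{i,j,k} k n_{ij}^k .\]
By \cref{prop:dg-small} I may assume $g > 0$ and $d+g \le 2r$, and throughout $r \ge 12$. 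When convenient (and when Condition~\ref{K-small} of \cref{cor:high} does not already apply), I will first strip off the $(0,0;k)$-points via \cref{zzk}, whose numerical hypotheses are supplied by \cref{zzk-dg} together with \cref{cor:high}.

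I would then case on the position of $L$ relative to $2d+2g-3r$, $2d+2g-3r+1$, and $2d+2g-3r+2$ --- equivalently on $K$ relative to $4r-2d-2g-4$, $4r-2d-2g-3$, and $4r-2d-2g-2$. If $L \ge 2d+2g-3r+2$, I apply \cref{stick}, reducing to $(d-1,g,r-1;n')$ with $\sum_{i,j,k} j(n')_{ij}^k = 0$ and $\sum_{i,j,k} k(n')_{ij}^k \le 2$; such a tuple avoids both exceptional families for $\Pbb^{r-1}$, so it is good by the inductive hypothesis (or by \cref{prop:plow} when $r - 1 = 11$). If $L \le 2d+2g-3r$, I invoke \cref{K-maximal-lhs-small} when $K \ge r-3$, and otherwise apply \cref{two-secant}; the condition \eqref{regime} is preserved by the table of \cref{S:regime}, and the reduction lands on $(d-1,g-1,r;n')$ with a new $(1,1;1)$-point, so that $\sum_{i,j,k} i(n')_{ij}^k = 2$. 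This tuple avoids the first exceptional family outright, and avoids the second except possibly when $K$ sits at the extreme value $4r-2d-2g-2$ and a congruence modulo $5$ holds; in that corner I would fall back to \cref{lower-d} (checking $d > g+r$ and its numerical hypothesis via \cref{chimod}), which reduces to $(d-1,g,r;n)$ --- caught by the first branch --- and to $(d-1,g,r-1;n)$, good by induction.

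The genuinely delicate case is $L = 2d+2g-3r+1$, i.e.\ $K = 4r-2d-2g-3 = 4r-2d-2g-\delta(n)$, and I expect this to be the main obstacle. Here the second exceptional family is precisely the sub-case $K > r/2$ together with $d+g+r \equiv \delta(n)+2$ or $\delta(n)+4$ modulo $5$, so there is nothing to prove for those configurations. For the remaining ones (where $K \le r/2$, or the congruence fails), I would apply \cref{two-sticks} when $K \le r-4$ and $\sum_{i,j,k} i n_{ij}^k = 1$, checking that the target $(d-2,g-1,r-1;n')$ --- which acquires a new $(2,0;1)$-point --- inherits the failure of the exceptional conditions (the point being that both the congruence modulo $5$ and the inequality against $r/2$ transform consistently under the reduction); and I would clean up the narrow leftover band, in particular the parity-forced cases $K \in \{r-3, r-2\}$ and the extreme $d = g+r$, using \cref{two-secant-backwards}, \cref{lower-d}, or a direct degeneration off a $1$-secant line to a curve covered by \cref{S:base} and \cref{prop:plow}. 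Finally, all the numerical inequalities invoked along the way (comparisons of the form $\frac{7r-c}{4} \le 2r$ and the like) are routine for $r \ge 12$, completing the proof.
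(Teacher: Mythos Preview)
Your overall plan is close to the paper's, but there is a genuine gap at the two boundary values of $L$.

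First, your claim that after \cref{stick} one has $\sum_{i,j,k} k(n')_{ij}^k \le 2$ is false. In the case $K < r-2$ the formula for $n'$ in \cref{stick} gives $(n')_{00}^k = \sum_\ell n_{\ell 0}^k$, and since all your original marked points have $j=0$ this yields $\sum_{i,j,k} k(n')_{ij}^k = K$, not $\le 2$. (Pre-stripping all $(0,0;k)$-points via \cref{zzk} does not save this: each application of \cref{zzk} requires the intermediate $(d,g,r;n')$ to be good, and as you peel off points the running value of $K$ can pass through the exceptional value $4r-2d-2g-3$, at which point the inductive hypothesis is silent.)

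This matters precisely at $L = 2d+2g-3r+2$, where $K = 4r-2d-2g-4$. After \cref{stick} you land on $(d-1,g,r-1;n')$ with $\delta(n') = 2$ and $K' = K$, and the second exceptional family there demands exactly $K' = 4(r-1)-2(d-1)-2g-2 = 4r-2d-2g-4$ --- so you hit it head-on whenever $K > (r+2)/2$ and $d+g+r \equiv 1$ or $3 \pmod 5$. The symmetric problem occurs at $L = 2d+2g-3r$: after \cref{two-secant} you land on $(d-1,g-1,r;n')$ with $\delta(n') = 5$ and $K' = K+1 = 4r-2d-2g-1$, which is again exactly the exceptional value there. Your proposed fallback to \cref{lower-d} needs $d > g+r$, which can fail (e.g.\ $d = g+r$ with $(r+1)/4 < g \le (r-1)/2$ gives $K = 2r-4g-2 \in (0, r-3)$).

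The paper's fix is to widen the middle band: apply \cref{two-sticks} for the entire range $2d+2g-3r \le L \le 2d+2g-3r+2$ when $K < r-3$, and restrict \cref{stick} to $L \ge 2d+2g-3r+3$ and \cref{two-secant} to $L \le 2d+2g-3r-1$. With these sharper cutoffs one gets $K \le 4r-2d-2g-5$ (resp.\ $K \ge 4r-2d-2g-1$), strictly missing the exceptional value for the reduced tuple; and within the band the \cref{two-sticks} reduction lands on $(d-2,g-1,r-1;n')$ with $\delta(n')=4$, whose exceptional condition translates back to exactly the hypothesis you are allowed to assume away. The $K \ge r-3$ subcases are then handled by \cref{K-maximal-lhs-small}, \cref{K-maximal-lhs-big}, and a direct mod-$5$ contradiction when applying \cref{stick} at $K = r-3$.
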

\begin{proof}
Consider first the case when
\[\sum_{i,j,k} ((r - 2)i + (r - 3)j - k) \cdot n_{ij}^k \neq 2d + 2g - 3r + 1 \quad \text{and} \quad \sum_{i,j,k} k n_{ij}^k \geq r - 3.\]
If in addition
\[\sum_{i,j,k} ((r - 2)i + (r - 3)j - k) \cdot n_{ij}^k < 2d + 2g - 3r + 1,\]
then \cref{K-maximal-lhs-small} implies the desired result. Similarly, if
\[\sum_{i,j,k} ((r - 2)i + (r - 3)j - k) \cdot n_{ij}^k > 2d + 2g - 3r + 1 \quad \text{and} \quad \sum_{i,j,k} k n_{ij}^k = r - 2,\]
then \cref{K-maximal-lhs-big} implies the desired result. In this case we may thus assume
\[\sum_{i,j,k} ((r - 2)i + (r - 3)j - k) \cdot n_{ij}^k > 2d + 2g - 3r + 1 \quad \text{and} \quad \sum_{i,j,k} k n_{ij}^k = r - 3.\]
Applying \cref{stick}, it suffices to show $(d - 1, g, r - 1; n')$
satisfies our inductive hypothesis, where
\[(n')_{ij}^k = \begin{cases}
\sum_\ell n_{\ell i}^k & \text{if $j = 0$ and $(i, j; k) \neq (0, 0; 0)$;} \\
0 & \text{else.}
\end{cases}\]

For this we first note that $\sum_{i,j,k} j (n')_{ij}^k = 0$;
since $\delta(n') = 2$, our problem is thus reduced to showing
that we do not simultaneously have
\begin{align*}
r - 3 = \sum_{i,j,k} k n_{ij}^k = \sum_{i,j,k} k (n')_{ij}^k &= 4(r - 1) - 2(d - 1) - 2g - 2, \\
(d - 1) + g + (r - 1) &\equiv 4 \ \text{or} \ 1 \ \mod 5.
\end{align*}
Reducing the first equation mod $5$ and rearranging, it suffices
to show that we do not simultaneously have
\begin{align*}
d + g + r &\equiv 2 \ \mod 5 \\
d + g + r &\equiv 1 \ \text{or} \ 3 \ \mod 5,
\end{align*}
which is clear.

Next, we consider the case when
\[\sum_{i,j,k} ((r - 2)i + (r - 3)j - k) \cdot n_{ij}^k \leq 2d + 2g - 3r - 1 \quad \text{and} \quad \sum_{i,j,k} k n_{ij}^k < r - 3.\]
By \cref{two-secant} it is thus sufficient
to show $(d - 1, g - 1, r; n')$ satisfies our inductive hypothesis, where
\[(n')_{ij}^k = \begin{cases}
n_{ij}^k & \text{if $(i, j; k) \neq (1, 1; 1)$;} \\
n_{11}^1 + 1 & \text{if $(i, j; k) = (1, 1; 1)$.}
\end{cases}\]

For this, we first note that $\sum_{i,j,k} k(n')_{ij}^k < r - 2$;
since $\delta(n') = 5$, our problem is thus reduced to showing
\[1 + \sum_{i,j,k} k n_{ij}^k = \sum_{i,j,k} k (n')_{ij}^k \neq 4r - 2(d - 1) - 2(g - 1) - 5;\]
or upon rearrangement, that
\[\sum_{i,j,k} k n_{ij}^k \neq 4r - 2d - 2g - 2.\]
But we have
\[(r - 2) - \sum_{i,j,k} k n_{ij}^k = \sum_{i,j,k} ((r - 2)i + (r - 3)j - k) \cdot n_{ij}^k \leq 2d + 2g - 3r - 1;\]
which upon rearrangement gives
\[\sum_{i,j,k} k n_{ij}^k \geq 4r - 2d - 2g - 1,\]
completing the proof in this case.

Next, we consider the case when
\[\sum_{i,j,k} ((r - 2)i + (r - 3)j - k) \cdot n_{ij}^k \geq 2d + 2g - 3r + 3 \quad \text{and} \quad \sum_{i,j,k} k n_{ij}^k < r - 3.\]
Applying \cref{stick}, it suffices to show $(d - 1, g, r - 1; n')$
satisfies our inductive hypothesis, where
\[(n')_{ij}^k = \begin{cases}
\sum_\ell n_{\ell i}^k & \text{if $j = 0$ and $(i, j; k) \neq (0, 0; 0)$;} \\
0 & \text{else.}
\end{cases}\]

For this we first note that $\sum_{i,j,k} j (n')_{ij}^k = 0$;
since $\delta(n') = 2$, our problem is thus reduced to showing
\[\sum_{i,j,k} k n_{ij}^k = \sum_{i,j,k} k (n')_{ij}^k \neq 4(r - 1) - 2(d - 1) - 2g - 2 = 4r - 2d - 2g - 4.\]
But we have
\[(r - 2) - \sum_{i,j,k} k n_{ij}^k = \sum_{i,j,k} ((r - 2)i + (r - 3)j - k) \cdot n_{ij}^k \geq 2d + 2g - 3r + 3;\]
which upon rearrangement gives
\[\sum_{i,j,k} k n_{ij}^k \leq 4r - 2d - 2g - 5,\]
completing the proof in this case.

Next, we consider the case when
\[2d + 2g - 3r \leq \sum_{i,j,k} ((r - 2)i + (r - 3)j - k) \cdot n_{ij}^k \leq 2d + 2g - 3r + 2 \quad \text{and} \quad \sum_{i,j,k} k n_{ij}^k < r - 3.\]
In this case, we seek to apply \cref{two-sticks}. For this, it suffices
to show $(d - 2, g - 1, r - 1; n')$ is good, where
\[(n')_{ij}^k = \begin{cases}
\sum_\ell n_{\ell i}^k & \text{if $j = 0$ and $(i, j; k) \notin \{(0, 0; 0), (2, 0, 1)\}$;} \\
1 + \sum_\ell n_{\ell i}^k & \text{if $(i, j; k) = (2, 0, 1)$;} \\
0 & \text{else.} \\
\end{cases}\]
Since
\[\sum_{i,j,k} i (n')_{ij}^k = 2 + \sum_{i,j,k} j n_{ij}^k = 2 \quad \text{and} \quad \sum_{i,j,k} j (n')_{ij}^k = 0,\]
we have $\delta(n') = 4$, and it suffices to show that we do not simultaneously have
\begin{align*}
1 + \sum_{i,j,k} k n_{ij}^k = \sum_{i,j,k} k (n')_{ij}^k &= 4(r - 1) - 2(d - 2) - 2(g - 1) - 4 > \frac{(r - 1) + 3}{2}, \\
(d - 2) + (g - 1) + (r - 1) &\equiv 1 \ \text{or} \ 3 \ \mod 5.
\end{align*}
Or, upon rearrangement, that we do not simultaneously have
\begin{align*}
\sum_{i,j,k} k n_{ij}^k &= 4r - 2d - 2g - 3 > \frac{r}{2}, \\
d + g + r &\equiv 0 \ \text{or} \ 2 \ \mod 5.
\end{align*}
But this is precisely the assumption of \cref{thm:high} (we have $\delta(n) = 3$).

It thus remains to consider the case when
\[\sum_{i,j,k} ((r - 2)i + (r - 3)j - k) \cdot n_{ij}^k = 2d + 2g - 3r + 1 \quad \text{and} \quad \sum_{i,j,k} k n_{ij}^k \in \{r - 3, r - 2\}.\]
Our first equation gives
\[(r - 2) - \sum_{i,j,k} k n_{ij}^k = 2d + 2g - 3r + 1 \quad \Rightarrow \quad \sum_{i,j,k} k n_{ij}^k = 4r - 2d - 2g - 3.\]
This in addition implies
\[4r - 2d - 2g - 3 = \sum_{i,j,k} k n_{ij}^k = r - 3 \ \text{or} \ r - 2.\]
Reducing mod $5$ and rearranging, we obtain
\[r + d + g \equiv 0 \ \text{or} \ 2 \ \mod 5.\]
But this case is excluded by assumption,
since $\delta(n) = 3$ and $\sum_{i,j,k} k n_{ij}^k \geq r - 3 > r/2$.
\end{proof}

\begin{proposition} \label{prop:high-0}
Suppose that \cref{thm:high} holds for all $(d', g', r'; n')$
where either $d' < d$, or $d' = d$ and $\sum_{i,j,k} (n')_{ij}^k < \sum_{i,j,k} n_{ij}^k$.
Then \cref{thm:high} holds for $(d, g, r; n)$ if $\sum_{i,j,k} (i + j) \cdot n_{ij}^k = 0$.
\end{proposition}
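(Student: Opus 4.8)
The hypothesis $\sum_{i,j,k}(i+j)n_{ij}^k = 0$ forces every marked point to have type $(0,0;1)$ or $(0,0;2)$; write $a=n_{00}^1$, $b=n_{00}^2$, and $K=\sum_{i,j,k}kn_{ij}^k=a+2b$, so $0\le K\le r-2$ and $\delta(n)=2$. By \cref{prop:dg-small} we may assume $g>0$ and $d+g\le 2r$, hence $d\ge g+r\ge r+1$ and $d+g\ge r+2$. Since $\sum_{i,j,k}(i+j)n_{ij}^k=0$, the first exceptional family of \cref{thm:high} cannot occur, so it suffices to prove $(d,g,r;n)$ is good under the assumption that it does not lie in the second exceptional family, i.e.\ that we do not have $K=4r-2d-2g-2>r/2$ together with $d+g+r\equiv 1$ or $4\pmod 5$.

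The plan is to split into ranges of $K$ and feed each into one of the reductions of \cref{S:fixed}, each time checking three things: that the output tuple lies in the range covered by our inductive hypothesis, that it satisfies \eqref{regime}, and that it avoids both exceptional families of \cref{thm:high}. If $K\in\{r-2,r-3\}$, then since $d+g\ge r+2$ one has $-K\le 2d+2g-3r$, so \cref{K-maximal-lhs-small} applies directly (its output tuple has $\sum k\le 2$ and is handled by the inductive hypothesis together with \cref{cor:high}, Condition~\ref{K-small}). If $K\le (r+3)/2$, then $(d,g,r;n)$ is good by \cref{cor:high}, Condition~\ref{K-small}. The remaining range $(r+3)/2<K\le r-4$ is nonempty precisely because $r\ge 12$, and there I would distinguish cases according to whether $-K\le 2d+2g-3r$ or $-K\ge 2d+2g-3r+1$. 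In the first case I would apply \cref{two-secant}, which produces $(d-1,g-1,r;n')$ with $n'$ obtained from $n$ by adding one $(1,1;1)$-point; here one checks that $n'$ satisfies \eqref{regime} exactly because $-K\le 2d+2g-3r$, that $\sum k\,n'=K+1<r-2$ rules out the first exceptional family for $n'$, and that — since $\delta(n')=5$ — the congruence-and-$\delta$ bookkeeping makes the second exceptional family for $(d-1,g-1,r;n')$ equivalent to that for $(d,g,r;n)$, hence also excluded. In the second case, $K\le 3r-2d-2g-1$ together with $d\ge g+r$ forces $K\le r-4g-1$ (so in particular $K\le r-5$), and I would instead peel a marked point off with \cref{zzk}: the output $n''=\mathbf 0$ is good by \cref{cor:high}, Condition~\ref{K-small}; the inequality required by \cref{zzk} follows from $K\le r-4g-1$ by a short computation that collapses to $-3g\le 4$; and the output $(d,g,r;n')$ (one $(0,0;\ell)$-point removed, any $\ell$) has $\sum k\,n'=K-\ell<4r-2d-2g-2$, so it avoids the second exceptional family. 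Finally, when $g=1$ some outputs are rational curves and one invokes \cref{thm:ratcurves} rather than \cref{thm:high}, using the \eqref{regime} checks already made.

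The main obstacle, as throughout \cref{S:high-dimensions}, is the bookkeeping around the exceptional families: for each reduction one must verify that the mod-$5$ congruence and the shift in $\delta(\cdot)$ conspire so that the excluded cases of the target tuple are either arithmetically impossible (as in the $K\in\{r-2,r-3\}$ reduction, where $3r-1=4r-2$ would force $r=1$) or literally inherited from $(d,g,r;n)$ not being excluded. In particular, the split in the middle range is dictated by whether the $2$-secant-line construction of \cref{addtwo} (as repackaged in \cref{two-secant}) would output a tuple violating \eqref{regime}; the two subcases must be shown to exhaust $(r+3)/2<K\le r-4$, and in the \cref{zzk} branch one must note that the reduction, though it preserves $d$, strictly decreases $\sum_{i,j,k}n_{ij}^k$, so that the inductive hypothesis of this proposition does apply.
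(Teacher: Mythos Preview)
Your case~2 contains a genuine circularity. You invoke \cref{cor:high}, Condition~\ref{K-small}, for the tuple $(d,g,r;n)$ itself, but that corollary is deduced from \cref{thm:high} (via \cref{high-lengthened}), which is exactly what you are trying to establish for this tuple. The inductive hypothesis only supplies these statements for tuples with strictly smaller $d$ or strictly fewer marked points; in your case~3b the analogous citation for the output $n''=\mathbf 0$ is legitimate because $n''$ has fewer marked points than $n$, but here it is not.

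This is not cosmetic: your case~2 contains $n=\mathbf 0$, for which no reduction in the number of marked points is possible and the inductive hypothesis at the same $(d,g,r)$ is vacuous. The paper avoids the problem by running the dichotomy of your cases~3a/3b over \emph{all} values of $K$, not just $K>(r+3)/2$. The branch $-K\le 2d+2g-3r$ proceeds via \cref{two-secant} exactly as in your~3a. In the branch $-K\ge 2d+2g-3r+1$, after verifying the bound of \cref{zzk-dg}, one iterates \cref{zzk} down to $n=\mathbf 0$, then applies \cref{lower-d} while $d>g+r$, then \cref{stick} if the inequality is strict; the residual case $d=g+r$ with $2d+2g-3r+1=0$ forces $(d,g,r)=(5g+1,\,g,\,4g+1)$, which is precisely the base case \cref{last-case}. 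That lemma never appears in your argument, which is the clearest signal that something essential is missing. Your cases~1, 3a, and 3b are otherwise sound and closely parallel the paper's treatment.
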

\begin{proof} 
Consider first the case when
\[\sum_{i,j,k} ((r - 2)i + (r - 3)j - k) \cdot n_{ij}^k \leq 2d + 2g - 3r.\]
By \cref{K-maximal-lhs-small}, it suffices to consider the case
$\sum_{i,j,k} k n_{ij}^k < r - 3$. By \cref{two-secant} it is thus sufficient
to show $(d - 1, g - 1, r; n')$ satisfies our inductive hypothesis, where
\[(n')_{ij}^k = \begin{cases}
n_{ij}^k & \text{if $(i, j; k) \neq (1, 1; 1)$;} \\
n_{11}^1 + 1 & \text{if $(i, j; k) = (1, 1; 1)$.}
\end{cases}\]

For this, we first note that $\sum_{i,j,k} k(n')_{ij}^k < r - 2$;
since $\delta(n') = 5$, our problem is thus reduced to showing that we do not simultaneously have
\begin{align*}
1 + \sum_{i,j,k} k n_{ij}^k = \sum_{i,j,k} k (n')_{ij}^k &= 4r - 2(d - 1) - 2(g - 1) - 5 > \frac{r + 3}{2}, \\
(d - 1) + (g - 1) + r &\equiv 5 + 2 \ \text{or} \ 5 + 4 \ \mod 5.
\end{align*}
Or equivalently, that we do not simultaneously have
\begin{align*}
\sum_{i,j,k} k n_{ij}^k &= 4r - 2d - 2g - 2 > \frac{r + 1}{2}, \\
d + g + r &\equiv 4 \ \text{or} \ 1 \ \mod 5.
\end{align*}
But by assumption (since $\delta(n) = 2$), we do not simultaneously have
\begin{align*}
\sum_{i,j,k} k n_{ij}^k &= 4r - 2d - 2g - 2 > \frac{r}{2}, \\
d + g + r &\equiv 4 \ \text{or} \ 1 \ \mod 5.
\end{align*}

\noindent
Next, we consider the case when
\begin{equation} \label{ij0-high}
\sum_{i,j,k} ((r - 2)i + (r - 3)j - k) \cdot n_{ij}^k \geq 2d + 2g - 3r + 1.
\end{equation}
In this case, we first of all claim that the inequalities of \cref{zzk}
and \cref{lower-d} are satisfied. To check this, we apply \cref{zzk-dg},
which reduces our claim to verifying the inequality
\[\sum_{i,j,k} k n_{ij}^k \leq \frac{3r^2 - 3r - 4}{2r - 4} - \frac{r - 5}{2r - 4}(d + g).\]
On the other hand, our assumption implies
\[- \sum_{i,j,k} k n_{ij}^k = \sum_{i,j,k} ((r - 2)i + (r - 3)j - k) \cdot n_{ij}^k \geq 2d + 2g - 3r + 1 \quad \Rightarrow \quad d + g \leq \frac{3r - 1 - \sum_{i,j,k} k n_{ij}^k}{2}.\]
We are thus reduced to showing
\[\sum_{i,j,k} k n_{ij}^k \leq \frac{3r^2 - 3r - 4}{2r - 4} - \frac{r - 5}{2r - 4} \cdot \frac{3r - 1 - \sum_{i,j,k} k n_{ij}^k}{2}.\]
Or, upon rearrangement, that
\[\sum_{i,j,k} k n_{ij}^k \leq \frac{3r + 13}{3}.\]
For this is it sufficient to note that
\[r - 2 \leq \frac{3r + 13}{3}.\]

Now if $\sum_{i,j,k} n_{ij}^k > 0$, we may iteratively
apply \cref{zzk} --- noting that if $n'$ is as in \cref{zzk},
then $(d, g, r; n')$ also satisfies the inequality of \cref{zzk} --- to
reduce to cases where $\sum_{i,j,k} n_{ij}^k = 0$
(which are good by Condition~\ref{K-small} of \cref{cor:high}).
Similarly, if $\sum_{i,j,k} n_{ij}^k = 0$ and
$d > g + r$, we may apply \cref{lower-d},
again applying Condition~\ref{K-small} of \cref{cor:high}
to check the inductive hypothesis.
It therefore remains to consider the case where $d = g + r$
and $\sum_{i,j,k} n_{ij}^k = 0$.

If the inequality \cref{ij0-high} is strict, then the desired
result follows from \cref{stick}: The $n'$ appearing in \cref{stick}
satisfies
\[\sum_{i,j,k} k(n')_{ij}^k = 0.\]

We may thus suppose additionally that \cref{ij0-high}
is an equality. But in this case, we additionally have
\[2d + 2g - 3r + 1 = \sum_{i,j,k} ((r - 2)i + (r - 3)j - k) \cdot n_{ij}^k = 0.\]
Using the above equation together with $d = g + r$ to solve for $d$ and $r$
in terms of $g$, we obtain
\[d = 5g + 1 \quad \text{and} \quad r = 4g + 1.\]
We are thus done by \cref{last-case}.
\end{proof}

\titlelabel{Appendix \thetitle:\quad}
\section{Code for \cref{S:low-dimensions}}
\label{A:code}

\noindent
In this section, we give python code to do the finite
computations described in \cref{S:low-dimensions}.

\lstinputlisting{cx.py}

\noindent
The output is as follows:

\lstinputlisting[language=]{out.txt}


\bibliographystyle{amsplain.bst}
\bibliography{Interpolation}

\end{document}